\numberwithin{equation}{section}
\theoremstyle{plain}
\newtheorem{theorem}{Theorem}[section]
\newtheorem{lemma}[theorem]{Lemma}
\newtheorem{corollary}[theorem]{Corollary}
\newtheorem{proposition}[theorem]{Proposition}
\newtheorem{conjecture}[theorem]{Conjecture}
\newtheorem{question}[theorem]{Question}
\theoremstyle{definition}
\newtheorem{definition}[theorem]{Definition}
\newtheorem{problem}[theorem]{Problem}
\theoremstyle{remark}
\newtheorem{remark}[theorem]{Remark}
\newtheorem{notation}[theorem]{Notation}
\newtheorem{case[theorem]}{Case}
\title{Multi-linear forms, graphs, and $L^p$-improving measures in ${\Bbb F}_q^d$}
\author{Pablo Bhowmik\and  Alex Iosevich\and  Doowon Koh \and Thang Pham}
\begin{document} 

\maketitle

\begin{abstract} The purpose of this paper is to introduce and study the following graph theoretic paradigm. Let 
$$T_Kf(x)=\int K(x,y) f(y) d\mu(y),$$ where $f: X \to {\Bbb R}$, $X$ a set, finite or infinite,  and $K$ and $\mu$ denote a suitable kernel and  a measure, respectively. Given a connected ordered graph $G$ on $n$ vertices, consider the multi-linear form 
$$ \Lambda_G(f_1,f_2, \dots, f_n)=\int_{x^1, \dots, x^n \in X} \ \prod_{(i,j) \in {\mathcal E}(G)} K(x^i,x^j) \prod_{l=1}^n f_l(x^l) d\mu(x^l), $$ where 
${\mathcal E}(G)$ is the edge set of $G$. Define $\Lambda_G(p_1, \ldots, p_n)$ as the smallest constant $C>0$ such that the inequality
\begin{equation} \label{keyquestion} \Lambda_G(f_1, \dots, f_n) \leq C \prod_{i=1}^n {||f_i||}_{L^{p_i}(X, \mu)} \end{equation} holds for all non-negative real-valued functions $f_i$, $1\le i\le n$, on $X$. The basic question is, how does the structure of $G$ and the mapping properties of the operator $T_K$ influence the sharp exponents in (\ref{keyquestion}). In this paper, this question is investigated mainly in the case $X={\Bbb F}_q^d$, the $d$-dimensional vector space over the field with $q$ elements, and $K(x^i,x^j)$ is the indicator function of the sphere evaluated at $x^i-x^j$. In the sequel, we shall study this problem in the context of hypergraphs where the underlying object is a multi-linear operator in place of $T_K$. 

\end{abstract}

\tableofcontents

\section{Introduction} 

\vskip.125in 

One of the fundamental objects in harmonic analysis is the operator of the form 
\begin{equation} \label{euclideanoperator} T_Kf(x)=\int_{{\Bbb R}^d} K(x,y) f(y) dy, \end{equation} where $K: {\Bbb R}^d \times {\Bbb R}^d \to {\Bbb R}$ is a suitable kernel and $f$ is a locally integrable function. See \cite{St93} and the references contained therein for a variety of manifestations of operators of this type and their bounds. 

The purpose of this paper is to study operators from (\ref{euclideanoperator}) in the context of vector spaces over finite fields. Let ${\Bbb F}_q$ denote the finite field with $q$ elements and ${\Bbb F}_q^d$ the $d$-dimensional vector space over this field. Let $K: {\Bbb F}_q^d \times {\Bbb F}_q^d \to {\Bbb C}$ be a suitable kernel, and define 
$$ T_Kf(x)=\sum_{y \in {\Bbb F}_q^d} K(x,y) f(y).$$ 

Operators of this type have been studied before (\cite{CSW08, KL22, KS13, KSS16}).  In particular, the operator $T_K$ with $K(x,y)=S_t(x-y)$, where $S_t$ is the indicator function of the sphere 
$$ S_t=\{x \in {\Bbb F}_q^d: ||x||=t \},$$ $||x||=x_1^2+x_2^2+\dots+x_d^2$ comes up naturally in the study of the Erd\H{o}s-Falconer distance problem in vector spaces over finite fields, namely the question of how large a subset $E \subset {\Bbb F}_q^d$ needs to be to ensure that if 
$$ \Delta(E)=\{||x-y||: x,y \in E\},$$ then $|\Delta(E)| \ge \frac{q}{2}$. Here and throughout, $|S|$, with $S$ a finite set, denotes the number of elements in this set. See, for example (\cite{BHIPR, CEHIK09, HLR16, IR07, MPPRS}). 

If one is interested in studying more complicated geometric objects than distances, an interesting modification of the spherical averaging operator needs to be made. Indeed, let $E \subset {\Bbb F}_q^d$, and suppose that we want to know how many equilateral triangles of side-length $1$ it determines. The quantity that counts such triangles is given by 
\begin{equation} \label{equilateralform} \sum_{x,y,z \in {\Bbb F}_q^d} K(x,y)K(x,z)K(y,z)E(x)E(y)E(z),\end{equation} where $K(x,y)=S_1(x-y)$. 

Let us interpret the quantity (\ref{equilateralform}) in the following way. Let us view $x,y,z$ as vertices, and let us view the presence of $K(x,y)$ as determining the edge connecting $x$ and $y$, and so on. In this way, the quantity (\ref{equilateralform}) is associated with the graph $K_3$, the complete graph on three vertices (Figure \ref{fig:sub2}). 

Another natural example is the following. Let $K(x,y)=S_1(x-y)$, and consider the quantity that counts rhombi of side-length $1$, i.e.
\begin{equation} \label{rhombusform} \sum_{x,y,z,w\in {\Bbb F}_q^d} K(x,y)K(y,z)K(z,w)K(w,x)E(x)E(y)E(z)E(w). \end{equation} Arguing as above, we associate this form with the graph $C_4$, the cycle on four vertices (Figure \ref{fig:C4}). 

In general, let $K$ be a kernel function, and let $G$ be a connected ordered graph on $n$ vertices. Define 
\begin{equation} \label{generalform} \Lambda_G(f_1, f_2, \dots, f_n)=\frac{1}{\mathcal{N}(G)}\sum_{x^1, \dots, x^n \in {\Bbb F}_q^d} \ \prod_{(i,j) \in {\mathcal E}(G)} K(x^i,x^j) \prod_{l=1}^n f_l(x^l), \end{equation} where ${\mathcal E}(G)$ is the edge set of $G$ and $\mathcal{N}(G)$ is the normalizing factor defined as the number of distinct embeddings of $G$ in $\mathbb{F}_q^d$.

We note in passing that the paradigm we just introduced extends readily to the setting of hypergraphs. If we replace our basic object, the linear operator $T_K$, by an $m$-linear operator $M_K$, the problem transforms to the setting where the edges dictated by the kernel $K$ are replaced by hyperedges induced by the multi-linear kernel $K(x^1, \dots, x^{m+1})$. We shall address this formulation of the problem in the sequel. 

\vskip.125in 
The norm $||f||_p, 1\le p< \infty,$ is defined to be associated with normalizing counting measure on $\mathbb F_q^d.$ More precisely, given a function $f$ on $\mathbb F_q^d,$ we define
$$ {||f||}_p:={\left(q^{-d}\sum_{x \in {\Bbb F}_q^d} {|f(x)|}^p \right)}^{\frac{1}{p}}~~ (1\le p<\infty), \quad\mbox{and}\quad ||f||_\infty:=\max_{x\in \mathbb F_q^d} |f(x)|.$$ 

\begin{definition} \label{defIKT}Let $1\le p_i\le \infty$,  $i=1,\ldots, n,$ be integers. 
We define $\Lambda_G(p_1, \ldots, p_n)$ as the smallest constant $C>0$ such that  the following inequality
\begin{equation} \label{generalproblem} \Lambda_G(f_1, \dots, f_n) \leq C \prod_{i=1}^n {||f_i||}_{p_i} \end{equation}
holds for all non-negative real-valued functions $f_i$, $1\le i\le n$, on $\mathbb{F}_q^d$. \end{definition}
If there is a uniform constant $C\ge 1$ independent of $q$ such that $\Lambda_G(p_1, \ldots, p_n)\le C$, then we use the notation $\Lambda_G(p_1, \ldots, p_n) \lesssim 1$.

For the remainder of this paper, the kernel function $K(x, y)$ is assumed to be $S_t(x-y)$ with $t\ne 0.$ In addition,  when the dimension $d$ is two, we assume that the number $3$ in $ \mathbb F_q$ is a square number so that we can exclude the trivial case in which the shape of an equilateral triangle in $\mathbb F_q^2$ does not occur. 

The main purpose of this paper is to determine all numbers $1\le p_i\le \infty, i=1,2, \ldots, n,$ such that
$$ \Lambda_G(p_1, \ldots, p_n) \lesssim 1.$$
This problem will be called the boundedness problem for the operator $\Lambda_G$ on $\mathbb F_q^d.$

In this paper we shall mainly confine ourselves to the graphs $G$ which possess one of the following specific structures: $K_2$ (the graph with two vertices and one edge), $K_3$ (the cycle with three vertices and three edges), $K_3$ + tail (a kite), $P_2$ (the path of length $2$), $P_3$ (the path of length $3$), $C_4$ (the cycle with $4$ vertices and $4$ edges), $C_4$ + diagonal, $Y$-shape (a space station). It is worth noting that the study of the distribution of these graphs in a given set has been received much attention from people in Discrete Geometry and Geometric Measure Theory during the last decade, see for example \cite{DIOWZ, GIOW, GIT1, GIT2, KPV,LMH, OT, R}.
\begin{figure}[h!]
\centering
\begin{subfigure}{.4\textwidth}
  \centering
  \includegraphics[width=.3\linewidth]{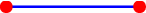}
  \caption{$G=K_2$}
  \label{fig:sub1}
\end{subfigure}%
\begin{subfigure}{.4\textwidth}
  \centering
  \includegraphics[width=.3\linewidth]{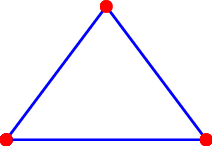}
  \caption{$G=K_3$}
  \label{fig:sub2}
\end{subfigure}
\label{fig:test}
\centering
\begin{subfigure}{.4\textwidth}
  \centering
  \includegraphics[width=.3\linewidth]{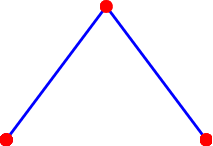}
  \caption{$G=P_2$}
\end{subfigure}%
\begin{subfigure}{.4\textwidth}
  \centering
  \includegraphics[width=.3\linewidth]{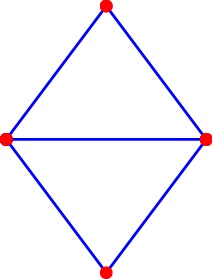}
  \caption{$G=C_4+$ diagonal}
   \label{fig:sub2K}
\end{subfigure}%
\label{fig:test}
\centering
\begin{subfigure}{.4\textwidth}
  \centering
  \includegraphics[width=.3\linewidth]{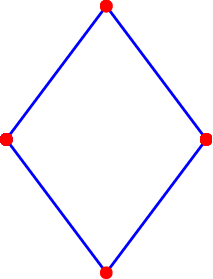}
  \caption{$G=C_4$}\label{fig:C4}
\end{subfigure}
\begin{subfigure}{.4\textwidth}
  \centering
  \includegraphics[width=.3\linewidth]{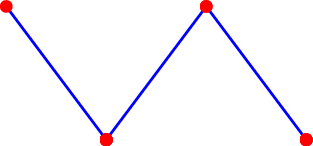}
  \caption{$G=P_3$}
\end{subfigure}
\label{fig:test}
\centering
\begin{subfigure}{.4\textwidth}
  \centering
  \includegraphics[width=.3\linewidth]{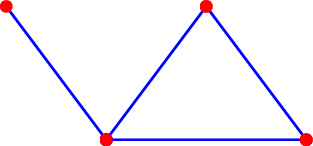}
  \caption{$G=K_3+$ tail}
  \label{fig:sub2KK}
\end{subfigure}
\begin{subfigure}{.4\textwidth}
  \centering
  \includegraphics[width=.3\linewidth]{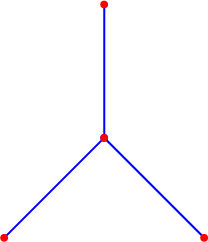}
  \caption{$G=Y$-shape}
  \label{fig:YS}
\end{subfigure}
\label{fig:test}
\end{figure}

When the graph $G$ is the $K_2,$  the complete answer to the boundedness problem will be given in all dimensions. To deduce the result, we will invoke the spherical averaging estimates over finite fields (see Theorem \ref{GenRL}).

When the number of the vertices of the graph $G$ is $3$ or $4,$  we will obtain reasonably good boundedness results in two dimensions. In particular, in the case when the degree of each vertex is  at least two ($K_3, C_4+$ a diagonal, and $C_4$),  we shall prove  sharp results (up to the endpoints) for the operators on $\mathbb F_q^2$ (see Theorems \ref{Best2D}, \ref{KiteK}, and \ref{Thm7.6K}). For three and higher dimensions, the boundedness problem is not simple and we will  address partial results.

It is very natural to ask whether or not one can prove a general theorem that addresses all connected graphs on $n$ vertices. Unfortunately, such a result is beyond the scope of this paper. The main difficulties arise when the maximal degree is large or the edge set is dense, or if the graph contains a cycle or not. All of these issues will be illustrated in the proofs of our results.  

{\bf Notation:} Let $1\le p_i\le \infty $ and  $f_i, 1\le i\le n,$ be non-negative real-valued functions on $\mathbb F_q^d.$
\begin{itemize}
\item When $n=2$, we denote 
$ L(f_1, f_2):=\Lambda_G(f_1, f_2)$ and $L(p_1, p_2):=\Lambda_G(p_1, p_2).$
\end{itemize}

\begin{itemize}\item (Case of $n=3$) 
\begin{enumerate}
\item If $G$ is the graph $K_3$,  we denote
$\Delta(f_1, f_2, f_3):= \Lambda_G(f_1,f_2, f_3)$ and $\Delta(p_1, p_2, p_3):= \Lambda_G(p_1, p_2, p_3).$
\item  If $G$ is the graph $P_2$,  we denote
$\Lambda(f_1, f_2, f_3):= \Lambda_G(f_1, f_2, f_3)$ and $\Lambda(p_1, p_2, p_3):= \Lambda_G(p_1, p_2, p_3).$
\end{enumerate}
\end{itemize}

\begin{itemize}
\item (Case of $n=4$)
\begin{enumerate}
\item  If $G$ is the $C_4$ + diagonal, we denote
$\diamondsuit_t(f_1,f_2,f_3, f_4):=\Lambda_G(f_1, f_2, f_3, f_4)$ and\\
$\diamondsuit_t(p_1,p_2, p_3, p_4):=\Lambda_G(p_1, p_2, p_3, p_4).$

\item If $G$ is the $C_4$, we denote
$\Diamond(f_1,f_2, f_3, f_4):=\Lambda_G(f_1, f_2, f_3, f_4)$ and $\Diamond(p_1, p_2, p_3, p_4):=\Lambda_G(p_1, p_2, p_3, p_4).$

\item  If $G$ is the $P_3$, we denote
$\sqcap(f_1,f_2, f_3, f_4):=\Lambda_G(f_1, f_2, f_3, f_4)$ and 
$\sqcap(p_1, p_2, p_3, p_4):=\Lambda_G(p_1, p_2, p_3, p_4).$

\item If $G$ is the $K_3$ + tail (a kite), we denote
$\unlhd(f_1, f_2,f_3, f_4):=\Lambda_G(f_1, f_2, f_3, f_4)$ and 
$\unlhd(p_1, p_2, p_3, p_4):=\Lambda_G(p_1, p_2, p_3, p_4).$

\item If $G$ is the $Y$-shape (a space station), we denote
$Y(f_1, f_2,f_3, f_4):=\Lambda_G(f_1, f_2, f_3, f_4)$ and 
$Y(p_1, p_2, p_3, p_4):=\Lambda_G(p_1, p_2, p_3, p_4).$
\end{enumerate}
\end{itemize}

\begin{itemize}
\item We denote $\Lambda_G(p_1, \ldots, p_n)\lessapprox 1$ if the inequality (\ref{generalproblem}) holds true for all characteristic functions on $\mathbb F_q^d.$
\item By $\mathbb{F}_q^*$, we mean the set of all non-zero elements in $\mathbb{F}_q$. 
\item For $t\in \mathbb F_q^*$,   we denote by $S_t^{n-1}$ the sphere of radius $t$ centered at the origin in $\mathbb F_q^n:$
$$ S_t^{n-1}:=\{x\in \mathbb F_q^n: ||x||=t\}.$$
Unless otherwise specified in this paper, $d$ represents the general dimension of $\mathbb F_q^d, d\ge 2.$ 
When $n=d$, we write $S_t$ instead of $S_t^{d-1}$ for simplicity. 
\item We identify the set $S_t$ with its indicator function $1_{S_t}$, namely, $S_t(x)=1_{S_t}(x)$.
\item We write $\delta_0$ for the indicator function of the set of  the zero vector in $\mathbb F_q^d.$
\item For positive numbers $A, B>0,$  we write $A\lesssim B$ if $A\le C B$ for some constant $C>0$ independent of $q$, the size of the underlying finite field $\mathbb F_q.$  The notation $A\sim B$ means that $A\lesssim B$ and $B\lesssim A.$
\end{itemize}

We also study the boundedness relation between the operators associated with a graph $G$ and its subgraph $G'$ with $n$-vertices. Throughout the paper, we always assume that the graph $G$ and its subgraph $G'$ are connected ordered graphs with $|G|=|G'|$  in $\mathbb F_q^d,$ and two vertices $x,y$ in $G$ is connected if $||x-y||=t\ne 0.$

In Theorem \ref{mainthmS5}, we will see that  any exponents $1\le p_1, p_2, p_3 \le \infty$ with $\Delta(p_1, p_2, p_3)\lesssim 1$ satisfy that $\Lambda(p_1, p_2, p_3) \lesssim  1$. Notice that $P_2$ can be considered as a subgraph of $K_3,$ and  the operators $\Lambda$ and $\Delta$ are related to the graphs $P_2$ and $K_3,$ respectively. Hence, in view of Theorem \ref{mainthmS5}, one  may have a question that, ``Compared to a graph $G$,  does the operator associated with its subgraph yield less restricted mapping exponents?''. More precisely, one may pose the following question.

\begin{question} \label{PConj}
Suppose that $G'$ is a subgraph of the graph $G$ with $n$ vertices in $\mathbb F_q^d.$  Let  $1\le p_i\le \infty, 1\le i\le n.$ If $\Lambda_G(p_1, \ldots, p_n)\lesssim 1,$ is it true that $\Lambda_{G'}(p_1, \ldots, p_n) \lesssim 1?$
\end{question}

Somewhat surprisingly, the answer turns out to be no! When $G=K_3$ and $G'=P_2,$  the answer to Question \ref{PConj} is positive as Theorem \ref{mainthmS5} shows. However, it turns out that there exist a graph $G$ and its subgraph $G'$ yielding a negative answer, although the answers are positive for the most graphs which we consider in this paper. For example, the answer to Question \ref{PConj} is negative when $G$ is the $C_4$ + diagonal and $G'$ is the $C_4$ (see Theorem \ref{CExa}).

Since the general answer to Question \ref{PConj} is not always positive, we pose the following natural question.
\begin{problem}\label{P1K}
Find  general properties of the graph $G$ and its subgraph $G'$ which yield a positive answer to Question \ref{PConj}. 
\end{problem}
The main goal of this paper is to address a conjecture on this problem and to confirm it in two dimensions. To precisely state our conjecture on the problem, let us review the standard definition and notation for the minimal degree of a graph.
\begin{definition} 
The Minimum Degree of a graph $G$, denoted by $\delta(G)$, is defined as the degree of the vertex with the least number of edges incident to it.
\end{definition}

We propose the following conjecture which can be a solution of Problem \ref{P1K}.
\begin{conjecture} \label{PConjK} Let $G'$ be a subgraph of the graph $G$ in $\mathbb F_q^d, d\ge 2,$ with $n$ vertices, and let $1\le p_i\le \infty, 1\le i\le n.$ In addition, assume that 
\begin{equation}\label{mainConj} \min\left\{\delta(G), d\right\} > \delta(G').\end{equation}
Then if $\Lambda_G(p_1, \ldots, p_n)\lesssim 1$, we have  $\Lambda_{G'}(p_1, \ldots, p_n)\lesssim 1.$
\end{conjecture}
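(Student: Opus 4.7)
The plan is to prove the conjecture in the case $d=2$. The hypothesis $\min\{\delta(G),2\}>\delta(G')$ together with the fact that every connected graph on $n\ge 2$ vertices has minimum degree at least $1$ forces $\delta(G')=1$ and $\delta(G)\ge 2$. So the two-dimensional version of Conjecture \ref{PConjK} reduces to the following statement: whenever $G'\subseteq G$ are connected graphs on the same $n$ vertices with $G'$ having a leaf and $G$ leaf-free, the bound $\Lambda_G(p_1,\dots,p_n)\lesssim 1$ implies $\Lambda_{G'}(p_1,\dots,p_n)\lesssim 1$.

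I would proceed by induction on $|E(G)|-|E(G')|$, deleting extra edges of $G$ one at a time so as to arrive at $G'$ through a chain of connected intermediate graphs. The heart of the argument is the single-edge reduction: say $G=G'\cup\{v,w\}$, where $v$ is a leaf of $G'$ with unique neighbor $u\ne w$. Summing over the leaf variable $x^v$ in $\Lambda_{G'}$ produces the spherical average $Tf_v(x^u)=\sum_{x^v}S_t(x^v-x^u)f_v(x^v)$, which gives the identity
\[
\Lambda_{G'}(f_1,\dots,f_n)=\frac{\mathcal{N}(G'')}{\mathcal{N}(G')}\,\Lambda_{G''}\bigl(f_1,\dots,\widehat{f_v},\dots,f_u\cdot Tf_v,\dots,f_n\bigr),
\]
where $G''=G'\setminus\{v\}$ is a graph on $n-1$ vertices and $\widehat{f_v}$ denotes the omission of $f_v$. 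The analogous manipulation on $\Lambda_G$ collapses $x^v$ onto the two-sphere intersection $S_t(x^u)\cap S_t(x^w)$, which in $\mathbb F_q^2$ has cardinality at most $2$ whenever $x^u\ne x^w$. Combining this sparseness with the $L^p$-improving estimates for $T$ from Theorem \ref{GenRL} should let the $(p_1,\dots,p_n)$-bound on $\Lambda_G$ be transferred to $\Lambda_{G'}$ at the same exponents.

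I expect the main obstacle to be twofold. First, the inductive edge deletion must be organized so that every intermediate graph remains connected and retains a compatible leaf-structure, a delicate combinatorial problem when $G$ has many extra edges sharing endpoints at or near $v$. Second, the transfer is not a pointwise comparison of $\Lambda_G$ and $\Lambda_{G'}$: one is comparing distinct multilinear forms, so the normalization ratio $\mathcal{N}(G)/\mathcal{N}(G')$ must be tracked carefully through each step. The two-dimensional sphere incidence bound $|S_a(x)\cap S_b(y)|\le 2$ for $x\ne y$ is the essential geometric ingredient driving both the two-sphere collapse and the normalization count; in $\mathbb F_q^d$ with $d\ge 3$ the analogous intersection has size roughly $q^{d-2}$, which is precisely why the stronger hypothesis $d>\delta(G')$ appears in the conjecture.
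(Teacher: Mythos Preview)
Your proposal aims at something strictly stronger than what the paper establishes: the paper confirms Conjecture \ref{PConjK} only for $d=2$ and $n\in\{3,4\}$ (Theorem \ref{mainthm22}), and it does so by an entirely different, case-by-case method. For each concrete pair $(G,G')$ satisfying the hypothesis, the paper first extracts the necessary conditions on $(1/p_1,\dots,1/p_n)$ implied by $\Lambda_G(p_1,\dots,p_n)\lesssim 1$, by testing on indicators of spheres and Dirac masses; these cut out a convex polytope whose vertices are enumerated via Polymake. It then \emph{independently} proves sufficient conditions for $\Lambda_{G'}(p_1,\dots,p_n)\lesssim 1$ (built from the spherical averaging bounds of Theorem \ref{sharpA} and Lemma \ref{BALem}) and checks, vertex by vertex, that every extreme point of the $G$-polytope lies in the sufficient region for $G'$; interpolation closes each case. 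There is never a direct comparison of the forms $\Lambda_G$ and $\Lambda_{G'}$ --- the link is entirely through the exponent constraints.

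Your inductive edge-deletion scheme has a genuine gap precisely at the step you label ``the transfer.'' You correctly rewrite $\Lambda_{G'}$ as a form on $G''=G'\setminus\{v\}$ with $f_u$ replaced by $f_u\cdot Tf_v$, and you note that in $\Lambda_G$ the $x^v$-sum collapses onto a two-point intersection when $x^u\ne x^w$. But that collapse produces a quantity depending jointly on $x^u$ and $x^w$, not a product of one-variable functions, so $\Lambda_G$ does \emph{not} reduce to $\Lambda_{G''}$ applied to modified inputs. More importantly, even granting both reductions, you have no a priori bound on $\Lambda_{G''}$ at any exponents; the hypothesis gives you only $\Lambda_G\lesssim 1$, and the paper's Theorem \ref{CExa} shows that removing an edge can strictly shrink the boundedness region, so there is no monotonicity to invoke. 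The sentence ``combining this sparseness with the $L^p$-improving estimates for $T$ \dots\ should let the bound be transferred'' is, in effect, the whole content of the conjecture, and your outline supplies no mechanism for it. The inductive structure has a second obstruction you already flag: any intermediate graph obtained by deleting a single edge from $G$ will typically have minimum degree $1$, so neither the conjecture's hypothesis nor your base-case argument applies at that stage.
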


Note that the condition \eqref{mainConj} in Conjecture \ref{PConjK} is equivalent to the following:
\begin{equation}\label{EquiCon} (i)~ \delta(G)>\delta(G') \quad \mbox{and} \quad (ii) ~d> \delta(G').\end{equation}
We have some comments and further questions below, regarding the above conjecture and our main theorems which we will state and prove in the body of this paper.
\begin{itemize}
\item 
Our results in this paper confirm Conjecture \ref{PConjK}, possibly up to endpoints, for all graphs $G$ and their subgraphs $G'$ on $n=3, 4$ vertices in $\mathbb F_q^2$ (see Theorem \ref{mainthm22}). Also note that when $d=2$ and $n=3, 4$, the condition \eqref{mainConj} is equivalent to the first condition $(i)$ in \eqref{EquiCon} since $\delta(G)\le 2$ (see the figures above).
\item It will be shown that the conclusion of Conjecture \ref{PConjK} cannot be reversed at least for $n=3, 4$ in two dimensions (see Remark \ref{Remark5.6} for $n=3$, and see Remarks \ref{remark8.10}, \ref{remark8.11K}, \ref{ReRk}, \ref{remark10.6K} for $n=4$.)
\end{itemize}

It is worth investigating whether the key hypothesis \eqref{mainConj} of Conjecture \ref{PConjK} can be relaxed.
\begin{itemize}
\item  The conclusion of Conjecture \ref{PConjK} does not hold in general if the $ >$ in the assumption \eqref{mainConj} is replaced by $\ge$. To see this,  consider  $G=C_4+$diagonal and $G'=C_4$ on 4 vertices in $\mathbb F_q^d,$ with $d=2.$ It is obvious that $G'$ is a subgraph of $G$ and $\min\{\delta(G), d\}=\delta(G')=2.$ However,  Theorem \ref{CExa} (ii) implies that the conclusion of Conjecture \ref{PConjK} is not true. 
\item We are not sure what can we say about the conclusion of Conjecture \ref{PConjK} if the main hypothesis \eqref{mainConj} of Conjecture \ref{PConjK} is relaxed by the second one of the conditions \eqref{EquiCon}. To be precise, when \eqref{mainConj} is replaced by the second statement of \eqref{EquiCon}, that is, $d> \delta(G'),$ we do not have a definitive answer even for $n=4$ in $\mathbb F_q^2.$ For instance, let $G=Y$-shape and $G'=K_3+$ a tail on $\mathbb F_q^2.$  Then it is clear that $d=2>\delta(G')=1$ and $1=\delta(G)=\delta(G')$ and so this provides an example that does not satisfy the assumption \eqref{mainConj} of Conjecture \ref{PConjK} but satisfy the second statement of \eqref{EquiCon}. Unfortunately,  in this paper we have not found any inclusive bounedness relations between the operators corresponding to such graphs.  In order to exclude this uncertain case, both conditions in \eqref{EquiCon} were taken as the hypothesis for Conjecture \ref{PConjK}, namely the condition \eqref{mainConj}.
\end{itemize}

The rest of this paper is organized as follows: In Section $2$, we recall known results on the spherical averaging operator, which functions as a fundamental tool to prove our theorems. Sections $3$ to $10$ are devoted to the presentation and proofs of our main results associated to the graphs mentioned above. The appendix contains some technical lemmas on the number of intersection points of two spheres in $\mathbb{F}_q^d$. 

\vskip.25in 

\section{The spherical averaging problem}
In the finite field setting, Carbery, Stones, and Wiright \cite{CSW08} initially formulated and studied the averaging problem over the varieties defined by vector valued polynomials. This problem for general varieties was studied by Chun-Yen Shen and the third listed author  \cite{KS13}. Here, we introduce the standard results on the averaging problem over the spheres. We adopt the notation in \cite{KS13}. 

Let $dx$ be the normalizing counting measure on $\mathbb F_q^d.$
For each non-zero $t$,  we endow the sphere $S_t$ with the normalizing surface measure $d\sigma_t.$ 
We recall that 
$$ d\sigma_t(x)= \frac{q^d}{|S_t|} 1_{S_t}(x) dx$$
so that we can identify the measure $d\sigma_t$ with the function $\frac{q^d}{|S_t|} 1_{S_t}$ on $\mathbb F_q^d.$

The spherical averaging operator  $A_{S_t}$ is defined by
\begin{equation}\label{DefA} A_{S_t}f(x)=f\ast d\sigma_t(x)=\int_{S_t} f(x-y) d\sigma_t(y) =\frac{1}{|S_t|} \sum_{y\in S_t} f(x-y),\end{equation}
where  $f$ is a function on $\mathbb F_q^d.$ By a change of variables, we also have
\begin{equation}\label{AAdef} A_{S_t}f(x)=\frac{1}{|S_t|} \sum_{y\in \mathbb F_q^d} S_t(x-y) f(y). \end{equation}

For $1\le p, r\le \infty,$  we define $A_{S_t}(p\to r)$ to be the smallest number such that the averaging estimate
\begin{equation}\label{AEK} ||f\ast d\sigma_t||_{L^r(\mathbb F_q^d, dx)} \le A_{S_t}(p\to r) ||f||_{L^p(\mathbb F_q^d, dx)}\end{equation}
holds for all test functions $f$ on $\mathbb F_q^d.$ 
\begin{problem} [Spherical averaging problem] Determine all exponents $1\le p, r\le \infty$ such that 
$$ A_{S_t}(p\to r)\lesssim 1.$$
\end{problem}
\begin{notation}
From now on, we simply write $A$ for the spherical averaging operator $A_{S_t}.$ 
\end{notation}

By testing  \eqref{AEK} with $f=\delta_0$ and by using the duality of the averaging operator, it is not hard to notice that 
the necessary conditions for the boundedness of $A(p\to r)$ are as follows:
$ (1/p, 1/r)$ is contained in the convex hull of points $(0,0), (0, 1),  (1,1), $ and $(\frac{d}{d+1},  \frac{1}{d+1} ).$

Using the Fourier decay estimate on $S_t$ and its cardinality,  it can be shown that  these necessary conditions are sufficient. For the reader's convenience, we give a detail proof although the argument is standard, as is well known in the literature such as \cite{CSW08, KS13}.

\begin{theorem}\label{sharpA}  Let $ 1\le p, r\le \infty$ be numbers such that $(1/p, 1/r)$ lies on the convex hull of points $(0,0), (0, 1),  (1,1), $ and $(\frac{d}{d+1},  \frac{1}{d+1} ).$ Then we have $ A(p\to r) \lesssim 1.$
\end{theorem}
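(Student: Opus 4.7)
The plan is to verify the bound at the four vertices of the convex hull and then invoke Riesz--Thorin to fill in the interior. The vertices are $(0,0),(0,1),(1,1)$ and the Tomas--Stein type corner $(d/(d+1),1/(d+1))$, corresponding to $(p,r)=(\infty,\infty),(\infty,1),(1,1)$ and $((d+1)/d,d+1)$ respectively. Three of these are elementary: because $d\sigma_t$ is a probability measure (of total mass $1$ with respect to normalizing counting measure), Young's inequality yields $\|Af\|_\infty\le\|f\|_\infty$ and $\|Af\|_1\le\|f\|_1$, and since $\|g\|_1\le\|g\|_\infty$ with respect to the normalized measure, the vertex $(0,1)$ follows from the $(\infty,\infty)$ bound.

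The only nontrivial vertex is the critical endpoint, and here the idea is to separate the zeroth Fourier mode. Writing $Af=A_0f+A_1f$, where $A_0f$ denotes the constant function equal to $\widehat{f}(0)$ (i.e., the mean of $f$) and $A_1$ has Fourier multiplier $\widehat{d\sigma_t}(m)\mathbf{1}_{m\ne 0}$, one has $\|A_0f\|_r=|\widehat{f}(0)|=\|f\|_1\le\|f\|_p$ for every $1\le p,r\le\infty$, so $A_0$ is harmless at every $(p,r)$. For the piece $A_1$, two bounds suffice:
\begin{equation*}
\|A_1f\|_2\;\lesssim\;q^{-(d-1)/2}\,\|f\|_2,\qquad \|A_1f\|_\infty\;\lesssim\;q\,\|f\|_1.
\end{equation*}
The first is Plancherel combined with the classical Fourier decay $|\widehat{d\sigma_t}(m)|\lesssim q^{-(d-1)/2}$ for $m\ne 0$, obtained from Gauss sum estimates for the sphere. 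The second is Young's inequality together with the pointwise bound $\|d\sigma_t-1\|_\infty\lesssim q$, which follows from $d\sigma_t(x)=q^d/|S_t|\sim q$ on $S_t$ and $|S_t|\sim q^{d-1}$.

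Riesz--Thorin interpolation between the pairs $(1,\infty)$ and $(2,2)$ with parameter $\theta=2/(d+1)$ lands at $(1/p,1/r)=(d/(d+1),1/(d+1))$, and the resulting operator norm is
\begin{equation*}
\lesssim q^{1-\theta}\cdot q^{-(d-1)\theta/2}\;=\;q^{(d-1)/(d+1)\,-\,(d-1)/(d+1)}\;=\;1,
\end{equation*}
so the exponents cancel exactly. Combining $A_0$ and $A_1$ gives $A((d+1)/d\to d+1)\lesssim 1$, and a second round of Riesz--Thorin over the four corner estimates produces the full convex hull.

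The main obstacle is not computational but rather conceptual: the naive use of Plancherel fails because $\widehat{d\sigma_t}(0)=1$ saturates $L^2\to L^2$ at scale $1$ rather than $q^{-(d-1)/2}$, so one must peel off the zero mode before exploiting Fourier decay. The numerical balance $\theta=2/(d+1)$ is precisely what makes $q^{1-\theta}$ and $q^{-(d-1)\theta/2}$ cancel, which is the finite-field reflection of the Euclidean Tomas--Stein endpoint; once that is observed, the rest is bookkeeping.
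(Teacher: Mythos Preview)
Your proof is correct and follows essentially the same route as the paper's: both handle the three trivial vertices via Young's inequality and norm nesting, and for the critical endpoint both subtract off the zero Fourier mode (your $A_0$ is exactly the paper's $f\ast 1$, and your $A_1$ is $f\ast\widehat K$ with $K=(d\sigma_t)^\vee-\delta_0$), then interpolate the $L^2\to L^2$ Plancherel/decay bound against the $L^1\to L^\infty$ Young bound with $\theta=2/(d+1)$. The only difference is notational.
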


\begin{proof}
Since both $d\sigma_t$ and $dx$ have total mass $1,$  it follows from Young's inequality for  convolution functions that  if $1\leq r \leq p\leq \infty,$ then
\begin{equation}\label{K1} \|f\ast d\sigma_t\|_{L^r({\mathbb F_{q}^d, dx})} \leq \|f\|_{L^p({\mathbb F_{q}^d, dx})}.\end{equation}
We notice that these results do not hold for the Euclidean Averaging problem.

By the interpolation and  the duality,  we only need to establish the following critical estimate:
$$ A\left(\frac{d+1}{d}\to d+1\right)\lesssim 1.$$
It is well known that for non-zero $t$,  
$$|(d\sigma_t)^\vee(m)|:=\left|\frac{1}{|S_t|} \sum_{x\in S_t} \chi(m\cdot x)\right|\lesssim q^{-\frac{(d-1)}{2}}\quad \mbox{for all}~~ m\neq (0,\dots,0),$$
where $\chi$ denotes a non-trivial additive character of $\mathbb F_q$ (see the proof of Lemma 2.2 in \cite{IR07}). 

Since $|S_t|\sim q^{d-1},$ we complete the proof by combining this Fourier decay estimate with the following lemma.
\begin{lemma}\label{Carbery}
Let $d\sigma$ be the normalized surface measure on an variety $S$ in $\mathbb F_q^d$  with $|S|\sim q^{d-1}.$
If $|(d\sigma)^\vee(m)|\lesssim  q^{-\frac{k}{2}}$ for all $m\in {\mathbb F_q^d}\setminus (0,\dots,0)$ and for some $k>0,$ then we have 
$$ A\left(\frac{k+2}{k+1}\to k+2\right)\lesssim 1.$$
\end{lemma}
\textbf{Proof of Lemma \ref{Carbery}} Define $K=(d\sigma)^\vee -\delta_0.$ 
We want to prove that for every function $f$ on $({\mathbb F_q^d},dx)$, 
$$\|f\ast d\sigma\|_{L^{k+2}({\mathbb F_q^d},dx)}\lesssim \|f\|_{L^{\frac{k+2}{k+1}}({\mathbb F_q^d},dx)},$$
where $dx$ is the normalized counting measure on $\mathbb F_q^d.$

Since $d\sigma= \widehat{K}+\widehat{\delta_0}= \widehat{K}+1$ and $\|f\ast 1\|_{L^{k+2}({\mathbb F_q^d},dx)}\lesssim \|f\|_{L^{\frac{k+2}{k+1}}({\mathbb F_q^d},dx)}$, it suffices to show that for every $f$ on $({\mathbb F_q^d},dx),$
\begin{equation}
\|f\ast \widehat{K}\|_{L^{k+2}({\mathbb F_q^d},dx)}\lesssim \|f\|_{L^{\frac{k+2}{k+1}}({\mathbb F_q^d},dx)}.
\end{equation}
Notice that this inequality can be obtained by interpolating the following two estimates:
\begin{equation}\label{first1}
\|f\ast \widehat{K}\|_{L^2({\mathbb F_q^d},dx)}\lesssim q^{-\frac{k}{2}}\|f\|_{L^2({\mathbb F_q^d},dx)},
\end{equation}
and 
\begin{equation}\label{second2}
\|f\ast \widehat{K}\|_{L^{\infty}({\mathbb F_q^d},dx)}\lesssim  q \|f\|_{L^1({\mathbb F_q^d},dx)}.
\end{equation}
It remains to prove the above two estimates.
The inequality (\ref{first1}) follows from  the Plancherel theorem, the size assumption of $|(d\sigma)^\vee|$, and the definition of $K.$
On the other hand, the inequality (\ref{second2}) follows from Young's inequality and the observation that $\|\widehat{K}\|_{L^\infty({\mathbb F_q^d},dx)}\lesssim  q.$  
\end{proof}

The boundary points of the convex hull play an important role in the application of Theorem \ref{sharpA}. More precisely, we will apply the following result.
\begin{lemma}\label{BALem}
Let $1\le p, r\le \infty$ and let $A$ denote the averaging operator over the sphere $S_t, t\ne 0,$ in $\mathbb F_q^d, d\ge 2.$
\begin{enumerate}
\item [(i)] If $1\le \frac{1}{p} \le \frac{d}{d+1}$ and $ \frac{1}{r}= \frac{1}{dp}$, then  $A(p\to r)\lesssim 1.$
\item [(ii)] If $ \frac{d}{d+1} \le  \frac{1}{p} \le 1$ and $ \frac{1}{r}=\frac{d}{p}-d+1$, then $A(p\to r)\lesssim 1.$
\end{enumerate}
\end{lemma}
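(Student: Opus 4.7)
The plan is to recognize Lemma \ref{BALem} as an immediate corollary of Theorem \ref{sharpA}: the two relations $\frac{1}{r}=\frac{1}{dp}$ and $\frac{1}{r}=\frac{d}{p}-d+1$ are exactly the equations of the two ``non-trivial'' edges of the convex hull appearing in the hypothesis of Theorem \ref{sharpA} (the edge meeting the critical point $(\frac{d}{d+1},\frac{1}{d+1})$ on either side). Therefore the whole task is to check this algebraic identification and then invoke Theorem \ref{sharpA}. (I also suspect the lower bound $1\le\frac{1}{p}$ written in (i) is a typo for $0\le\frac{1}{p}$, since $\frac{d}{d+1}<1$.)

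For part (i), the line $\frac{1}{r}=\frac{1}{dp}$ passes through the origin with slope $\frac{1}{d}$. Evaluating at the endpoints of the stipulated range of $\frac{1}{p}$, one has $\frac{1}{p}=0\Rightarrow \frac{1}{r}=0$ and $\frac{1}{p}=\frac{d}{d+1}\Rightarrow \frac{1}{r}=\frac{1}{d+1}$. Hence the locus in question is precisely the segment joining the convex-hull vertices $(0,0)$ and $(\frac{d}{d+1},\frac{1}{d+1})$, which lies on the boundary of the convex hull in Theorem \ref{sharpA}. Applying that theorem yields $A(p\to r)\lesssim 1$.

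For part (ii), the line $\frac{1}{r}=\frac{d}{p}-d+1$ has slope $d$. At $\frac{1}{p}=1$ we get $\frac{1}{r}=1$, giving the vertex $(1,1)$; at $\frac{1}{p}=\frac{d}{d+1}$ we get
\[
\frac{1}{r}=\frac{d^2}{d+1}-d+1=\frac{d^2-d(d+1)+(d+1)}{d+1}=\frac{1}{d+1},
\]
giving the critical vertex $(\frac{d}{d+1},\frac{1}{d+1})$. Thus this locus is exactly the segment joining $(\frac{d}{d+1},\frac{1}{d+1})$ to $(1,1)$, again on the boundary of the convex hull, so Theorem \ref{sharpA} gives $A(p\to r)\lesssim 1$.

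There is really no obstacle here; the lemma is a convenient bookkeeping restatement of Theorem \ref{sharpA} along the two sharp edges of the admissible region, isolating the exponent relations that will be substituted into later estimates. The only point requiring mild care is the correct identification of the line slopes $1/d$ and $d$ with the two segments meeting at the critical endpoint $(\frac{d}{d+1},\frac{1}{d+1})$, which the computation above verifies.
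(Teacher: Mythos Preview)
Your proof is correct and matches the paper's own argument essentially verbatim: both identify the two relations as the equations of the boundary segments $L_1$ (from $(0,0)$ to $(\frac{d}{d+1},\frac{1}{d+1})$) and $L_2$ (from $(\frac{d}{d+1},\frac{1}{d+1})$ to $(1,1)$) and then invoke Theorem~\ref{sharpA}. Your observation that ``$1\le \frac{1}{p}$'' in (i) should read ``$0\le \frac{1}{p}$'' is also correct, as the paper's proof itself uses the range $0\le 1/p\le d/(d+1)$.
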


\begin{proof} Let $L_1$ and $L_2$ denote the line segments connecting the two points $(0, 0),  (d/(d+1),  ~1/(d+1) )$ and 
 the two points $(d/(d+1),  ~1/(d+1) ),  (1,1),$ respectively. 
Theorem \ref{sharpA} implies that for any point $(1/p, 1/r)$ lying on either $L_1$ or $L_2,$ we have  $A(p\to r) \lesssim 1.$  Hence, the first part of the lemma  follows
since $L_1=\{(1/p, 1/r):  0\le 1/p \le d/(d+1),  ~ 1/r= 1/(dp)\}.$ 
Similarly,  since $L_2=\{(1/p, 1/r):  d/(d+1)\le 1/p\le 1, ~   1/r= d/p-d+1\},$  the second part of the lemma is obtained, as required.
\end{proof}

\section{Sharp mapping properties for the $K_2$-Operator }
In this section, we provide the sharp mapping properties of the operator associated to the graph $K_2.$  To this end, as described below, we relate the problem to the spherical averaging problem. 

As usual, the inner product of the non-negative real-valued functions $f, g$ on $\mathbb F_q^d$  is defined as
$$ <f, g>:=||f g||_1=\frac{1}{q^d} \sum_{x\in \mathbb F_q^d} f(x) g(x).$$
Let $t\in \mathbb F_q^*$ and let $f_1,f_2$ be non-negative real-valued functions on $\mathbb F_q^d, d\ge 2.$ Then the operator $L$ associated with the graph $K_2$ is defined as
\begin{equation}\label{lineDe} L(f_1,f_2)= \frac{1}{q^d|S_t|} \sum_{x^1, x^2\in \mathbb F_q^d} S_t(x^1-x^2) f_1(x^1) f_2(x^2),\end{equation}
and it is called the $K_2$-Operator on $\mathbb F_q^d.$ Here, the quantity $q^d|S_t|$ represents the normalizing factor $\mathcal{N}(G)$ in \eqref{generalform} when the graph $G$ is $K_2.$

By a change of variables, we can write
\begin{equation}\label{CVA} L(f_1,f_2) =  \frac{1}{q^d} \sum_{x^1\in \mathbb F_q^d} f_1(x^1) \left( \frac{1}{|S_t|} \sum_{x^2\in\mathbb F_q^d} f_2(x^1-x^2) S_t(x^2)\right)=<f_1, Af_2>,\end{equation}
where $A$ denotes the averaging operator related to the sphere $S_t.$ 
Likewise, we also obtain that $ L(f_1, f_2) = < Af_1,  f_2>.$

In a usual way, we define the operator norm of the $K_2$-Operator $L$ as follows.
\begin{definition} Let $1\le p_1, p_2\le \infty.$ 
We define $L(p_1, p_2)$ as the smallest constant such that   the following estimate holds for all functions $f_1, f_2$ on $\mathbb F_q^d:$
$$ L(f_1,f_2) \le L(p_1, p_2) ||f_1||_{p_1} ||f_2||_{p_2}.$$
\end{definition} 
The main goal of this section is to address all numbers $1\le p_1, p_2\le \infty$ satisfying  $L(p_1, p_2)\lesssim 1.$

We begin with the necessary conditions for the boundedness of the $K_2$-Operator $L$ on $\mathbb F_q^d.$
\begin{proposition}\label{NesLam}
Let $1\le p_1, p_2\le\infty.$ 
Suppose that $ L(p_1, p_2) \lesssim 1$. Then we have
$$\frac{1}{p_1} +  \frac{d}{p_2}\le d  \quad \mbox{and}\quad  \frac{d}{p_1} +  \frac{1}{p_2}\le d .$$
\end{proposition}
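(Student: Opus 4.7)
The plan is to establish each of the two necessary conditions by substituting a carefully chosen pair of test functions into the defining inequality $L(f_1,f_2)\le L(p_1,p_2)\|f_1\|_{p_1}\|f_2\|_{p_2}$ and comparing the resulting powers of $q$. Since $K(x^1,x^2)=S_t(x^1-x^2)$ is symmetric in $x^1,x^2$, the form $L$ is symmetric in $f_1,f_2$, so the two inequalities will arise from the same calculation with the roles of the two test functions interchanged.

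For the first inequality $\tfrac{d}{p_1}+\tfrac{1}{p_2}\le d$, I would take $f_1=\delta_0$ and $f_2=S_t$. The factor $\delta_0(x^1)$ collapses the outer sum to $x^1=0$; then, since $\|\cdot\|$ is even one has $S_t=-S_t$, so the remaining sum is $\sum_{x^2}S_t(-x^2)S_t(x^2)=|S_t|$, and the normalization $\tfrac{1}{q^d|S_t|}$ produces $L(\delta_0,S_t)=q^{-d}$. The norms, using $|S_t|\sim q^{d-1}$ for $t\in\mathbb{F}_q^*$, evaluate to $\|\delta_0\|_{p_1}=q^{-d/p_1}$ and $\|S_t\|_{p_2}=(q^{-d}|S_t|)^{1/p_2}\sim q^{-1/p_2}$. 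Plugging into the hypothesis $L(p_1,p_2)\lesssim 1$ yields
$$ q^{-d}\ \lesssim\ q^{-d/p_1-1/p_2}, $$
and since this must hold uniformly in $q$, one reads off $\tfrac{d}{p_1}+\tfrac{1}{p_2}\le d$.

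For the second inequality $\tfrac{1}{p_1}+\tfrac{d}{p_2}\le d$, I would swap the test functions, taking $f_1=S_t$ and $f_2=\delta_0$. By the symmetry of $K$ the computation is identical up to relabelling, giving $L(S_t,\delta_0)=q^{-d}$ together with $\|S_t\|_{p_1}\sim q^{-1/p_1}$ and $\|\delta_0\|_{p_2}=q^{-d/p_2}$, whence $\tfrac{1}{p_1}+\tfrac{d}{p_2}\le d$.

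I do not anticipate a substantive obstacle. The only non-trivial input is the standard cardinality estimate $|S_t|\sim q^{d-1}$ for $t\ne 0$, which already underlies the Fourier-decay bound invoked in Theorem \ref{sharpA}; the rest of the argument is bookkeeping of the normalizing constants. A minor point to verify is that the conventions $d/p=0$ and $q^{-d/p}=1$ at $p=\infty$ make the two test calculations consistent at the endpoints, which they do.
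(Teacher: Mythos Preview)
Your proposal is correct and follows essentially the same approach as the paper: both test the inequality with the pair $(\delta_0, 1_{S_t})$ (and its swap), compute $L=q^{-d}$, and compare with $\|f_1\|_{p_1}\|f_2\|_{p_2}\sim q^{-d/p_1-1/p_2}$. The only cosmetic difference is that the paper invokes the symmetry $L(p_1,p_2)\lesssim 1 \iff L(p_2,p_1)\lesssim 1$ up front to reduce to a single inequality, whereas you handle the two inequalities by explicitly swapping the test functions.
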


\begin{proof} By symmetry, it is clear that $ L(p_1, p_2)\lesssim 1 \iff  L(p_2, p_1)\lesssim 1.$
Hence, it suffices to prove the first listed conclusion that $\frac{1}{p_1} +  \frac{d}{p_2}\le d.$

 From \eqref{CVA} and our assumption that $ L(p_1, p_2) \lesssim 1,$ we must have
$$ L(f_1,f_2) :=  \frac{1}{q^d} \sum_{x^1\in \mathbb F_q^d} f_1(x^1) \left( \frac{1}{|S_t|} \sum_{x^2\in \mathbb F_q^d} f_2(x^1-x^2) S_t(x^2)\right) \lesssim ||f_1||_{p_1} ||f_2||_{p_2}.$$
 We test this inequality with $f_1=1_{S_t}$ and $f_2=\delta_0.$  Then
 $$ L(f_1, f_2)= \frac{1}{q^d} \sum_{x^1\in S_t} \frac{1}{|S_t|} =q^{-d},$$
 and
 $$ ||f_1||_{p_1} ||f_2||_{p_2} \sim  (q^{-d} |S_t|)^{1/p_1}(q^{-d} )^{1/p_2} \sim q^{-\frac{1}{p_1}-\frac{d}{p_2}}.$$
 By a direct comparison, we get the desired result.
 \end{proof}
 
 \begin{remark}  For $1\le p_1, p_2\le \infty, $ one can note that
 $\frac{1}{p_1} +  \frac{d}{p_2}\le d$ and  $\frac{d}{p_1} +  \frac{1}{p_2}\le d $ if and only if  $(1/p_1, 1/p_2)\in [0, 1]\times [0, 1]$ lies on the convex hull of points $(0, 0), (0, 1),  (\frac{d}{d+1},  \frac{d}{d+1}), (1,0).$
 \end{remark}
 
Let us move to the sufficient conditions on the exponents $1\le p_1, p_2\le \infty$ such that $L(p_1, p_2) \lesssim 1.$
We will invoke the following lemma which demonstrates that the boundedness of the $K_2$-Operator $L$ can be directly determined by the spherical averaging estimates over the finite fields.

For $1\le p\le \infty$,  we denote by $p'$ the H\"older conjugate of $p,$ namely,  $1/p+1/p'=1.$
\begin{lemma} \label{distancelem}
Suppose that  $A(p_2 \to p_1')\lesssim 1$ with $1\le p_1, p_2 \le \infty.$ Then we have
$$ L(p_1, p_2) \lesssim 1.$$
\end{lemma}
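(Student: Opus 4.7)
The plan is to exploit the identity
\[ L(f_1, f_2) = \langle f_1, A f_2 \rangle \]
that was derived in \eqref{CVA}, and then split the right-hand side into a norm of $f_1$ and a norm of $Af_2$ via H\"older's inequality, so that the averaging hypothesis $A(p_2 \to p_1') \lesssim 1$ can be applied directly to the second factor.

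More precisely, I would first recall that $\langle f_1, Af_2 \rangle = \|f_1 \cdot Af_2\|_1$ where the $L^1$ norm is taken against the normalizing counting measure $dx$ on $\mathbb F_q^d$. Since $f_1$ and $Af_2$ are non-negative, H\"older's inequality with the dual pair $(p_1, p_1')$ yields
\[ \langle f_1, Af_2 \rangle \le \|f_1\|_{p_1} \, \|Af_2\|_{p_1'}. \]
The hypothesis $A(p_2 \to p_1') \lesssim 1$ then gives $\|Af_2\|_{p_1'} \lesssim \|f_2\|_{p_2}$, and combining these bounds produces
\[ L(f_1, f_2) \lesssim \|f_1\|_{p_1} \|f_2\|_{p_2}, \]
which is exactly the statement $L(p_1, p_2) \lesssim 1$.

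There is essentially no obstacle in this argument; the only things to be mindful of are (i) the endpoint cases $p_1 = 1$ or $p_1 = \infty$, which correspond to trivial H\"older pairings and still work without modification, and (ii) ensuring that all norms are computed with respect to the normalizing counting measure on $\mathbb F_q^d$, which is already the convention fixed in the preliminaries of the paper and under which the averaging estimate \eqref{AEK} was stated. No new tools beyond \eqref{CVA} and H\"older's inequality are required, so the proof should be only a few lines.
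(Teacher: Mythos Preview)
Your proposal is correct and matches the paper's own proof essentially line for line: the paper also invokes the identity $L(f_1,f_2)=\langle f_1, Af_2\rangle$ from \eqref{CVA}, applies H\"older's inequality with the pair $(p_1,p_1')$, and then uses the averaging hypothesis $A(p_2\to p_1')\lesssim 1$ to conclude. Your additional remarks about endpoint exponents and the normalizing measure are fine but not needed for the argument.
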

\begin{proof}  Since $L(f_1, f_2) = <f_1, Af_2>,$  it follows by H\"older's inequality that 
$$ L(f_1, f_2) \le ||f_1||_{p_1} ||Af||_{p_1'} \lesssim ||f_1||_{p_1} ||f_2||_{p_2},$$
where  the averaging assumption was used in the last inequality.
\end{proof}

We now show that the necessary conditions are in fact sufficient conditions for $L(p_1, p_2) \lesssim 1.$
\begin{theorem}[Sharp boundedness result for the $K_2$-Operator on $\mathbb F_q^d$] \label{GenRL}Let $1\le p_1, p_2\le \infty.$  Then we have
$$ L(p_1, p_2)\lesssim 1 \quad \mbox{if and only if}\quad \frac{1}{p_1} +  \frac{d}{p_2}\le d,  \quad  \frac{d}{p_1} +  \frac{1}{p_2}\le d .$$
\end{theorem}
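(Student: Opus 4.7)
Necessity is already contained in Proposition~\ref{NesLam}, so only sufficiency remains. The plan is to run the problem through the spherical averaging theory developed in the previous section. The identity $L(f_1,f_2)=\langle f_1, Af_2\rangle$ coupled with H\"older's inequality (Lemma~\ref{distancelem}) reduces the desired bound $L(p_1,p_2)\lesssim 1$ to the averaging bound $A(p_2\to p_1')\lesssim 1$, where $p_1'$ is the H\"older conjugate of $p_1$. Thus it suffices to show that whenever $(1/p_1,1/p_2)$ satisfies the two hypothesized inequalities, the pair $(p_2,p_1')$ lies in the admissible range of the averaging operator.

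By Theorem~\ref{sharpA}, the averaging estimate $A(p_2\to p_1')\lesssim 1$ holds precisely when $(1/p_2,\,1/p_1')$ lies in the convex hull $\mathcal{C}$ of the four points $(0,0)$, $(0,1)$, $(1,1)$ and $(d/(d+1),\,1/(d+1))$. Hence the proof reduces to an affine-geometry verification: under the substitution $1/p_1'=1-1/p_1$, the condition $(1/p_2,\,1-1/p_1)\in\mathcal{C}$ should translate exactly into
\[
\frac{1}{p_1}+\frac{d}{p_2}\le d, \qquad \frac{d}{p_1}+\frac{1}{p_2}\le d.
\]
Writing $\mathcal{C}$ as the intersection of the four half-planes cut out by its edges, the two horizontal/vertical edges yield only the trivial bounds $1/p_2\ge 0$ and $1/p_1'\le 1$, while the edge from $(0,0)$ to $(d/(d+1),1/(d+1))$ (slope $1/d$) and the edge from $(d/(d+1),1/(d+1))$ to $(1,1)$ (slope $d$) give, after the substitution, precisely the two hypothesized inequalities; this is immediate from the slopes and can be double-checked by evaluating at the endpoints.

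There is no serious obstacle here: the argument is a clean combination of the duality identity $L(f_1,f_2)=\langle f_1,Af_2\rangle$ with the sharp spherical averaging bound of Theorem~\ref{sharpA}, plus an elementary change of variables $p_1\mapsto p_1'$. The only place that requires even minor care is matching the two non-trivial facets of $\mathcal{C}$ with the correct inequality in the $(1/p_1,1/p_2)$ plane, but the correspondence is forced by the slopes of those edges. Once this geometric dictionary is in hand, combining Lemma~\ref{distancelem} and Theorem~\ref{sharpA} completes the proof.
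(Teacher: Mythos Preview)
Your argument is correct and uses the same two ingredients as the paper---Lemma~\ref{distancelem} and Theorem~\ref{sharpA}---but packages them slightly differently. The paper reduces by interpolation to the three critical endpoints $(1/p_1,1/p_2)=(0,1),(1,0),(d/(d+1),d/(d+1))$ and then checks each of these via Lemma~\ref{distancelem} and a corresponding averaging estimate; you instead perform the affine change of variables $(1/p_1,1/p_2)\mapsto(1/p_2,1-1/p_1)$ once and observe that it carries the hypothesized half-spaces exactly onto the non-trivial facets of the averaging region $\mathcal C$, so that Theorem~\ref{sharpA} applies on the whole range without a second interpolation. Your facet computation is correct: the edges $v=u/d$ and $v=du-d+1$ translate to $d/p_1+1/p_2\le d$ and $1/p_1+d/p_2\le d$ respectively, while the edges $u=0$ and $v=1$ give the trivial constraints $1/p_2\ge 0$ and $1/p_1\ge 0$. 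The two approaches are equivalent in content; yours is marginally more direct since it leverages the fact that Theorem~\ref{sharpA} already states the full convex range rather than just the vertex estimates.
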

\begin{proof}
By Proposition \ref{NesLam},  it will be enough to prove that $L(p_1, p_2)\lesssim 1$ for all $1\le p_1, p_2\le \infty$ satisfying  $$\frac{1}{p_1} +  \frac{d}{p_2}\le d,  \quad  \frac{d}{p_1} +  \frac{1}{p_2}\le d .$$
By the interpolation theorem and the nesting property of  the norm, it suffices to establish the estimates on  the critical end-points $(1/p_1, 1/p_2) \in [0, 1]\times [0,1]$, which are     
$ (0,1), (1,0),$ and $(d/(d+1), d/(d+1)).$ In other words,  it remains to prove the following estimates:
$$ L(\infty, 1)\lesssim 1,   ~L(1, \infty)\lesssim 1, ~ L\left(\frac{d+1}{d}, ~\frac{d+1}{d}\right)\lesssim 1.$$
Using Lemma \ref{distancelem},  matters are reduced to establishing the following averaging estimates:
$$ A(1\to 1)\lesssim 1,  ~~A(\infty\to \infty)\lesssim 1,  ~~ A\left(\frac{d+1}{d} \to d+1\right)\lesssim 1.$$
However, these averaging estimates  are clearly valid by Theorem \ref{sharpA}. Hence, the proof is complete. 
\end{proof}

The following result is a special case of Theorem \ref{GenRL}, but it is very useful in practice.
\begin{corollary} \label{LamRes} For any dimensions $d\ge 2, $ we have
$ L\left( \frac{d+1}{d},  \frac{d+1}{d}\right) \lesssim 1.$
\end{corollary}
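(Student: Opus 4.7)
The plan is to deduce this corollary as an immediate specialization of Theorem \ref{GenRL} (or equivalently, by invoking Lemma \ref{distancelem} together with the critical endpoint of the spherical averaging estimate from Theorem \ref{sharpA}). First, I would set $p_1 = p_2 = \frac{d+1}{d}$ and verify that both necessary conditions of Theorem \ref{GenRL} are met with equality:
\begin{equation*}
\frac{1}{p_1} + \frac{d}{p_2} = \frac{d}{d+1} + \frac{d \cdot d}{d+1} = \frac{d(1+d)}{d+1} = d,
\end{equation*}
and by symmetry the second inequality $\frac{d}{p_1} + \frac{1}{p_2} \le d$ also holds. Hence the corollary follows directly from Theorem \ref{GenRL}.

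Alternatively, and perhaps more transparently, I would give a short self-contained argument by applying Lemma \ref{distancelem} with $p_1 = p_2 = \frac{d+1}{d}$. The H\"older conjugate of $p_1 = \frac{d+1}{d}$ is $p_1' = d+1$, so the hypothesis of Lemma \ref{distancelem} becomes the critical averaging estimate
\begin{equation*}
A\!\left(\tfrac{d+1}{d} \to d+1\right) \lesssim 1.
\end{equation*}
This estimate is precisely the endpoint case established in Theorem \ref{sharpA} (obtained via Lemma \ref{Carbery} from the Fourier decay bound $|(d\sigma_t)^\vee(m)| \lesssim q^{-(d-1)/2}$ on $S_t$ together with $|S_t| \sim q^{d-1}$). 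Feeding this into Lemma \ref{distancelem} yields $L\!\left(\frac{d+1}{d}, \frac{d+1}{d}\right) \lesssim 1$.

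There is essentially no obstacle here, since all the heavy lifting has already been done in the proof of Theorem \ref{GenRL}; the corollary merely isolates the most symmetric critical vertex $(d/(d+1), d/(d+1))$ of the sharp boundedness region. The only thing worth emphasizing is why this particular endpoint deserves separate billing: it is the self-dual point where both exponents are equal, and it is the weakest estimate (in the sense of using the least information from either $f_1$ or $f_2$ alone) that still forces the full sharp range via interpolation with the trivial estimates $L(1,\infty) \lesssim 1$ and $L(\infty, 1) \lesssim 1$. This makes it the estimate most directly usable in applications throughout the remainder of the paper.
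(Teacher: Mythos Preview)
Your proposal is correct and your primary argument is essentially identical to the paper's own proof: set $p_1=p_2=\frac{d+1}{d}$, verify the two inequalities of Theorem \ref{GenRL} (with equality), and conclude. Your alternative via Lemma \ref{distancelem} and the critical averaging estimate $A\!\left(\tfrac{d+1}{d}\to d+1\right)\lesssim 1$ simply unpacks the proof of Theorem \ref{GenRL} at this endpoint, so it is not a genuinely different route.
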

\begin{proof}
Notice that if $p_1=p_2=\frac{d+1}{d}$, then it satisfies that $\frac{1}{p_1} +  \frac{d}{p_2}\le d$ and $ \frac{d}{p_1} +  \frac{1}{p_2}\le d.$ 
Hence, the statement follows immediately from Theorem \ref{GenRL}.
\end{proof}

\section{ Boundedness problem for the $K_3$-Operator}
 
Let $t\in \mathbb F_q^*.$  The operator $\Delta$ related to the graph $K_3$ can be defined as
\begin{equation} \label{KDeltatD} \Delta(f_1, f_2, f_3)= \frac{1}{q^d} \frac{1}{|S_t| |S_t^{d-2}|} \sum_{x^1, x^2, x^3\in \mathbb F_q^d} S_t(x^1-x^2) S_t(x^2-x^3) S_t(x^3-x^1) \prod_{i=1}^3 f_i(x^i) ,\end{equation}
where each $f_i, i=1,2,3,$ is a non-negative real-valued function on $\mathbb F_q^d,$ and the quantity $q^d|S_t| |S_t^{d-2}|$ stands for the normalizing factor $\mathcal{N}(G)$ in \eqref{generalform} when $G=K_3.$
We name the operator $\Delta$ as the $K_3$-Operator on $\mathbb F_q^d.$ 

\begin{definition} Let $1\le p_1, p_2, p_3\le \infty.$ 
We define $\Delta(p_1, p_2, p_3)$ as the best constant such that   the following estimate holds  for all non-negative real-valued functions $f_i, i=1,2,3,$ on $\mathbb F_q^d:$
$$ \Delta(f_1,f_2, f_3) \le \Delta(p_1, p_2, p_3) ||f_1||_{p_1} ||f_2||_{p_2} ||f_3||_{p_3}.$$
\end{definition} 

The purpose of this section is to find the numbers $1\le p_1, p_2, p_3\le \infty$ such that $\Delta(p_1, p_2, p_3)\lesssim 1.$ 
 
When the dimension $d$ is two, we will settle this problem up to the endpoint estimate. To this end, we relate our problem to the estimate of the Bilinear Averaging Operator (see \eqref{KdefB}) for which we establish the sharp bound.

On the other hand, as we shall see, in three and higher dimensions $d\ge 3,$ it is not easy to deduce the sharp results. However, when one of the exponents $p_1, p_2, p_3$ is $\infty$, we will able to obtain the optimal results. This will be done by applying Theorem \ref{GenRL}, the boundedness result for the $K_2$-Operator $L$ on $\mathbb F_q^d.$ 

We begin by deducing necessary conditions for our problem in $\mathbb F_q^d, d\ge 2.$ Recall that for $d=2,$ we pose an additional restriction that  $3\in \mathbb F_q$ is a square number. 
\begin{proposition} [Necessary conditions for the boundedness of $\Delta$]\label{NCD123}
Let $1\le p_1, p_2, p_3\le\infty.$ 
Suppose that $ \Delta(p_1, p_2, p_3) \lesssim 1$. Then we have
$$\frac{d}{p_1} +  \frac{1}{p_2}+ \frac{1}{p_3}\le d,  ~~\frac{1}{p_1} +  \frac{d}{p_2}+ \frac{1}{p_3}\le d,~~\frac{1}{p_1} +  \frac{1}{p_2}+ \frac{d}{p_3}\le d.$$
In particular, when $d = 2,$ it can be shown by Polymake\footnote{Polymake is software for the algorithmic treatment of convex polyhedra.}\cite{AGHJLP, GJ} that $(1/p_1, 1/p_2, 1/p_3)$ is contained in the convex hull of the points:  
$(0,0,1),$ $(0,1,0),$ $(2/3,2/3,0),$ $(1/2,1/2,1/2),$ $(2/3,0,2/3),$ $(1,0,0),$ $(0,0,0),$ $(0,2/3,2/3).$
\end{proposition}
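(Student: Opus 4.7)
The plan rests on the $S_3$-symmetry of the graph $K_3$: the trilinear form $\Delta(f_1,f_2,f_3)$ is invariant under every permutation of its arguments, so the three listed inequalities are permutations of one another, and it suffices to prove the first, namely $d/p_1+1/p_2+1/p_3\le d$.

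To obtain this, I would test the defining inequality \eqref{generalproblem} against the extremal choice $f_1=\delta_0$, $f_2=1_{S_t}$, $f_3=1_{S_t}$, in direct analogy with the proof of Proposition \ref{NesLam}. In \eqref{KDeltatD} the factor $f_1$ pins $x^1=0$, and then the kernels $S_t(-x^2)$ and $S_t(x^3)$ are automatically $1$ on the supports of $f_2$ and $f_3$, so the triple sum collapses to
$$\Delta(f_1,f_2,f_3)=\frac{N_0}{q^d\,|S_t|\,|S_t^{d-2}|},\qquad N_0:=\big|\{(x^2,x^3)\in S_t\times S_t:\|x^2-x^3\|=t\}\big|.$$
By translation invariance, $N_0$ is the total number of ordered $K_3$-configurations divided by $q^d$, hence $N_0\sim |S_t|\,|S_t^{d-2}|$, which gives $\Delta(f_1,f_2,f_3)\gtrsim q^{-d}$. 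On the right-hand side $\|f_1\|_{p_1}=q^{-d/p_1}$ and $\|f_i\|_{p_i}\sim q^{-1/p_i}$ for $i=2,3$, using $|S_t|\sim q^{d-1}$. Substituting into the hypothesis $\Delta(f_1,f_2,f_3)\lesssim \prod_i \|f_i\|_{p_i}$ and letting $q\to\infty$ yields the first inequality; the remaining two follow by permuting the roles of the $f_i$.

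For the two-dimensional assertion, the three inequalities specialize to $2/p_i+1/p_j+1/p_k\le 2$ for the cyclic permutations of $(i,j,k)$, which together with the box constraints $0\le 1/p_\ell\le 1$ cut out a convex polytope in $[0,1]^3$; the Polymake citation serves only to verify that the vertex enumeration of this polytope produces exactly the eight points listed.

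The main obstacle in this plan is the lower bound $N_0\gtrsim q^{2d-3}$ that drives $\Delta(f_1,f_2,f_3)\gtrsim q^{-d}$. For $d\ge 3$ it follows from the sphere-intersection estimates collected in the appendix, which show that two spheres of radius $t$ in $\mathbb{F}_q^d$ whose centers differ by a vector of length $t$ meet in $\sim q^{d-2}$ points. For $d=2$, $N_0$ is the number of equilateral triangles of side $t$ based at the origin, and the standing assumption that $3\in\mathbb{F}_q$ is a square is precisely what guarantees $N_0\gtrsim q$; this is why that hypothesis was imposed from the outset. Once this geometric lower bound is in hand, the rest of the argument is a direct computation.
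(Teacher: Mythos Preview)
Your proposal is correct and follows essentially the same route as the paper: reduce by the $S_3$-symmetry of $K_3$ to the single inequality $d/p_1+1/p_2+1/p_3\le d$, test with $f_1=\delta_0$ and $f_2=f_3=1_{S_t}$, and use the sphere-intersection count from Corollary~\ref{KKo} (together with the standing hypothesis that $3$ is a square when $d=2$) to obtain $\Delta(f_1,f_2,f_3)\sim q^{-d}$. Your translation-invariance justification for $N_0\sim|S_t|\,|S_t^{d-2}|$ is equivalent to the paper's direct appeal to Corollary~\ref{KKo}, and your treatment of the $d=2$ polytope matches the paper's use of Polymake.
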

\begin{proof} 
We only prove the first inequality in the conclusion since  we can establish other inequalities by symmetric property of $\Delta(f_1,f_2, f_3).$
We will use the simple fact  that $x\in S_t$ if and only if $-x\in S_t.$
In the definition \eqref{KDeltatD},  taking $f_1=\delta_0, f_2=1_{S_{t}},$ and $f_3=1_{S_{t}},$   we see that
$$ ||f_1||_{p_1} ||f_2||_{p_2} |||f_3||_{p_3} \sim q^{-\frac{d}{p_1}}  q^{-\frac{1}{p_2}} q^{-\frac{1}{p_3}},$$
$$ \Delta(f_1,f_2,f_3) = \frac{1}{q^d}   \frac{1}{|S_{t}||S_t^{d-2}|} \left(\sum_{x^2, x^3\in S_{t}: ||x^2-x^3||=t} 1\right)\sim q^{-d},$$ 
where the last similarity above  follows from  Corollary \ref{KKo} in Appendix with our assumption that  $3\in \mathbb F_q$ is a square number for $d=2.$

By the direct comparison of these estimates, we obtain the required necessary condition. \end{proof}

\begin{remark} \label{mainRem}
In order to prove that  the necessary conditions in Proposition \ref{NCD123} are sufficient conditions for $d=2,$  we only need to establish the following critical endpoint estimates:
$\Delta( 2, 2, 2)\lesssim 1,$ $\Delta(\infty, \infty, \infty)\lesssim 1,$ $\Delta(1, \infty, \infty)\lesssim 1,$ $\Delta(\infty, 1, \infty)\lesssim 1,$  $\Delta(\infty, \infty, 1)\lesssim 1,$ 
$\Delta\left( \frac{3}{2},  \frac{3}{2}, \infty\right)\lesssim 1,$  $\Delta\left( \frac{3}{2},  \infty, \frac{3}{2} \right)\lesssim 1,$  $\Delta\left( \infty, \frac{3}{2},  \frac{3}{2}\right)\lesssim 1.$ In fact, this claim follows by interpolating the critical points  given in the second part of Proposition \ref{NCD123}.
\end{remark}

\subsection{Boundedness results for $\Delta$ on $\mathbb F_q^d$}

The graph $K_2$ can be obtained by removing  any one of three vertices in the graph $K_3.$  Therefore,  the boundedness of $L(p_1, p_2)$ can determine the boundedness of  $\Delta(p_1, p_2, \infty).$ More precisely, we have the following result.

\begin{lemma}\label{SimL} Let $1\le a, b\le \infty.$ If $L(a, b)\lesssim 1,$ then 
$$\Delta(a, b, \infty)\lesssim 1, ~~ \Delta(a, \infty, b)\lesssim 1,~~ \mbox{and}\quad \Delta(\infty, a, b)\lesssim 1.$$
\end{lemma}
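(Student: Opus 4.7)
The plan is to dominate $f_3$ pointwise by $||f_3||_\infty$, pull this factor outside the sum defining $\Delta$, and then execute the $x^3$-summation by hand so that $\Delta(f_1,f_2,f_3)$ collapses to a constant multiple of $L(f_1,f_2)$. After the pointwise bound $f_3(x^3)\le ||f_3||_\infty$, the right-hand side of \eqref{KDeltatD} is majorized by
\begin{equation*}
\frac{||f_3||_\infty}{q^d\,|S_t|\,|S_t^{d-2}|}\sum_{x^1,x^2\in\mathbb F_q^d}S_t(x^1-x^2)f_1(x^1)f_2(x^2)\sum_{x^3\in\mathbb F_q^d}S_t(x^2-x^3)S_t(x^3-x^1).
\end{equation*}
For each fixed pair $(x^1,x^2)$ with $||x^1-x^2||=t$, the inner $x^3$-sum counts points lying on the intersection of two spheres of radius $t$ whose centers are at distance $t$; by Corollary \ref{KKo} in the Appendix, this quantity is uniformly of size $O(|S_t^{d-2}|)$.

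Once this geometric estimate is in hand, the $|S_t^{d-2}|$ factor cancels against the normalization, and what remains is
\begin{equation*}
\Delta(f_1,f_2,f_3)\lesssim ||f_3||_\infty\cdot\frac{1}{q^d|S_t|}\sum_{x^1,x^2\in\mathbb F_q^d}S_t(x^1-x^2)f_1(x^1)f_2(x^2)=||f_3||_\infty\, L(f_1,f_2),
\end{equation*}
where the equality is exactly the definition \eqref{lineDe} of $L$. Invoking the hypothesis $L(a,b)\lesssim 1$ then gives $\Delta(f_1,f_2,f_3)\lesssim ||f_1||_a\,||f_2||_b\,||f_3||_\infty$, i.e., $\Delta(a,b,\infty)\lesssim 1$.

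For the two remaining inequalities I would appeal to the symmetry of $K_3$. Because $S_t(x)=S_t(-x)$ and all three vertices of $K_3$ play identical roles, the integrand in \eqref{KDeltatD} is invariant under any permutation of the triple $(x^1,x^2,x^3)$, so $\Delta(g_1,g_2,g_3)=\Delta(g_{\sigma(1)},g_{\sigma(2)},g_{\sigma(3)})$ for every $\sigma\in S_3$. Consequently $\Delta(a,\infty,b)\lesssim 1$ and $\Delta(\infty,a,b)\lesssim 1$ reduce to the case already proved by simply relabeling the functions. The one genuinely non-routine step is the sphere-intersection count furnished by Corollary \ref{KKo}; this is the key geometric input, and the standing assumption that $3$ is a square in $\mathbb F_q$ when $d=2$ is exactly what guarantees that this count has the expected size $|S_t^{d-2}|$.
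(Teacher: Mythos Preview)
Your proof is correct and follows essentially the same route as the paper: bound $f_3$ by $\|f_3\|_\infty$, use Corollary~\ref{KKo} to control the resulting sphere-intersection count by $\sim q^{d-2}\sim |S_t^{d-2}|$, reduce to $L(f_1,f_2)$, and finish by the $S_3$-symmetry of $\Delta$. The paper spells out the change of variables needed to put the inner sum into the exact form of Corollary~\ref{KKo}, which you gloss over, but the argument is the same.
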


\begin{proof} 
By symmetry, to complete the proof, it suffices to prove that  
$\Delta(a,b, \infty)\lesssim 1.$ 

Recall from \eqref{KDeltatD} and \eqref{lineDe} that
\begin{equation*}  \Delta(f_1, f_2, f_3)= \frac{1}{q^d} \frac{1}{|S_t| |S_t^{d-2}|} \sum_{x^1, x^2, x^3\in \mathbb F_q^d} S_t(x^1-x^2) S_t(x^2-x^3) S_t(x^3-x^1) \prod_{i=1}^3 f_i(x^i),\end{equation*}
and
\begin{equation*} L(f_1,f_2)= \frac{1}{q^d|S_t|} \sum_{x^1, x^2\in \mathbb F_q^d} S_t(x^1-x^2) f_1(x^1) f_2(x^2).\end{equation*}
Since $f_i, i=1,2,3,$ are non-negative real-number functions on $\mathbb F_q^d,$ we have
$$\Delta(f_1,f_2, f_3) \le  \frac{1}{|S_t^{d-2}|}\left(\max_{x^1, x^2\in \mathbb F_q^d: ||x^1-x^2||=t}  \sum_{x^3\in \mathbb F_q^d: ||x^2-x^3||=t =||x^3-x^1||} f_3(x^3)\right) L(f_1, f_2).$$
Since $|S_t^{d-2}|\sim q^{d-2}$ and $ L(f_1,f_2)\lesssim ||f_1||_{a} ||f_2||_b$ by our assumption,  it suffices to prove that the maximum value in the above parenthesis   is $\lesssim q^{d-2} ||f_3||_\infty.$
Let us denote by $I$ the maximum above. 

 By a change of variables, $x=x^1,  y=x^1-x^2,$  we see that
$$ I\le \left(\max_{x\in \mathbb F_q^d, y\in S_t}  \sum_{x^3\in \mathbb F_q^d: ||x-y-x^3||=t=||x^3-x||} 1 \right) ||f_3||_\infty. $$ 
By another change of variables by putting  $z=x-x^3$,   we get
$$ I\le\left( \max_{x\in \mathbb F_q^d, y\in S_t}  \sum_{z\in S_t: ||z-y||=t} 1\right) ||f_3||_\infty = \left( \max_{ y\in S_t}  \sum_{z\in S_t: ||z-y||=t} 1\right) ||f_3||_\infty.$$
Now applying Corollary \ref{KKo} in Appendix,  we conclude that $I\lesssim q^{d-2} ||f_3||_\infty$ as required.

\end{proof}

When one of $p_1, p_2, p_3$ is $\infty$,   we are able to obtain sharp boundedness results for $\Delta(p_1, p_2, p_3).$
\begin{theorem} \label{thmSi} Let $1\le a, b\le \infty$ satisfy that  
$\frac{1}{a} +  \frac{d}{b}\le d$ and  $\frac{d}{a} +  \frac{1}{b}\le d.$
Then we have $\Delta(a, b, \infty)\lesssim 1,$ $\Delta(a, \infty, b)\lesssim 1$ and $\Delta(\infty, a, b)\lesssim 1.$
\end{theorem}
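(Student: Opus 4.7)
The plan is to observe that this theorem is essentially an immediate corollary of the two ingredients developed in the previous section and at the start of this section: the sharp mapping properties of the $K_2$-Operator in Theorem \ref{GenRL}, and the graph-reduction Lemma \ref{SimL}. So the proof is by composition rather than by a genuinely new argument.

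More precisely, the first step is to read off from Theorem \ref{GenRL} that the two hypotheses $\frac{1}{a}+\frac{d}{b}\le d$ and $\frac{d}{a}+\frac{1}{b}\le d$ are exactly the characterizing conditions for $L(a,b)\lesssim 1$. Thus the hypothesis of the theorem instantly yields the bound
\[
L(f_1,f_2) \lesssim \|f_1\|_a\,\|f_2\|_b
\]
for all non-negative real-valued $f_1,f_2$ on $\mathbb F_q^d$.

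The second step is to feed this $K_2$ bound into Lemma \ref{SimL}. That lemma asserts precisely that whenever $L(a,b)\lesssim 1$, we automatically have $\Delta(a,b,\infty)\lesssim 1$, $\Delta(a,\infty,b)\lesssim 1$, and $\Delta(\infty,a,b)\lesssim 1$. Combining the two steps gives all three conclusions simultaneously, so no separate treatment of the three cases is needed.

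There is no real obstacle here; the substantive work has already been done. The only point worth emphasizing is where the $L^\infty$-hypothesis on the third argument is actually used: inside the proof of Lemma \ref{SimL}, the $f_i$ that is paired with the $\infty$-slot is pulled out in sup-norm and the remaining sum over the third vertex (the one not already pinned down by the $K_2$ structure) is bounded by $\max_{y\in S_t}\#\{z\in S_t:\|z-y\|=t\}\lesssim q^{d-2}$, via the sphere-intersection estimate of Corollary \ref{KKo} in the Appendix, matching precisely the normalizing factor $|S_t^{d-2}|\sim q^{d-2}$ in the definition of $\Delta$. Since this bookkeeping has already been carried out in Lemma \ref{SimL}, the present theorem follows with no further computation.
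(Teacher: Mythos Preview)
Your proposal is correct and matches the paper's own proof essentially verbatim: the paper states that ``the statement follows immediately by combining Theorem \ref{GenRL} and Lemma \ref{SimL},'' which is exactly the two-step composition you describe. Your added paragraph recalling how the $L^\infty$ slot is handled inside Lemma \ref{SimL} is accurate bookkeeping but not required for the argument.
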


\begin{proof}
 The statement follows immediately by combining Theorem \ref{GenRL}  and  Lemma \ref{SimL}.
\end{proof}

It is not hard to see from Proposition \ref{NCD123} that  the above theorem cannot be improved  in the case when  one of the exponents $p_1, p_2, p_3$ is $\infty.$ In particular,  we have the following critical endpoint estimates.

\begin{corollary} \label{mainC}We have
$\Delta\left(\frac{d+1}{d},  \frac{d+1}{d},  \infty\right) \lesssim 1,$ $\Delta\left(\frac{d+1}{d}, \infty ,\frac{d+1}{d} \right) \lesssim 1,$ and $\Delta\left(\infty, \frac{d+1}{d},  \frac{d+1}{d}\right) \lesssim 1.$
\end{corollary}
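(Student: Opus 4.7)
The plan is to derive this corollary directly from Theorem \ref{thmSi} by verifying that the pair $a = b = \frac{d+1}{d}$ satisfies the hypotheses of that theorem. Concretely, I would compute
$$\frac{1}{a} + \frac{d}{b} \;=\; \frac{d}{d+1} + \frac{d^2}{d+1} \;=\; \frac{d(d+1)}{d+1} \;=\; d,$$
so the first inequality in the hypothesis of Theorem \ref{thmSi} holds with equality; by the symmetry $a = b$, the second inequality $\frac{d}{a} + \frac{1}{b} \le d$ also holds with equality. Substituting $(a,b) = (\tfrac{d+1}{d}, \tfrac{d+1}{d})$ into the conclusion of Theorem \ref{thmSi} then yields all three bounds simultaneously.

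In terms of what makes the corollary work, the real content has been absorbed into Theorem \ref{thmSi}, whose proof chain passes through Lemma \ref{SimL} (reduction of $\Delta$ with an $\infty$-slot to the bilinear operator $L$), Theorem \ref{GenRL} (sharp boundedness of the $K_2$-operator), and ultimately the critical averaging estimate $A(\tfrac{d+1}{d} \to d+1) \lesssim 1$ from Theorem \ref{sharpA}. The point $(1/p_1, 1/p_2, 1/p_3) = (\tfrac{d}{d+1}, \tfrac{d}{d+1}, 0)$ sits exactly on the boundary of the necessary-condition polytope from Proposition \ref{NCD123}, which matches the ``with equality'' calculation above and confirms that these three endpoint estimates are sharp among $L^p$ bounds with one coordinate set to $\infty$.

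Since the only step is the arithmetic verification and an invocation of Theorem \ref{thmSi}, there is no genuine obstacle at this stage of the argument. If anything, the only care one must take is bookkeeping: making sure that the version of the hypothesis used matches the symmetric roles played by the three vertices in the definition of $\Delta$, so that one may legitimately conclude all three permutations $(\tfrac{d+1}{d}, \tfrac{d+1}{d}, \infty)$, $(\tfrac{d+1}{d}, \infty, \tfrac{d+1}{d})$, and $(\infty, \tfrac{d+1}{d}, \tfrac{d+1}{d})$ from a single application, which is exactly what the tri-partite conclusion of Theorem \ref{thmSi} already provides.
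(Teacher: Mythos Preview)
Your proposal is correct and essentially identical to the paper's own proof: the paper also sets $a=b=\tfrac{d+1}{d}$, checks that $\tfrac{1}{a}+\tfrac{d}{b}=d=\tfrac{d}{a}+\tfrac{1}{b}$, and then invokes Theorem~\ref{thmSi} directly. Your additional remarks about the underlying chain (Lemma~\ref{SimL}, Theorem~\ref{GenRL}, Theorem~\ref{sharpA}) and the sharpness via Proposition~\ref{NCD123} are accurate context but not part of the formal argument.
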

\begin{proof} 
Put $a=b=\frac{d+1}{d}.$ Then we see that $ \frac{1}{a} +  \frac{d}{b} =d = \frac{d}{a} +  \frac{1}{b},$
Hence, the statement of the corollary follows directly from Theorem \ref{thmSi}.
\end{proof}

Theorem \ref{thmSi} also implies the following result.
\begin{corollary}\label{mainCC}
We have
$\Delta(\infty, \infty, \infty)\lesssim 1,$ $\Delta(1, \infty, \infty)\lesssim 1,$ $\Delta(\infty, 1, \infty)\lesssim 1,$ $\Delta(\infty, \infty, 1)\lesssim 1.$
\end{corollary}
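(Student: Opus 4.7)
The plan is to derive each of the four endpoint bounds as an immediate consequence of Theorem \ref{thmSi} by selecting appropriate exponents $a, b \in [1, \infty]$ and choosing the relevant one of the three symmetric statements $\Delta(a,b,\infty)\lesssim 1$, $\Delta(a,\infty,b)\lesssim 1$, $\Delta(\infty,a,b)\lesssim 1$ provided by that theorem. In every case I only need to check the two defining inequalities $\frac{1}{a}+\frac{d}{b}\le d$ and $\frac{d}{a}+\frac{1}{b}\le d$.

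First I would handle $\Delta(\infty,\infty,\infty)\lesssim 1$: take $a=b=\infty$. Both conditions reduce to $0\le d$, which is trivially true for $d\ge 2$, and Theorem \ref{thmSi} (in any of its three forms) yields the claim. Next, for $\Delta(1,\infty,\infty)\lesssim 1$, I take $a=1, b=\infty$ and invoke the $\Delta(a,b,\infty)$ form: the conditions become $1\le d$ and $d\le d$, which hold. The bound $\Delta(\infty,1,\infty)\lesssim 1$ is then obtained by applying the $\Delta(\infty,a,b)$ form of Theorem \ref{thmSi} with $a=1, b=\infty$, again under the same verified conditions. Finally, for $\Delta(\infty,\infty,1)\lesssim 1$, I apply the $\Delta(\infty,a,b)$ form with $a=\infty, b=1$, where the conditions become $d\le d$ and $1\le d$, both valid.

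In short, every assertion in the corollary is a direct specialization of Theorem \ref{thmSi}, so the only step is the bookkeeping of exponents. There is no real obstacle: once Lemma \ref{SimL} and Theorem \ref{GenRL} are in place, the entire argument reduces to checking that the chosen pairs $(a,b)$ satisfy the two linear inequalities, which they do because at least one of $a, b$ is $\infty$ in each case. One might alternatively observe that all four exponent triples in the corollary sit on the boundary of the necessary-condition polytope of Proposition \ref{NCD123}, which guarantees that the bounds are in fact sharp.
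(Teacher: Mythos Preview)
Your proof is correct and follows essentially the same approach as the paper: both derive all four bounds directly from Theorem \ref{thmSi} by choosing appropriate exponents $a,b$. The only cosmetic difference is that the paper invokes the symmetry of $\Delta$ to reduce the three cases $\Delta(1,\infty,\infty)$, $\Delta(\infty,1,\infty)$, $\Delta(\infty,\infty,1)$ to a single verification, whereas you spell out each one explicitly.
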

\begin{proof}
By taking $a=b=\infty$ in Theorem \ref{thmSi},  we obtain the estimate that  $\Delta(\infty, \infty, \infty)\lesssim 1.$
Now, by symmetry, it will be enough to show that $\Delta(1, \infty, \infty)\lesssim 1.$  However, this is easily shown by taking $a=1, b=\infty$ in Theorem \ref{thmSi}. Thus, the proof is complete.
\end{proof}

\begin{remark}\label{mainRRem} From Corollary \ref{mainC}, Corollary \ref{mainCC}, and Remark \ref{mainRem},  we  see that  to completely solve the problem on the boundedness of $\Delta(p_1, p_2, p_3)$ for $d=2,$ we only need to establish the following critical endpoint estimate
$$ \Delta\left( \frac{d+2}{d},  \frac{d+2}{d}, \frac{d+2}{d}\right)=\Delta(2,2,2)\lesssim 1.$$
\end{remark}

\subsection{Sharp restricted strong-type estimates in two dimensions}
Although Theorem \ref{thmSi} is valid for all dimensions $d\ge 2,$   it is not sharp, compared to the necessary conditions given in Proposition \ref{NCD123}. In this subsection, we will deduce  the sharp boundedness results up to the endpoints  for $\Delta(p_1, p_2, p_3)$ in two dimensions. To this end, we need the following theorem. 
\begin{theorem} \label{mainRSE} Let $\Delta$ be the $K_3$-Operator on $\mathbb F_q^2.$ Then, for all subsets $E, F, H $ of $\mathbb F_q^2,$  the following estimate holds:
$\Delta(E, F, H) \lesssim  ||E||_2 ||F||_2 ||H||_2.$
\end{theorem}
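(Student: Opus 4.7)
The inequality is equivalent, after unwinding $\Delta(E,F,H) \sim q^{-3}T(E,F,H)$ in dimension two and using $\|f\|_2 = q^{-1}|f|^{1/2}$ for an indicator, to the combinatorial bound
\begin{equation*}
T(E,F,H) := \#\{(x,y,z)\in E\times F\times H : \|x-y\|=\|y-z\|=\|x-z\|=t\} \lesssim \sqrt{|E||F||H|}.
\end{equation*}
The first move is a Cauchy--Schwarz pivot on $y$: writing $T = \sum_y F(y)\,N(y)$ where $N(y)$ counts pairs $(x,z)\in E\times H$ forming an equilateral triangle with apex $y$, one obtains $T^2 \le |F|\sum_y N(y)^2$.

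Two two-dimensional inputs control $\sum_y N(y)^2$. Since $3$ is a square in $\mathbb{F}_q$, every ordered distance-$t$ pair admits exactly two completions into an equilateral triangle, and therefore $N(y) \le 2\min(|E\cap(y+S_t)|,|H\cap(y+S_t)|)$. By Corollary~\ref{KKo}, two distinct distance-$t$ spheres in $\mathbb{F}_q^2$ intersect in $O(1)$ points. Combining these yields
\begin{equation*}
\sum_y N(y)^2 \lesssim q|E\cap H| + |E||H|.
\end{equation*}
When $\max(|E|,|H|)\gtrsim q$ the $|E||H|$ term dominates and $T\lesssim \sqrt{|E||F||H|}$ follows immediately; the symmetric cases $\max(|E|,|F|)\gtrsim q$ and $\max(|F|,|H|)\gtrsim q$ are handled by pivoting on $x$ or on $z$ instead. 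This reduces the proof to the regime $|E|,|F|,|H| \ll q$.

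The main obstacle is precisely this residual regime, in which the diagonal term $q|E\cap H|$ is not absorbed by $|E||H|$ and the pure $L^p$-averaging input of Theorem~\ref{GenRL} is no longer sharp enough. To close it I would combine the tripartite Loomis--Whitney-type projection bound $T^2 \lesssim I(E,F)\,I(F,H)\,I(E,H)$, where $I(A,B)$ counts distance-$t$ pairs in $A\times B$, with a Szemer\'edi--Trotter-type incidence estimate in the small-set range and crucially with the bijective $\pm 60^{\circ}$-rotation pairing of distance-$t$ side pairs that exists in $\mathbb{F}_q^2$ exactly because $\sqrt{3}$ is available. The $\pm 60^{\circ}$ symmetry allows one to transfer an additional $|E|^{1/2}$ gain into the estimate that is otherwise lost in the pointwise bound on $N(y)$. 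Carrying out this transfer uniformly over the small-set regime, without incurring a spurious power of $q$ from the incidence input, is the central technical difficulty of the proof, and is where the two-dimensional hypothesis (in particular the constancy $|S_t^0|=O(1)$) plays its decisive role.
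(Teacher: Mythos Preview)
Your Cauchy--Schwarz pivot on $y$ is exactly right, and is equivalent to the paper's opening move $\Delta(E,F,H)\le\|B(E,F)\|_2\|H\|_2$; indeed $\sum_y N(y)^2$ is, up to normalization, precisely $\|B(E,H)\|_2^2$ for the bilinear averaging operator $B$ of Lemma~\ref{mainlemK}. The gap is that you estimate $N(y)$ \emph{pointwise} by $N(y)\le 2|E\cap(y+S_t)|$ before squaring and summing. This forgets the constraint $\|x-z\|=t$ inside $N(y)$, and is what produces the spurious diagonal term $q|E\cap H|$ that you cannot kill when $|E|,|H|\ll q$.

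The fix is the one you yourself name but do not carry out: keep the rotation structure. Writing $N(y)=N_\theta(y)+N_{\theta^{-1}}(y)$ with
\[
N_\theta(y)=\sum_{u\in S_t} E(y+u)\,H(y+\theta u),
\]
one expands $\sum_y N_\theta(y)^2$ as a double sum over $u,u'\in S_t$. The diagonal $u=u'$ gives $\sum_{x\in E}\sum_{u\in S_t}H(x+(\theta-I)u)\le |E||H|$ because $I-\theta$ is invertible (so $u\mapsto(\theta-I)u$ is injective). The off-diagonal $u\ne u'$ is, after dropping the $H$ factors, at most $\sum_{x\in E}\sum_{0\ne v}E(x+v)\,|\{(u,u')\in S_t^2:u'-u=v\}|\lesssim |E|^2$, since two distinct circles meet in at most two points. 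Hence $\sum_y N(y)^2\lesssim |E||H|$ (taking $|E|\le|H|$ without loss), and $T^2\le|F|\sum_yN(y)^2\lesssim|E||F||H|$ follows with no case split. This is exactly the computation in the paper's proof of Lemma~\ref{mainlemK}. The Loomis--Whitney and Szemer\'edi--Trotter machinery you propose is neither needed nor sufficient here; the entire difficulty dissolves once you compute the second moment with the $\theta$-structure retained rather than discarded.
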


For $1\le p_1, p_2, p_3\le \infty$,  we say that the restricted strong-type $\Delta(p_1, p_2, p_3)$ estimate  holds if
 the estimate 
 $$\Delta(E, F, H) \lesssim  ||E||_{p_1} ||F||_{p_2} ||H||_{p_3}$$
 is valid for all subsets $E, F, H$ of $\mathbb F_q^2.$
 In this case, we write $ \Delta(p_1, p_2, p_3) \lessapprox 1.$
 \begin{proof} The proof proceeds with some reduction. 
When $d=2$,  by a change of variables by letting $x=x^3,  y=x^3-x^1,  z=x^3-x^2,$  \eqref{KDeltatD}  becomes
\begin{equation*}\label{DeltatD}  \Delta(f_1, f_2, f_3)= \frac{1}{q^2}  \sum_{x\in \mathbb F_q^2} f_3(x) \left[ \frac{1}{|S_t|} \sum_{y, z\in \mathbb F_q^2} S_t(z-y) S_t(z)  S_t(y)  f_1(x-y) f_2(x-z) \right]. \end{equation*}

We define  $B(f_1, f_2)(x)$ as the value in the bracket above, namely,
\begin{equation}\label{KdefB}B(f_1, f_2)(x):=\frac{1}{|S_t|} \sum_{y, z\in S_t: ||z-y||=t}  f_1(x-y) f_2(x-z) .\end{equation}
We refer to this operator $B$  as ``the bilinear averaging operator''.
It is clear that
$$ \Delta(f_1,f_2,f_3)= \frac{1}{q^2} \sum_{x\in \mathbb F_q^2} B(f_1, f_2)(x)  f_3(x)=< B(f_1, f_2),~ f_3>.$$ 
By H\"older's inequality,  we have
$$\Delta(f_1,f_2,f_3) \le  ||B(f_1, f_2)||_2  ||f_3||_{2}.$$
Thus,  Theorem \ref{mainRSE} follows immediately from the reduction lemma below
\end{proof}
\begin{lemma} \label{mainlemK}Let $B(f_1, f_2)$ be the bilinear averaging operator defined as in \eqref{KdefB}. 
Then, for all subsets $E, F$ of $\mathbb F_q^2,$  we have
$$ ||B(E, F)||_2 \lesssim  ||E||_2  ||F||_2.$$
\end{lemma}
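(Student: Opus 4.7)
The plan is to exploit the rigid algebraic structure of equilateral triangles in $\mathbb F_q^2$. Since the paper's standing hypothesis that $3$ is a square in $\mathbb F_q$ is precisely what guarantees the existence of an orthogonal matrix $R$ on $\mathbb F_q^2$ with $\operatorname{tr}(R) = 1$, I would first observe that for each $y \in S_t$ the two solutions of $z \in S_t \cap (y + S_t)$ are exactly $z = R y$ and $z = R^{-1} y$. This yields the decomposition $B(E, F) = B_+(E, F) + B_-(E, F)$, where
\[
B_\pm(E, F)(x) = \frac{1}{|S_t|} \sum_{y \in S_t} E(x - y)\, F(x - R^{\pm 1} y),
\]
so by $(u+v)^2 \le 2(u^2+v^2)$ it suffices to prove $\|B_\pm(E, F)\|_2 \lesssim \|E\|_2 \|F\|_2$ separately.

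Next, I would set $\psi_+(x) := |S_t|\, B_+(E, F)(x)$, and after the change of variables $a = x - y$ write
\[
\psi_+(x) = \sum_{a} E(a)\, F(R_x a)\, S_t(x - a), \qquad R_x a := x + R(a - x).
\]
Expanding the square gives
\[
\sum_x \psi_+(x)^2 = \sum_{a, a'} E(a) E(a')\, I(a, a'), \qquad I(a, a') := \sum_x F(R_x a)\, F(R_x a')\, S_t(x - a)\, S_t(x - a').
\]
The key algebraic input will be the identity $R + R^T = (\operatorname{tr} R)\, I$, valid for every orthogonal $2 \times 2$ matrix; combined with $\operatorname{tr}(R) = 1$, it forces $(R - I)^T (R - I) = I$, so $R - I$ is itself orthogonal and maps $S_t$ bijectively onto $S_t$.

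This identity is what drives the estimate on the diagonal: substituting $u = a - x$ and using that $(R - I) : S_t \to S_t$ is a bijection, I can rewrite $I(a, a) = \sum_{u \in S_t} F(a + (R - I) u) = |F \cap (a + S_t)|$. Hence the diagonal contribution collapses to the edge count $\#\{(a, b) \in E \times F : \|a - b\| = t\} \le |E| |F|$. Off the diagonal, the factors $S_t(x - a) S_t(x - a')$ restrict $x$ to $S_t(a) \cap S_t(a')$, which by Corollary \ref{KKo} from the Appendix has cardinality $\lesssim 1$ whenever $a \ne a'$; hence $I(a, a') \lesssim 1$ and the off-diagonal total is $\lesssim |E|^2$. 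Rerunning the same expansion after rewriting $\psi_+(x) = \sum_b F(b)\, E(R_x^{-1} b)\, S_t(x - b)$ and summing first in $b$ produces the dual bound $\lesssim |F|^2$ off the diagonal; combining the two and using $\min(|E|^2, |F|^2) \le |E| |F|$ gives $\sum_x \psi_+(x)^2 \lesssim |E| |F|$. The same reasoning handles $\psi_-$.

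Finally, $\|B(E, F)\|_2^2 = q^{-2} |S_t|^{-2} \sum_x (\psi_+ + \psi_-)^2 \lesssim |E| |F| / q^4 = \|E\|_2^2 \|F\|_2^2$, which is the desired bound. The step I expect to be the main obstacle is precisely the diagonal term $I(a, a)$: naïvely, dropping the constraint $S_t(x - a')$ when $a = a'$ inflates the sum by a factor of $|S_t| \sim q$, which would be fatal for the target estimate. The whole argument hinges on the orthogonality of $R - I$, which forces $a + (R - I) u$ to range exactly over $a + S_t$ as $u$ ranges over $S_t$, compressing the seemingly $|S_t|$-sized sum back down to a trivial edge count.
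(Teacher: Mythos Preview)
Your proof is correct and follows essentially the same route as the paper: decompose $B = B_+ + B_-$ via the two rotations $R^{\pm 1}$ (the paper's $\theta^{\pm 1}$), expand $\|B_\pm(E,F)\|_2^2$, and split into a diagonal term controlled by $|E||F|$ and an off-diagonal term controlled by $\min(|E|^2,|F|^2)$ using that two distinct circles meet in $\lesssim 1$ points. The only cosmetic differences are your change of variables to $a = x - y$ before expanding (the paper expands in $y,y'$ directly) and your explicit verification that $R-I$ is orthogonal, whereas the paper only uses that $y \mapsto y - \theta y$ is injective on $S_t$; both suffice.
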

\begin{proof}
We begin by representing  the bilinear averaging operator $B(f_1,f_2).$ 
From \eqref{KdefB}, note that 
$$B(f_1, f_2)(x) =  \frac{1}{|S_{t}|} \sum_{y\in S_{t}}  f_1(x-y)\left( \sum_{z\in S_{t}: ||z-y||=t} f_2(x-z)\right).$$

For  each $y\in S_{t}$, let 
$ \Theta(y):= \{z\in S_t: ||z-y||=t\}.$
With this notation,   the bilinear averaging operator is written as
\begin{equation*} B(f_1, f_2)(x) =\frac{1}{|S_{t}|} \sum_{y\in S_{t}}f_1(x-y) \left( \sum_{z\in  \Theta(y)}  f_2(x-z)\right).\end{equation*}
Let $\eta$ denote the quadratic character of $\mathbb F_q^*.$ Recall that  $\eta(s)=1$ for a square number $s$ in $\mathbb F_q^*$, and $\eta(s)=-1$ otherwise.
Notice from Corollary \ref{KKo} in Appendix that  
$\Theta(y)$ is the empty set for all $y\in S_t$  if $d=2$ and $\eta(3)=-1.$  
In this case,  the problem is trivial since  $B(f_1, f_2)(x)=0$ for all $x\in \mathbb F_q^2.$  Therefore, when $d=2$, we always assume that $\eta(3)=1.$ 

Notice that   $|\Theta(y)|=2$ for all $y\in S_t,$ which follows from the last statement of  Corollary \ref{KKo} in Appendix. More precisely,   for each $y\in S_t,$ we can write
$$ \Theta(y)=\{ \theta y, ~ \theta^{-1} y\},$$
where $\theta y$ denotes the rotation of $y$ by  ``60 degrees,''  and  $||y-\theta y||=t =||y-\theta^{-1} y||.$

 From these observations, the bilinear averaging operator $B(f_1, f_2)$ can be represented as follows:
 \begin{equation}\label{DecB} B(f_1, f_2)(x) =B_\theta(f_1, f_2)(x) + B_{\theta^{-1}}(f_1, f_2)(x).\end{equation}
 Here, we define 
 \begin{equation}\label{defBthe} B_\theta(f_1, f_2)(x):= \frac{1}{|S_{t}|} \sum_{y\in S_{t}}f_1(x-y)  f_2(x- \theta y),\end{equation}
 and 
 $$ B_{\theta^{-1}}(f_1, f_2):= \frac{1}{|S_{t}|} \sum_{y\in S_{t}}f_1(x-y)  f_2(x- \theta^{-1} y).$$

In order to complete the proof of the lemma,  it suffices to establish the following two estimates: for all subsets $E, F$ of $\mathbb F_q^d,$ 
\begin{equation} \label{eqB1}
 ||B_\theta(E, F)||_2 \lesssim ||E||_2 ||F||_2,
\end{equation}
and 
\begin{equation}\label{eqB2}
 ||B_{\theta^{-1}}(E, F)||_2 \lesssim ||E||_2 ||F||_2.
 \end{equation}
 
 We will only provide the proof of the estimate \eqref{eqB1} since the proof of \eqref{eqB2} is the same.
 
Now we start proving the estimate \eqref{eqB1}.  Since $||E||_2^2=q^{-2}|E|$ and $ ||F||_2^2=q^{-2} |F|,$  it is enough to prove that
\begin{equation}\label{mimi} || B_\theta (E,  F)||_2^2 \lesssim q^{-4} |E||F|.\end{equation}
Without loss of generality, we may assume that $|E|\le |F|.$
By the definition, it follows that
$$ || B_\theta (E, F)||_2^2 =q^{-2} |S_t|^{-2}  \sum_{x\in \mathbb F_q^2} \sum_{y, y'\in S_t} E(x-y)E(x-y') F(x-\theta y) F(x-\theta y') = I + II,$$
where  the first term $I$ is the value corresponding to the case where $y=y'$, whereas the second term $II$ is corresponding to the case where $y\ne y'.$ We have
$$ I= q^{-2} |S_t|^{-2}  \sum_{y\in S_t}\sum_{x\in \mathbb F_q^2}  E(x-y)F(x-\theta y) .$$
Applying a change of variables by replacing $x$ with $x+y$,   we see that
$$ I=q^{-2} |S_t|^{-2}  \sum_{y\in S_t}\sum_{x\in \mathbb F_q^2}  E(x)F(x+ y-\theta y)
= q^{-2} |S_t|^{-2} \sum_{x\in E} \left( \sum_{y\in S_t} F(x+ y-\theta y)\right).$$
Observe that  $y-\theta y \ne y'-\theta y'$ for all $y, y'$ in $S_t$ with $y\ne y'.$  Then we see that
the value in the parentheses above is bounded above by  $|S_t\cap F|\le |F|.$ Therefore, we obtain the desired estimate:
$$ I \le q^{-2} |S_t|^{-2} |E||F|\sim q^{-4} |E||F|.$$
Next, it remains to show that
$ II \lesssim q^{-4} |E||F|.$
Since  we have assumed that $|E| \le |F|,$  it suffices to show that
$ II\lesssim q^{-4} |E|^2.$

By the definition of $II$,  it follows that
$$ II= q^{-2} |S_t|^{-2}  \sum_{x\in \mathbb F_q^2} \sum_{y, y'\in S_t: y\ne y'} E(x-y) E(x-y') F(x-\theta y) F(x-\theta y').$$
It is obvious that
$$ II \le q^{-2} |S_t|^{-2}  \sum_{y, y'\in S_t: y\ne y'} \sum_{x\in \mathbb F_q^2} E(x-y) E(x-y').$$
We use a change of variables by replacing $x$ with $x+y.$  Then we have
\begin{align*} II &\le q^{-2} |S_t|^{-2}  \sum_{x\in \mathbb F_q^2} E(x)  \left( \sum_{y, y'\in S_t: y\ne y'}  E(x+y-y')\right) \\
&= q^{-2} |S_t|^{-2}  \sum_{x\in \mathbb F_q^2} E(x) \left( \sum_{\mathbf{0} \ne u\in \mathbb F_q^2}  E(x+u) W(u)\right),\end{align*}
where  $W(u)$ denotes the number of pairs $(y, y') \in S_t \times S_t$ such that $u=y-y'$ and $y\ne y'.$
It is not hard to see that  for any non-zero vector $u \in \mathbb F_q^2$,    we have $W(u)\le 2.$   So we obtain that
$$ II \lesssim q^{-2} |S_t|^{-2}  \sum_{x\in \mathbb F_q^2} E(x) \left( \sum_{\mathbf{0} \ne u\in \mathbb F_q^2}  E(x+u) \right) \lesssim q^{-4} |E|^2 , $$
as required.
\end{proof}

In two dimensions,  we are able to obtain the optimal boundedness of $\Delta(p_1, p_2, p_3)$ except for one endpoint. Indeed, we have the following result.
\begin{theorem}\label{Best2D}  Let $1\le p_1, p_2, p_3\le \infty$ and  let $\Delta$ be the $K_3$-Operator on $\mathbb F_q^2.$ 
\begin{itemize}
\item [(i)] If  $\Delta(p_1, p_2, p_3) \lesssim 1$, then   
\begin{equation}\label{eq3C}\frac{2}{p_1} +  \frac{1}{p_2}+ \frac{1}{p_3}\le 2,  ~~\frac{1}{p_1} +  \frac{2}{p_2}+ \frac{1}{p_3}\le 2,~~\frac{1}{p_1} +  \frac{1}{p_2}+ \frac{2}{p_3}\le 2.\end{equation}

\item [(ii)] Conversely,  if $(p_1, p_2, p_3)$ satisfies all three inequalities \eqref{eq3C}, then  $\Delta(p_1, p_2, p_3) \lesssim 1 $  for  $(p_1, p_2, p_3)\ne (2,2,2),$  and   we have $\Delta(2,2,2)\lessapprox 1.$ 
\end{itemize}
\end{theorem}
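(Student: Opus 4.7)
Part (i) is already the content of Proposition \ref{NCD123} specialized to $d=2$, so the substance of the theorem lies in part (ii). The plan for (ii) is to collect the critical endpoint estimates at the eight vertices of the polytope carved out by the inequalities \eqref{eq3C} and then invoke multilinear interpolation. By the second part of Proposition \ref{NCD123}, these vertices are: the four corners of the unit cube $(0,0,0)$, $(1,0,0)$, $(0,1,0)$, $(0,0,1)$, all of which carry strong-type bounds by Corollary \ref{mainCC}; the three symmetric points $(\tfrac{2}{3},\tfrac{2}{3},0)$, $(\tfrac{2}{3},0,\tfrac{2}{3})$, $(0,\tfrac{2}{3},\tfrac{2}{3})$, corresponding to $\Delta(\tfrac{3}{2},\tfrac{3}{2},\infty)$ and its permutations, handled by Corollary \ref{mainC}; and the central vertex $V_0 = (\tfrac{1}{2},\tfrac{1}{2},\tfrac{1}{2})$ corresponding to $\Delta(2,2,2)$, at which Theorem \ref{mainRSE} delivers only the restricted strong-type bound $\Delta(2,2,2)\lessapprox 1$.

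Given any admissible exponent triple with $(p_1,p_2,p_3)\ne(2,2,2)$, I would write the reciprocal point $P=(1/p_1,1/p_2,1/p_3)$ as a convex combination $P = \lambda V_0 + (1-\lambda) Q$, with $Q$ a convex combination of the seven strong-type vertices and $\lambda \in [0,1)$; such a representation exists because $V_0$ is an extreme vertex of the polytope and $P\ne V_0$. If $\lambda=0$, then $P=Q$ lies in the convex hull of the seven strong-type endpoints, so a standard multilinear Riesz--Thorin interpolation yields $\Delta(p_1,p_2,p_3)\lesssim 1$ directly. If $\lambda\in(0,1)$, one instead interpolates the restricted strong-type bound at $V_0$ against the Riesz--Thorin bound at $Q$; since $P$ lies strictly in the interior of the segment from $V_0$ to $Q$, a multilinear Marcinkiewicz-type argument promotes the restricted endpoint to a full strong-type bound at $P$.

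The principal technical obstacle is this last step: justifying that a positive trilinear form which satisfies a strong-type bound at one vertex and only a restricted strong-type bound at another satisfies a strong-type bound at every strictly interior point of the connecting segment. The argument is standard but requires care: one decomposes each $f_i$ dyadically via its level sets, applies the restricted strong-type inequality to characteristic functions of those sets, and sums the resulting geometric series, which converges precisely because $\lambda<1$ strictly. Once this interpolation step is in place, together with Theorem \ref{mainRSE} and the strong-type endpoints from Corollaries \ref{mainC} and \ref{mainCC}, every admissible exponent triple other than $(2,2,2)$ itself is covered, and part (ii) follows.
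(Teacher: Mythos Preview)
Your proposal is correct and follows essentially the same approach as the paper: reduce part (i) to Proposition \ref{NCD123}, and for part (ii) cite Theorem \ref{mainRSE} for the restricted strong-type bound at $(1/2,1/2,1/2)$, Corollaries \ref{mainC} and \ref{mainCC} for the strong-type bounds at the remaining seven vertices, and then interpolate. The paper's own proof is terser---it simply invokes ``the interpolation theorem'' without further comment---whereas you spell out how to write a generic admissible $P\neq V_0$ as $\lambda V_0+(1-\lambda)Q$ with $\lambda<1$ and how the level-set decomposition upgrades the restricted endpoint to a strong-type bound at every strictly interior point; this added detail is a welcome clarification of a step the paper leaves implicit.
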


\begin{proof}
 The first part  of the theorem is the special case of Proposition \ref{NCD123} with $d=2.$
 Now we  prove the second part. As stated in Proposition \ref{NCD123},  one  can notice by using Polymake \cite{AGHJLP, GJ} that  all the points $\left(\frac{1}{p_1},  \frac{1}{p_2}, \frac{1}{p_3}\right)\in [0, 1]^3$  satisfying all three inequalities \eqref{eq3C} are contained in the convex hull of the critical points  $$(1/2,1/ 2, 1/ 2), ~ (0, 0, 0),~ (1, 0, 0), ~ (0, 1,0),  ~(0, 0, 1), 
 \left( \frac{2}{3},  \frac{2}{3}, 0\right),~  \left( \frac{2}{3},  0, \frac{2}{3} \right), ~ \left( 0, \frac{2}{3},  \frac{2}{3}\right). $$
 Notice from Theorem \ref{mainRSE} with $d=2$ that the restricted strong-type estimate for the operator $\Delta$ holds for the point $(1/p_1, 1/p_2, 1/p_3)=(1/2, 1/2, 1/2).$  In addition, notice from Corollary \ref{mainC} and Corollary \ref{mainCC} with $d=2$ that 
 $\Delta(p_1, p_2, p_3)\lesssim 1$ for the above critical points $(1/p_1, 1/p_2, 1/p_3)$ except for $(1/2, 1/2, 1/2).$  Hence,
 the statement of the second part follows immediately by invoking the interpolation theorem.
 \end{proof}

\section{Boundedness results for the $P_2$-Operator}
For $t\in \mathbb F_q^*$ and  functions $f_i, i=1,2,3,$ on $\mathbb F_q^d,$   the operator $\Lambda$ associated with the graph $P_2$, called the $P_2$-Operator on $\mathbb F_q^d,$ is defined as
\begin{equation}\label{DefHinges}\Lambda(f_1,f_2,f_3)= \frac{1}{q^d|S_t|^2} \sum_{x^1, x^2, x^3\in \mathbb F_q^d} S_t(x^1-x^2)  S_t(x^2-x^3) f_1(x^1) f_2(x^2)f_3(x^3),\end{equation}
where  the quantity $q^d|S_t|^2$ stands for the normalizing factor $\mathcal{N}(G)$ in \eqref{generalform} when $G=P_2$. Note that
this can be written as
\begin{equation}\label{DDAver} \Lambda(f_1,f_2,f_3)= <f_2,~  Af_1 \cdot Af_3>.\end{equation}
\begin{definition} Let $1\le p_1, p_2, p_3\le \infty.$ 
We define $\Lambda(p_1, p_2, p_3)$ as the smallest constant such that   the following estimate holds for all non-negative real-valued functions $f_1, f_2, f_3$ on $\mathbb F_q^d:$
$$ \Lambda(f_1,f_2, f_3) \le \Lambda(p_1, p_2, p_3) ||f_1||_{p_1} ||f_2||_{p_2} ||f_3||_{p_3}.$$
\end{definition} 

In this section, we study the problem determining all numbers $1\le p_1, p_2, p_3\le \infty$ satisfying  $\Lambda(p_1, p_2, p_3)\lesssim 1.$ Compared to the $K_3$-Operator $\Delta$, this problem is much hard to find the optimal answers. Based on the formula \eqref{DDAver} with the averaging estimates in Lemma \ref{BALem},  we are able to address partial results on this problem (see Theorem \ref{SharpNS}).


\begin{proposition} [Necessary conditions for the boundedness of $\Lambda$]\label{NecLam}
Let $1\le p_1, p_2, p_3\le\infty.$ 
Suppose that $ \Lambda(p_1, p_2, p_3) \lesssim 1$. Then we have
$$\frac{1}{p_1} +  \frac{d}{p_2}+ \frac{1}{p_3}\le d, \quad \frac{d}{p_1} +  \frac{1}{p_2}\le d, \quad \frac{1}{p_2} +  \frac{d}{p_3}\le d,  \quad \frac{d}{p_1} +  \frac{1}{p_2}+ \frac{d}{p_3}\le 2d-1.$$

Also, under this assumption when $d = 2$, it can be shown by  Polymake \cite{AGHJLP, GJ} that $(1/p_1, 1/p_2, 1/p_3)$ is contained in the convex hull of points: $(0,1,0),$ $(1/2,0,1),$ $(1,0,1/2),$$(1,0,0),$ $(5/6,1/3,1/2),$ $(1/2,1/3,5/6),$ $(2/3,2/3,0),$ $(0,2/3,2/3),$ $(0,0,0),$ $(0,0,1).$
\end{proposition}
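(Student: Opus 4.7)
The plan is to derive all four inequalities by the standard testing strategy: insert carefully chosen non-negative indicator-type functions $f_1,f_2,f_3$ into the defining formula \eqref{DefHinges}, compute $\Lambda(f_1,f_2,f_3)$ in closed form using only the basic facts $|S_t|\sim q^{d-1}$ and $\|1_A\|_{p}\sim (q^{-d}|A|)^{1/p}$, compute $\|f_i\|_{p_i}$, and then force the resulting estimate in powers of $q$ to survive as $q\to\infty$. This is exactly the blueprint used in Proposition \ref{NesLam} and Proposition \ref{NCD123}, adapted to the path $P_2$. Note also that $\Lambda$ is invariant under swapping the two endpoints of $P_2$, i.e.\ $\Lambda(f_1,f_2,f_3)=\Lambda(f_3,f_2,f_1)$, so the third inequality will follow from the second by a relabeling.

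Concretely, the four test choices I would use are as follows. \emph{Inequality 1} ($\frac{1}{p_1}+\frac{d}{p_2}+\frac{1}{p_3}\le d$): take $f_1=1_{S_t}$, $f_2=\delta_0$, $f_3=1_{S_t}$; the $\delta_0$ forces $x^2=0$, both edge conditions then reduce to $x^1,x^3\in S_t$, and one gets $\Lambda\sim q^{-d}$ while the product of norms is $\sim q^{-1/p_1-d/p_2-1/p_3}$. \emph{Inequality 2} ($\frac{d}{p_1}+\frac{1}{p_2}\le d$): take $f_1=\delta_0$, $f_2=1_{S_t}$, $f_3\equiv 1$; summing over $x^3$ with $x^2-x^3\in S_t$ contributes a factor $|S_t|$, giving $\Lambda\sim q^{-d}$ against $\sim q^{-d/p_1-1/p_2}$. \emph{Inequality 3} is the mirror image of Inequality 2 under $x^1\leftrightarrow x^3$. \emph{Inequality 4} ($\frac{d}{p_1}+\frac{1}{p_2}+\frac{d}{p_3}\le 2d-1$): take $f_1=\delta_0$, $f_3=\delta_0$, $f_2=1_{S_t}$; both edges collapse to $x^2\in S_t$, yielding $\Lambda=\frac{1}{q^d|S_t|^2}|S_t|\sim q^{-2d+1}$ against $\sim q^{-d/p_1-1/p_2-d/p_3}$. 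In each case, comparing powers of $q$ gives the stated inequality.

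For the second assertion (vertex description in $d=2$), I would specialize the four linear inequalities to $d=2$, adjoin the trivial box $0\le 1/p_i\le 1$ coming from the nesting of norms, and run a vertex enumeration on the resulting polytope in $(1/p_1,1/p_2,1/p_3)$-space. One checks directly that each of the ten claimed triples $(0,1,0),(1/2,0,1),(1,0,1/2),(1,0,0),(5/6,1/3,1/2),(1/2,1/3,5/6),(2/3,2/3,0),(0,2/3,2/3),(0,0,0),(0,0,1)$ lies in the feasible region; the fact that these are precisely the extreme points is what Polymake \cite{AGHJLP, GJ} is invoked for, but it can also be verified by hand since each vertex must saturate at least three of the bounding hyperplanes, so one simply solves the corresponding $3\times 3$ linear systems and discards those that violate any remaining inequality.

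The whole argument is essentially mechanical; the only mildly delicate point is making sure the test functions chosen are genuinely extremal, i.e.\ that the direct computation of $\Lambda$ on them yields the sharpest power of $q$. In particular, for Inequality 4 the estimate $\Lambda\sim q^{-2d+1}$ is tight precisely because $|S_t|\sim q^{d-1}$, so the gap between $q^{-d}|S_t|^{-2}$ and a single $|S_t|$ in the count produces the anomalous $2d-1$ exponent — this is the one place where one must resist the temptation to ``improve'' the trivial inequality and verify the size of $|S_t|$ explicitly. The vertex enumeration for the $d=2$ polytope is the other step likely to be tedious, but it is routine once the four inequalities are in hand.
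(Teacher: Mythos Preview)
Your proposal is correct and matches the paper's proof essentially verbatim: the paper uses exactly the four test choices you list (with the same $\delta_0$, $1_{S_t}$, and $1_{\mathbb F_q^d}$ placements), obtains the third inequality by the $x^1\leftrightarrow x^3$ symmetry, and defers the $d=2$ vertex enumeration to Polymake. There is nothing to add.
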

\begin{proof}
Suppose that $\Lambda(p_1, p_2, p_3) \lesssim 1.$
Then, for all  functions $f_i, i=1,2,3,$ on $\mathbb F_q^d,$   we have
$$\Lambda(f_1,f_2,f_3)= \frac{1}{q^d|S_t|^2} \sum_{x^1, x^2, x^3\in \mathbb F_q^d} S_t(x^1-x^2)  S_t(x^2-x^3) f_1(x^1) f_2(x^2)f_3(x^3) \lesssim ||f_1||_{p_1}||f_2||_{p_2} ||f_3||_{p_3}.$$
We test the above inequality with  $f_2=\delta_0, f_1=f_3=1_{S_t}.$  It is plain to note that
$$ ||f_1||_{p_1}||f_2||_{p_2} ||f_3||_{p_3} = (q^{-d} |S_t|)^{\frac{1}{p_1}} q^{-\frac{d}{p_2}} (q^{-d} |S_t|)^{\frac{1}{p_3}} \sim  q^{-\frac{1}{p_1}-\frac{d}{p_2} -\frac{1}{p_3}},$$ 
and $$ \Lambda(f_1, f_2, f_3) = q^{-d}.$$
Therefore, we obtain that $q^{-d}\lesssim q^{-\frac{1}{p_1}-\frac{d}{p_2} -\frac{1}{p_3}}.$ This implies the first inequality in the conclusion that $ \frac{1}{p_1} +  \frac{d}{p_2}+ \frac{1}{p_3}\le d.$

To obtain the second inequality in the conclusion, we choose $f_1=\delta_0, f_2=1_{S_t},$ and $f_3=1_{\mathbb F_q^d}.$ Then it is easy to check that
\begin{equation*}\label{comPK} ||f_1||_{p_1}||f_2||_{p_2} ||f_3||_{p_3} \sim  q^{-\frac{d}{p_1}-\frac{1}{p_2} },\end{equation*}
and
$$ \Lambda(f_1, f_2, f_3) = \frac{1}{q^d |S_t|^2} \sum_{x^2\in S_t} \sum_{x^3\in \mathbb F_q^d}  S_t(x^2-x^3) 
=q^{-d}.$$
 Comparing these estimates gives the second inequality in the conclusion.

The third inequality in the conclusion can be easily obtained by switching the roles of $f_1$ and $f_3$ in the proof of the second one.

To deduce the last inequality in  the conclusion,  we  take $f_1=f_3=\delta_0$ and $f_2=1_{S_t}.$  Then
$$ ||f_1||_{p_1}||f_2||_{p_2} ||f_3||_{p_3}\sim q^{-\frac{d}{p_1}-\frac{1}{p_2} -\frac{d}{p_3}},$$ 
and $$ \Lambda(f_1, f_2, f_3) = \frac{1}{q^d|S_t|}\sim q^{-2d+1}.$$
From these, we have the required result that
$ \frac{d}{p_1} +  \frac{1}{p_2}+ \frac{d}{p_3}\le 2d-1.$
\end{proof}

By symmetry, it is not hard to note that $ \Lambda(p_1, p_2, p_3)\lesssim 1 \iff  \Lambda(p_3, p_2, p_1)\lesssim 1.$
In the following lemma, we prove that the boundedness question for the $P_2$-Operator $\Lambda$ is closely related to the spherical averaging problem over finite fields.  
\begin{lemma} \label{Hinge}
Suppose that  $\frac{1}{r_1}+ \frac{1}{p_2}+\frac{1}{r_3}=1,  A(p_1 \to r_1)\lesssim 1,$ and $A(p_3 \to r_3)\lesssim 1$ for some $1\le p_1, p_2, p_3, r_1, r_3 \le \infty.$ Then we have 
$ \Lambda(p_1, p_2, p_3) \lesssim 1.$
\end{lemma}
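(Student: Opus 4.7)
The plan is to exploit the decomposition (\ref{DDAver}), namely
$$\Lambda(f_1,f_2,f_3)=\langle f_2,\ Af_1\cdot Af_3\rangle,$$
and to dominate the right-hand side by a three-factor Hölder inequality whose exponents are chosen exactly so that the averaging hypotheses can be applied to the first and third factors.

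First I would unpack the inner product as $\Lambda(f_1,f_2,f_3)=\|f_2\cdot (Af_1)\cdot (Af_3)\|_1$ (with respect to the normalized counting measure), and then apply Hölder's inequality with the triple of exponents $(p_2,r_1,r_3)$, which is admissible by the assumption $\frac{1}{r_1}+\frac{1}{p_2}+\frac{1}{r_3}=1$. This yields
$$\Lambda(f_1,f_2,f_3)\le \|f_2\|_{p_2}\,\|Af_1\|_{r_1}\,\|Af_3\|_{r_3}.$$
The second step is purely bookkeeping: invoke the hypothesis $A(p_1\to r_1)\lesssim 1$ to bound $\|Af_1\|_{r_1}\lesssim \|f_1\|_{p_1}$, and similarly $A(p_3\to r_3)\lesssim 1$ to bound $\|Af_3\|_{r_3}\lesssim \|f_3\|_{p_3}$. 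Combining these gives the desired inequality $\Lambda(f_1,f_2,f_3)\lesssim \|f_1\|_{p_1}\|f_2\|_{p_2}\|f_3\|_{p_3}$, hence $\Lambda(p_1,p_2,p_3)\lesssim 1$.

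There is no real obstacle here: the hard work is already encoded in the reduction (\ref{DDAver}) and in the sharp averaging estimates of Theorem \ref{sharpA} (or Lemma \ref{BALem}). The only thing worth double-checking is that $f_1,f_2,f_3\ge 0$ so that the identity (\ref{DDAver}) makes sense without absolute values and all quantities are non-negative, which is built into Definition \ref{defIKT}. Thus the proof is a one-line application of Hölder's inequality followed by two invocations of the spherical averaging bound.
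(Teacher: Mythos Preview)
Your proof is correct and is essentially identical to the paper's own argument: both start from the identity $\Lambda(f_1,f_2,f_3)=\langle f_2,\,Af_1\cdot Af_3\rangle$, apply the three-factor H\"older inequality with exponents $(p_2,r_1,r_3)$, and then invoke the two averaging bounds $A(p_i\to r_i)\lesssim 1$ for $i=1,3$.
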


\begin{proof}  Since $ \Lambda(f_1,f_2,f_3)= <f_2, ~ Af_1 \cdot Af_3>,$  we obtain by H\"older's inequality with the first assumption that 
$$ \Lambda(f_1, f_2, f_3) \le ||Af_1||_{r_1} ||f_2||_{p_2}  ||Af_3||_{r_3} \lesssim ||f_1||_{p_1} ||f_2||_{p_2} ||f_3||_{p_3},$$
where  the averaging assumption was used for the last inequality. Hence,   $\Lambda(p_1, p_2, p_3) \lesssim 1,$   
as required.
\end{proof}

Now we state and prove  our boundedness results of $\Lambda(p_1, p_2, p_3)$ on $\mathbb F_q^d.$
\begin{theorem}\label{SharpNS} 
Let $1\le p_1, p_2, p_3\le\infty.$  Then, for the $P_2$-Operator $\Lambda$ on $\mathbb F_q^d,$ the following four statements hold: 
\begin{enumerate} 
\item [(i)]  If $0\le \frac{1}{p_1}, \frac{1}{p_3} \le \frac{d}{d+1}$ and $ \frac{1}{p_1}+\frac{d}{p_2}+ \frac{1}{p_3} \le d$, then $\Lambda(p_1, p_2, p_3)\lesssim 1. $

\item [(ii)] If $0\le \frac{1}{p_1} \le  \frac{d}{d+1} \le \frac{1}{p_3} \le 1$ and $ \frac{1}{dp_1}+\frac{1}{p_2}+ \frac{d}{p_3} \le d$, then $\Lambda(p_1, p_2, p_3)\lesssim 1. $

\item [(iii)] If $0\le \frac{1}{p_3} \le  \frac{d}{d+1} \le \frac{1}{p_1} \le 1$ and $ \frac{d}{p_1}+\frac{1}{p_2}+ \frac{1}{dp_3} \le d$, then $\Lambda(p_1, p_2, p_3)\lesssim 1. $

\item [(iv)] If $\frac{d}{d+1}\le \frac{1}{p_1}, \frac{1}{p_3} \le 1$ and $ \frac{d}{p_1}+\frac{1}{p_2}+ \frac{d}{p_3} \le 2d-1$, then $\Lambda(p_1, p_2, p_3)\lesssim 1. $
\end{enumerate}
\end{theorem}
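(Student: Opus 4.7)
The plan is to exploit the factorization $\Lambda(f_1,f_2,f_3) = \langle f_2,\, Af_1 \cdot Af_3\rangle$ from \eqref{DDAver} together with Lemma \ref{Hinge}, so that each of the four cases reduces to a choice, in each case, of appropriate output exponents $r_1, r_3$ for the averaging operator on two of the edges of the convex hull described by Lemma \ref{BALem}. If I can arrange $A(p_1 \to r_1) \lesssim 1$, $A(p_3 \to r_3) \lesssim 1$, and $\tfrac{1}{r_1} + \tfrac{1}{p_2} + \tfrac{1}{r_3} = 1$, then Lemma \ref{Hinge} immediately delivers $\Lambda(p_1,p_2,p_3) \lesssim 1$.

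For part (i), the constraint $\tfrac{1}{p_1}, \tfrac{1}{p_3} \in [0, \tfrac{d}{d+1}]$ places both $p_i$ on the lower edge of the hull, so I apply Lemma \ref{BALem}(i) twice to take $\tfrac{1}{r_i} = \tfrac{1}{dp_i}$; dividing the hypothesis $\tfrac{1}{p_1} + \tfrac{d}{p_2} + \tfrac{1}{p_3} \le d$ by $d$ yields $\tfrac{1}{r_1} + \tfrac{1}{p_2} + \tfrac{1}{r_3} \le 1$. For part (iv), both $p_i$ lie on the upper edge and Lemma \ref{BALem}(ii) gives $\tfrac{1}{r_i} = \tfrac{d}{p_i} - d + 1$; substituting and simplifying reproduces the stated hypothesis $\tfrac{d}{p_1} + \tfrac{1}{p_2} + \tfrac{d}{p_3} \le 2d-1$. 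Parts (ii) and (iii) are the mixed cases: pick the lower branch of Lemma \ref{BALem} for whichever $p_i$ satisfies $\tfrac{1}{p_i} \le \tfrac{d}{d+1}$ and the upper branch for the other, and the bookkeeping works out identically, producing the mixed inequalities stated.

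The one small gap to patch is that Lemma \ref{Hinge} is stated with the equality $\tfrac{1}{r_1}+\tfrac{1}{p_2}+\tfrac{1}{r_3}=1$, whereas the above only secures $\le 1$. I absorb the slack using the norm nesting $\|g\|_{r} \le \|g\|_{r'}$ for $r \le r'$ on $(\mathbb F_q^d, dx)$ with normalized counting measure: after shrinking $r_1$ (or $r_3$), the averaging estimate $A(p_i \to r_i) \lesssim 1$ is preserved since a weaker output norm only makes the inequality easier, and I can push $\tfrac{1}{r_1}$ (or $\tfrac{1}{r_3}$) upward until equality is reached. No genuine obstacle is anticipated; the main thing to watch is routine case-by-case arithmetic and the boundary points $\tfrac{1}{p_i} \in \{0, \tfrac{d}{d+1}, 1\}$ where the two branches of Lemma \ref{BALem} overlap and either may be chosen.
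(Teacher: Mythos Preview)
Your proposal is correct and follows essentially the same route as the paper: both arguments reduce each of the four cases to an application of Lemma \ref{Hinge} by choosing $r_1,r_3$ from the appropriate branch of Lemma \ref{BALem} (lower branch $\tfrac{1}{r_i}=\tfrac{1}{dp_i}$ when $\tfrac{1}{p_i}\le\tfrac{d}{d+1}$, upper branch $\tfrac{1}{r_i}=\tfrac{d}{p_i}-d+1$ otherwise), and the arithmetic matches the stated hypotheses. The only cosmetic difference is that the paper absorbs the slack in the inequality by invoking norm nesting to reduce to the equality case in the hypothesis at the outset, whereas you absorb it by shrinking $r_i$ after the fact; both are equivalent and valid.
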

\begin{proof} 
We proceed as follows.
\begin{enumerate}
 \item [(i)] By the nesting property of the norm,  it suffices to prove  it  in the case when
$0\le \frac{1}{p_1}, \frac{1}{p_3} \le \frac{d}{d+1}$ and $ \frac{1}{p_1}+\frac{d}{p_2}+ \frac{1}{p_3} =d.$
This equation can be rewritten as
$ \frac{1}{dp_1} +  \frac{1}{p_2}+ \frac{1}{dp_3}=1.$ 
Since $0\le \frac{1}{p_1}, \frac{1}{p_3}\le \frac{d}{d+1},$  we see from Lemma \ref{BALem} (i) that letting $ \frac{1}{r_1}=\frac{1}{dp_1}, \frac{1}{r_3}=\frac{1}{dp_3},$ we have  $A(p_1\to r_1)\lesssim 1$ and $A(p_3\to r_3)\lesssim 1.$ 

Since $\frac{1}{r_1} +\frac{1}{p_2}+\frac{1}{r_3}=1, $   applying Lemma \ref{Hinge} gives the required result.

\item [(ii)] As in the proof of the first part of the theorem, it will be enough to prove $\Lambda(p_1, p_2, p_3)\lesssim 1$ in the case when $0\le \frac{1}{p_1} \le  \frac{d}{d+1} \le \frac{1}{p_3} \le 1$ and $ \frac{1}{dp_1}+\frac{1}{p_2}+ \frac{d}{p_3} =d.$ Let $ \frac{1}{r_1}= \frac{1}{dp_1}$ and $\frac{1}{r_3}= \frac{d}{p_3}-d+1.$  Then by Lemma \ref{BALem} it follows that $A(p_1\to r_1)\lesssim 1$ and $A(p_3\to r_3)\lesssim 1.$ Also notice that $ \frac{1}{r_1}+ \frac{1}{p_2} + \frac{1}{r_3}=1.$ Hence,  Theorem \ref{SharpNS} (ii) follows from  Lemma \ref{Hinge}.
\item [(iii)] Switching  the roles of $p_1, p_2$,  the proof is exactly the same as that of the second part of this theorem. 
\item [(iv)] As before, it suffices to prove  the case when 
$\frac{d}{d+1}\le \frac{1}{p_1}, \frac{1}{p_3} \le 1$ and $ \frac{d}{p_1}+\frac{1}{p_2}+ \frac{d}{p_3} = 2d-1.$
Put $\frac{1}{r_k}=\frac{d}{p_k}-d+1$ for $k=1, 3.$  Then we see from  Lemma \ref{BALem} (ii) that  $A(p_k\to r_k)\lesssim 1$ for  $k=1, 3.$ Notice that $\frac{1}{r_1}+ \frac{1}{p_2} + \frac{1}{r_3}=1.$ Therefore,  using Lemma \ref{Hinge}  we finish the proof.
\end{enumerate}
\end{proof}

As a special case of Theorem \ref{SharpNS}, we obtain the following.
\begin{corollary} \label{HingeRes} For any dimensions $d\ge 2, $ we have
$\Lambda\left(\frac{d+1}{d},  \frac{d+1}{d-1},  \frac{d+1}{d}\right) \lesssim 1.$
\end{corollary}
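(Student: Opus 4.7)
The plan is to deduce this corollary as an immediate specialization of Theorem \ref{SharpNS}(i) to the exponents $p_1 = p_3 = \frac{d+1}{d}$ and $p_2 = \frac{d+1}{d-1}$. Two things must be checked: the range hypothesis on $1/p_1, 1/p_3$ and the scaling inequality $\frac{1}{p_1} + \frac{d}{p_2} + \frac{1}{p_3} \le d$. For the range, $\frac{1}{p_1} = \frac{1}{p_3} = \frac{d}{d+1}$ sits precisely at the right endpoint of the interval $\left[0, \frac{d}{d+1}\right]$, so the hypothesis of case (i) is satisfied at the critical endpoint. For the scaling equation I would compute
\[
\frac{1}{p_1} + \frac{d}{p_2} + \frac{1}{p_3}
= \frac{d}{d+1} + \frac{d(d-1)}{d+1} + \frac{d}{d+1}
= \frac{2d + d(d-1)}{d+1}
= \frac{d^2 + d}{d+1}
= d,
\]
and so the hypothesis $\frac{1}{p_1} + \frac{d}{p_2} + \frac{1}{p_3} \le d$ holds with equality.

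Conceptually, the corollary amounts to invoking the sharp critical endpoint averaging bound $A\!\left(\frac{d+1}{d} \to d+1\right) \lesssim 1$ from Theorem \ref{sharpA} on \emph{both} averaging factors in the bilinear-averaging representation $\Lambda(f_1,f_2,f_3) = \langle f_2,\, Af_1 \cdot Af_3 \rangle$ from \eqref{DDAver}, then combining via H\"older's inequality with exponents $(d+1, \tfrac{d+1}{d-1}, d+1)$ which sum reciprocally to $\tfrac{1}{d+1} + \tfrac{d-1}{d+1} + \tfrac{1}{d+1} = 1$. Since the content is a direct application of an already established theorem, there is no real obstacle; the only substantive step is the arithmetic verification above, which shows that this triple of exponents lies exactly on the critical boundary covered by case (i) of Theorem \ref{SharpNS}.
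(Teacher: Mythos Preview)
Your proof is correct and follows exactly the same approach as the paper, which simply says the result follows from Theorem \ref{SharpNS} with $p_1=p_3=\tfrac{d+1}{d}$ and $p_2=\tfrac{d+1}{d-1}$. Your version is more detailed in that you explicitly identify case (i), verify the endpoint condition and the scaling identity, and spell out the underlying H\"older/averaging mechanism, but the substance is identical.
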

\begin{proof}
This clearly follows from Theorem \ref{SharpNS}  by taking $p_1=p_3=\frac{d+1}{d},$  and $ p_2=\frac{d+1}{d-1}.$
\end{proof}


While we do not know whether Theorem \ref{SharpNS} is optimal or not, the result will play a crucial role in proving the following theorem which implies that Conjecture \ref{PConjK} is true for the graph $K_3$ and its subgraph $P_2$ in all dimensions $d\ge 2$ (see Corollary \ref{CormainthmS5} below).

\begin{theorem} \label{mainthmS5} Let $\Delta$ and $\Lambda$ be the operators associated with $K_3$ and $P_2$, respectively, on $\mathbb F_q^d.$ 
Then if  $\Delta(p_1, p_2, p_3)\lesssim 1$ for $1\le p_1, p_2, p_3\le \infty,$ we have $\Lambda(p_1, p_2, p_3)\lesssim 1.$
\end{theorem}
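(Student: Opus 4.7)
The plan is to express both $\Lambda$ and $\Delta$ as averages of a common non-negative trilinear form on the sphere product, and then relate them via a decomposition by the distance function. After the substitution $y_1 = x^1 - x^2$, $y_3 = x^3 - x^2$, set
\[
H(y_1,y_3) \;=\; \frac{1}{q^d}\sum_{x\in\mathbb F_q^d} f_1(x+y_1)\,f_2(x)\,f_3(x+y_3).
\]
Then the two forms become
\[
\Lambda(f_1,f_2,f_3) = \frac{1}{|S_t|^2}\sum_{y_1,y_3\in S_t} H(y_1,y_3),
\qquad
\Delta(f_1,f_2,f_3) = \frac{1}{|S_t|\,|S_t^{d-2}|}\sum_{\substack{y_1,y_3\in S_t \\ \|y_1-y_3\|=t}} H(y_1,y_3),
\]
so $\Lambda$ averages the non-negative kernel $H$ over all of $S_t\times S_t$, whereas $\Delta$ averages $H$ only over the ``triangle-edge'' subset.

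Next I would partition $S_t\times S_t$ according to the value $s=\|y_1-y_3\|\in\mathbb F_q$, rewriting
\[
\Lambda(f_1,f_2,f_3) \;=\; \sum_{s\in\mathbb F_q}\frac{M_s}{|S_t|}\;\Delta^{(s)}(f_1,f_2,f_3),
\]
where $M_s := |\{y_3\in S_t:\|y_1-y_3\|=s\}|$ (which is independent of $y_1\in S_t$ by the transitive action of the orthogonal group on $S_t$) and $\Delta^{(s)}$ is the trilinear operator attached to isoceles triangles of side lengths $(t,t,s)$, normalized in the same way as $\Delta=\Delta^{(t)}$. By the appendix results (Corollary \ref{KKo}), $M_s\sim q^{d-2}\sim |S_t^{d-2}|$ in the non-degenerate range, so each coefficient $M_s/|S_t|$ is of order $1/q$ and the sum contains $\sim q$ non-trivial terms.

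The crux is then to establish $\Delta^{(s)}(p_1,p_2,p_3)\lesssim 1$ uniformly in the admissible $s\neq 0$. The proof of the hypothesis $\Delta=\Delta^{(t)}(p_1,p_2,p_3)\lesssim 1$ relies only on geometric inputs (spherical averaging estimates, sphere-intersection cardinalities, and Fourier decay of the sphere measure) that are uniform in the radius; replacing $t$ by $s$ in the third edge leaves these inputs intact, so the same argument delivers $\Delta^{(s)}\lesssim 1$. Summing the $\sim q$ slices, each weighted by $\sim 1/q$, yields $\Lambda\lesssim\Delta\lesssim 1$.

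The main obstacle I anticipate is justifying the uniformity of the bound on $\Delta^{(s)}$ across admissible $s$, since the hypothesis only provides the bound at $s=t$. The degenerate slices (e.g.\ $s=0$, or those values of $s$ for which no $(t,t,s)$-triangle exists) contribute $O(1)$ terms out of $\sim q$ and can be handled by crude pointwise estimates on $H$ combined with H\"older, reducing the problem to the non-degenerate regime where the uniform bound applies.
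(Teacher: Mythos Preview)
Your decomposition of $\Lambda$ as a weighted sum of isoceles-triangle forms $\Delta^{(s)}$ is correct, but the argument has a real gap at the step where you claim $\Delta^{(s)}(p_1,p_2,p_3)\lesssim 1$ uniformly in $s$. The theorem is a conditional statement: you are given only that the inequality $\Delta(p_1,p_2,p_3)\lesssim 1$ \emph{holds}, not any particular proof of it. Appealing to ``the proof of the hypothesis'' and asserting that its inputs are ``uniform in the radius'' is therefore not a legitimate move --- the hypothesis has no proof attached to it, and a priori it could hold at some exponent triple for a reason specific to the equilateral configuration. Moreover, $\Delta^{(s)}$ is a $(t,t,s)$-triangle form, not a dilate of the $(t,t,t)$-form, so even a scaling argument cannot transfer the bound from $s=t$ to general $s$. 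You correctly flag this as ``the main obstacle,'' but you do not actually overcome it.

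The paper sidesteps the issue entirely by working at the level of exponents rather than operators. From $\Delta(p_1,p_2,p_3)\lesssim 1$ it extracts, via Proposition~\ref{NCD123}, the three necessary inequalities $\tfrac{d}{p_i}+\tfrac{1}{p_j}+\tfrac{1}{p_k}\le d$; these are purely algebraic constraints on $(1/p_1,1/p_2,1/p_3)$ and carry no memory of how $\Delta$ was bounded. It then invokes Theorem~\ref{SharpNS}, which gives sufficient conditions for $\Lambda(p_1,p_2,p_3)\lesssim 1$ coming solely from the sharp spherical averaging estimate and H\"older, and verifies by a four-case split (on whether $1/p_1$ and $1/p_3$ lie below or above $d/(d+1)$) that every triple satisfying those necessary inequalities also satisfies one of the sufficient conditions. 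No $\Delta^{(s)}$-bound is ever required. If you wanted to repair your route, you would have to prove $\Delta^{(s)}\lesssim 1$ for all admissible $s$ directly from the inequalities in Proposition~\ref{NCD123}, which is at least as much work as bounding $\Lambda$ itself.
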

\begin{proof}
Suppose that $\Delta(p_1, p_2, p_3)\lesssim 1$ for $1\le p_1, p_2, p_3\le \infty.$ Then, by Proposition \ref{NCD123}, the exponents $p_1, p_2, p_3$ satisfy the following three inequalities:
\begin{equation}\label{eqDelta}\frac{d}{p_1} +  \frac{1}{p_2}+ \frac{1}{p_3}\le d,  ~~\frac{1}{p_1} +  \frac{d}{p_2}+ \frac{1}{p_3}\le d,~~\frac{1}{p_1} +  \frac{1}{p_2}+ \frac{d}{p_3}\le d.\end{equation}
To complete the proof,  it remains to show that $\Lambda(p_1, p_2, p_3)\lesssim 1.$ We will prove this by considering the four cases depending on the sizes of $p_1$ and $p_3.$

\textbf{Case 1:} Suppose that $0\le \frac{1}{p_1}, \frac{1}{p_3} \le \frac{d}{d+1}.$ The condition \eqref{eqDelta} clearly implies that $\frac{1}{p_1}+\frac{d}{p_2}+ \frac{1}{p_3} \le d.$ 
Thus, by Theorem \ref{SharpNS} (i), we obtain the required conclusion that $\Lambda(p_1, p_2, p_3)\lesssim 1.$

\textbf{Case 2:} Suppose that $0\le \frac{1}{p_1} \le  \frac{d}{d+1} \le \frac{1}{p_3} \le 1.$ By  Theorem \ref{SharpNS} (ii), to prove that $\Lambda(p_1, p_2, p_3)\lesssim 1,$   it will be enough to show that
$$\frac{1}{dp_1}+\frac{1}{p_2}+ \frac{d}{p_3} \le d.$$
 However, this inequality clearly follows from the third inequality in \eqref{eqDelta} since $d\ge 2.$

\textbf{Case 3:} Suppose that $0\le \frac{1}{p_3} \le  \frac{d}{d+1} \le \frac{1}{p_1} \le 1.$ By  Theorem \ref{SharpNS} (iii), it suffices to show that 
$\frac{d}{p_1}+\frac{1}{p_2}+ \frac{1}{dp_3} \le d.$ However, this inequality can be easily obtained from the first inequality in \eqref{eqDelta}.

\textbf{Case 4:} Suppose that $\frac{d}{d+1}\le \frac{1}{p_1}, \frac{1}{p_3} \le 1.$ By Theorem \ref{SharpNS} (iv),  to show that $\Lambda(p_1, p_2, p_3)\lesssim 1$, we only need to prove that
$$\frac{d}{p_1}+\frac{1}{p_2}+ \frac{d}{p_3} \le 2d-1.$$
However, this inequality can be easily proven as follows:
$$ \frac{d}{p_1}+\frac{1}{p_2}+ \frac{d}{p_3} = \left(\frac{d}{p_1}+\frac{1}{p_2}+ \frac{1}{p_3}\right) + \frac{d-1}{p_3} \le d + \frac{d-1}{p_3} \le 2d-1,$$
where the first inequality follows from the first inequality in \eqref{eqDelta}, and the last inequality follows from a simple fact that $1\le p_3\le \infty.$
\end{proof}

\begin{remark}\label{Remark5.6} The reverse statement of Theorem \ref{mainthmS5} cannot be true. Indeed, we know by Corollary \ref{HingeRes} that $\Lambda\left(\frac{d+1}{d},  \frac{d+1}{d-1},  \frac{d+1}{d}\right) \lesssim 1.$  However, $\Delta\left(\frac{d+1}{d},  \frac{d+1}{d-1},  \frac{d+1}{d}\right)$ cannot be bounded, which can be easily shown by considering Proposition \ref{NCD123}, namely, the necessary conditions for the boundedness of $\Delta(p_1,p_2, p_3).$
\end{remark}

We invoke Theorem \ref{mainthmS5} to deduce the following result.
\begin{corollary} \label{CormainthmS5} Conjecture \ref{PConjK} is true for the graph $K_3$ and its subgraph $P_2$ in $\mathbb F_q^d$, $d\ge 2.$ 
\end{corollary}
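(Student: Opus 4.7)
The plan is simple: this corollary is essentially a direct translation of Theorem \ref{mainthmS5} into the language of Conjecture \ref{PConjK}. I would first verify that the graph pair $(G, G') = (K_3, P_2)$ on $n=3$ vertices satisfies the hypothesis \eqref{mainConj} of Conjecture \ref{PConjK} in every dimension $d \ge 2$. Indeed, $\delta(K_3) = 2$ since every vertex of the triangle has degree $2$, while $\delta(P_2) = 1$ since an endpoint of the path of length two has a single incident edge. Hence $\min\{\delta(K_3), d\} = \min\{2, d\} = 2 > 1 = \delta(P_2)$ whenever $d \ge 2$, so the hypothesis \eqref{mainConj} holds.

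Next, I would observe that the conclusion required by Conjecture \ref{PConjK} for this specific pair is precisely: whenever $\Lambda_{K_3}(p_1,p_2,p_3) = \Delta(p_1,p_2,p_3) \lesssim 1$, one also has $\Lambda_{P_2}(p_1,p_2,p_3) = \Lambda(p_1,p_2,p_3) \lesssim 1$. But this is exactly the statement of Theorem \ref{mainthmS5}, which has already been established for all $d \ge 2$. So the corollary is an immediate consequence, and the proof amounts to invoking Theorem \ref{mainthmS5}.

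There is no real obstacle here, since all the analytic content (in particular the case analysis via Theorem \ref{SharpNS} based on the location of $1/p_1, 1/p_3$ relative to $d/(d+1)$) has already been carried out in the proof of Theorem \ref{mainthmS5}. The only thing to check is the degree computation above to match the form of the hypothesis \eqref{mainConj}, which is routine.
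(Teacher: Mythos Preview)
Your proposal is correct and matches the paper's own proof essentially line for line: verify $\min\{\delta(K_3),d\}=2>1=\delta(P_2)$ so that hypothesis \eqref{mainConj} holds, then invoke Theorem \ref{mainthmS5}.
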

\begin{proof} It is clear that $P_2$ is a subgraph of $K_3$ in $\mathbb F_q^d.$ Sine $\delta(K_3)=2$, $d\ge 2$, and $\delta(P_2)=1,$  
we have $\min\{\delta(K_3), d\} =2 > \delta(P_2)=1.$ Hence, all assumptions of Conjecture \ref{PConjK} are satisfied for $K_3$ and $P_2.$  Then the statement of the corollary follows immediately from Theorem \ref{mainthmS5}.
\end{proof}

\section{Mapping properties for the $(C_4+t)$-Operator}
We investigate the mapping properties of the operator associated with the graph $C_4$ + diagonal. 
Throughout the remaining sections, we assume that $t$ is a non-zero element in $\mathbb F_q^*.$
Let  $f_i, 1\le i\le 4,$ be non-negative  real-valued functions on $\mathbb F_q^d.$ 

The operator ${\diamondsuit}_t$ is associated with the graph $C_4$ + diagonal $t$ (Figure \ref{fig:sub2K}), and we define ${\diamondsuit}_t(f_1,f_2,f_3, f_4)$ as the quantity
\begin{equation}\label{Rhdef} \frac{1}{q^d|S_{t}||S_t^{d-2}|^2} \sum_{x^1, x^2, x^3, x^4\in \mathbb F_q^d} S_t(x^1-x^2) S_t(x^2-x^3) S_t(x^3-x^4)  S_t(x^4-x^1)  S_t(x^1-x^3) \prod_{i=1}^4 f_i(x^i).\end{equation}

The operator ${\diamondsuit}_t$ is referred to as the $(C_4+t)$-Operator on $\mathbb F_q^d.$
Here, notice that we take the quantity $q^d|S_{t}||S_t^{d-2}|^2$ as the normalizing factor $\mathcal{N}(G)$ in \eqref{generalform}.  

Applying a change of variables by letting $x=x^1,  u=x^1-x^2, v=x^1-x^3, w=x^1-x^4,$   we see that 
\begin{equation}\label{eq6.1K}{\diamondsuit}_t(f_1, f_2, f_3, f_4)=\frac{1}{q^d} \sum_{x\in \mathbb F_q^d} f_1(x) T(f_2,f_3,f_4)(x)=< f_1,~ T(f_2, f_3, f_4)>,\end{equation}
where the operator $T(f_2, f_3, f_4)$ is defined by 
 \begin{equation}\label{defBTri} T(f_2, f_3, f_4)(x):= \frac{1}{|S_t||S_t^{d-2}|^2}    \sum_{u,v, w\in S_t} S_t(v-u) S_t(w-v)   f_2(x-u) f_3(x-v)f_4(x-w).\end{equation}

\begin{definition} Let $1\le p_1, p_2, p_3, p_4\le \infty.$ 
We define ${\diamondsuit}_t(p_1, p_2, p_3, p_4)$ to be  the smallest constant such that   the following estimate holds  for all non-negative real-valued functions $f_i, 1\le i\le 4,$ on $\mathbb F_q^d:$
$ {\diamondsuit}_t(f_1,f_2, f_3, f_4) \le {\diamondsuit}_t(p_1, p_2, p_3, p_4) ||f_1||_{p_1} ||f_2||_{p_2} ||f_3||_{p_3} ||f_4||_{p_4}.$
\end{definition} 

 We are asked to find $1\le p_1, p_2, p_3, p_4\le \infty$ such that  
\begin{equation}\label{DeltaPP} {\diamondsuit}_t(f_1, f_2, f_3, f_4) \lesssim ||f_1||_{p_1} ||f_2||_{p_2} ||f_3||_{p_3} ||f_4||_{p_4}\end{equation}
holds for all non-negative real-valued functions $f_i,1\le i\le 4,$ on $\mathbb F_q^d.$  
In other words,  our main problem is to determine  all numbers $1\le p_1, p_2, p_3, p_4 \le \infty$ such that
$ {\diamondsuit}_t(p_1, p_2, p_3, p_4) \lesssim 1.$

\begin{lemma} [Necessary conditions for the boundedness of ${\diamondsuit}_t(p_1, p_2, p_3, p_4)$  ]\label{N2244} 
Let ${\diamondsuit}_t$ be the $(C_4+t)$-Operator on $\mathbb F_q^d.$
If  ${\diamondsuit}_t(p_1, p_2, p_3, p_4) \lesssim 1$, then we have
$$ \frac{1}{p_1} +\frac{1}{p_2}+\frac{d}{p_3}+\frac{1}{p_4} \le d, \quad\frac{d}{p_1} +\frac{1}{p_2}+\frac{1}{p_3}+\frac{1}{p_4}\le d,\quad\mbox{and}\quad \frac{1}{p_1} +\frac{d}{p_2}+\frac{1}{p_3}+\frac{d}{p_4}\le 2d-2. $$
Also, under this assumption when $d = 2,$ it can be shown by Polymake \cite{AGHJLP, GJ} that $(1/p_1, 1/p_2, 1/p_3, 1/p_4)$ is contained in the convex hull of the points $(0,0,1,0),$ $(0,1,0,0),$ $(0,0,0,1),$ $(1/2,0,1/2,1/2),$ $(2/3,2/3,0,0),$ $(1,0,0,0),$ $(2/3,0,2/3,0),$ $(1/2,1/2,1/2,0),$ $(2/3,0,0,2/3),$ $(0,2/3,2/3,0),$ $(0,0,0,0),$ $(0,0,2/3,2/3).$
\end{lemma}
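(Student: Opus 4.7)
The plan is to test the defining inequality $\diamondsuit_t(f_1,f_2,f_3,f_4) \lesssim \prod_i \|f_i\|_{p_i}$ against a handful of indicator-function tuples built from $\delta_0$ and $1_{S_t}$, following exactly the template used for Propositions \ref{NesLam} and \ref{NCD123}. The geometric point is that the graph $C_4+$ diagonal has two vertices of degree three (the diagonal endpoints $x^1,x^3$) and two of degree two ($x^2,x^4$). Each of the three inequalities arises by placing a Dirac mass at one degree-three vertex, at the other degree-three vertex, or at both degree-two vertices simultaneously, and comparing $q$-exponents on both sides.

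For the first inequality $\frac{1}{p_1}+\frac{1}{p_2}+\frac{d}{p_3}+\frac{1}{p_4}\le d$, I would take $f_3=\delta_0$ and $f_1=f_2=f_4=1_{S_t}$, so that $\prod_i\|f_i\|_{p_i}\sim q^{-1/p_1-1/p_2-d/p_3-1/p_4}$. Forcing $x^3=0$ reduces the edge constraints to $x^1,x^2,x^4\in S_t$ with $\|x^1-x^2\|=\|x^1-x^4\|=t$; for each of the $\sim q^{d-1}$ choices of $x^1\in S_t$, Corollary \ref{KKo} from the appendix gives $\sim q^{d-2}$ valid choices of $x^2$ and of $x^4$, for a total count $\sim q^{3d-5}$. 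Dividing by the normalizing factor $q^d|S_t||S_t^{d-2}|^2\sim q^{4d-5}$ yields $\diamondsuit_t\sim q^{-d}$, and the exponent comparison gives the claim. The second inequality follows by the same computation after swapping the roles of $x^1$ and $x^3$ (both are degree-three vertices adjacent to each other and to $x^2,x^4$), i.e.\ taking $f_1=\delta_0$ and $f_2=f_3=f_4=1_{S_t}$. For the third inequality, I would put $f_2=f_4=\delta_0$ and $f_1=f_3=1_{S_t}$; the surviving constraints are just $x^1,x^3\in S_t$ with $\|x^1-x^3\|=t$, so one application of Corollary \ref{KKo} gives a count $\sim q^{2d-3}$, hence $\diamondsuit_t\sim q^{-2d+2}$, which matched against $\prod_i\|f_i\|_{p_i}\sim q^{-1/p_1-d/p_2-1/p_3-d/p_4}$ produces $\frac{1}{p_1}+\frac{d}{p_2}+\frac{1}{p_3}+\frac{d}{p_4}\le 2d-2$. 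The Polymake statement for $d=2$ is then a routine vertex enumeration: one asks Polymake for the vertices of the polytope in $[0,1]^4$ cut out by the three linear inequalities, and reads off the list.

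The main (and essentially the only) obstacle is the sphere-intersection count: the estimate $|S_t\cap(c+S_t)|\sim q^{d-2}$ must hold for centers $c\in S_t$, and in dimension two this intersection is empty unless $3$ is a square in $\mathbb{F}_q$ (the geometric content being that a genuine equilateral triangle of side length $t$ exists). Both concerns are absorbed by Corollary \ref{KKo} together with the paper's standing hypothesis that $3\in(\mathbb{F}_q^*)^2$ when $d=2$, so the rest is bookkeeping with the normalizing constants.
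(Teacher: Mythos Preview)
Your proposal is correct and follows essentially the same approach as the paper: the same three test-function tuples ($f_3=\delta_0$ with the rest $1_{S_t}$; the symmetric choice $f_1=\delta_0$; and $f_2=f_4=\delta_0$, $f_1=f_3=1_{S_t}$), the same use of Corollary \ref{KKo} to count sphere intersections, and the same comparison of $q$-exponents against the normalizing factor $q^d|S_t||S_t^{d-2}|^2\sim q^{4d-5}$. Your write-up is in fact slightly cleaner than the paper's on the third inequality, where you correctly record $\diamondsuit_t\sim q^{-2d+2}$ (the lower bound is what drives the necessary condition), whereas the paper only displays the upper bound before invoking the comparison.
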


\begin{proof}  We prove the first conclusion that
\begin{equation}\label{Fe1}\frac{1}{p_1} +\frac{1}{p_2}+\frac{d}{p_3}+\frac{1}{p_4} \le d.\end{equation}

We notice from \eqref{eq6.1K} and \eqref{defBTri} that ${\diamondsuit}_t(f_1,f_2,f_3,f_4)$ becomes
\begin{equation*} \frac{1}{q^d} \sum_{x\in \mathbb F_q^d} f_1(x) \left[ \frac{1}{|S_t||S_t^{d-2}|^2}    \sum_{v\in S_t} f_3(x-v) \left( \sum_{u, w\in S_t} S_t(v-u) S_t(w-v)   f_2(x-u) f_4(x-w)\right)\right].\end{equation*}
 
Taking $f_1=f_2=f_4=1_{S_t},$ and $f_3=\delta_0,$   we see that
$$ ||f_1||_{p_1} ||f_2||_{p_2} |||f_3||_{p_3} ||f_4||_{p_4}  \sim q^{-\frac{1}{p_1}}q^{-\frac{1}{p_2}}q^{-\frac{d}{p_3}}   q^{-\frac{1}{p_4}}$$
and 
$$ {\diamondsuit}_t(f_1,f_2,f_3, f_4) = \frac{1}{q^d} \sum_{x\in S_t}  \frac{1}{|S_t||S_t^{d-2}|^2} \left(\sum_{u\in S_{t}: ||x-u||=t} 1\right)\left(\sum_{w\in S_{t}: ||x-w||=t} 1\right) \sim q^{-d},$$ 
which yields the required necessary condition \eqref{Fe1} ,  where 
the last similarity above  follows  from Corollary \ref{KKo} in Appendix.

The second conclusion follows by symmetry from the first conclusion.

To prove the third conclusion that is
$\frac{1}{p_1} +\frac{d}{p_2}+\frac{1}{p_3}+\frac{d}{p_4}\le 2d-2,$ 
we test the inequality \eqref{DeltaPP} with $f_1=f_3=1_{S_t}$ and $ f_2=f_4=\delta_0.$ 

Then we have
$$ ||f_1||_{p_1}||f_2||_{p_2} ||f_3||_{p_3} ||f_4||_{p_4} = \left(\frac{|S_t|}{q^d}\right)^{1/p_1}  \left(\frac{1}{q^d}\right)^{1/p_2}  \left(\frac{|S_t|}{q^d}\right)^{1/p_3} \left(\frac{1}{q^d}\right)^{1/p_4} \sim  q^{- \frac{1}{p_1} - \frac{d}{p_2} - \frac{1}{p_3}-\frac{d}{p_4}}.$$

 On the other hand, taking  $f_1=f_3=1_{S_t}$ and $ f_2=f_4=\delta_0$ in the definition \eqref{Rhdef},  we see that
$${\diamondsuit}_t(f_1, f_2, f_3, f_4)  =\frac{1}{q^d|S_{t}||S_t^{d-2}|^2} \sum_{x^1\in S_t}  \left(\sum_{x^3\in S_t}S_t(x^1-x^3)\right)\lesssim \frac{1}{q^d|S_t^{d-2}|}\sim q^{-2d+2}.$$
Hence, by \eqref{DeltaPP} we must have the required third conclusion that 
$ \frac{1}{p_1} +\frac{d}{p_2}+\frac{1}{p_3}+\frac{d}{p_4}\le 2d-2.$
\end{proof}

\subsection{Boundedness results for ${\diamondsuit}_t$ on $\mathbb F_q^d$}

Given a rhombus with a fixed diagonal(the graph $C_4$ + diagonal),  we will show that   by removing the vertex $x^2$ or  the vertex $x^4,$ 
$$ {\diamondsuit}_t(f_1,f_2,f_3, f_4) \lesssim ||f_2||_\infty  \Delta(f_1, f_3, f_4)\quad \mbox{and} \quad  {\diamondsuit}_t(f_1,f_2,f_3, f_4) \lesssim ||f_4||_\infty  \Delta(f_1, f_2, f_3).$$
Hence,   upper bounds of ${\diamondsuit}_t(p_1, \infty, p_3, p_4)$ and ${\diamondsuit}_t(p_1, p_2, p_3, \infty)$ can be controlled by upper bounds of the  $\Delta(p_1, p_3, p_4)$ and $\Delta(p_1, p_2, p_3),$ respectively. More precisely, we have the following relation.
\begin{proposition}\label{TriDia} Suppose that $\Delta(p, s, r) \lesssim 1 $ for $1\le p, s, r\le \infty.$ Then we have
$$ {\diamondsuit}_t(p, \infty, s, r)\lesssim 1 \quad \mbox{and}\quad {\diamondsuit}_t(p, s, r, \infty)\lesssim 1.$$
\end{proposition}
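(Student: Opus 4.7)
The plan is to exploit the structural observation that removing the vertex $x^2$ (resp.\ $x^4$) from the $C_4$+diagonal graph leaves the triangle $K_3$ on the remaining three vertices, with the diagonal of the rhombus serving as one of the triangle's edges. Concretely, if I bound the function at the removed vertex pointwise by its $L^\infty$-norm and then sum that vertex out, the result should reduce exactly to the $K_3$-form $\Delta$, provided the inner sum over the removed vertex admits a good uniform upper bound. Given the hypothesis $\Delta(p,s,r)\lesssim 1$, that would finish both conclusions.

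Concretely, I would start from the defining formula \eqref{Rhdef} and, for the first estimate, pull out $f_2(x^2)\le \|f_2\|_\infty$. The inner sum over $x^2$ then becomes
$$ N(x^1,x^3) := \sum_{x^2 \in \mathbb{F}_q^d} S_t(x^1-x^2)\, S_t(x^2-x^3), $$
which counts points in the intersection of two spheres of radius $t$ centered at $x^1$ and $x^3$. The diagonal factor $S_t(x^1-x^3)$ in the remaining product forces $\|x^1-x^3\|=t$, so by Corollary \ref{KKo} in the Appendix (the sphere-intersection count for centers at distance $t$) we get the uniform bound $N(x^1,x^3)\lesssim |S_t^{d-2}|$.

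Substituting this back in is where the normalizations work out cleanly: the ratio of the normalizing factors $q^d|S_t||S_t^{d-2}|^2$ for $\diamondsuit_t$ versus $q^d|S_t||S_t^{d-2}|$ for $\Delta$ is exactly $|S_t^{d-2}|$, which absorbs the loss from $N(x^1,x^3)$. What remains is precisely $\|f_2\|_\infty$ times the $\Delta$-form evaluated at $(f_1,f_3,f_4)$, i.e.
$$ \diamondsuit_t(f_1,f_2,f_3,f_4) \lesssim \|f_2\|_\infty \cdot \Delta(f_1,f_3,f_4). $$
Invoking the hypothesis $\Delta(p,s,r)\lesssim 1$ then gives $\diamondsuit_t(p,\infty,s,r)\lesssim 1$. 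The second estimate $\diamondsuit_t(p,s,r,\infty)\lesssim 1$ is handled by the same argument with the roles of $x^2$ and $x^4$ interchanged, since after removing $x^4$ the remaining edges $(x^1,x^2),(x^2,x^3),(x^1,x^3)$ again form a $K_3$ on $\{x^1,x^2,x^3\}$.

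I expect no substantive obstacle: the only nontrivial ingredient is the appendix lemma ensuring the sphere-intersection count is uniformly $\lesssim |S_t^{d-2}|$, and the rest is bookkeeping of normalizing constants. The mild subtlety to verify is that in dimension $d=2$ the standing assumption that $3$ is a square in $\mathbb{F}_q$ makes the count $N(x^1,x^3)$ nontrivial (equal to $2$), so the bound is genuine and the reduction is not vacuous.
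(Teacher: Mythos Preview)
Your proposal is correct and follows essentially the same approach as the paper's proof: bound the function at the removed vertex by its $L^\infty$-norm, sum out that vertex using the sphere-intersection count from Corollary~\ref{KKo} (which gives $\lesssim |S_t^{d-2}|$ since the diagonal forces $\|x^1-x^3\|=t$), and observe that this extra factor of $|S_t^{d-2}|$ exactly matches the ratio of normalizing constants, reducing to $\Delta$. The paper carries out the details for the $f_4$-removal case and invokes symmetry for the $f_2$ case, while you do the reverse, but the argument is identical.
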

\begin{proof}
Since $\Delta(p, s, r) \lesssim 1 $ for $1\le p, s, r\le \infty,$   we see that for all non-negative functions $ f, g, h$ on $\mathbb F_q^d,$  
$ \Delta(f, g, h) \lesssim  ||f||_p ||g||_s ||h||_r.$
Thus, to complete the proof,  it will be enough to establish the following estimates: for all non-negative functions $f_i, i=1,2,3, 4,$ 
\begin{equation} \label{rhendnjs1}
{\diamondsuit}_t(f_1,f_2,f_3, f_4) \lesssim ||f_2||_\infty  \Delta(f_1, f_3, f_4)
\end{equation}
and
\begin{equation}\label{rhendnjs2}
{\diamondsuit}_t(f_1,f_2,f_3, f_4) \lesssim ||f_4||_\infty  \Delta(f_1, f_2, f_3).
\end{equation}

Since the proofs of both \eqref{rhendnjs1} and \eqref{rhendnjs2} are the same, we only provide the proof of  the estimate \eqref{rhendnjs2}.  Notice by the definition of ${\diamondsuit}_t(f_1,f_2,f_3, f_4)$ in \eqref{Rhdef}  that  ${\diamondsuit}_t(f_1,f_2,f_3,f_4)$ can be written as the form
$$\frac{1}{q^d|S_{t}||S_t^{d-2}|} \sum_{\substack{x^1, x^2, x^3\in \mathbb F_q^d\\
: ||x^1-x^2||=||x^2-x^3||= ||x^1-x^3||=t}} \left( \prod_{i=1}^3 f_i(x^i)\right) \left[\frac{1}{|S_t^{d-2}|}\sum_{x^4\in \mathbb F_q^d}S_t(x^3-x^4)  S_t(x^4-x^1)  f_4(x^4)\right].  $$

For each $x^1, x^3 \in \mathbb F_q^d$ with $||x^1-x^3||=t,$   we  define  $M(x^1, x^3)$ as the value in the above bracket. 
Then, recalling the definition of $\Delta(f_1,f_2,f_3) $ given in \eqref{KDeltatD},   we see that
$${\diamondsuit}_t(f_1,f_2,f_3,f_4) \le  \left(\max_{\substack{x^1, x^3\in \mathbb F_q^d\\
:||x^1-x^3||=t}} M(x^1, x^3) \right) \Delta(f_1,f_2,f_3).$$
Hence,  the estimate \eqref{rhendnjs2} follows immediately by proving the following claim:
\begin{equation}\label{claim1}
M:=\max_{\substack{x^1, x^3\in \mathbb F_q^d\\
:||x^1-x^3||=t}}  \frac{1}{|S_t^{d-2}|}\sum_{x^4\in \mathbb F_q^d}S_t(x^3-x^4)  S_t(x^4-x^1)  \lesssim 1.
\end{equation}
To prove this claim,  we first apply a change of variables by letting $x=x^1,  y=x^1-x^3.$ Then it follows that
$$ M=\max_{x\in \mathbb F_q^d, y\in S_t}  \frac{1}{|S_t^{d-2}|}\sum_{x^4\in \mathbb F_q^d}S_t(x-y-x^4)  S_t(x^4-x)  .$$
Letting $z=x-x^4,$   we have
$$ M=\max_{x\in \mathbb F_q^d, y\in S_t}  \frac{1}{|S_t^{d-2}|} \sum_{z\in S_t: ||z-y||=t} 1 \sim \frac{1}{q^{d-2}}\max_{y\in S_t} \sum_{z\in S_t: ||z-y||=t} 1$$
By Corollary \ref{KKo} in Appendix,  we conclude  that $M\lesssim 1,$ as required.
\end{proof}

In arbitrary dimensions $d\ge 2,$ we have the following consequences.

\begin{theorem}  \label{mainAT}
 Suppose that  $1\le a, b\le \infty$ satisfy that  
\begin{equation}\label{eqstart}\frac{1}{a} +  \frac{d}{b}\le d \quad \mbox{and} \quad   \frac{d}{a} +  \frac{1}{b}\le d.\end{equation}
Namely,  let $(1/a, 1/b)$ be contained in the convex hull of points $(0,0),$ $(0,1),$  $(d/(d+1), d/(d+1)),$ $(1,0).$
Then we have
${\diamondsuit}_t(a, \infty, b, \infty)\lesssim 1,$ ${\diamondsuit}_t(a, b,\infty, \infty)\lesssim 1 ,$ $  {\diamondsuit}_t(a, \infty, \infty, b)\lesssim 1,$ $ {\diamondsuit}_t(\infty, \infty, a, b)\lesssim 1,$ $ {\diamondsuit}_t(\infty, a, b, \infty)\lesssim 1 .$
\end{theorem}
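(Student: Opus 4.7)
The plan is to deduce each of the five estimates by combining Proposition~\ref{TriDia} with the sharp triangle bound Theorem~\ref{thmSi}. Proposition~\ref{TriDia} tells us that pushing either $f_2$ or $f_4$ into $L^\infty$ collapses $\diamondsuit_t$ into a $K_3$-form: whenever $\Delta(p,s,r)\lesssim 1$ we have both $\diamondsuit_t(p,\infty,s,r)\lesssim 1$ and $\diamondsuit_t(p,s,r,\infty)\lesssim 1$. Each of the five target inequalities places an $\infty$ in slot $2$ or slot $4$, so in every case one of these two reductions applies.

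Concretely, I would handle the five cases as follows. For $\diamondsuit_t(a,\infty,b,\infty)$ and $\diamondsuit_t(a,b,\infty,\infty)$, remove the vertex $x^2$ or $x^4$ respectively via Proposition~\ref{TriDia} to reduce to $\Delta(a,b,\infty)$. For $\diamondsuit_t(a,\infty,\infty,b)$ and $\diamondsuit_t(\infty,\infty,a,b)$, collapse the $x^2$-slot to get $\Delta(a,\infty,b)$ and $\Delta(\infty,a,b)$ respectively. Finally, for $\diamondsuit_t(\infty,a,b,\infty)$, collapse the $x^4$-slot to reach $\Delta(\infty,a,b)$. In all five cases the resulting $\Delta$-exponents are some permutation of $(a,b,\infty)$, and the pair $(a,b)$ satisfies exactly the hypothesis $\frac{1}{a}+\frac{d}{b}\le d$ and $\frac{d}{a}+\frac{1}{b}\le d$ needed to apply Theorem~\ref{thmSi}, which yields the three bounds $\Delta(a,b,\infty),\ \Delta(a,\infty,b),\ \Delta(\infty,a,b)\lesssim 1$.

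There is essentially no obstacle beyond bookkeeping here; the work has already been done in Proposition~\ref{TriDia} (which handles the geometry of the diagonal by the intersection-of-spheres counts) and in Theorem~\ref{thmSi} (which in turn reduces to the $K_2$-bound via Lemma~\ref{SimL}). The only care needed is to check, for each of the five target estimates, which of the two reductions in Proposition~\ref{TriDia} can be applied and that the surviving triple of exponents is one of the three permutations covered by Theorem~\ref{thmSi}. Once this is verified, combining the two results gives $\diamondsuit_t\lesssim 1$ in each case, completing the proof.
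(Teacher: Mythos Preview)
Your proposal is correct and follows essentially the same approach as the paper: the paper's proof likewise invokes Theorem~\ref{thmSi} to obtain $\Delta(a,b,\infty)\lesssim 1$, $\Delta(a,\infty,b)\lesssim 1$, $\Delta(\infty,a,b)\lesssim 1$ from the hypothesis \eqref{eqstart}, and then applies Proposition~\ref{TriDia} to pass to the five $\diamondsuit_t$ estimates. Your case-by-case bookkeeping of which slot to collapse is accurate and simply makes explicit what the paper leaves to the reader.
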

\begin{proof} From Theorem \ref{thmSi},   we know that  the assumption \eqref{eqstart} implies that
$ \Delta(a, b, \infty)\lesssim 1,$ $\Delta(a, \infty, b)\lesssim 1$ and  $\Delta(\infty, a, b)\lesssim 1.$ 
Hence, the statement of the theorem follows immediately  by combining  these and Proposition \ref{TriDia}.
\end{proof}

\subsection{ Sharp boundedness results up to endpoints for $\diamondsuit_t$ on $\mathbb F_q^2$}
In this subsection, we collect our boundedness results for the operator  $\diamondsuit_t$ in two dimensions.

\begin{theorem} \label{R1K}Let $\diamondsuit_t$ be the $(C_4+t)$-Operator on $\mathbb F_q^2.$ Let $1\le p_1, p_2, p_3\le \infty.$    
\begin{enumerate}
\item [(i)]  Suppose that $(p_1, p_2, p_3)\ne (2,2,2)$ satisfies the following equations:
$$\frac{2}{p_1} +  \frac{1}{p_2}+ \frac{1}{p_3}\le 2,  ~~\frac{1}{p_1} +  \frac{2}{p_2}+ \frac{1}{p_3}\le 2,~~\frac{1}{p_1} +  \frac{1}{p_2}+ \frac{2}{p_3}\le 2.$$
Then we have $ \diamondsuit_t(p_1, \infty, p_2, p_3)\lesssim 1$ and $ \diamondsuit_t(p_1, p_2, p_3, \infty)\lesssim 1.$

\item [(ii)] In addition,  we have $\diamondsuit_t(2, \infty, 2,2)\lessapprox 1$ and $\diamondsuit_t(2,2,2, \infty)\lessapprox 1,$
where $\lessapprox$ is used to denote that  the boundedness of $\diamondsuit_t$ holds for all indicator test functions.
\end{enumerate}
\end{theorem}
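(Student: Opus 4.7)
The plan is to derive Theorem \ref{R1K} almost entirely from results already established in the paper: Theorem \ref{Best2D} (sharp boundedness of the $K_3$-operator $\Delta$ on $\mathbb{F}_q^2$, together with the restricted strong-type estimate at $(2,2,2)$) and Proposition \ref{TriDia} (the pointwise comparison between $\diamondsuit_t$ and $\Delta$). The essential observation is that the triple of exponents $(p_1,p_2,p_3)$ appearing in Theorem \ref{R1K} satisfies exactly the same system of three inequalities as the triple controlling $\Delta$ in Theorem \ref{Best2D}, so the reduction is direct.

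First, for part (i), fix $(p_1,p_2,p_3)\neq(2,2,2)$ satisfying the three inequalities. Theorem \ref{Best2D}(ii) then gives $\Delta(p_1,p_2,p_3)\lesssim 1$. Feeding this into Proposition \ref{TriDia} with $(p,s,r)=(p_1,p_2,p_3)$ yields the two bounds
\[ \diamondsuit_t(p_1,\infty,p_2,p_3)\lesssim 1 \quad\text{and}\quad \diamondsuit_t(p_1,p_2,p_3,\infty)\lesssim 1, \]
which is exactly the statement of part (i).

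For part (ii), I would revisit the proof of Proposition \ref{TriDia}. That proof actually establishes the two pointwise-in-norm majorizations
\[ \diamondsuit_t(f_1,f_2,f_3,f_4)\lesssim \|f_2\|_\infty\,\Delta(f_1,f_3,f_4), \qquad \diamondsuit_t(f_1,f_2,f_3,f_4)\lesssim \|f_4\|_\infty\,\Delta(f_1,f_2,f_3), \]
valid for arbitrary non-negative functions, and in particular for characteristic functions $1_E,1_F,1_G,1_H$. Since $\|1_F\|_\infty\le 1$, combining these with the restricted strong-type estimate $\Delta(2,2,2)\lessapprox 1$ (which is Theorem \ref{mainRSE}, recorded in Theorem \ref{Best2D}(ii)) immediately gives
\[ \diamondsuit_t(1_E,1_F,1_G,1_H)\lesssim \Delta(1_E,1_G,1_H)\lesssim \|1_E\|_2\|1_G\|_2\|1_H\|_2, \]
and the analogous estimate with the roles of $1_F$ and $1_H$ swapped. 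These are precisely $\diamondsuit_t(2,\infty,2,2)\lessapprox 1$ and $\diamondsuit_t(2,2,2,\infty)\lessapprox 1$.

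There is really no hard step: once Theorem \ref{Best2D} and Proposition \ref{TriDia} are in place, the whole theorem is a bookkeeping exercise. The only mild care required is in part (ii), where one must notice that the key inequalities in the proof of Proposition \ref{TriDia} are actually pointwise bounds that pass restricted-type estimates through without any loss, rather than a black-box consequence of strong-type bounds on $\Delta$; this is what lets the $(2,2,2)$ endpoint, where only the restricted form of $\Delta$ is available, still transfer to a restricted bound for $\diamondsuit_t$.
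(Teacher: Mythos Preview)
Your proposal is correct and follows essentially the same route as the paper: both parts are obtained by combining Theorem \ref{Best2D}(ii) with Proposition \ref{TriDia}, and for part (ii) you correctly observe (as the paper does in one line) that the pointwise majorizations inside the proof of Proposition \ref{TriDia} transfer the restricted strong-type bound $\Delta(2,2,2)\lessapprox 1$ to $\diamondsuit_t$. Your write-up is actually a bit more explicit than the paper's about why the restricted endpoint passes through, which is helpful.
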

\begin{proof} Notice that Proposition \ref{TriDia} still holds after replacing $\lesssim$ by $\lessapprox.$  Hence,  the statement of the theorem is directly obtained by combining Proposition \ref{TriDia} and  Theorem  \ref{Best2D} (ii). 
\end{proof}

Theorem \ref{R1K} guarantees the sharp boundedness for the operator $\diamond_t$ up to endpoints. Indeed, we have the following result.
\begin{theorem}\label{KiteK} Let $\diamondsuit_t$ be the $(C_4+t)$-Operator on $\mathbb F_q^2.$ The necessary conditions for $\diamondsuit_t(p_1, p_2, p_3, p_4)\lesssim 1$ given in Lemma \ref{N2244}  are  sufficient except for the two points $(p_1, p_2, p_3, p_4)= (2,2,2, \infty),$ $(2, \infty, 2,2).$ 

In addition, we have
\begin{equation}\label{intk}  \diamondsuit_t(2, \infty, 2,2)\lessapprox 1 \quad\mbox{and} \quad \diamondsuit_t(2,2,2, \infty)\lessapprox 1 .\end{equation}
\end{theorem}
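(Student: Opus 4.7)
The strategy is to combine the endpoint bounds from Theorem \ref{R1K} with multilinear interpolation over the polytope of necessary conditions singled out in Lemma \ref{N2244}. It suffices to establish $\diamondsuit_t(p_1, p_2, p_3, p_4) \lesssim 1$ at each of the 12 extreme points listed there (with the two exceptional points handled only at the restricted level) and then propagate by interpolation.

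The key structural observation is that every one of the 12 listed vertices $(1/p_1, 1/p_2, 1/p_3, 1/p_4)$ has either $1/p_2 = 0$ or $1/p_4 = 0$. This is precisely the hypothesis needed to invoke Theorem \ref{R1K}, which bounds $\diamondsuit_t(p_1, \infty, p_2, p_3)$ and $\diamondsuit_t(p_1, p_2, p_3, \infty)$. Stripping the $\infty$-slot from each vertex and relabeling produces a triple of exponents in $[1,\infty]$: for 10 of the 12 vertices this triple satisfies the necessary inequalities of Theorem \ref{Best2D} for $d=2$ and is distinct from $(2,2,2)$, so Theorem \ref{R1K}(i) delivers the full strong-type bound at those vertices. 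The remaining two vertices $(1/2, 0, 1/2, 1/2)$ and $(1/2, 1/2, 1/2, 0)$ in Lemma \ref{N2244} correspond to $(p_1, p_2, p_3, p_4) = (2,\infty,2,2)$ and $(2,2,2,\infty)$ respectively, and both reduce to the triple $(2,2,2)$, which is exactly the restricted-only endpoint in Theorem \ref{Best2D}(ii). At these two points Theorem \ref{R1K}(ii) supplies the restricted strong-type inequality $\diamondsuit_t \lessapprox 1$, which is precisely the claim \eqref{intk}.

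With strong-type bounds in hand at 10 of the 12 vertices and restricted strong-type at the other two, interpolation concludes the proof. Multilinear Riesz--Thorin interpolation among the 10 strong-type vertices yields strong-type $\diamondsuit_t \lesssim 1$ at every point of the polytope lying in their convex hull. For points on faces that contain one of the two critical vertices, we invoke a multilinear Marcinkiewicz-type argument: any such point other than the critical vertex itself is a convex combination placing weight strictly less than $1$ on the critical corner, and combining the restricted strong-type at that corner with the surrounding strong-type bounds upgrades restricted strong-type to strong-type at the interior point.

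The main technical obstacle lies in this final interpolation step, namely verifying that the restricted-only estimate at the two critical corners does not prevent full strong-type boundedness at every other point of the polytope. This is where it is crucial that the two critical points $(2,\infty,2,2)$ and $(2,2,2,\infty)$ are \emph{isolated} extreme vertices rather than lying on a higher-dimensional critical face; consequently the standard multilinear Marcinkiewicz framework applies without modification, and the exceptional set in the statement is confined to just those two vertices as claimed.
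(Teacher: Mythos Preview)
Your proof is correct and follows essentially the same approach as the paper's: reduce to the twelve extreme points of the polytope in Lemma~\ref{N2244}, apply Theorem~\ref{R1K} at each vertex (strong-type from part (i) at ten of them, restricted strong-type from part (ii) at the two exceptional ones), and then interpolate. Your structural observation that every vertex has $1/p_2=0$ or $1/p_4=0$ makes explicit what the paper leaves to ``direct computation,'' and your discussion of the Marcinkiewicz-type upgrade near the two restricted vertices is more careful than the paper, which simply invokes ``the interpolation theorem'' without elaboration.
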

\begin{proof} The statement \eqref{intk} was already proven  in  Theorem \ref{R1K} (ii).  Hence, using the interpolation theorem and the second part of Lemma \ref{N2244}, the matter is reducing to proving
$\diamondsuit_t(p_1, p_2, p_3, p_4) \lesssim 1$ for the critical endpoints  $ (1/p_1, 1/p_2, 1/p_3, 1/p_4)$ including all the following points: 
$(0,0,0,0),$ $(1,0,0,0),$ $(0,1,0,0),$ $(0,0,1,0),$  $(0,0,0,1),$ $(2/3,2/3,0,0),$  $(2/3,0,2/3,0),$  $(2/3,0,0,2/3),$ $(0,2/3,2/3,0),$  $(0,0,2/3,2/3).$ 

In other words,  the proof will be complete by proving the following estimates: 
$\diamondsuit_t(\infty, \infty, \infty, \infty) \lesssim 1,$ 
~$\diamondsuit_t(1, \infty, \infty, \infty) \lesssim 1,$
~$\diamondsuit_t(\infty, 1, \infty, \infty) \lesssim 1,$
~$\diamondsuit_t(\infty, \infty, 1, \infty) \lesssim 1,$ 
~$\diamondsuit_t(\infty, \infty, \infty, 1) \lesssim 1,$
~$\diamondsuit_t(3/2, 3/2, \infty, \infty) \lesssim 1,$
~$\diamondsuit_t(3/2, \infty, 3/2, \infty) \lesssim 1,$ 
~$\diamondsuit_t(3/2, \infty, \infty, 3/2) \lesssim 1,$
~$\diamondsuit_t(\infty, 3/2, 3/2, \infty) \lesssim 1,$
~$\diamondsuit_t(\infty, \infty, 3/2, 3/2) \lesssim 1.$

However, by a direct computation,  these estimates follow immediately from Theorem \ref{R1K} (i).
\end{proof}

\section{Boundedness problem for the $C_4$-Operator}
Let $t\in \mathbb F_q^*.$  Given non-negative real-valued functions $f_i, 1\le i\le 4$, on $\mathbb F_q^d,$  we define $\Diamond(f_1,f_2, f_3, f_4)$ to be the following value:
\begin{equation}\label{DiaDef}  \frac{1}{q^d |S_t|^2 |S_t^{d-2}|} \sum_{x^1, x^2, x^3, x^4\in \mathbb F_q^d} S_t(x^1-x^2) S_t(x^2-x^3) S_t(x^3-x^4) S_t(x^4-x^1) \prod_{i=1}^4 f_i(x^i), \end{equation}
where the quantity $q^d |S_t|^2 |S_t^{d-2}|$ stands for the normalizing factor $\mathcal{N}(G)$ in \eqref{generalform} when $G=C_4.$
Since the operator $\Diamond$ is associated with the graph $C_4$, it is named as the $C_4$-Operator on $\mathbb F_q^d.$
 
\begin{definition} Let $1\le p_1, p_2, p_3, p_4\le \infty.$ 
We denote by $\Diamond(p_1, p_2, p_3, p_4)$   the smallest constant such that   the following estimate holds  for all non-negative real-valued functions $f_i, 1\le i\le 4,$ on $\mathbb F_q^d:$
$$ \Diamond(f_1,f_2, f_3, f_4) \le \Diamond(p_1, p_2, p_3, p_4) ||f_1||_{p_1} ||f_2||_{p_2} ||f_3||_{p_3} ||f_4||_{p_4}.$$\end{definition} 

Main problem is to find all exponents  $1\le p_1, p_2, p_3, p_4\le \infty$ such that   the inequality
\begin{equation}\label{DeltaPPB} \Diamond(f_1, f_2, f_3, f_4) \lesssim ||f_1||_{p_1} ||f_2||_{p_2} ||f_3||_{p_3} ||f_4||_{p_4}\end{equation}
holds for all non-negative real-valued functions $f_i, 1\le i\le 4,$ on $\mathbb F_q^d.$  
In other words, our main problem is to determine  all numbers $1\le p_1, p_2, p_3, p_4 \le \infty$ such that
$ \Diamond(p_1, p_2, p_3, p_4) \lesssim 1.$

\begin{lemma} [Necessary conditions for the boundedness of $\Diamond(p_1, p_2, p_3, p_4)$  ]\label{NeB}Suppose that \eqref{DeltaPPB} holds, namely $\Diamond(p_1, p_2, p_3, p_4) \lesssim 1.$ Then we have
$$ \frac{1}{p_1} +\frac{1}{p_2}+\frac{1}{p_3}+\frac{d}{p_4} \le d+1, \quad\frac{1}{p_1} +\frac{1}{p_2}+\frac{d}{p_3}+\frac{1}{p_4} \le d+1, \quad\frac{1}{p_1} +\frac{d}{p_2}+\frac{1}{p_3}+\frac{1}{p_4} \le d+1,$$
$$  \frac{d}{p_1} +\frac{1}{p_2}+\frac{1}{p_3}+\frac{1}{p_4} \le d+1, \quad \frac{d}{p_1} +\frac{1}{p_2}+\frac{d}{p_3}+\frac{1}{p_4} \le 2d-2, \quad\mbox{and}\quad \frac{1}{p_1} +\frac{d}{p_2}+\frac{1}{p_3}+\frac{d}{p_4} \le 2d-2. $$\\
In particular, when $d = 2,$ it can be shown by Polymake \cite{AGHJLP, GJ} that $(1/p_1, 1/p_2, 1/p_3, 1/p_4)$ is contained in the convex hull of the points $(0,0,1,0),$ $(0,0,0,1),$ $(0,1,0,0),$ $(2/3,0,0,2/3),$ $(2/3,2/3,0,0),$ $(1,0,0,0),$ $(0,0,0,0),$ $(0,2/3,2/3,0),$ $(0,0,2/3,2/3).$ 
\end{lemma}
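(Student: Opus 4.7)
The plan is to test the defining inequality $\Diamond(f_1,f_2,f_3,f_4)\le C\prod_i\|f_i\|_{p_i}$ with well-chosen combinations of $\delta_0$ and $1_{S_t}$. Since $C_4$ has a dihedral symmetry group acting transitively on its four vertices (and on its two pairs of opposite vertices), the six listed inequalities fall into two symmetry orbits: four obtained by setting a single $f_i=\delta_0$ and the other three equal to $1_{S_t}$, and two obtained by setting the $f_i$'s on a pair of opposite (non-adjacent) vertices equal to $\delta_0$ and the remaining two equal to $1_{S_t}$. I therefore only need to carry out one test from each orbit; the others follow by relabeling the indices.

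For the first orbit, take $f_4=\delta_0$ and $f_1=f_2=f_3=1_{S_t}$. Using $\|\delta_0\|_p\sim q^{-d/p}$ and $\|1_{S_t}\|_p\sim q^{-1/p}$, the right-hand side is $\sim q^{-1/p_1-1/p_2-1/p_3-d/p_4}$. On the left, $x^4=0$ forces $\|x^1\|=\|x^3\|=t$, which is automatic from $x^1,x^3\in S_t$, so the sum collapses to
\[
\frac{1}{q^d|S_t|^2|S_t^{d-2}|}\,\bigl|\{(x^1,x^2,x^3)\in S_t^3:\|x^1-x^2\|=\|x^2-x^3\|=t\}\bigr|.
\]
By Corollary \ref{KKo}, for each fixed $x^2\in S_t$ the number of $x^1\in S_t$ at distance $t$ from $x^2$ is $\sim q^{d-2}$, and independently the same bound holds for $x^3$, giving a configuration count of $\sim |S_t|\cdot q^{2(d-2)}\sim q^{3d-5}$. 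Dividing by the normalization $q^d|S_t|^2|S_t^{d-2}|\sim q^{4d-4}$ yields $\Diamond(f_1,f_2,f_3,f_4)\sim q^{-(d+1)}$, and comparing exponents gives $\tfrac{1}{p_1}+\tfrac{1}{p_2}+\tfrac{1}{p_3}+\tfrac{d}{p_4}\le d+1$.

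For the second orbit, take $f_1=f_3=\delta_0$ and $f_2=f_4=1_{S_t}$, forcing $x^1=x^3=0$. All four sphere constraints collapse to $x^2\in S_t$ and $x^4\in S_t$ with no further coupling, producing a configuration count of $|S_t|^2\sim q^{2d-2}$; dividing by the normalization yields $\Diamond(f_1,f_2,f_3,f_4)\sim q^{-(2d-2)}$, while the right-hand side is $\sim q^{-d/p_1-1/p_2-d/p_3-1/p_4}$, producing $\tfrac{d}{p_1}+\tfrac{1}{p_2}+\tfrac{d}{p_3}+\tfrac{1}{p_4}\le 2d-2$. The convex-hull description in dimension two then follows by feeding these six inequalities, together with the box constraints $0\le 1/p_i\le 1$, into Polymake \cite{AGHJLP,GJ} and reading off the vertices. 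The main subtlety lies in the counting step of the first orbit, where one must use Corollary \ref{KKo} to confirm that the number of $x^1\in S_t$ at distance $t$ from a given $x^2\in S_t$ is uniformly $\sim q^{d-2}$ across $x^2$ — a fact that in dimension $d=2$ requires precisely the standing hypothesis that $3$ is a square in $\mathbb F_q$.
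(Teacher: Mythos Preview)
Your proof is correct and follows the same approach as the paper: testing \eqref{DeltaPPB} with the six combinations of $\delta_0$ and $1_{S_t}$ listed there, then invoking Polymake for the $d=2$ convex-hull description. Your write-up is in fact more detailed than the paper's (which leaves the computations to the reader), and your use of the dihedral symmetry of $C_4$ to reduce to two representative cases, together with the explicit invocation of Corollary~\ref{KKo} and the standing hypothesis that $3$ is a square when $d=2$, is a clean way to organize the argument.
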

\begin{remark}\label{remK} When $d=2, 3$,  the first four inequalities in the conclusion are not necessary.  We only need the last two. \end{remark}
\begin{proof}The 6 inequalities in the conclusion can be easily deduced by testing the inequality \eqref{DeltaPPB} with the following specific functions, respectively:
We leave the proofs to the readers.
\begin{tabbing}
\=1) \hspace{4cm} \=2) \hspace{4cm} \kill
1) $f_1=f_2=f_3=1_{S_t},$ and $f_4=\delta_0.$ \hspace{2mm}
2)  $f_1=f_2=f_4=1_{S_t},$ and $f_3=\delta_0.$\\
3) $f_1=f_3=f_4=1_{S_t},$ and $f_2=\delta_0.$\hspace{3mm}
4) $f_2=f_3=f_4=1_{S_t},$ and $f_1=\delta_0.$\\
5) $f_2=f_4=1_{S_t},$ and $f_1=f_3=\delta_0.$\hspace{3mm}
6) $f_1=f_3=1_{S_t},$ and $f_2=f_4=\delta_0.$
\end{tabbing}

\end{proof}

\subsection{ Boundedness results for the $C_4$-Operator $\Diamond$ on $\mathbb F_q^d$}

In this subsection, we provide some exponents $1\le p_i \le \infty, 1\le i\le 4,$ such that $\Diamond(p_1, p_2, p_3, p_4)\lesssim 1$ in the specific case when one of $p_i$ is $\infty.$  However, our result will correspond to all dimensions $d\ge 2.$ In general, it is very hard to deduce  non-trivial boundedness results for the $C_4$-Operator on $\mathbb F_q^d.$

We begin by observing that  an upper bound of $\Diamond(f_1,f_2,f_3,f_4)$ can be controlled by  estimating for both the $K_2$-Operator $L$ and the $P_2$-Operator $\Lambda.$ 
\begin{proposition}\label{Bo1}For all non-negative functions $f_i, i=1,2,3,4,$ on $\mathbb F_q^d, d\ge 2,$  we have
$$ \Diamond(f_1, f_2, f_3, f_4) \lesssim  \left\{\begin{array}{ll} 
&\left(\frac{1}{|S_t^{d-2}|} L(f_1f_3,~ f_2)  +  \Lambda(f_1, f_2, f_3) \right) ||f_4||_\infty,\\
&\left(\frac{1}{|S_t^{d-2}|} L(f_2f_4,~ f_1)  +  \Lambda(f_4, f_1, f_2)\right)  ||f_3||_\infty,\\
&\left(\frac{1}{|S_t^{d-2}|} L(f_1f_3,~ f_4)  +  \Lambda(f_3, f_4, f_1)\right)  ||f_2||_\infty,\\
&\left(\frac{1}{|S_t^{d-2}|} L(f_2f_4,~ f_3)  +  \Lambda(f_2, f_3, f_4)\right)  ||f_1||_\infty.\\
\end{array} \right.$$
\end{proposition}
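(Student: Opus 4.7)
The plan is to derive all four bounds in a uniform way by peeling off one function in $L^\infty$ and then summing over the corresponding vertex of the $C_4$. The four inequalities are symmetric under cyclic relabeling of the vertices of $C_4$, so it suffices to carry out the argument for the first one (pulling out $f_4$, with diagonal pair $(x^1,x^3)$); pulling out $f_3$ or $f_1$ gives diagonal pair $(x^2,x^4)$, and pulling out $f_2$ gives $(x^1,x^3)$ again, yielding the other three lines verbatim.

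For the first inequality, I would begin by replacing $f_4(x^4)$ by $\|f_4\|_\infty$ and extracting it, leaving the inner sum over $x^4$ of $S_t(x^3-x^4)S_t(x^4-x^1)$. This is the cardinality
\[
N(x^1,x^3) := \#\bigl\{x^4\in\mathbb F_q^d:\; \|x^3-x^4\|=\|x^4-x^1\|=t\bigr\},
\]
i.e.\ the number of points lying on both spheres of radius $t$ centered at $x^1$ and $x^3$. The appendix estimates (used in the same form as in Corollary \ref{KKo} and in the proof of Proposition \ref{TriDia}) guarantee $N(x^1,x^3)\lesssim |S_t^{d-2}|$ whenever $x^1\ne x^3$, while trivially $N(x^1,x^1)=|S_t|$.

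The next step is to split the remaining outer sum over $(x^1,x^2,x^3)$ according to whether $x^1=x^3$ or not. In the off-diagonal region $x^1\ne x^3$, the factor $N(x^1,x^3)\lesssim |S_t^{d-2}|$ cancels the $|S_t^{d-2}|$ in the denominator of $\Diamond$, and what remains is, by inspection, precisely the defining sum of $\Lambda(f_1,f_2,f_3)$ with its correct normalization $q^d|S_t|^2$; this produces the summand $\|f_4\|_\infty\Lambda(f_1,f_2,f_3)$. In the diagonal region $x^1=x^3$, we have $S_t(x^1-x^2)S_t(x^2-x^3)=S_t(x^1-x^2)$ and $f_1(x^1)f_3(x^3)=(f_1f_3)(x^1)$, while $N$ contributes a factor $|S_t|$; comparing with the normalization $q^d|S_t|$ of $L$, the surviving constants combine to exactly $\dfrac{1}{|S_t^{d-2}|}L(f_1f_3,f_2)$, as required.

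The remaining three bounds are obtained by the identical argument, pulling out $\|f_i\|_\infty$ for $i=3,2,1$ respectively and identifying which pair of non-adjacent vertices in $C_4$ plays the role of the diagonal. The main (and only) substantive ingredient is the sphere-intersection estimate $N\lesssim |S_t^{d-2}|$ for distinct centers, which is already supplied by the appendix; everything else is careful bookkeeping of the powers of $|S_t|$ and $|S_t^{d-2}|$ to match the normalizations appearing in the definitions of $L$ and $\Lambda$.
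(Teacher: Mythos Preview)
Your proposal is correct and follows essentially the same argument as the paper: isolate the sum over one vertex, bound it by $\|f_i\|_\infty$ times the two-sphere intersection count, split into the diagonal ($x^1=x^3$) and off-diagonal cases, and match normalizations to recognize $L$ and $\Lambda$. The only cosmetic difference is that the paper keeps $f_4$ inside the inner sum and bounds $\sum_{x^4}S_t(x^3-x^4)S_t(x^4-x^1)f_4(x^4)$ directly by $|S_t|\,\|f_4\|_\infty$ or $q^{d-2}\|f_4\|_\infty$, whereas you first pull out $\|f_4\|_\infty$ and then count; this is the same step.
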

\begin{proof} We only provide the proof of the first inequality, 
\begin{equation} \label{WantE}\Diamond(f_1, f_2, f_3, f_4) \lesssim \frac{1}{|S_t^{d-2}|} L(f_1f_3,~ f_2) ||f_4||_\infty +  \Lambda(f_1, f_2, f_3) ||f_4||_\infty,\end{equation}
since other inequalities can be easily proven in the same way by replacing the role of $f_4$ with $f_3, f_2, f_1,$ respectively.
By definition,  the value of   $\Diamond(f_1,f_2,f_3,f_4)$ is equal to
$$ \frac{1}{q^d |S_t|^2 |S_t^{d-2}|} \sum_{x^1, x^2, x^3\in \mathbb F_q^d} S_t(x^1-x^2) S_t(x^2-x^3) \left(\prod_{i=1}^3 f_i(x^i)\right) \left(\sum_{x^4\in \mathbb F_q^d} S_t(x^3-x^4) S_t(x^4-x^1) f_4(x^4)\right).$$

For fixed $x^1, x^3 \in \mathbb F_q^d,$  the sum in the above bracket can be estimated as follows:
$$ \sum_{x^4\in \mathbb F_q^d} S_t(x^3-x^4) S_t(x^4-x^1) f_4(x^4) \lesssim \left\{\begin{array}{ll}
|S_t| ||f_4||_\infty\quad &\mbox{if} \quad x^1=x^3,\\
 q^{d-2} ||f_4||_\infty \quad &\mbox{if}\quad  x^1\ne x^3. \end{array} \right.$$
 Notice that this estimates are easily obtained by invoking  Corollary \ref{KKo} in Appendix after using a change of variables.
 
Let $\Diamond(f_1,f_2,f_3,f_4) =\Diamond_1 + \Diamond_2,$
 where $\Diamond_1$ denotes the contribution to $\Diamond(f_1, f_2, f_3, f_4)$  when $x^1=x^3,$  and  $\Diamond_2$  does it when $x^1\ne x^3.$ Then it follows that
 $$ \Diamond_1 \lesssim  \frac{||f_4||_\infty}{q^d |S_t| |S_t^{d-2}|}  \sum_{x^1, x^2\in \mathbb F_q^d} S_t(x^1-x^2) (f_1f_3)(x^1) f_2(x^2) =\frac{1}{|S_t^{d-2}|} L(f_1f_3,~ f_2) ||f_4||_\infty,$$
$$ \Diamond_2 \lesssim  \frac{||f_4||_\infty}{q^d|S_t|^2 }  \sum_{x^1, x^2, x^3\in \mathbb F_q^d:x^1\ne x^3} S_t(x^1-x^2) S_t(x^2-x^3) f_1(x^1) f_2(x^2) f_3(x^3)\lesssim  \Lambda(f_1, f_2, f_3) ||f_4||_\infty.$$
Hence, we obtain the required estimate \eqref{WantE}.
\end{proof}

In Proposition \ref{Bo1},  we obtained four different kinds of the upper bounds of the $\Diamond(f_1, f_2, f_3, f_4).$ Using each of them, we are able to deduce  exponents $p_1, p_2, p_3, p_4$ with $ \Diamond(p_1, p_2, p_3, p_4)\lesssim 1,$  where at least one of $p_j, j=1,2,3, 4,$  takes $\infty.$

The following result can be proven by applying the first upper bound of $\Diamond(f_1, f_2,f_3, f_4)$ in Proposition \ref{Bo1} together with 
Theorem  \ref{GenRL} and Theorem \ref{SharpNS}.

\begin{theorem}\label{BaBa} Let $\Diamond$ be defined on functions on $\mathbb F_q^d, d\ge 2.$  Let $1\le p_1, p_2, p_3\le \infty.$ 
Then the following statements are true.
\begin{enumerate}
\item [(i)] If  $\frac{1}{p_1}+\frac{d}{p_2}+ \frac{1}{p_3} \le d$ and $ \frac{d}{p_1}+\frac{1}{p_2}+ \frac{d}{p_3} \le d$, then $\Diamond(p_1, p_2, p_3, \infty)\lesssim 1. $
\item [(ii)] If $ \frac{d}{p_1}+\frac{1}{p_2}+ \frac{1}{p_4} \le d$ and $\frac{1}{p_1}+\frac{d}{p_2}+ \frac{d}{p_4} \le d$, then $\Diamond(p_1, p_2,  \infty, p_4)\lesssim 1. $
\item [(iii)] If  $\frac{1}{p_1}+\frac{1}{p_3}+ \frac{d}{p_4} \le d$ and $ \frac{d}{p_1}+\frac{d}{p_3}+ \frac{1}{p_4} \le d$, then $\Diamond(p_1, \infty, p_3, p_4)\lesssim 1. $
\item [(iv)]If  $\frac{1}{p_2}+\frac{d}{p_3}+ \frac{1}{p_4} \le d$ and $ \frac{d}{p_2}+\frac{1}{p_3}+ \frac{d}{p_4} \le d$, then $\Diamond(\infty, p_2, p_3, p_4)\lesssim 1. $
\end{enumerate}
\end{theorem}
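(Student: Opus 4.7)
The plan is to derive the four statements in parallel by applying the four corresponding upper bounds in Proposition \ref{Bo1}: statement (i) uses the first bound, (ii) the second, (iii) the third, and (iv) the fourth. The reason the matching is so clean is that the $m$-th bound carries the factor $||f_{5-m}||_\infty$, which coincides with the exponent $p_{5-m}=\infty$ appearing in the $m$-th statement. The prefactor $|S_t^{d-2}|^{-1}\le 1$ is harmless, so the whole problem reduces to estimating the $L$-summand by Theorem \ref{GenRL} and the $\Lambda$-summand by Theorem \ref{SharpNS}.

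Take part (i) as the model case. Starting from
$$\Diamond(f_1,f_2,f_3,f_4)\lesssim \Bigl(\tfrac{1}{|S_t^{d-2}|}L(f_1f_3,f_2)+\Lambda(f_1,f_2,f_3)\Bigr)||f_4||_\infty,$$
I would first apply H\"older to the product $f_1f_3$ with $\tfrac{1}{a}=\tfrac{1}{p_1}+\tfrac{1}{p_3}$; this is legitimate since the second hypothesis $\tfrac{d}{p_1}+\tfrac{1}{p_2}+\tfrac{d}{p_3}\le d$ forces $\tfrac{1}{p_1}+\tfrac{1}{p_3}\le 1$. The two hypotheses of (i) then translate into exactly the two conditions $\tfrac{1}{a}+\tfrac{d}{p_2}\le d$ and $\tfrac{d}{a}+\tfrac{1}{p_2}\le d$ needed by Theorem \ref{GenRL} to conclude $L(a,p_2)\lesssim 1$, which controls the $L$-summand.

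The $\Lambda$-summand is handled by Theorem \ref{SharpNS}, whose statement splits into four sub-cases according to where $(1/p_1,1/p_3)$ sits relative to the threshold $d/(d+1)$. The main obstacle is precisely this case analysis, but it is routine: when both coordinates lie below the threshold, sub-case (i) of Theorem \ref{SharpNS} applies with exactly our first hypothesis; when exactly one exceeds the threshold, sub-case (ii) or (iii) applies and its required inequality is obtained from our second hypothesis by replacing a term $\tfrac{1}{dp_i}$ by the larger $\tfrac{d}{p_i}$; when both exceed the threshold, sub-case (iv) applies and its bound $2d-1$ is weaker than $d$, so the second hypothesis once again suffices.

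Statements (ii), (iii), and (iv) follow by the identical template. In each case the $L$-operator acts on the pair of $C_4$-opposite vertices whose exponents are not $\infty$, the $\Lambda$-operator acts on the remaining cyclic triple, and a short relabeling shows that the two listed hypotheses are exactly what Theorem \ref{GenRL} needs after H\"older, while simultaneously dominating every sub-case of Theorem \ref{SharpNS} by the same case split as above. No further analytic input is required.
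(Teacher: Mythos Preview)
Your proposal is correct and follows essentially the same route as the paper's own proof: both reduce part (i) via the first inequality of Proposition \ref{Bo1}, handle the $L$-summand by H\"older (with $1/a=1/p_1+1/p_3$) together with Theorem \ref{GenRL}, and handle the $\Lambda$-summand through the identical four-way case split feeding into Theorem \ref{SharpNS}; the remaining parts are obtained by symmetry using the other three bounds in Proposition \ref{Bo1}. Your write-up is in fact slightly more careful in one place---you explicitly note that the second hypothesis forces $1/p_1+1/p_3\le 1$, which the paper takes for granted---and the only cosmetic quibble is that $|S_t^{d-2}|^{-1}\lesssim 1$ rather than literally $\le 1$.
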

\begin{proof} 
We will only provide the proof of the first part of the theorem since  the proofs of other parts are the same in the sense that  
the proof of the first part uses the first upper bound of Proposition \ref{Bo1} and the proofs of other parts can also use their corresponding upper bounds of Proposition \ref{Bo1}  to complete the proofs.

Let us start proving the first part of the theorem. 
To complete the proof,  we aim to show that for all non-negative functions $f_i, i=1,2,3,4,$ on $\mathbb F_q^d,$ 
$$ \Diamond(f_1, f_2, f_3, f_4) \lesssim ||f_1||_{p_1} ||f_2||_{p_2} ||f_3||_{p_3}||f_4||_\infty,$$
 whenever   the exponents $1\le p_1, p_2, p_3\le \infty$ satisfy the following conditions:
\begin{equation}\label{ConKK}
\frac{1}{p_1}+\frac{d}{p_2}+ \frac{1}{p_3} \le d \quad\mbox{and}\quad \frac{d}{p_1}+\frac{1}{p_2}+ \frac{d}{p_3} \le d. \end{equation}

By the first part of Proposition \ref{Bo1},  it follows that
$$ \Diamond(f_1, f_2, f_3, f_4) \lesssim \left(\frac{1}{|S_t^{d-2}|} L(f_1f_3,~ f_2)  +  \Lambda(f_1, f_2, f_3)\right) ||f_4||_\infty.$$
Therefore, under the assumptions \eqref{ConKK}, our problem is reducing to establishing the following two estimates:
\begin{equation} \label{Hun1}L(f_1f_3,~ f_2) \lesssim ||f_1||_{p_1} ||f_2||_{p_2} ||f_3||_{p_3},\end{equation}
 \begin{equation}\label{Hun2}  \Lambda(f_1, f_2, f_3)\lesssim ||f_1||_{p_1} ||f_2||_{p_2} ||f_3||_{p_3}.\end{equation}
 For $1\le p_1, p_3\le \infty, $  let  $1/r= 1/p_{1} + 1/p_{3}.$ Then the conditions \eqref{ConKK} are the same as
$$ \frac{1}{r} +\frac{d}{p_2} \le d \quad \mbox{and} \quad \frac{d}{r} + \frac{1}{p_2}\le d.$$
 So these conditions enable us to invoke Theorem \ref{GenRL} so that we obtain the estimate \eqref{Hun1} as follows:
 $$ L(f_1f_3,~ f_2) \lesssim ||f_1f_3||_r ||f_2||_{p_2} \le ||f_1|||_{p_1} ||f_2||_{p_2} ||f_3||_{p_3},$$
 where we used H\"older's inequality in the last inequality. 
 
 It remains to prove the estimate \eqref{Hun2} under the assumptions \eqref{ConKK}.
 To do this, we shall use Theorem \ref{SharpNS}, which gives sufficient conditions for $\Lambda(p_1, p_2, p_3)\lesssim 1.$
 We directly compare the conditions \eqref{ConKK} with  the assumptions of Theorem \ref{SharpNS}.
 Then it is not hard to observe the following statements.
 \begin{itemize}
 \item (Case 1)  In the case when $0\le \frac{1}{p_1}, \frac{1}{p_3} \le \frac{d}{d+1}$,    the conditions \eqref{ConKK}  imply  the hypothesis of the first part of Theorem \ref{SharpNS}. 
 \item (Case 2)  In the case when $0\le \frac{1}{p_1} \le  \frac{d}{d+1} \le \frac{1}{p_3} \le 1,$  the conditions \eqref{ConKK} imply the hypothesis of the second part of Theorem \ref{SharpNS}.
 To see this,  notice that  if $d/p_1+ 1/p_2+ d/p_3\le d$, then  $ 1/(dp_1)+ 1/p_2+ d/p_3\le d.$
  \item (Case 3)  In the case when $0\le \frac{1}{p_3} \le  \frac{d}{d+1} \le \frac{1}{p_1} \le 1,$  the conditions \eqref{ConKK}  imply the hypothesis of the third part of Theorem \ref{SharpNS}.
   \item (Case 4)  In the case when $\frac{d}{d+1}\le \frac{1}{p_1}, \frac{1}{p_3} \le 1,$  the conditions \eqref{ConKK}  imply the hypothesis of the fourth part of Theorem \ref{SharpNS}.
 \end{itemize}
 
 Hence,  we conclude from Theorem \ref{SharpNS} that   $\Lambda(p_1, p_2, p_3)\lesssim 1$ under the assumptions \eqref{ConKK}, as desired. 
 \end{proof}

\subsection{Sharp boundedness results for the $C_4$-Operator $\Diamond$ on $\mathbb F_q^2$}

Recall that Theorem \ref{BaBa} provides sufficient conditions for $\diamond(p_1, p_2, p_2, p_4)\lesssim 1$ in any dimensions $d\ge 2.$ In this section, we show that Theorem \ref{BaBa} is sharp in two dimensions. More precisely, using Theorem \ref{BaBa} we will prove the following optimal result.
\begin{theorem} \label{Thm7.6K}Let $\Diamond$ be the $C_4$-Operator on $\mathbb F_q^2.$  For $1\le p_i\le \infty, 1\le i\le 4,$ we have
$$ \Diamond(p_1, p_2, p_3, p_4)\lesssim 1 ~~\mbox{if and only if}\quad \frac{2}{p_1} +\frac{1}{p_2}+\frac{2}{p_3}+\frac{1}{p_4} \le 2, ~~\mbox{and}\quad \frac{1}{p_1} +\frac{2}{p_2}+\frac{1}{p_3}+\frac{2}{p_4} \le 2.$$
\end{theorem}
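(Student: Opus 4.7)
The plan is to prove the two directions separately, with necessity coming directly from Lemma \ref{NeB} and sufficiency obtained by locating the extremal points of the admissible region and invoking Theorem \ref{BaBa} at each of them, followed by interpolation.

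For the necessity direction, I would simply specialize Lemma \ref{NeB} to $d=2$. Setting $d=2$ in the last two inequalities of Lemma \ref{NeB} yields precisely the two conditions in the statement (both right-hand sides equal $2d-2=2$). The first four inequalities of Lemma \ref{NeB} read $\tfrac{1}{p_i}+\sum_{j\ne i}\tfrac{1}{p_j}\cdot(\text{coeff})\le d+1=3$ when $d=2$, and one checks that each of them is automatically implied by averaging the two inequalities we keep. This matches Remark \ref{remK}, and so the necessity is immediate.

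For sufficiency, I would rely on the Polymake description in Lemma \ref{NeB}: the set of $(1/p_1,1/p_2,1/p_3,1/p_4)\in [0,1]^4$ satisfying the two stated inequalities coincides with the convex hull of the nine explicit vertices
\[
(0,0,0,0),\ (1,0,0,0),\ (0,1,0,0),\ (0,0,1,0),\ (0,0,0,1),
\]
\[
(\tfrac23,\tfrac23,0,0),\ (0,\tfrac23,\tfrac23,0),\ (0,0,\tfrac23,\tfrac23),\ (\tfrac23,0,0,\tfrac23).
\]
By multilinear interpolation, it is enough to establish $\Diamond(p_1,p_2,p_3,p_4)\lesssim 1$ at each of these vertices. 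A crucial observation is that \emph{every} vertex above has at least one coordinate equal to $0$, i.e.\ at least one $p_i=\infty$. This is exactly the regime covered by Theorem \ref{BaBa}, which gives sufficient conditions for the boundedness of $\Diamond$ whenever one argument is in $L^\infty$. For each vertex I would pick the appropriate part of Theorem \ref{BaBa} (matching the index with $1/p_i=0$) and verify the two inequalities hypothesized there. For instance, the vertex $(\tfrac23,\tfrac23,0,0)$ corresponds to $\Diamond(\tfrac32,\tfrac32,\infty,\infty)$, which falls under Theorem \ref{BaBa}(ii) with $p_1=p_2=\tfrac32$, $p_4=\infty$, and the two required inequalities become equalities $\tfrac43+\tfrac23=2$ and $\tfrac23+\tfrac43=2$. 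The other eight vertices are verified by analogous one-line computations.

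The main obstacle, such as it is, is purely bookkeeping: making sure the correct part of Theorem \ref{BaBa} is applied to each vertex (since the four parts privilege different indices) and confirming, for each of the nine cases, that both inequalities in the chosen part of Theorem \ref{BaBa} indeed hold. There is no new analytic input: the geometry of the two-dimensional Polymake polytope has been arranged so that every extremal point lies on a face where one argument is $L^\infty$, which is precisely the content of Theorem \ref{BaBa}. Interpolation of the resulting strong-type bounds then yields $\Diamond(p_1,p_2,p_3,p_4)\lesssim 1$ throughout the interior of the polytope, completing the proof.
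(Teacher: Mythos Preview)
Your proposal is correct and follows essentially the same approach as the paper: necessity from Lemma~\ref{NeB} specialized to $d=2$ (with Remark~\ref{remK} handling the redundancy of the first four inequalities), and sufficiency by checking the nine Polymake vertices via Theorem~\ref{BaBa} and interpolating. The only difference is cosmetic---the paper illustrates with the vertex $(2/3,0,0,2/3)$ and part~(ii) of Theorem~\ref{BaBa}, while you illustrate with $(2/3,2/3,0,0)$---but the argument is otherwise identical.
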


\begin{proof} The necessary conditions for $\Diamond(p_1, p_2,p_3, p_4)\lesssim 1$ follow immediately from Lemma \ref{NeB} for $d=2$ (see Remark \ref{remK}).

Conversely, suppose that $1\le p_1, p_2, p_3, p_4\le \infty$ satisfy the following two inequalities:
\begin{equation}\label{twoE}  
\frac{2}{p_1} +\frac{1}{p_2}+\frac{2}{p_3}+\frac{1}{p_4} \le 2, ~~\mbox{and}\quad \frac{1}{p_1} +\frac{2}{p_2}+\frac{1}{p_3}+\frac{2}{p_4} \le 2.
\end{equation}
Then, as mentioned in Lemma \ref{NeB}, it can be shown by Polymake \cite{AGHJLP, GJ} that $(1/p_1, 1/p_2, 1/p_3, 1/p_4)$ is contained in the convex hull of the points $(0,0,1,0),$ $(0,0,0,1),$ $(0,1,0,0),$ $(2/3,0,0,2/3),$ $(2/3,2/3,0,0),$ $(1,0,0,0),$ $(0,0,0,0),$ $(0,2/3,2/3,0),$ $(0,0,2/3,2/3).$ 

By interpolating the above 9 critical points,  to prove  $\Diamond(p_1, p_2, p_3, p_4)\lesssim 1 $ for all $p_i, 1\le i\le 4$ satisfying the inequalities in \eqref{twoE},  it will be enough to prove it for the 9 critical points $(1/p_1, 1/p_2, 1/p_3, 1/p_4).$
This can be easily proven by using Theorem \ref{BaBa}. For example, for the point $(1/p_1, 1/p_2, 1/p_3, 1/p_4)=(2/3,0,0,2/3),$ a direct computation shows that the assumptions in Theorem \ref{BaBa} (ii) are satisfied and thus  $\Diamond(p_1, p_2, p_3, p_4)=\Diamond(3/2, \infty, \infty, 3/2) \lesssim 1.$ 
For other critical points,  we can easily prove them in the same way so that we omit the detail proofs.
\end{proof}

Notice that the graph $C_4$ is a subgraph of the graph $C_4$ + diagonal, and they are associated with the operators $\Diamond$ and $\diamondsuit_t,$ respectively. Hence, the following theorem shows that the answer to Question \ref{PConj} is negative when $G$ is the $C_4$ + diagonal, and $G'$ is the $C_4.$  However, this does not mean that Conjecture \ref{PConjK} is not true since  the $C_4$ and  the $C_4$ + diagonal do not satisfy  the main hypothesis \eqref{mainConj} of Conjecture \ref{PConjK}. 
\begin{theorem} \label{CExa}
Let $\diamondsuit_t, \Diamond$ be the $(C_4+t)$-Operator and the $C_4$-Operator on $\mathbb F_q^2,$ respectively. Let $1\le p_1, p_2, p_3, p_4\le \infty.$ Then the following statements hold.
\begin{itemize}
\item [(i)] If $\Diamond(p_1, p_2, p_3, p_4)\lesssim 1$, then $\diamondsuit_t(p_1, p_2, p_3, p_4)\lesssim 1.$
\item [(ii)] Moreover, there exist exponents $1\le a, b,c, d\le \infty$ such that $\diamondsuit_t(a, b, c, d)\lesssim 1$ but $\Diamond(a, b, c, d)$ is not bounded. 
\end{itemize}
\end{theorem}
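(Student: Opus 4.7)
The plan is to deduce both parts from the sharp characterizations of boundedness on $\mathbb{F}_q^2$ already obtained in Theorem \ref{Thm7.6K} (for $\Diamond$) and Theorem \ref{KiteK} (for $\diamondsuit_t$); each part reduces to a straightforward comparison of linear inequalities together with a single endpoint check.

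For (i), I would begin from the hypothesis $\Diamond(p_1,p_2,p_3,p_4)\lesssim 1$, which by Theorem \ref{Thm7.6K} is equivalent to
$$\tfrac{2}{p_1}+\tfrac{1}{p_2}+\tfrac{2}{p_3}+\tfrac{1}{p_4}\le 2,\qquad \tfrac{1}{p_1}+\tfrac{2}{p_2}+\tfrac{1}{p_3}+\tfrac{2}{p_4}\le 2.$$
A termwise comparison shows that the first of these inequalities dominates each of the first two necessary conditions of Lemma \ref{N2244} for $\diamondsuit_t$ (since $\tfrac{1}{p_1}\le\tfrac{2}{p_1}$ and $\tfrac{1}{p_3}\le\tfrac{2}{p_3}$), while the second coincides with the third $\diamondsuit_t$-condition. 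Hence all three necessary conditions of Lemma \ref{N2244} are in force. To invoke Theorem \ref{KiteK} I also need to rule out the two exceptional points $(2,2,2,\infty)$ and $(2,\infty,2,2)$ at which only a restricted strong-type bound is known; at either of these points the first $\Diamond$-inequality evaluates to $\tfrac52>2$, so both are forbidden by the hypothesis. Theorem \ref{KiteK} then yields $\diamondsuit_t(p_1,p_2,p_3,p_4)\lesssim 1$.

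For (ii), I would exhibit the explicit quadruple $(a,b,c,d)=(3/2,\infty,3/2,\infty)$. In reciprocal coordinates this is $(2/3,0,2/3,0)$, which appears among the extreme points of the $\diamondsuit_t$-region listed in Lemma \ref{N2244} and is distinct from the two exceptional points of Theorem \ref{KiteK}; consequently $\diamondsuit_t(3/2,\infty,3/2,\infty)\lesssim 1$. Substituting the same quadruple into the first necessary condition of Theorem \ref{Thm7.6K} yields $\tfrac{4}{3}+0+\tfrac{4}{3}+0=\tfrac{8}{3}>2$, so the necessity direction of Theorem \ref{Thm7.6K} precludes $\Diamond(3/2,\infty,3/2,\infty)\lesssim 1$. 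The only labour in either part is these elementary inequality manipulations and the endpoint check; there is no genuine analytical obstacle, as all the heavy lifting has already been carried out in the earlier theorems.
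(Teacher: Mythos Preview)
Your proof is correct, and part (ii) uses the same witness $(3/2,\infty,3/2,\infty)$ as the paper. For part (i), however, you take a different route from the paper's. The paper argues by interpolation: it invokes the Polymake enumeration in Lemma~\ref{NeB} to list the nine extreme points of the $\Diamond$-region and then verifies $\diamondsuit_t(p_1,p_2,p_3,p_4)\lesssim 1$ at each of those nine points using Theorem~\ref{R1K}~(i). You instead compare the defining linear inequalities directly, observing that the two $\Diamond$-constraints of Theorem~\ref{Thm7.6K} termwise dominate the three $\diamondsuit_t$-constraints of Lemma~\ref{N2244}, and then appeal to Theorem~\ref{KiteK} (after excluding its two exceptional endpoints). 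Your argument is cleaner and avoids the extreme-point enumeration, though it leans on Theorem~\ref{KiteK}, whose proof already absorbed that enumeration; the paper's version is more self-contained at the cost of a case check. Either way the analytic content is identical.
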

\begin{proof}
First, let us prove the statement (ii) in the conclusion. To prove this,  we choose 
$(a, b, c, d)=(3/2, \infty, 3/2, \infty).$
From Theorem \ref{R1K} (i), we can easily note that $\diamondsuit_t(3/2, \infty, 3/2, \infty) \lesssim 1.$
However, it is impossible that $\Diamond(3/2, \infty, 3/2, \infty) \lesssim 1,$ which can be shown from Theorem \ref{Thm7.6K}.

Next, let us prove the first conclusion of the theorem.
Suppose that $\Diamond(p_1, p_2, p_3, p_4)\lesssim 1$ for $1\le p_i\le \infty, 1\le i\le 4.$ Then, as mentioned in the second conclusion of Lemma \ref{NeB},  the point $(1/p_1, 1/p_2, 1/p_3, 1/p_4)$ lies on the convex body with the critical endpoints: $(0,0,1,0),$ $(0,0,0,1),$ $(0,1,0,0),$ $(2/3,0,0,2/3),$ $(2/3,2/3,0,0),$ $(1,0,0,0),$ $(0,0,0,0),$ $(0,2/3,2/3,0),$ $(0,0,2/3,2/3).$ 

Invoking the interpolation theorem,  to prove the conclusion that $\diamondsuit_t(p_1, p_2, p_3, p_4)\lesssim 1,$ it will be enough to establish the boundedness only for those 9 critical points $(1/p_1, 1/p_2, 1/p_3, 1/p_4).$ More precisely, it remains to establish the following estimates:

$ \diamondsuit_t(\infty, \infty, \infty, \infty)\lesssim 1,$ 
~$ \diamondsuit_t(1, \infty, \infty, \infty)\lesssim 1,$
~$\diamondsuit_t(\infty, 1, \infty, \infty)\lesssim 1,$
~$\diamondsuit_t(\infty, \infty, 1, \infty)\lesssim 1,$\\
~$\diamondsuit_t(\infty, \infty, \infty, 1)\lesssim 1,$
~$\diamondsuit_t(3/2, 3/2,\infty, \infty)\lesssim 1,$
~$\diamondsuit_t(3/2, \infty, \infty, 3/2)\lesssim 1,$
~$\diamondsuit_t(\infty, 3/2, 3/2, \infty)\lesssim 1,$
$\diamondsuit_t(\infty, \infty, 3/2, 3/2)\lesssim 1.$

However, these estimates follow by applying  Theorem \ref{R1K} (i).
\end{proof}

\section{ Boundedness problem for the $P_3$-Operator}
For $t\in \mathbb F_q^*$ and  non-negative real-valued functions $f_i, i=1,2,3, 4,$ on $\mathbb F_q^d,$  we define $\sqcap(f_1,f_2, f_3, f_4)$ as the following value:
\begin{equation} \label{defP3}\frac{1}{q^d|S_t|^3}    \sum_{x^1, x^2, x^3, x^4\in \mathbb F_q^d} S_t(x^1-x^2) S_t(x^2-x^3) S_t(x^3-x^4) \prod_{i=1}^4 f_i(x^i). \end{equation}
This operator $\sqcap$ will be named the $P_3$-Operator on $\mathbb F_q^d$ since
it is related to the graph $P_3$ with vertices in $\mathbb F_q^d, d\ge 2.$ Note that in the definition of $\sqcap(f_1,f_2, f_3, f_4)$, we take the normalizing fact $q^d|S_t|^3$, which is corresponding to $\mathcal{N}(G)$ in \eqref{generalform} when $G$ is the $P_3.$  
\begin{definition} Let $1\le p_1, p_2, p_3, p_4\le \infty.$ 
We define $\sqcap(p_1, p_2, p_3, p_4)$ as the smallest constant such that   the following estimate holds  for all non-negative real-valued functions $f_i, i=1,2,3, 4,$ on $\mathbb F_q^d:$
$$ \sqcap(f_1,f_2, f_3, f_4) \le \sqcap(p_1, p_2, p_3, p_4) ||f_1||_{p_1} ||f_2||_{p_2} ||f_3||_{p_3} ||f_4||_{p_4}.$$
\end{definition} 

 We want  to determine $1\le p_1, p_2, p_3, p_4\le \infty$ such that  
\begin{equation}\label{Loo} \sqcap(f_1, f_2, f_3, f_4) \lesssim ||f_1||_{p_1} ||f_2||_{p_2} ||f_3||_{p_3} ||f_4||_{p_4}\end{equation}
holds for all non-negative real-valued functions $f_i, i=1,2,3,4,$ on $\mathbb F_q^d.$  
In other words,  our main problem is to find  all numbers $1\le p_1, p_2, p_3, p_4 \le \infty$ such that
$ \sqcap(p_1, p_2, p_3, p_4) \lesssim 1.$

\begin{lemma} [Necessary conditions for $\sqcap(p_1, p_2, p_3, p_4)\lesssim 1$]\label{NCap}
Suppose that  $\sqcap(p_1, p_2, p_3, p_4) \lesssim 1.$ Then we have
$\frac{1}{p_1} +\frac{d}{p_2}+\frac{1}{p_3} \le d,$ 
~ $\frac{1}{p_2}+\frac{d}{p_3}+\frac{1}{p_4} \le d,$
~ $\frac{d}{p_1} +\frac{1}{p_2}+\frac{1}{p_3}+\frac{1}{p_4} \le d+2,$
~$\frac{1}{p_1} +\frac{1}{p_2}+\frac{1}{p_3}+\frac{d}{p_4} \le d+2,$ 
~$\frac{1}{p_1} +\frac{d}{p_2}+\frac{1}{p_3}+\frac{d}{p_4} \le 2d-1,$
~ $\frac{d}{p_1} +\frac{1}{p_2}+\frac{d}{p_3}+\frac{1}{p_4} \le 2d-1,$ 
and $\frac{d}{p_1} +\frac{1}{p_2}+\frac{1}{p_3}+\frac{d}{p_4} \le 2d $

In particular,  when $d = 2$, by using Polymake \cite{AGHJLP, GJ}, it can be shown  that $(1/p_1, 1/p_2, 1/p_3, 1/p_4)$ is contained in the convex hull of the points: $(0,1,0,1/2),$ $(0,1,0,0),$ $(1/2,0,1/2,1),$ $(1/2,0,1,0),$ $ (1,1/2,0,0),$ $(1,0,0,0),$ $(1,1/3,1/3,0),$ $(1,0,1/2,0),$ $(1/2,1/3,5/6,0),$ $(1,0,0,1),$ $(1/2,1/2,1/2,1/2),$ $(1,1/2,0,1/2),$ $(0,5/6,1/3,1/2),$ $(0,0,0,1),$ $(0,1/2,0,1),$ $(0,2/3,2/3,0),$ $(0,1/3,1/3,1),$ $(0,0,1/2,1),$ $(0,0,1,0),$ $(0,0,0,0).$ 
\end{lemma}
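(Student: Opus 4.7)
The plan is to follow the pattern established in the necessity arguments of this paper (e.g., Proposition \ref{NecLam}, Lemma \ref{N2244}, Lemma \ref{NeB}): assume $\sqcap(p_1,p_2,p_3,p_4)\lesssim 1$, test the defining inequality against a carefully chosen quadruple $(f_1,f_2,f_3,f_4)$ of indicator or point-mass functions, compute both sides as powers of $q$, and read off the corresponding linear inequality in the $1/p_i$. Since the path $P_3$ with labelled vertices $x^1\!-\!x^2\!-\!x^3\!-\!x^4$ is invariant under the reflection $i\mapsto 5-i$, the seven listed inequalities fall into three symmetric pairs plus one self-symmetric inequality, so only four essentially different test configurations are needed.

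For the two "interior" inequalities $\frac{1}{p_1}+\frac{d}{p_2}+\frac{1}{p_3}\le d$ and $\frac{1}{p_2}+\frac{d}{p_3}+\frac{1}{p_4}\le d$, I would take $f_1=1_{S_t}$, $f_2=\delta_0$, $f_3=1_{S_t}$, $f_4\equiv 1$ (and its mirror). The $\delta_0$ forces $x^2=0$, after which the remaining sum reduces to counting triples $(x^1,x^3,x^4)$ with $x^1,x^3\in S_t$ and $\|x^3-x^4\|=t$, giving $\sqcap\sim q^{-d}$; the product of norms is $\sim q^{-1/p_1-d/p_2-1/p_3}$, and comparison yields the desired inequality. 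For the two "endpoint $\delta_0$" inequalities $\frac{d}{p_1}+\frac{1}{p_2}+\frac{1}{p_3}+\frac{1}{p_4}\le d+2$ and its mirror, I would test with $f_1=\delta_0$ and $f_2=f_3=f_4=1_{S_t}$, invoking Corollary \ref{KKo} twice along the path (each time counting $z\in S_t$ with $\|z-y\|=t$ for a given $y\in S_t$, giving $\sim q^{d-2}$) to obtain $\sqcap\sim q^{3d-5}/(q^d|S_t|^3)\sim q^{-d-2}$.

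For the two "alternating" inequalities with threshold $2d-1$, namely $\frac{1}{p_1}+\frac{d}{p_2}+\frac{1}{p_3}+\frac{d}{p_4}\le 2d-1$ and $\frac{d}{p_1}+\frac{1}{p_2}+\frac{d}{p_3}+\frac{1}{p_4}\le 2d-1$, I would place $\delta_0$ at exactly the two vertices whose coefficient should be $d$ (so $f_2=f_4=\delta_0$ in the first case, $f_1=f_3=\delta_0$ in the second) and indicators $1_{S_t}$ elsewhere; each $\delta_0$ forces its neighbour to lie on $S_t$, which is automatic, and a short count yields $\sqcap\sim q^{-2d+1}$. Finally, for the self-symmetric inequality $\frac{d}{p_1}+\frac{1}{p_2}+\frac{1}{p_3}+\frac{d}{p_4}\le 2d$, I would take $f_1=f_4=\delta_0$ and $f_2=f_3=1_{S_t}$; after the $\delta_0$'s collapse the endpoints, the surviving count is exactly the number of pairs $(x^2,x^3)\in S_t\times S_t$ with $\|x^2-x^3\|=t$, which by Corollary \ref{KKo} is $\sim q^{2d-3}$, giving $\sqcap\sim q^{-2d}$.

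The second assertion of the lemma, which identifies the convex hull of the $(1/p_1,\dots,1/p_4)$ cut out by these seven inequalities inside $[0,1]^4$ when $d=2$, is not a mathematical obstacle: it is a standard vertex-enumeration problem for a polytope defined by linear inequalities, and the list of $20$ extreme points is exactly what one obtains by feeding the system to Polymake \cite{AGHJLP,GJ}, as in the analogous statements of Proposition \ref{NCD123}, Lemma \ref{N2244} and Lemma \ref{NeB}. Overall there is no real obstacle in this proof; the only delicate point is the careful bookkeeping of the sphere-intersection counts, and those are precisely the content of Corollary \ref{KKo} in the appendix, so no new geometric input beyond what is already used elsewhere in the paper is required.
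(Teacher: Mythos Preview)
Your proposal is correct and follows exactly the same approach as the paper: the seven test configurations you describe coincide with the seven listed in the paper's proof, and your computations (including the use of Corollary \ref{KKo} for the sphere-intersection counts) match. The Polymake comment is handled identically as well.
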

\begin{remark}\label{remKK} When $d=2$,  the third, fourth, and seventh inequalities above are not necessary. When $d=3$,  the third and fourth inequalities above are not necessary. \end{remark}
\begin{proof} 
As in the proofs of Propositions \ref{NesLam}, \ref{NCD123}, and \ref{NecLam}, the conclusions of the statement follow by testing the inequality \eqref{Loo} with the following specific functions, respectively:
\begin{tabbing}
\=1) \hspace{4cm} \=2) \hspace{4cm} \kill
1) $f_1=f_3=1_{S_t},  f_2=\delta_0, $ and $f_4=1_{\mathbb F_q^d}.$\hspace{3mm}
2)  $f_1=1_{\mathbb F_q^d}, f_2=f_4=1_{S_t},$ and $f_3=\delta_0.$
\\
3) $f_2=f_3=f_4=1_{S_t},$ and $f_1=\delta_0.$\hspace{10mm}
4) $f_1=f_2=f_3=1_{S_t},$ and $f_4=\delta_0.$\\
5) $f_1=f_3=1_{S_t},$ and $f_2=f_4=\delta_0.$\hspace{10mm}
6)  $f_2=f_4=1_{S_t},$ and $f_1=f_3=\delta_0.$ \\
7)  $f_2=f_3=1_{S_t},$ and $f_1=f_4=\delta_0.$
\end{tabbing}
\end{proof}

\subsection{Boundedness results for the $P_3$-Operator $\sqcap$ on $\mathbb F_q^d$}

We begin by observing that  an upper bound of $\sqcap(f_1, f_2, f_3, f_4)$ can be controlled by the value  $\Lambda(f_1, f_2, f_3).$
 
\begin{proposition}\label{RelLH}  Let $1\le a, b,c \le \infty.$ 
If $\Lambda(a,b,c)\lesssim 1$, then  $\sqcap(a,b, c, \infty)\lesssim 1$ and  $\sqcap(\infty, a,b, c)\lesssim 1.$
\end{proposition}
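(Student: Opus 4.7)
The plan is to reduce each inequality to the hypothesis $\Lambda(a,b,c)\lesssim 1$ by bounding the innermost function by its $L^\infty$-norm and recognizing the remaining expression as the $P_3$-Operator applied to the other three functions. The key observation is that the graph $P_3$ is obtained from $P_2$ by attaching one extra edge at an endpoint, so when the function at that new endpoint is estimated in the $L^\infty$-norm, the resulting sum over the attached vertex is exactly $|S_t|$, which cancels exactly against the extra power of $|S_t|$ in the normalizing factor.

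More concretely, for $\sqcap(a,b,c,\infty)\lesssim 1$, I would start from the definition \eqref{defP3} and apply the trivial bound $f_4(x^4)\le \|f_4\|_\infty$ inside the sum. Then the inner sum $\sum_{x^4\in \mathbb F_q^d} S_t(x^3-x^4)$ equals $|S_t|$ independent of $x^3$, so
\[
\sqcap(f_1,f_2,f_3,f_4) \le \frac{\|f_4\|_\infty\, |S_t|}{q^d |S_t|^3} \sum_{x^1,x^2,x^3\in \mathbb F_q^d} S_t(x^1-x^2) S_t(x^2-x^3)\, f_1(x^1)f_2(x^2)f_3(x^3) = \|f_4\|_\infty\, \Lambda(f_1,f_2,f_3),
\]
after comparing with the definition \eqref{DefHinges}. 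Invoking the hypothesis $\Lambda(a,b,c)\lesssim 1$ finishes this part.

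For $\sqcap(\infty,a,b,c)\lesssim 1$, the argument is symmetric: bound $f_1(x^1)\le \|f_1\|_\infty$, evaluate $\sum_{x^1} S_t(x^1-x^2)=|S_t|$, and identify the remaining quantity as $\Lambda(f_2,f_3,f_4)$, yielding $\sqcap(f_1,f_2,f_3,f_4)\le \|f_1\|_\infty \Lambda(f_2,f_3,f_4)\lesssim \|f_1\|_\infty \|f_2\|_a \|f_3\|_b \|f_4\|_c$.

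There is essentially no obstacle in this argument: it is purely bookkeeping about normalizing factors and exploits the fact that $P_2$ is a subgraph of $P_3$ obtained by deleting a leaf, whose deletion contributes the factor $|S_t|$ that cancels exactly with the normalizing constant $\mathcal N(P_3)/\mathcal N(P_2)=|S_t|$. The same technique was used in Lemma \ref{SimL} for triangles and in Proposition \ref{TriDia} for the kite; this is the $P_3$ analogue.
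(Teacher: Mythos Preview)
Your proof is correct and essentially identical to the paper's own argument. The paper phrases the inner sum as the averaging operator $Af_4(x^3)$ and bounds it by $\|f_4\|_\infty$, while you first pull out $\|f_4\|_\infty$ and then evaluate the remaining sum as $|S_t|$; these are the same step written in two equivalent ways.
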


\begin{proof} For all non-negative functions $f_i, i=1,2,3,4,$ on $\mathbb F_q^d,$  our task is to prove the following inequalities:
\begin{equation}\label{fir2} \sqcap(f_1, f_2,f _3, f_4) \lesssim \left\{\begin{array}{ll} & \Lambda(f_1,f_2, f_3) ~||f_4||_\infty, \\
 & ||f_1||_\infty ~\Lambda(f_2, f_3, f_4).\end{array} \right.\end{equation}
 We will only prove the first  inequality, that is
 \begin{equation} \label{Kaim} \sqcap(f_1, f_2,f _3, f_4) \lesssim \Lambda(f_1,f_2, f_3) ~||f_4||.\end{equation}
 By symmetry, the second inequality can be easily proven in the same way.
 By definition in \eqref{defP3},  we can write  $ \sqcap(f_1, f_2,f _3, f_4) $ as 
 $$\frac{1}{q^d |S_t|^2} \sum_{x^1, x^2, x^3\in \mathbb F_q^d} S_t(x^1-x^2) S_t(x^2-x^3) \left(\prod_{i=1}^3 f_i(x^i)\right) \left(\frac{1}{|S_t|} \sum_{x^4\in \mathbb F_q^d}  f_4(x^4) S_t(x^3-x^4)\right).$$
 Since the value in the above bracket  is  $ Af_4(x^3)$, which is clearly dominated by $||Af_4||_\infty$,   the required estimate \eqref{Kaim} follows immediately from the definition of $\Lambda(f_1,f_2, f_3)$ in \eqref{DefHinges}.
\end{proof}

The following theorem can be deduced from Proposition \ref{RelLH} and Theorem \ref{SharpNS}.
\begin{theorem}\label{thm8.5} Consider the $P_3$-Operator $\sqcap$ on $\mathbb F_q^d.$ Suppose that  the exponents $1\le a, b, c\le \infty$ satisfy one of the following conditions:
\begin{enumerate}
\item [(i)] $0\le \frac{1}{a}, \frac{1}{c} \le \frac{d}{d+1}$ and $ \frac{1}{a}+\frac{d}{b}+ \frac{1}{c} \le d$
\item [(ii)] $0\le \frac{1}{a} \le  \frac{d}{d+1} \le \frac{1}{c} \le 1,$ and $ \frac{1}{da}+\frac{1}{b}+ \frac{d}{c} \le d$
\item [(iii)] $0\le \frac{1}{c} \le  \frac{d}{d+1} \le \frac{1}{a} \le 1,$ and $ \frac{d}{a}+\frac{1}{b}+ \frac{1}{dc} \le d$ 
\item [(iv)]$\frac{d}{d+1}\le \frac{1}{a}, \frac{1}{c} \le 1$ and $ \frac{d}{a}+\frac{1}{b}+ \frac{d}{c} \le 2d-1.$
\end{enumerate}
Then we have
$\sqcap(a,b, c, \infty)\lesssim 1$ and $\sqcap(\infty, a,b, c)\lesssim 1.$
\end{theorem}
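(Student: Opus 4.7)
The plan is very direct: the four conditions (i)--(iv) on the triple $(a,b,c)$ in Theorem \ref{thm8.5} are precisely the four hypotheses appearing in the four cases of Theorem \ref{SharpNS} (with $(p_1,p_2,p_3)=(a,b,c)$). So I would first invoke Theorem \ref{SharpNS} to conclude $\Lambda(a,b,c)\lesssim 1$ under any one of the four assumptions, and then feed this bound into Proposition \ref{RelLH} to obtain both of the desired inequalities $\sqcap(a,b,c,\infty)\lesssim 1$ and $\sqcap(\infty,a,b,c)\lesssim 1$.

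More concretely, I would split the proof into the same four cases as Theorem \ref{SharpNS}. In Case (i), assume $0\le 1/a,1/c\le d/(d+1)$ and $1/a+d/b+1/c\le d$; then Theorem \ref{SharpNS}(i) gives $\Lambda(a,b,c)\lesssim 1$. Cases (ii), (iii), (iv) of Theorem \ref{thm8.5} are similarly matched with parts (ii), (iii), (iv) of Theorem \ref{SharpNS}. In every case the conclusion $\Lambda(a,b,c)\lesssim 1$ is reached without any additional work.

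Once $\Lambda(a,b,c)\lesssim 1$ is established, Proposition \ref{RelLH} (which was already proved by factoring out the averaging operator $Af_4$ or $Af_1$ bounded in $L^\infty$) immediately yields both $\sqcap(a,b,c,\infty)\lesssim 1$ and $\sqcap(\infty,a,b,c)\lesssim 1$. Thus Theorem \ref{thm8.5} is obtained as a formal consequence of the two earlier results, with no further estimates needed.

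There is no serious obstacle here; the theorem is essentially a packaging of Theorem \ref{SharpNS} through the reduction of Proposition \ref{RelLH}. The only thing worth double-checking is that each of the four hypotheses listed in Theorem \ref{thm8.5} literally coincides with the corresponding hypothesis of Theorem \ref{SharpNS} (which it does, after renaming $p_1\mapsto a$, $p_2\mapsto b$, $p_3\mapsto c$), so that no interpolation or endpoint argument is required to bridge the two statements.
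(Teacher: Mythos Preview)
Your proposal is correct and matches the paper's proof essentially verbatim: the paper also simply applies Theorem \ref{SharpNS} with $(p_1,p_2,p_3)=(a,b,c)$ to get $\Lambda(a,b,c)\lesssim 1$, and then invokes Proposition \ref{RelLH} to conclude.
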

\begin{proof}
Using Theorem \ref{SharpNS} with $p_1=a, ~p_2=b,~ p_3=c$,  it is clear that  $\Lambda(a,b, c)\lesssim 1$ for all exponents $a, b, c$ in our assumption. Hence, the statement follows immediately from  Proposition \ref{RelLH}.
\end{proof}

Now we prove that  the value $\sqcap(f_1,f_2,f_3, f_4)$ can be expressed in terms of the averaging operator over spheres.
For functions $f, g, h$ on $\mathbb F_q^d$,   let us denote
$$ <f, g, h>:=||fgh||_1=\frac{1}{q^d} \sum_{x\in \mathbb F_q^d} f(x) g(x) h(x).$$
\begin{proposition} \label{FormPI}
Let $f_i, i=1,2,3, 4,$ be  non-negative real-valued functions on $\mathbb F_q^d.$ Then we have
$$ \sqcap(f_1, f_2, f_3, f_4)=<Af_1,~ f_2, ~ A(f_3\cdot Af_4) >  = <A( f_2\cdot Af_1), ~f_3,~ Af_4 >.$$
\end{proposition}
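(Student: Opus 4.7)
The plan is to massage the definition of $\sqcap(f_1,f_2,f_3,f_4)$ by absorbing two of the four sums into averaging operators and then recognizing the remaining structure as one more application of $A$. Recall from \eqref{AAdef} that $Af(x)=\frac{1}{|S_t|}\sum_{y\in\mathbb F_q^d} S_t(x-y)f(y)$, and note $S_t(-u)=S_t(u)$ since $S_t$ is symmetric about the origin.

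Start from \eqref{defP3}. First, fix $x^2$ and carry out the inner sum over $x^1$; the factor $S_t(x^1-x^2)=S_t(x^2-x^1)$ gives
$\sum_{x^1} S_t(x^1-x^2)f_1(x^1)=|S_t|\,Af_1(x^2).$
Similarly, fixing $x^3$ and summing over $x^4$ yields $|S_t|\,Af_4(x^3).$ Substituting these into \eqref{defP3} and simplifying the normalizing constants gives
\begin{equation*}
\sqcap(f_1,f_2,f_3,f_4)=\frac{1}{q^d|S_t|}\sum_{x^2,x^3\in\mathbb F_q^d} S_t(x^2-x^3)\,f_2(x^2)\,Af_1(x^2)\,f_3(x^3)\,Af_4(x^3).
\end{equation*}

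From this symmetric intermediate form, the two claimed identities follow by applying $A$ to either remaining variable. For the first identity, fix $x^2$ and sum over $x^3$: the factor $\tfrac{1}{|S_t|}\sum_{x^3}S_t(x^2-x^3)\,[f_3\cdot Af_4](x^3)$ is exactly $A(f_3\cdot Af_4)(x^2)$, leaving
$\sqcap=q^{-d}\sum_{x^2} Af_1(x^2)\,f_2(x^2)\,A(f_3\cdot Af_4)(x^2)=\langle Af_1,f_2,A(f_3\cdot Af_4)\rangle.$
For the second identity, instead fix $x^3$ and sum over $x^2$ first, which produces $A(f_2\cdot Af_1)(x^3)$, and then pair against $f_3$ and $Af_4$ to get $\langle A(f_2\cdot Af_1), f_3, Af_4\rangle$.

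There is no real obstacle here: the proof is a direct rearrangement of sums using the definition \eqref{AAdef} of $A$, combined with the symmetry $S_t(-u)=S_t(u)$ and careful bookkeeping of the $|S_t|$ factors (three in the denominator, one absorbed by each of the three times we replace a sum $\sum_y S_t(\cdot-y)g(y)$ by $|S_t|\,Ag$). The only thing to be careful about is to stop after the $x^1$- and $x^4$-averaging and produce the symmetric intermediate expression, so that both desired identities can be read off by a single further averaging in whichever of the two remaining variables one chooses.
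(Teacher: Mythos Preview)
Your proof is correct and follows essentially the same approach as the paper: both arguments collapse the $x^1$- and $x^4$-sums into $Af_1(x^2)$ and $Af_4(x^3)$ and then recognize one further averaging. The only cosmetic difference is that you write out the symmetric intermediate form and derive both identities directly, whereas the paper proves the first and appeals to symmetry for the second.
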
 
\begin{proof}
By symmetry, to complete the proof, it suffices to prove the first equality, that is
$$ \sqcap(f_1, f_2, f_3, f_4)=<Af_1,~ f_2, ~ A(f_3\cdot Af_4) >.$$

Combining the definition in \eqref{defP3} and the definition of the spherical averaging operator $A$,  it follows that
\begin{align*}\sqcap(f_1, f_2, f_3, f_4) &= \frac{1}{q^d} \sum_{x^2\in \mathbb F_q^d} f_2(x^2) Af_1(x^2)  \left[ \frac{1}{|S_t|} \sum_{x^3\in \mathbb F_q^d} f_3(x^3) S_t(x^2-x^3)  Af_4(x^3) \right]\\
&=\frac{1}{q^d} \sum_{x^2\in \mathbb F_q^d} f_2(x^2) Af_1(x^2) A( f_3\cdot Af_4)(x^2).\end{align*}
This gives the required estimate.
\end{proof}

Combining Proposition \ref{FormPI} and the averaging estimate over spheres,  we are able to deduce sufficient conditions for the boundedness of the $P_3$-Operator $\sqcap$ on $\mathbb F_q^d.$ 
\begin{theorem} \label{thm8.7} Let  $1\le p_1,p_2, p_3, p_4\le \infty$ be exponents satisfying one of the following conditions:
\begin{enumerate}
\item [(i)] $0\le \frac{1}{p_1}, \frac{1}{p_4}, ~\frac{1}{p_3}+\frac{1}{dp_4}\le \frac{d}{d+1},$ and $  \frac{1}{dp_1}+\frac{1}{p_2}+\frac{1}{dp_3}+\frac{1}{d^2p_4} \le 1.$

\item [(ii)]$0\le \frac{1}{p_1}, \frac{1}{p_4} \le \frac{d}{d+1} \le \frac{1}{p_3}+\frac{1}{dp_4}\le 1,$ and $  \frac{1}{dp_1}+\frac{1}{p_2}+\frac{d}{p_3}+\frac{1}{p_4} \le d.$

\item [(iii)]$0\le \frac{1}{p_1}, ~\frac{1}{p_3}+\frac{d}{p_4} -d+1\le \frac{d}{d+1} \le \frac{1}{p_4}\le 1,$ and $  \frac{1}{p_1}+\frac{d}{p_2}+\frac{1}{p_3}+\frac{d}{p_4} \le 2d-1.$
 
\item [(iv)]$0\le \frac{1}{p_1} \le \frac{d}{d+1} \le \frac{1}{p_4},~\frac{1}{p_3}+\frac{d}{p_4}-d+1\le 1,$ and $  \frac{1}{dp_1}+\frac{1}{p_2}+\frac{d}{p_3}+\frac{d^2}{p_4} \le d^2.$

\item [(v)] $0\le \frac{1}{p_4},~\frac{1}{p_3}+\frac{1}{dp_4}\le \frac{d}{d+1} \le \frac{1}{p_1}\le 1,$ and $  \frac{d}{p_1}+\frac{1}{p_2}+\frac{1}{dp_3}+\frac{1}{d^2p_4} \le d.$

\item [(vi)] $0\le \frac{1}{p_4}\le \frac{d}{d+1} \le \frac{1}{p_1},~\frac{1}{p_3}+\frac{1}{dp_4}\le 1,$  and $ \frac{d}{p_1}+\frac{1}{p_2}+\frac{d}{p_3}+\frac{1}{p_4} \le 2d-1.$

\item [(vii)] $0\le \frac{1}{p_3}+\frac{d}{p_4}-d+1\le \frac{d}{d+1}\le \frac{1}{p_1}, \frac{1}{p_4} \le 1,$  and 
 $  \frac{d^2}{p_1}+\frac{d}{p_2}+\frac{1}{p_3}+\frac{d}{p_4} \le d^2+d-1.$

\item [(viii)]  $\frac{d}{d+1}\le \frac{1}{p_1}, \frac{1}{p_4}, \frac{1}{p_3}+\frac{d}{p_4}-d+1\le 1,$  and 
 $  \frac{d}{p_1}+\frac{1}{p_2}+\frac{d}{p_3}+\frac{d^2}{p_4} \le d^2+d-1.$
\end{enumerate}
Then we have
$\sqcap(p_1,p_2, p_3, p_4)\lesssim 1$ and  $\sqcap(p_4,p_3, p_2, p_1)\lesssim 1.$
\end{theorem}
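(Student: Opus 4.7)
The plan is to start from the key formula of Proposition \ref{FormPI}, namely $\sqcap(f_1,f_2,f_3,f_4) = \langle Af_1,\, f_2,\, A(f_3 \cdot Af_4)\rangle$, and bound this triple inner product by iterated applications of H\"older's inequality interspersed with averaging estimates from Lemma \ref{BALem}. First I would apply H\"older with exponents $r_1, p_2, s$ satisfying $1/r_1+1/p_2+1/s=1$, which gives
\[\sqcap(f_1,f_2,f_3,f_4)\le \|Af_1\|_{r_1}\,\|f_2\|_{p_2}\,\|A(f_3\cdot Af_4)\|_s.\]
Next, pick an auxiliary exponent $u$ and write $\|A(f_3\cdot Af_4)\|_s \le A(u\to s)\,\|f_3\cdot Af_4\|_u$; then H\"older again yields $\|f_3\cdot Af_4\|_u \le \|f_3\|_{p_3}\|Af_4\|_v$ with $1/u=1/p_3+1/v$. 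Finally, bound $\|Af_4\|_v$ by $A(p_4\to v)\,\|f_4\|_{p_4}$.

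The whole scheme succeeds as soon as the three averaging operators $A(p_1\to r_1)$, $A(u\to s)$, $A(p_4\to v)$ are simultaneously $\lesssim 1$. Lemma \ref{BALem} says that on the lower branch ($1/p\le d/(d+1)$) the optimal target exponent satisfies $1/r=1/(dp)$, while on the upper branch ($1/p\ge d/(d+1)$) it satisfies $1/r=d/p-d+1$. Accordingly, I would split into $2\times 2\times 2=8$ cases according to which branch contains $1/p_1$, $1/p_4$, and $1/u = 1/p_3 + 1/v$ (noting that $1/v$ is either $1/(dp_4)$ or $d/p_4-d+1$ depending on which branch $p_4$ is on). In each case, pick $r_1,v,s$ from Lemma \ref{BALem}, determine $1/u=1/p_3+1/v$, and translate the critical constraint $1/r_1+1/p_2+1/s=1$ into an inequality on $(p_1,p_2,p_3,p_4)$. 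A direct algebraic check shows that the eight branch combinations produce exactly the eight hypotheses (i)--(viii) of the theorem; the range assumptions on $1/p_1$, $1/p_4$, and $1/p_3+1/(dp_4)$ or $1/p_3+d/p_4-d+1$ precisely encode which branch of Lemma \ref{BALem} is used.

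The symmetric conclusion $\sqcap(p_4,p_3,p_2,p_1)\lesssim 1$ follows either from the second identity $\sqcap(f_1,f_2,f_3,f_4) = \langle A(f_2\cdot Af_1),\, f_3,\, Af_4\rangle$ in Proposition \ref{FormPI} by running the same argument with the roles of the ends interchanged, or even more cheaply from the obvious invariance $\sqcap(f_1,f_2,f_3,f_4) = \sqcap(f_4,f_3,f_2,f_1)$ visible in the defining formula \eqref{defP3}. In either case it is essentially a relabeling.

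The main obstacle is not conceptual but book-keeping: one has to verify that, in each of the eight cases, replacing $1/r_1,1/v,1/s$ by the correct formulas dictated by Lemma \ref{BALem} and then clearing denominators produces exactly the inequality stated in the theorem. I would present this as a single table or unified computation rather than eight separate derivations, noting in particular that the ``threshold'' hypotheses like $1/p_3+1/(dp_4)\le d/(d+1)$ versus $\ge d/(d+1)$ are precisely the conditions needed to place the intermediate exponent $u$ in the correct branch of Lemma \ref{BALem}.
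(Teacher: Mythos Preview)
Your proposal is correct and follows essentially the same approach as the paper: start from the identity in Proposition~\ref{FormPI}, apply H\"older to split off $\|Af_1\|_{r_1}$, $\|f_2\|_{p_2}$, and $\|A(f_3\cdot Af_4)\|$, then chain two more averaging bounds from Lemma~\ref{BALem} interspersed with another H\"older, and finally split into the eight cases according to which branch of Lemma~\ref{BALem} governs $p_1$, $p_4$, and the intermediate exponent. The only cosmetic differences are variable names (your $s,u,v$ are the paper's $r,s,t$) and that the paper writes the H\"older constraint as $\le 1$ via the nesting property rather than $=1$.
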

\begin{proof}
By symmetry, it will be enough to prove the first part of conclusions, that is
$ \sqcap(p_1, p_2, p_3, p_4) \lesssim 1.$
To complete the proof, we will first find the general conditions that guarantee this conclusion.  Next we will demonstrate that  each of the hypotheses in the theorem satisfies the general conditions. 

To derive the first general condition,  we assume  that $1\le r_1, p_2, r\le \infty$  satisfy that 
\begin{equation}\label{GenAss1} \frac{1}{r_1}+ \frac{1}{p_2}+ \frac{1}{r}\le1.\end{equation}
Then by Proposition \ref{FormPI} and  H\"older's inequality, 
$$\sqcap(f_1,f_2,f_3, f_4)  \le ||Af_1||_{r_1} ||f_2||_{p_2} ||A(f_3 \cdot Af_4)||_r, $$
where we also used the nesting property of norms associated with the normalizing counting measure.
Assume that  $1\le p_1, s \le \infty$ satisfy the following averaging estimates over spheres:
\begin{equation}\label{GenAss2} A(p_1 \to r_1) \lesssim 1 \quad \mbox{and}\quad  A(s \to r)\lesssim 1.\end{equation}
It follows that
$\sqcap(f_1, f_2, f_3, f_4) \lesssim ||f_1||_{p_1} ||f_2||_{p_2}  ||f_3 \cdot Af_4||_s.$
Now we assume that  $1\le p_3, t\le \infty$ satisfy that
\begin{equation}\label{GenAss3} \frac{1}{s}=\frac{1}{p_3}+\frac{1}{t}.\end{equation}
Then, by the  H\"older's inequality,  we see that
$$\sqcap(f_1, f_2, f_3, f_4) \lesssim ||f_1||_{p_1} ||f_2||_{p_2} ||f_3||_{p_3}  ||Af_4||_t.$$
Finally, if we assume that $1\le p_4\le \infty$ satisfies the following averaging estimate:
\begin{equation}\label{GenAss4} A(p_4 \to t)\lesssim 1,\end{equation}
then  we obtain that
$ \sqcap(f_1, f_2, f_3, f_4) \lesssim ||f_1||_{p_1} ||f_2||_{p_2} ||f_3||_{p_3} ||f_4||_{p_4}.$

In summary, we see that $\sqcap(p_1, p_2, p_3, p_4) \lesssim 1$ provided that  the numbers $1\le p_i\le \infty, i=1,2,3, 4,$ satisfy  all the conditions \eqref{GenAss1}, \eqref{GenAss2}, \eqref{GenAss3}, \eqref{GenAss4}. Thus, to finish the proof, we will show that  each of the 8 hypotheses in the theorem satisfies  all these conditions.

Given $1\le p_1, p_4\le \infty,$  by Lemma \ref{BALem}  we can chose  $1\le r_1, t\le \infty$ such that  the first averaging estimate in \eqref{GenAss2} and  the averaging estimate  \eqref{GenAss4} hold respectively.
More precisely, we can select $0\le 1/r_1,~1/ t\le 1$ as follows:
\begin{itemize}
\item   If  $0\le \frac{1}{p_1} \le \frac{d}{d+1}, $  then we take  $ 1/r_1= 1/(dp_1). $
\item   If  $0\le \frac{1}{p_4} \le \frac{d}{d+1},$  then we take  $ 1/t=1/(dp_4). $
\item   If  $\frac{d}{d+1}\le\frac{1}{p_1} \le 1,$  then  we choose  $1/r_1=  {d}/{p_1}-d+1.$
\item   If  $\frac{d}{d+1}\le\frac{1}{p_4} \le 1, $  then  we choose  $ 1/t={d}/{p_4}-d+1.$
\end{itemize}
In the next step,  we determine $1\le r\le \infty$ by using the condition \eqref{GenAss3} and  the second averaging estimate in \eqref{GenAss2}.
Since two kinds of $t$ values can be chosen as above,   the condition \eqref{GenAss3} becomes
$$\frac{1}{s}= \frac{1}{p_3}+ \frac{1}{dp_4} \quad \mbox{or}\quad  \frac{1}{s}=\frac{1}{p_3}+\frac{d}{p_4}-d+1.$$
Combining these  $s$ values with the second averaging estimate in \eqref{GenAss2},   the application of Lemma \ref{BALem} enables us to choose $1/r$ values as follows:
\begin{itemize}
\item  If $0\le \frac{1}{s}=\frac{1}{p_3}+ \frac{1}{dp_4} \le \frac{d}{d+1}$, then  we take $ \frac{1}{r}=\frac{1}{dp_3}+ \frac{1}{d^2p_4}.$
\item If $\frac{d}{d+1} \le  \frac{1}{s}=\frac{1}{p_3}+ \frac{1}{dp_4} \le 1,$ then we take $ \frac{1}{r}=\frac{d}{p_3}+ \frac{1}{p_4}-d+1.$
\item  If $0\le \frac{1}{s}=\frac{1}{p_3}+ \frac{d}{p_4}-d+1 \le \frac{d}{d+1}$, then we take  $ \frac{1}{r}=\frac{1}{dp_3}+ \frac{1}{p_4}-1+\frac{1}{d}.$
\item If $\frac{d}{d+1} \le \frac{1}{s}=\frac{1}{p_3}+ \frac{d}{p_4}-d+1  \le 1,$ then  we take $ \frac{1}{r}=\frac{d}{p_3}+ \frac{d^2}{p_4}-d^2+1.$
\end{itemize}
Finally,  use the condition \eqref{GenAss1} together with previously selected two values for $r_1$ and  four values for $r$.
Then we obtain the required remaining conditions.
\end{proof} 

\begin{remark} Notice that Theorem \ref{thm8.5} is a special case of Theorem \ref{thm8.7}. However, the proof of Theorem \ref{thm8.5} is much simpler than that of Theorem \ref{thm8.5}. 
\end{remark}

We do not know if the consequences from Theorems \ref{thm8.5} and \ref{thm8.7} imply the sharp boundedness results for the $P_3$-Operator $\sqcap$ on $\mathbb F_q^d.$ However, they play an important role in proving the theorem below, which states that the exponents for $\sqcap(p_1, p_2, p_3, p_4)\lesssim 1$  are less restricted than those for ${\diamondsuit}_t(p_1, p_2, p_3, p_4)\lesssim 1.$  The precise statement is as follows.

\begin{theorem}\label{ThmDtCap} Let ${\diamondsuit}_t$ and $\sqcap$ be the operators acting on the functions on $\mathbb F_q^2.$
If  ${\diamondsuit}_t(p_1, p_2, p_3, p_4)\lesssim 1$ for $1\le p_1, p_2, p_3, p_4\le \infty,$ 
then  $\sqcap(p_1, p_2, p_3, p_4)\lesssim 1.$
\end{theorem}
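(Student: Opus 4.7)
The plan is to reduce, via the multilinear interpolation theorem, the claim to a finite list of endpoint estimates, each of which can be verified from the boundedness results for $\sqcap$ on $\mathbb{F}_q^2$ already established in Section 8.

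First I would invoke Lemma \ref{N2244} in the case $d=2$ to conclude that the hypothesis $\diamondsuit_t(p_1,p_2,p_3,p_4)\lesssim 1$ forces $(1/p_1,1/p_2,1/p_3,1/p_4)$ to lie in the explicit convex polytope whose twelve extreme points are listed there. By the multilinear Riesz--Thorin interpolation theorem applied to the multilinear form $\sqcap$, it therefore suffices to prove $\sqcap(p_1,p_2,p_3,p_4)\lesssim 1$ at each of these twelve vertices; interpolation then extends the strong-type bound to the entire convex hull, which contains $(1/p_1,1/p_2,1/p_3,1/p_4)$.

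At the four ``trivial'' vertices $(0,0,0,0)$, $(1,0,0,0)$, $(0,0,1,0)$, $(0,0,0,1)$, where three of the $p_i$ are $\infty$, the bound is immediate: bounding three of the $f_i$ by their $L^\infty$-norms reduces the remaining triple sum to $|S_t|^3\|f_i\|_1$, which cancels against the normalization $q^d|S_t|^3$. At every vertex with $p_1=\infty$ or $p_4=\infty$, I plan to apply Theorem \ref{thm8.5}: for instance, at $(2/3,2/3,0,0)$ one takes $(a,b,c)=(3/2,3/2,\infty)$ in part (i) and verifies $1/a+d/b+1/c=2\le d=2$; at $(1/2,1/2,1/2,0)$, take $(a,b,c)=(2,2,2)$ in part (i); at $(2/3,0,2/3,0)$, take $(a,b,c)=(3/2,\infty,3/2)$ in part (i); at $(0,1,0,0)$ and $(0,2/3,2/3,0)$ use the second conclusion of Theorem \ref{thm8.5}(i) with the obvious triples; at $(0,0,2/3,2/3)$ use the second conclusion with $(a,b,c)=(\infty,3/2,3/2)$ in part (ii). These verifications are mechanical.

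Only the two vertices $(1/2,0,1/2,1/2)$ and $(2/3,0,0,2/3)$, where neither $p_1$ nor $p_4$ is $\infty$, fall outside the reach of Theorem \ref{thm8.5} and require Theorem \ref{thm8.7}. At $(2/3,0,0,2/3)$, namely $\sqcap(3/2,\infty,\infty,3/2)$, hypothesis (i) of Theorem \ref{thm8.7} applies: one checks $1/p_3+1/(dp_4)=1/3\le 2/3$ together with $1/(dp_1)+1/p_2+1/(dp_3)+1/(d^2p_4)=1/2\le 1$. At $(1/2,0,1/2,1/2)$, namely $\sqcap(2,\infty,2,2)$, hypothesis (ii) applies: $1/p_3+1/(dp_4)=3/4\in[2/3,1]$ and $1/(dp_1)+1/p_2+d/p_3+1/p_4=7/4\le 2$. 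The main (and modest) obstacle of the proof is book-keeping: selecting the correct parameter triples $(a,b,c)$ inside Theorem \ref{thm8.5} and identifying the appropriate branch among the eight clauses of Theorem \ref{thm8.7} for the two critical vertices. No new analytic input beyond Lemma \ref{N2244}, Theorem \ref{thm8.5} and Theorem \ref{thm8.7} is needed.
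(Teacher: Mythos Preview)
Your proposal is correct and follows essentially the same route as the paper's proof: reduce via Lemma \ref{N2244} to the twelve extreme points of the convex polytope, then verify $\sqcap(p_1,p_2,p_3,p_4)\lesssim 1$ at each vertex using Theorem \ref{thm8.5} for the points with $p_1=\infty$ or $p_4=\infty$ and Theorem \ref{thm8.7} (clauses (i) and (ii)) for the two remaining vertices $(2/3,0,0,2/3)$ and $(1/2,0,1/2,1/2)$. The only cosmetic difference is that you dispatch the four ``trivial'' vertices by a direct $L^\infty$ bound rather than quoting Theorem \ref{thm8.5}, and for $(0,0,2/3,2/3)$ you invoke clause (ii) of Theorem \ref{thm8.5} where the paper uses clause (i); both choices are valid.
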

\begin{proof}
Assume that ${\diamondsuit}_t(p_1, p_2, p_3, p_4)\lesssim 1$ for $1\le p_1, p_2, p_3, p_4\le \infty.$ 
Then, by Lemma \ref{N2244}, the point  $(1/p_1, 1/p_2, 1/p_3, 1/p_4)$ is contained in the convex hull of  the following points: $(0,0,1,0),$ $(0,1,0,0),$ $(0,0,0,1),$ $(1/2,0,1/2,1/2),$ $(2/3,2/3,0,0),$ $(1,0,0,0),$ $(2/3,0,2/3,0),$ $(1/2,1/2,1/2,0),$ $(2/3,0,0,2/3),$ $(0,2/3,2/3,0),$ $(0,0,0,0),$ $(0,0,2/3,2/3).$

To complete the proof, by the interpolation theorem,  it suffices to show that for each of the above critical points  $(1/p_1, 1/p_2, 1/p_3, 1/p_4),$   we have
$$ \sqcap(p_1, p_2, p_3, p_4)\lesssim 1.$$
To prove this, we will use  Theorem \ref{thm8.7} and Theorem \ref{thm8.5}.
By Theorem \ref{thm8.7} with the hypothesis (i),  one can notice that   $\sqcap(3/2, \infty, \infty,  3/2)\lesssim 1,$ which is corresponding to the point   $(1/p_1, 1/p_2, 1/p_3, 1/p_4)=(2/3, 0,0,2/3).$ Similarly, Theorem \ref{thm8.7} with the hypothesis (ii) can be used for the point $(1/2,0,1/2,1/2)$, namely, $\sqcap(2, \infty, 2, 2)\lesssim 1.$

For any other points,  we can invoke Theorem \ref{thm8.5}. 
More precisely, we can  apply Theorem \ref{thm8.5} with the hypothesis (i) for  the points $(0,1,0,0),$ $(2/3,2/3,0,0),$ $(2/3, 0, 2/3, 0),$ $(1/2, 1/2, 1/2,0),$ $(0,2/3, 2/3, 0),$ $(0,0,0,0),$ $(0, 0, 2/3, 2/3).$  The  points $(0,0,1,0),$ $(0,0,0, 1)$ can be obtained by  Theorem \ref{thm8.5} with the  hypothesis (ii).  Finally, for the point $(1, 0, 0, 0)$, we can prove that  $\sqcap(1, \infty, \infty, \infty)\lesssim 1$ by using  Theorem \ref{thm8.5} with the hypothesis (iii). This completes the proof.
\end{proof}

\begin{remark} \label{remark8.10} The reverse statement of Theorem \ref{ThmDtCap} is not true in general. As a counterexample, we can take $p_1=3/2,~ p_2=3,~ p_3=3/2, p_4=\infty.$  Indeed,  the assumption (i) of Theorem \ref{thm8.5} with $d=2$ implies that
$\sqcap(3/2, 3, 3/2, \infty) \lesssim 1.$ However,
${\diamondsuit_t}(3/2, 3, 3/2, \infty)$ cannot be bounded, which follows from Lemma \ref{N2244}. 
\end{remark}

The following corollary is a consequence of Theorem \ref{ThmDtCap}.
\begin{corollary} \label{CorThmDtCap} Conjecture \ref{PConjK} is valid for the graph $C_4$ + diagonal and its subgraph $P_3$ in $\mathbb F_q^2.$
\end{corollary}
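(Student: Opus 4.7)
The plan is to verify the hypotheses of Conjecture \ref{PConjK} for the pair $(G,G')=(C_4+\text{diagonal},\,P_3)$ and then read off the conclusion directly from Theorem \ref{ThmDtCap}. The substantive analytic content has already been carried out in Theorem \ref{ThmDtCap}, so what remains for this corollary is essentially bookkeeping about the graphs themselves.

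First I would check that $P_3$ embeds in $C_4+$ diagonal as a connected ordered subgraph on the same vertex set. Labeling the vertices $\{1,2,3,4\}$ so that $C_4+$ diagonal has edge set $\{(1,2),(2,3),(3,4),(4,1),(1,3)\}$ (which is the labeling used in the definition \eqref{Rhdef} of $\diamondsuit_t$), removing the two edges $(4,1)$ and $(1,3)$ leaves exactly the path $1\!-\!2\!-\!3\!-\!4$, which matches the $P_3$ structure used in the definition \eqref{defP3} of $\sqcap$. In particular, the operators $\diamondsuit_t$ and $\sqcap$ act on $4$-tuples of functions indexed by the same vertices, so the subgraph relation is compatible with the labelings that appear in the inequalities we need to transfer.

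Next I would compute the minimum degrees. In $C_4+$ diagonal the diagonal endpoints $1$ and $3$ have degree $3$ and the vertices $2$ and $4$ have degree $2$, so $\delta(C_4+\text{diagonal})=2$. In $P_3$ the two endpoints have degree $1$ and the two interior vertices have degree $2$, so $\delta(P_3)=1$. Since the ambient dimension is $d=2$, we have
$$\min\{\delta(C_4+\text{diagonal}),\,d\}=\min\{2,2\}=2>1=\delta(P_3),$$
so the key hypothesis \eqref{mainConj} of Conjecture \ref{PConjK} holds for this pair.

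Finally, with all hypotheses of Conjecture \ref{PConjK} verified, its conclusion for $(G,G')=(C_4+\text{diagonal},P_3)$ is exactly the statement: $\diamondsuit_t(p_1,p_2,p_3,p_4)\lesssim 1$ implies $\sqcap(p_1,p_2,p_3,p_4)\lesssim 1$ for all $1\le p_i\le\infty$. This is precisely Theorem \ref{ThmDtCap}, already established in the paper. There is no analytic obstacle here; the only point requiring attention is ensuring the vertex labeling matches between the two operator definitions, and the labeling given above shows this is automatic.
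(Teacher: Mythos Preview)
Your proposal is correct and follows essentially the same approach as the paper's own proof: verify that $P_3$ is a subgraph of $C_4+$ diagonal, check the minimum-degree inequality $\min\{\delta(C_4+\text{diagonal}),2\}=2>1=\delta(P_3)$, and then invoke Theorem \ref{ThmDtCap}. You supply a bit more detail on the explicit edge removal and vertex labeling, but the structure and content are the same.
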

\begin{proof} It is obvious that the $P_3$ is a subgraph of $C_4$ + diagonal in $\mathbb F_q^2.$ For $d=2,$ it is plain to notice that $\min\{\delta(C_4 + diagonal), d\} =2 > \delta(P_3)=1.$
Thus, the graph $C_4$ + diagonal and its subgraph $P_3$ satisfy all assumptions of Conjecture \ref{PConjK}. Then the statement of the corollary follows immediately from Theorem \ref{ThmDtCap} since the operators ${\diamondsuit}_t$ and $\sqcap$ are related to the $C_4$ + diagonal and its subgraph $P_3$, respectively.
\end{proof}

The following theorem provides a concrete example for a positive answer to Question \ref{PConj} since the operators  $\Diamond$ and $\sqcap$ are related to the graph $C_4$ and its subgraph $P_3,$ respectively.  Furthermore, the graphs also satisfy Conjecture \ref{PConjK} (see Corollary \ref{CorThmBoxK} below). 
\begin{theorem}\label{ThmBoxK} Let $\Diamond$ and $\sqcap$ be the operators acting on the functions on $\mathbb F_q^2.$
If  $\Diamond(p_1, p_2, p_3, p_4)\lesssim 1,$ $1\le p_1, p_2, p_3, p_4\le \infty,$ 
then  $\sqcap(p_1, p_2, p_3, p_4)\lesssim 1.$
\end{theorem}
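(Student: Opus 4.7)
The plan is to observe that the claim follows immediately by chaining two results already established in the paper. By Theorem \ref{CExa}(i), the hypothesis $\Diamond(p_1, p_2, p_3, p_4) \lesssim 1$ implies $\diamondsuit_t(p_1, p_2, p_3, p_4) \lesssim 1$. Once this is in hand, an application of Theorem \ref{ThmDtCap} yields $\sqcap(p_1, p_2, p_3, p_4) \lesssim 1$, which is exactly what is to be proved. This two-step reduction is the cleanest route and requires no new machinery.

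A self-contained alternative is to mimic directly the proof of Theorem \ref{ThmDtCap}. By Lemma \ref{NeB} specialized to $d=2$, the hypothesis $\Diamond(p_1,p_2,p_3,p_4)\lesssim 1$ forces $(1/p_1, 1/p_2, 1/p_3, 1/p_4)$ to lie in the convex hull of the nine explicit vertices listed in the lemma. By multilinear Riesz--Thorin interpolation, it suffices to verify $\sqcap(p_1,p_2,p_3,p_4)\lesssim 1$ at each of these nine vertices. Inspecting the two lists, every one of these nine vertices already appears among the twelve critical points for $\diamondsuit_t$ that were handled in the proof of Theorem \ref{ThmDtCap}, so the corresponding applications of Theorem \ref{thm8.5} and Theorem \ref{thm8.7} carry over verbatim. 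For example, the vertex $(2/3,0,0,2/3)$ gives $\sqcap(3/2,\infty,\infty,3/2)\lesssim 1$ via Theorem \ref{thm8.7}(i), while axis vertices such as $(1,0,0,0)$ and permutations of $(0,2/3,2/3,0)$ are covered by the appropriate case of Theorem \ref{thm8.5}.

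The substantive work is therefore already embedded in Theorems \ref{thm8.5} and \ref{thm8.7}, which rest on the decomposition $\sqcap(f_1,f_2,f_3,f_4)=\langle Af_1, f_2, A(f_3\cdot Af_4)\rangle$ from Proposition \ref{FormPI} combined with the sharp spherical averaging estimates of Lemma \ref{BALem}. The only remaining step is a routine vertex-by-vertex verification, which is purely arithmetic. The main obstacle, if any, is bookkeeping: matching each of the nine vertices to the correct subcase of Theorem \ref{thm8.5} or Theorem \ref{thm8.7}; but since the nine $\Diamond$-vertices form a proper subset of the twelve $\diamondsuit_t$-vertices already treated, no genuinely new case arises.
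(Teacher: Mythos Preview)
Your proposal is correct and matches the paper's own proof essentially verbatim: the paper also chains Theorem \ref{CExa}(i) with Theorem \ref{ThmDtCap} to conclude. Your alternative direct route is just an unrolling of the same argument, so nothing new is needed.
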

\begin{proof}
By Theorem \ref{CExa} (i), if $\Diamond(p_1, p_2, p_3, p_4)\lesssim 1$, then $\diamondsuit_t(p_1, p_2, p_3, p_4)\lesssim 1.$  By Theorem \ref{ThmDtCap},  if  
${\diamondsuit}_t(p_1, p_2, p_3, p_4)\lesssim 1,$ 
then  $\sqcap(p_1, p_2, p_3, p_4)\lesssim 1.$ Hence, the statement follows.
\end{proof}

\begin{remark} \label{remark8.11K} The reverse statement of Theorem \ref{ThmBoxK} cannot hold. As in Remark \ref{remark8.10},  if we can take $p_1=3/2, p_2=3, p_3=3/2, p_4=\infty,$ then $\sqcap(3/2, 3, 3/2, \infty) \lesssim 1.$ However, $\Diamond(3/2, 3, 3/2, \infty)$ cannot be bounded, which follows from Theorem \ref{Thm7.6K}. \end{remark}

\begin{corollary}\label{CorThmBoxK}
Conjecture \ref{PConjK} holds true for the graph $C_4$ and its subgraph $P_3$ on $\mathbb F_q^2.$
\end{corollary}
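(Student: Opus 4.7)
The plan is straightforward: Corollary \ref{CorThmBoxK} is essentially a verification that the graphs $G=C_4$ and $G'=P_3$ satisfy the hypotheses of Conjecture \ref{PConjK} in $\mathbb{F}_q^2$, combined with a direct appeal to the implication already proved in Theorem \ref{ThmBoxK}. So the proof should mirror the structure of Corollary \ref{CorThmDtCap}, just with $C_4$ in place of $C_4+$diagonal and the corresponding operator $\Diamond$ in place of $\diamondsuit_t$.

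First I would observe that $P_3$ is a subgraph of $C_4$: indeed, deleting any single edge from the $4$-cycle leaves a path on four vertices, which is precisely $P_3$. Both graphs have $n=4$ vertices, so the matching of the indices $(p_1,p_2,p_3,p_4)$ between the operators $\Diamond$ and $\sqcap$ makes sense (the vertex labelling is compatible with the notation fixed in the introduction).

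Next I would check the minimum-degree condition \eqref{mainConj} in dimension $d=2$. Every vertex of $C_4$ has degree $2$, so $\delta(C_4)=2$; in $P_3$ the two endpoints have degree $1$, so $\delta(P_3)=1$. Therefore
\[
\min\{\delta(C_4),\,d\}=\min\{2,2\}=2>1=\delta(P_3),
\]
which verifies the hypothesis of Conjecture \ref{PConjK} for the pair $(G,G')=(C_4,P_3)$ in $\mathbb{F}_q^2$.

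Finally, the implication asserted by Conjecture \ref{PConjK} for this pair of graphs is exactly the content of Theorem \ref{ThmBoxK}: whenever $\Diamond(p_1,p_2,p_3,p_4)\lesssim 1$ for some $1\le p_i\le \infty$, we have $\sqcap(p_1,p_2,p_3,p_4)\lesssim 1$. Since the operators $\Diamond$ and $\sqcap$ are by definition the operators $\Lambda_{C_4}$ and $\Lambda_{P_3}$, respectively, the corollary follows immediately from Theorem \ref{ThmBoxK}. There is no real obstacle here; all of the analytic work has already been carried out in the chain of results Theorem \ref{CExa}(i) $\Rightarrow$ Theorem \ref{ThmDtCap} $\Rightarrow$ Theorem \ref{ThmBoxK}, and the remaining task is purely a bookkeeping check of the graph-theoretic hypotheses.
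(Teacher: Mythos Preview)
Your proof is correct and follows essentially the same approach as the paper's own proof: verify that $P_3$ is a subgraph of $C_4$, check the minimum-degree hypothesis $\min\{\delta(C_4),2\}=2>1=\delta(P_3)$, and then invoke Theorem \ref{ThmBoxK}. The paper's version is simply a terser presentation of the same bookkeeping.
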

\begin{proof}
The main hypothesis \eqref{mainConj} of Conjecture \ref{PConjK} is satisfied for the the graph $C_4$ and its subgraph $P_3$ on $\mathbb F_q^2:$
$$ \min\{\delta(C_4), 2\}=2 >1= \delta(P_3).$$
Since the operators $\Diamond$ and $\sqcap$ are associated to the graph $C_4$ and its subgraph $P_3,$ respectively,  the statement of the corollary follows from Theorem \ref{ThmBoxK}. 
\end{proof}
\section{Operators associated to the graph $K_3$ + tail (a kite)}
Given $t\in \mathbb F_q^*$ and functions $f_i, i=1,2,3, 4,$ on $\mathbb F_q^d,$   we define $ \unlhd(f_1, f_2,f_3, f_4)$ as the following value:
$$\frac{1}{q^d |S_t|^2 |S_t^{d-2}|} \sum_{x^1, x^2, x^3, x^4\in \mathbb F_q^d} S_t(x^1-x^2) S_t(x^2-x^3) S_t(x^3-x^4) S_t(x^3-x^1) \prod_{i=1}^4 f_i(x^i). $$
Note that this operator $ \unlhd$ is related to the graph $K_3$ + tail (Figure \ref{fig:sub2KK}), and so the normalizing factor $\mathcal{N}(G)$ in \eqref{generalform} can be taken as the quantity $q^d |S_t|^2 |S_t^{d-2}|.$ 
\begin{definition} For $1\le p_1, p_2, p_3, p_4\le \infty,$ 
we define $\unlhd(p_1, p_2, p_3, p_4)$ as  the smallest constant such that   the following estimate holds  for all non-negative real-valued functions $f_i, i=1,2,3, 4,$ on $\mathbb F_q^d:$
$$ \unlhd(f_1,f_2, f_3, f_4) \le \unlhd(p_1, p_2, p_3, p_4) ||f_1||_{p_1} ||f_2||_{p_2} ||f_3||_{p_3} ||f_4||_{p_4}.$$
\end{definition} 

 Here, our main problem is to determine all exponents $1\le p_1, p_2, p_3, p_4\le \infty$ such that  
\begin{equation}\label{DeltaPPU} \unlhd(f_1, f_2, f_3, f_4) \lesssim ||f_1||_{p_1} ||f_2||_{p_2} ||f_3||_{p_3} ||f_4||_{p_4}\end{equation}
holds for all non-negative real-valued functions $f_i, i=1,2,3,4,$ on $\mathbb F_q^d.$  
In other words,  we are asked to determine  all numbers $1\le p_1, p_2, p_3, p_4 \le \infty$ such that
$ \unlhd(p_1, p_2, p_3, p_4) \lesssim 1.$

Recall that when $d=2$, we assume that $3\in \mathbb F_q$ is a square number.
\begin{lemma} [Necessary conditions for the boundedness of $\unlhd(p_1, p_2, p_3, p_4)$  ]\label{Neun}Suppose that \eqref{DeltaPPU} holds, namely $\unlhd(p_1, p_2, p_3, p_4) \lesssim 1.$ Then we have
$$ \frac{1}{p_1} +\frac{1}{p_2}+\frac{d}{p_3}+\frac{1}{p_4} \le d, \quad\frac{1}{p_1} +\frac{d}{p_2}+\frac{1}{p_3}\le d, \quad\frac{d}{p_1} +\frac{1}{p_2}+\frac{1}{p_3}\le d,$$
$$  \frac{1}{p_1} +\frac{d}{p_2}+\frac{1}{p_3}+\frac{d}{p_4} \le 2d-1, \quad\mbox{and}\quad \frac{d}{p_1} +\frac{1}{p_2}+\frac{1}{p_3}+\frac{d}{p_4}\le 2d-1. $$

In particular, if $d=2$, then it can be shown by Polymake \cite{AGHJLP, GJ} that $(1/p_1, 1/p_2, 1/p_3, 1/p_4)$ is contained in the convex hull of the  points:
$(0,0,1,0),$ $(0,1,0,0),$  $(0,1,0,1/2),$ $(2/3,0,2/3,0),$ $(1/2,1/2,1/2,0),$ $(5/6,0,1/3,1/2),$ $(1,0,0,0),$ $(1/3,0,1/3,1),$ $(5/8,5/8,1/8,1/2),$ $(1/2,0,0,1),$\\ $(1/4,1/4,1/4,1),$
$(1/3,1/3,0,1),$ $(1,0,0,1/2),$ $(2/3,2/3,0,1/2),$ $(2/3,2/3,0,0),$ $(0,1/2,0,1),$\\
 $(0,1/3,1/3,1),$ $(0,0,0,1),$ $(0,5/6,1/3,1/2),$ $(0,0,1/2,1),$ $(0,0,0,0),$ $(0,2/3,2/3,0).$
\end{lemma}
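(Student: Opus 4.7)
The plan is to follow the method already employed in Lemmas \ref{N2244}, \ref{NeB}, and \ref{NCap}: for each of the five listed inequalities, I would test the hypothesis $\unlhd(p_1,p_2,p_3,p_4)\lesssim 1$ against a carefully chosen tuple of functions of the form $f_i\in\{\delta_0,\,1_{S_t},\,1_{\mathbb F_q^d}\}$, then compute both sides in closed form and compare. The norm bookkeeping uses $\|1_{S_t}\|_p\sim q^{-1/p}$ and $\|\delta_0\|_p\sim q^{-d/p}$, while $|S_t|\sim q^{d-1}$ and $|S_t^{d-2}|\sim q^{d-2}$ handle the denominator, and all counting of restricted configurations on spheres is done by Corollary \ref{KKo} of the Appendix. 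The standing hypothesis that $3\in\mathbb F_q$ is a square when $d=2$ ensures that the number of equilateral pairs on $S_t$ is non-trivial in the dimensional edge case.

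Concretely, for $\frac{1}{p_1}+\frac{1}{p_2}+\frac{d}{p_3}+\frac{1}{p_4}\le d$, take $f_1=f_2=f_4=1_{S_t}$ and $f_3=\delta_0$; the kernel factors then force $x^1,x^2,x^4\in S_t$ with $\|x^1-x^2\|=t$, and Corollary \ref{KKo} gives the count of such configurations. For $\frac{1}{p_1}+\frac{d}{p_2}+\frac{1}{p_3}\le d$, use $f_1=f_3=1_{S_t}$, $f_2=\delta_0$ and $f_4\equiv 1$ on $\mathbb F_q^d$, so that the tail vertex $x^4$ ranges freely over a sphere of radius $t$ centred at $x^3$; the inequality $\frac{d}{p_1}+\frac{1}{p_2}+\frac{1}{p_3}\le d$ follows by the symmetry $x^1\leftrightarrow x^2$ at the base of the triangle. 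For $\frac{1}{p_1}+\frac{d}{p_2}+\frac{1}{p_3}+\frac{d}{p_4}\le 2d-1$, take $f_1=f_3=1_{S_t}$ and $f_2=f_4=\delta_0$, reducing the kernel conditions to $x^1,x^3\in S_t$ with $\|x^1-x^3\|=t$; the last inequality again comes from swapping $x^1$ and $x^2$.

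The main technical point is keeping careful track of the normalising factor $q^d|S_t|^2|S_t^{d-2}|$: in each of the five computations it must cancel cleanly against the configuration count produced by Corollary \ref{KKo}, yielding a left-hand side of size $q^{-d}$ in the three ``one-delta'' cases and of size $q^{1-2d}$ in the two ``two-delta'' cases. Comparing this with the corresponding power of $q$ read off from $\prod_i\|f_i\|_{p_i}$ directly forces the claimed necessary condition. I expect the only subtle point is the triangle count via Corollary \ref{KKo}, where one must handle $d=2$ and $d\ge 3$ uniformly and ensure that the equilateral constraint $\|x^1-x^2\|=t$ with $x^1,x^2\in S_t$ really does contribute the factor $q^{d-2}$ rather than something smaller.

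Once the five scalar inequalities are established, the polytope description for $d=2$ is obtained mechanically: feed those five constraints, together with the box constraints $0\le 1/p_i\le 1$, into Polymake \cite{AGHJLP, GJ} and read off the vertex list. I would double-check at the end that the Polymake output matches the list given in the statement (both in number and in coordinates), which serves as a sanity check that no constraint has been missed and that the five chosen test tuples already capture the full face structure in two dimensions.
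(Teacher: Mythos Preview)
Your proposal is correct and follows essentially the same approach as the paper: the paper tests \eqref{DeltaPPU} with exactly the tuples you describe (using the explicit choices $f_1=\delta_0,\ f_2=f_3=1_{S_t},\ f_4=1_{\mathbb F_q^d}$ and $f_1=f_4=\delta_0,\ f_2=f_3=1_{S_t}$ in place of your symmetry argument for the third and fifth inequalities, which amounts to the same thing since $\unlhd$ is symmetric in $f_1,f_2$), and then defers the $d=2$ polytope to Polymake.
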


\begin{proof} 
To deduce the first inequality,  we test \eqref{DeltaPPU} with $f_1=f_2=f_4= 1_{S_t}$ and $f_3=\delta_0.$
To obtain the second one,  we test \eqref{DeltaPPU} with $f_1=f_3=1_{S_t}, f_2=\delta_0, $ and $f_4=1_{\mathbb F_q^d}.$
To get the third one, we test \eqref{DeltaPPU} with $f_1=\delta_0, f_2=f_3=1_{S_t}, $ and $f_4=1_{\mathbb F_q^d}.$
To prove the fourth one, we test \eqref{DeltaPPU} with $f_1=f_3=1_{S_t}$ and $ f_2=f_4=\delta_0.$
Finally, to obtain the fifth inequality, we test \eqref{DeltaPPU} with $f_1=f_4=\delta_0$ and $f_2=f_3=1_{S_t}.$
\end{proof}

\subsection{Sufficient conditions for the boundedness of $\unlhd$ on $\mathbb F_q^d$}
When one of exponents $p_1, p_2, p_4$ is $\infty$,  the boundedness problem of $\unlhd(p_1, p_2, p_3,  p_4)$ can be reduced to that for the $K_3$-Operator $\Delta$ or the $P_2$-Operator $\Lambda.$  
\begin{proposition}\label{RelationDL}  Let $1\le a, b,c \le \infty.$ 
\begin{enumerate}
\item  [(i)]If $\Delta(a,b,c)\lesssim 1$, then  $\unlhd(a,b, c, \infty)\lesssim 1.$
\item [(ii)] If $\Lambda(a,b,c)\lesssim 1$, then  $\unlhd(\infty, a, b,c) \lesssim 1$ and $\unlhd(a, \infty, b,c)\lesssim 1.$
\end{enumerate}
\end{proposition}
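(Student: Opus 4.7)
The plan is to prove both parts by the same scheme: dominate one of the $f_i$ by its $L^\infty$ norm, evaluate the resulting inner sum in closed form, and match the residual normalizing constant to that of the target operator ($\Delta$ in part (i), $\Lambda$ in part (ii)).

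For part (i), I would strip off the pendant vertex $x^4$ of the kite. Starting from the definition of $\unlhd(f_1,f_2,f_3,f_4)$, bound $f_4(x^4)\le \|f_4\|_\infty$ and perform the inner sum
$$\sum_{x^4\in\mathbb F_q^d} S_t(x^3-x^4)=|S_t|.$$
One factor of $|S_t|$ cancels in the denominator $q^d|S_t|^2|S_t^{d-2}|$, leaving exactly the normalization $q^d|S_t||S_t^{d-2}|$ of $\Delta$. Thus
$$\unlhd(f_1,f_2,f_3,f_4)\le \|f_4\|_\infty\,\Delta(f_1,f_2,f_3),$$
and the hypothesis $\Delta(a,b,c)\lesssim 1$ gives the conclusion.

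For part (ii), I would instead remove one of the two non-apex triangle vertices, namely $x^1$ (for $\unlhd(\infty,a,b,c)$) or $x^2$ (for $\unlhd(a,\infty,b,c)$); either deletion leaves the path $P_2$ with $x^3$ as middle vertex. Taking the first case, bound $f_1\le\|f_1\|_\infty$ and carry out
$$\sum_{x^1\in\mathbb F_q^d} S_t(x^1-x^2)\,S_t(x^3-x^1),$$
which counts the intersection of two spheres of radius $t$ whose centers $x^2,x^3$ are at distance $t$ (this distance constraint is enforced by the surviving factor $S_t(x^2-x^3)$ in the outer summation). By Corollary \ref{KKo} in the Appendix, this count is $\lesssim q^{d-2}\sim |S_t^{d-2}|$. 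The $|S_t^{d-2}|$ cancels the one in the denominator of $\unlhd$, leaving the normalization $q^d|S_t|^2$ of $\Lambda$. Hence
$$\unlhd(f_1,f_2,f_3,f_4)\lesssim \|f_1\|_\infty\,\Lambda(f_2,f_3,f_4),$$
and the symmetric argument (now summing $x^2$ using the factor $S_t(x^3-x^1)$ to control the distance $\|x^1-x^3\|=t$) yields
$$\unlhd(f_1,f_2,f_3,f_4)\lesssim \|f_2\|_\infty\,\Lambda(f_1,f_3,f_4).$$
Since the middle vertex in both reduced paths is $x^3$, the remaining indices align with the $P_2$ labelling used in the definition of $\Lambda$, so the hypothesis $\Lambda(a,b,c)\lesssim 1$ immediately gives $\unlhd(\infty,a,b,c)\lesssim 1$ and $\unlhd(a,\infty,b,c)\lesssim 1$.

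There is no substantial obstacle: the only nontrivial step is the sphere-intersection count, which is exactly the appendix lemma, and the only bookkeeping subtlety is verifying that the centers of the two spheres are at distance $t$ — but this is automatic because the corresponding edge of the kite survives as a factor in the outer sum after deleting the chosen vertex.
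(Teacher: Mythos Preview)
Your proposal is correct and follows essentially the same approach as the paper: for (i) you dominate $f_4$ by $\|f_4\|_\infty$ and collapse the pendant sum to $|S_t|$, and for (ii) you dominate $f_1$ (resp.\ $f_2$) by its $L^\infty$ norm and bound the resulting two-sphere intersection via Corollary~\ref{KKo}, exactly as the paper does. The only cosmetic difference is that the paper carries out the change of variables explicitly to put the intersection count in the form of Corollary~\ref{KKo}, whereas you invoke it directly.
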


\begin{proof} For all non-negative functions $f_i, i=1,2,3,4,$ on $\mathbb F_q^d.$  we aim to prove the following inequalities:
\begin{equation}\label{fir1} \unlhd(f_1, f_2,f _3, f_4) \lesssim \left\{\begin{array}{ll} & \Delta(f_1,f_2, f_3) ~||f_4||_\infty, \\
                                                                                 & ||f_1||_\infty ~\Lambda(f_2 ,f_3, f_4),\\
                                                                                  & ||f_2||_\infty ~\Lambda(f_1, f_3, f_4).\end{array} \right.\end{equation}

 By the definition,  $\unlhd(f_1, f_2,f _3, f_4)$ can be expressed as
$$ \frac{1}{q^d |S_t||S_t^{d-2}|} \sum_{x^1, x^2, x^3\in \mathbb F_q^d} S_t(x^1-x^2) S_t(x^2-x^3)  S_t(x^3-x^1) \left(\prod_{i=1}^3 f_i(x^i)\right) \left( \frac{1}{|S_t|} \sum_{x_4\in \mathbb F_q^d} S_t(x^3-x^4) f_4(x^4)\right).$$
The sum in the above bracket  is clearly dominated by $||f_4||_\infty$ for all $x^3\in \mathbb F_q^d.$ Hence, recalling the definition of $\Delta(f_1,f_2,f_3)$  in \eqref{KDeltatD},   we get the first inequality  in \eqref{fir1}:
$$ \unlhd(f_1, f_2,f _3, f_4) \le \Delta(f_1,f_2,f_3) ||f_4||_\infty.$$

Now we prove the second and third inequalities in \eqref{fir1}.  We will only provide the proof of the second inequality, that is
\begin{equation}\label{unK}\unlhd(f_1, f_2,f _3, f_4) \le ||f_1||_\infty ~\Lambda(f_2 ,f_3, f_4).\end{equation}
The third inequality can be similarly proved by  switching the roles of variables $x^1, x^2.$   
We write $\unlhd(f_1, f_2, f_3, f_4)$ as follows:
$$ \frac{1}{q^d |S_t|^2} \sum_{x^2, x^3, x^4\in \mathbb F_q^d} S_t(x^2-x^3) S_t(x^3-x^4) \left(\prod_{i=2}^4f_i(x^i)\right)  \left( \frac{1}{|S_t^{d-2}|} \sum_{x^1\in \mathbb F_q^d} S_t(x^1-x^2) S_t(x^3-x^1) f_1(x^1)\right).$$
Recall the definition of $\Lambda(f_2, f_3, f_4)$ in \eqref{DefHinges}.   Then,  to prove the inequality \eqref{unK},  it will be enough to show that  for all $x^2, x^3 \in \mathbb F_q^d$ with $||x^2-x^3||=t\ne 0,$    the value in the above bracket  is $\lesssim ||f_1||_\infty.$ Now by a simple change of variables,  the value in the above bracket  is the same as
$$  \frac{1}{|S_t^{d-2}|} \sum_{x^1\in S_t}  S_t( (x^3-x^2)-x^1) f_1(x^1+x^2).$$
This is clearly dominated by
$$ \frac{1}{|S_t^{d-2}|} \sum_{x^1\in S_t}  S_t( (x^3-x^2)-x^1) ||f_1||_\infty.$$
 Since $||x^3-x^2||=t\ne 0, $    applying Corollary \ref{KKo} in Appendix gives us the desirable estimate.
\end{proof}

We address sufficient conditions for the boundedness of $\unlhd$ on $\mathbb F_q^d.$

The following result can be obtained from Proposition \ref{RelationDL} (i).
\begin{theorem}\label{MK1}
Let  $\unlhd$ be defined on the functions on $\mathbb F_q^d, d\ge 2.$ Suppose that  $1\le a, b\le \infty$ satisfies the following equations:
$$\frac{1}{a} +  \frac{d}{b}\le d \quad \mbox{and} \quad   \frac{d}{a} +  \frac{1}{b}\le d.$$
Then we have
$ \unlhd(a, b, \infty, \infty) \lesssim 1,  ~ \unlhd(a, \infty, b, \infty) \lesssim 1,~ \unlhd(\infty, a, b, \infty)\lesssim 1.$
\end{theorem}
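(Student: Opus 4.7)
The plan is to reduce each of the three estimates to a boundedness estimate for the $K_3$-Operator $\Delta$ that is already available from Theorem \ref{thmSi}, and then to transfer that information up to the kite operator $\unlhd$ via Proposition \ref{RelationDL}(i). This is a clean chaining argument, with no new calculation needed beyond verifying the hypotheses of the two cited results.

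More precisely, I would first apply Theorem \ref{thmSi} with exponents $a,b$ satisfying the assumed conditions $\tfrac{1}{a}+\tfrac{d}{b}\le d$ and $\tfrac{d}{a}+\tfrac{1}{b}\le d$. This theorem gives, all at once, the three $\Delta$-bounds
\[
\Delta(a,b,\infty)\lesssim 1,\qquad \Delta(a,\infty,b)\lesssim 1,\qquad \Delta(\infty,a,b)\lesssim 1,
\]
which are exactly the three inputs we need. Next I would plug each of these into Proposition \ref{RelationDL}(i), whose content is the implication $\Delta(p,q,r)\lesssim 1\ \Longrightarrow\ \unlhd(p,q,r,\infty)\lesssim 1$. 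Applying it to the three triples $(a,b,\infty)$, $(a,\infty,b)$, $(\infty,a,b)$, respectively, yields the three conclusions $\unlhd(a,b,\infty,\infty)\lesssim 1$, $\unlhd(a,\infty,b,\infty)\lesssim 1$, and $\unlhd(\infty,a,b,\infty)\lesssim 1$, completing the proof.

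Since both of the building blocks (Theorem \ref{thmSi} and Proposition \ref{RelationDL}(i)) are already established, there is essentially no obstacle here; the only thing to be careful about is that the statement of Proposition \ref{RelationDL}(i) was phrased with the trailing $\infty$-slot fixed at position four, which matches exactly how the problem is posed. The underlying reason why this works so simply is that taking $p_4=\infty$ in the defining sum for $\unlhd$ lets us trivially dominate the $f_4$-factor by $\|f_4\|_\infty$ and then sum the remaining $S_t(x^3-x^4)$ kernel against $1$, reducing the configuration to the triangle on $(x^1,x^2,x^3)$; this mechanism, already incorporated in Proposition \ref{RelationDL}(i), is exactly what makes sharp $\Delta$-bounds pass automatically to these three $\unlhd$-bounds.
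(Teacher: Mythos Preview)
Your proposal is correct and is essentially identical to the paper's own proof, which reads in its entirety: ``The statement follows immediately by combining Proposition \ref{RelationDL} (i) with Theorem \ref{thmSi}.'' You have simply spelled out that chaining explicitly.
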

\begin{proof} The statement follows immediately by combining   Proposition \ref{RelationDL} (i) with Theorem \ref{thmSi}.
\end{proof}

Proposition \ref{RelationDL} (ii) can be used to deduce the following result.
\begin{theorem} \label{MK2}
Let  $\unlhd$ be defined on the functions on $\mathbb F_q^d, d\ge 2.$  Suppose that 
$1\le a, b, c\le\infty$ satisfies one of the following conditions:
\begin{enumerate} 
\item [(i)] $0\le \frac{1}{a}, \frac{1}{c} \le \frac{d}{d+1}$ and $ \frac{1}{a}+\frac{d}{b}+ \frac{1}{c} \le d,$
\item [(ii)] $0\le \frac{1}{a} \le  \frac{d}{d+1} \le \frac{1}{c} \le 1,$ and $ \frac{1}{da}+\frac{1}{b}+ \frac{d}{c} \le d,$ 
\item [(iii)]  $0\le \frac{1}{c} \le  \frac{d}{d+1} \le \frac{1}{a} \le 1,$ and $ \frac{d}{a}+\frac{1}{b}+ \frac{1}{dc} \le d,$
\item [(iv)] $\frac{d}{d+1}\le \frac{1}{a}, \frac{1}{c} \le 1$ and $ \frac{d}{a}+\frac{1}{b}+ \frac{d}{c} \le 2d-1.$
\end{enumerate}
Then we have
$\unlhd(\infty, a, b, c)\lesssim 1~~ \mbox{and}\quad \unlhd(a, \infty, b, c)\lesssim 1.$
\end{theorem}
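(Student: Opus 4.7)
The plan is to reduce the boundedness of $\unlhd$ in both statements to the boundedness of the $P_2$-Operator $\Lambda$, via Proposition \ref{RelationDL}~(ii), and then invoke Theorem \ref{SharpNS} to supply the latter. Concretely, once we know that $\Lambda(a, b, c) \lesssim 1$ under each of the four listed hypotheses, Proposition \ref{RelationDL}~(ii) immediately delivers the two desired estimates $\unlhd(\infty, a, b, c) \lesssim 1$ and $\unlhd(a, \infty, b, c) \lesssim 1$.

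The reduction already being in place, the only work is to match the hypotheses. By inspection, the four conditions (i)--(iv) in Theorem \ref{MK2} are identical (after the renaming $p_1 \mapsto a$, $p_2 \mapsto b$, $p_3 \mapsto c$) to the four conditions (i)--(iv) of Theorem \ref{SharpNS}. Thus in each of the four cases, Theorem \ref{SharpNS} gives $\Lambda(a, b, c) \lesssim 1$ directly, with no additional algebraic manipulation needed. I would present the argument as: fix $a, b, c$ satisfying one of (i)--(iv); apply the correspondingly numbered clause of Theorem \ref{SharpNS} to obtain $\Lambda(a, b, c) \lesssim 1$; then apply Proposition \ref{RelationDL}~(ii) to conclude.

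There is essentially no obstacle in this proof — the real content has already been packaged into Proposition \ref{RelationDL}~(ii) (which encodes the combinatorial estimate for the tail vertex via Corollary \ref{KKo}) and into Theorem \ref{SharpNS} (which encodes the averaging estimates of Lemma \ref{BALem}). The present theorem is a bookkeeping consequence. If anything warrants comment, it is only that one should verify the symmetry statement $\unlhd(a, \infty, b, c) \lesssim 1$ uses the second of the two inequalities provided by Proposition \ref{RelationDL}~(ii), so that both conclusions follow in parallel from the single bound on $\Lambda(a,b,c)$.
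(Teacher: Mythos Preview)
Your proposal is correct and follows exactly the same route as the paper: apply Theorem~\ref{SharpNS} (whose hypotheses coincide verbatim with (i)--(iv) here) to obtain $\Lambda(a,b,c)\lesssim 1$, then invoke Proposition~\ref{RelationDL}~(ii) to conclude both $\unlhd(\infty,a,b,c)\lesssim 1$ and $\unlhd(a,\infty,b,c)\lesssim 1$.
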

\begin{proof} From our assumptions on the numbers $a, b, c$,   Theorem \ref{SharpNS} implies that $\Lambda(a, b, c)\lesssim 1.$ Hence, the statement follows by applying   Proposition \ref{RelationDL} (ii).
\end{proof}

\subsection{Boundedness of $\unlhd$ in two dimensions}

Theorems \ref{MK1}, \ref{MK2}  provide  non-trivial results available in higher dimensions. In this section we will show that  further improvements can be made in two dimensions. 
Before we state and prove the improvements,  we collect the results in two dimensions, which can be direct consequences of Theorems \ref{MK1}, \ref{MK2}.

To deduce the following theorem, we will apply Theorem \ref{MK1} with $d=2.$ 
\begin{theorem}\label{Hb1} Let $\unlhd$  be defined on functions on $\mathbb  F_q^2.$ Then we have
$ \unlhd(p_1, p_2, p_3, p_4) \lesssim 1 $  provided that  $(p_1, p_2, p_3, p_4)$ is  one of the following points:
$(\infty, \infty, \infty, \infty),$ $(1, \infty, \infty, \infty),$ $(\infty, 1, \infty, \infty),$ $(\infty, \infty, 1, \infty),$
$(3/2, 3/2, \infty, \infty),$ $(3/2, \infty, 3/2, \infty),$ $(\infty, 3/2, 3/2, \infty).$
\end{theorem}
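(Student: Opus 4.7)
The plan is to deduce each of the seven boundedness statements directly from Theorem \ref{MK1} specialized to $d=2$. Recall that Theorem \ref{MK1} asserts that whenever $1\le a,b\le\infty$ satisfy
\[
\frac{1}{a}+\frac{2}{b}\le 2 \quad\text{and}\quad \frac{2}{a}+\frac{1}{b}\le 2,
\]
we have the three estimates $\unlhd(a,b,\infty,\infty)\lesssim 1$, $\unlhd(a,\infty,b,\infty)\lesssim 1$, and $\unlhd(\infty,a,b,\infty)\lesssim 1$. So all the work reduces to choosing $(a,b)$ appropriately for each listed point and then verifying the two linear inequalities above.

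More concretely, I would proceed as follows. For the three points of the form $(*,*,\infty,\infty)$, namely $(\infty,\infty,\infty,\infty)$, $(1,\infty,\infty,\infty)$, $(\infty,1,\infty,\infty)$, and $(3/2,3/2,\infty,\infty)$, I would invoke the first conclusion of Theorem \ref{MK1} with $(a,b)$ equal to $(\infty,\infty)$, $(1,\infty)$, $(\infty,1)$, and $(3/2,3/2)$, respectively; in each case a direct substitution into $\frac{1}{a}+\frac{2}{b}$ and $\frac{2}{a}+\frac{1}{b}$ produces values in $\{0,1,2\}$ that trivially satisfy both $\le 2$ constraints. For $(3/2,\infty,3/2,\infty)$, I would apply the second conclusion of Theorem \ref{MK1} with $(a,b)=(3/2,3/2)$, where the conditions give exactly $\frac{2}{3}+\frac{4}{3}=2$ and $\frac{4}{3}+\frac{2}{3}=2$, both saturated. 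Analogously, $(\infty,\infty,1,\infty)$ follows from the third conclusion with $(a,b)=(\infty,1)$, and $(\infty,3/2,3/2,\infty)$ follows from the third conclusion with $(a,b)=(3/2,3/2)$.

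There is no real obstacle here: the result is essentially a catalog obtained by reading off each of the seven tuples as one of the three templates $\unlhd(a,b,\infty,\infty)$, $\unlhd(a,\infty,b,\infty)$, $\unlhd(\infty,a,b,\infty)$ permitted by Theorem \ref{MK1}, and then checking that the corresponding pair $(1/a,1/b)$ lies in the convex hull of $(0,0),(0,1),(2/3,2/3),(1,0)$, which is precisely the region cut out by the two inequalities above. The only step that requires any care is keeping track of which slot the $\infty$ occupies so as to apply the correct one of the three conclusions in Theorem \ref{MK1}; once the bookkeeping is done, each verification is a one-line substitution.
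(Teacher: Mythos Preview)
Your proposal is correct and follows essentially the same approach as the paper: both deduce each of the seven endpoints directly from Theorem \ref{MK1} with $d=2$ by selecting an appropriate pair $(a,b)$ and invoking the matching conclusion. The only cosmetic difference is that you obtain $(\infty,\infty,1,\infty)$ via the third conclusion with $(a,b)=(\infty,1)$ whereas the paper uses the second conclusion with the same $(a,b)$; either choice works.
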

\begin{proof} Using the first conclusion of Theorem \ref{MK1} with $d=2$,  we see that $\unlhd(p_1, p_2, p_3, p_4) \lesssim 1 $ whenever $(p_1, p_2, p_3, p_4)$ takes the following points: 
$(\infty, \infty, \infty, \infty),$ $(1, \infty, \infty, \infty),$ $(\infty, 1, \infty, \infty),$ $(3/2, 3/2, \infty, \infty).$

Next,  the second conclusion of Theorem \ref{MK1} with $d=2$ implies that  $\unlhd(p_1, p_2, p_3, p_4) \lesssim 1 $ for the points $(p_1, p_2, p_3, p_4)=(\infty, \infty, 1, \infty),  (3/2, \infty, 3/2, \infty).$  Finally,  it follows from the third conclusion of Theorem \ref{MK1} with $d=2$ that   $\unlhd(p_1, p_2, p_3, p_4) \lesssim 1 $ for $(p_1, p_2, p_3, p_4)= (\infty, 3/2, 3/2, \infty).$ Hence, the proof is complete.
\end{proof}

The following theorem will be proven by applying Theorem \ref{MK2} with $d=2.$
\begin{theorem}\label{Hb2}
Let  $\unlhd$ be defined on the functions on $\mathbb F_q^2.$  Suppose that $(p_1, p_2, p_3, p_4)$ is one of the following points: $ (\infty, \infty, \infty, 1),$ $(2, \infty, 2, 2),$ $(3/2, \infty,  \infty, 3/2),$ $(\infty, \infty, 3/2, 3/2).$
Then we have 
$\unlhd(p_1, p_2, p_3, p_4) \lesssim 1. $
\end{theorem}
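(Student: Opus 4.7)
The plan is to deduce each of the four boundedness estimates directly from Theorem \ref{MK2} with $d = 2$, by matching each target point to one of the forms $\unlhd(\infty, a, b, c) \lesssim 1$ or $\unlhd(a, \infty, b, c) \lesssim 1$ provided by that theorem and verifying the corresponding numerical hypothesis (i)--(iv). No new bilinear or averaging input is needed beyond what Theorem \ref{MK2} already supplies via Proposition \ref{RelationDL} (ii) and Theorem \ref{SharpNS}; the work is entirely in bookkeeping.

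Concretely, I would proceed point-by-point. For $\unlhd(\infty, \infty, \infty, 1)$, I read this as $\unlhd(\infty, a, b, c)$ with $(a,b,c) = (\infty, \infty, 1)$, so $(1/a, 1/b, 1/c) = (0, 0, 1)$. Since $0 \le 1/a \le 2/3 \le 1/c \le 1$ and $\tfrac{1}{2a} + \tfrac{1}{b} + \tfrac{2}{c} = 0 + 0 + 2 \le 2$, hypothesis (ii) of Theorem \ref{MK2} (with $d=2$) applies, yielding the bound. For $\unlhd(2, \infty, 2, 2)$, I read this as $\unlhd(a, \infty, b, c)$ with $(a,b,c) = (2,2,2)$, so $(1/a, 1/b, 1/c) = (1/2, 1/2, 1/2)$, all within $[0, 2/3]$, and $\tfrac{1}{a} + \tfrac{2}{b} + \tfrac{1}{c} = 2$, so hypothesis (i) applies.

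For $\unlhd(3/2, \infty, \infty, 3/2)$, this is $\unlhd(a, \infty, b, c)$ with $(a,b,c) = (3/2, \infty, 3/2)$, giving $(1/a, 1/b, 1/c) = (2/3, 0, 2/3)$, both endpoints equal to $2/3$, and $\tfrac{1}{a} + \tfrac{2}{b} + \tfrac{1}{c} = 4/3 \le 2$, so again hypothesis (i) of Theorem \ref{MK2} applies. Finally, for $\unlhd(\infty, \infty, 3/2, 3/2)$, this is $\unlhd(\infty, a, b, c)$ with $(a,b,c) = (\infty, 3/2, 3/2)$, giving $(1/a, 1/b, 1/c) = (0, 2/3, 2/3)$, and $\tfrac{1}{a} + \tfrac{2}{b} + \tfrac{1}{c} = 0 + 4/3 + 2/3 = 2$, so hypothesis (i) applies once more.

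There is no real obstacle here; the only thing to be careful about is correctly identifying, for each of the four target tuples, which slot is the ``missing'' $\infty$ in the templates $\unlhd(\infty, a, b, c)$ and $\unlhd(a, \infty, b, c)$ from Theorem \ref{MK2}, and then verifying a short linear inequality in each case. Structurally, I would simply present a four-item enumeration, one line per point, each citing Theorem \ref{MK2} with the relevant hypothesis (i), (ii), (iii), or (iv); in the present instance only (i) and (ii) are needed.
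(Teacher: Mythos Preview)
Your proposal is correct and follows essentially the same approach as the paper's own proof: each of the four points is handled by matching it to one of the two conclusions of Theorem \ref{MK2} (with $d=2$) and verifying the appropriate numerical hypothesis, using (ii) for $(\infty,\infty,\infty,1)$ and (i) for the remaining three points.
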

\begin{proof} We get that $\unlhd(\infty, \infty, \infty, 1) \lesssim 1$ by using the assumption (ii) and the first conclusion of Theorem \ref{MK2}.  Invoking the assumption (i) and the second conclusion of Theorem \ref{MK2},  one can directly note that  $\unlhd(2, \infty, 2, 2)\lesssim 1$ and $\unlhd(3/2, \infty,  \infty, 3/2)\lesssim 1.$ Finally, to prove that $\unlhd (\infty, \infty, 3/2, 3/2)\lesssim 1,$  one can use the assumption (i) and the first conclusion of Theorem \ref{MK2}.
\end{proof}

We now introduce the connection between $\unlhd(f_1,f_2, f_3, f_4)$  and the bilinear averaging operator.
\begin{proposition}\label{ABFormula}
Let $B$ be the bilinear operator defined as in \eqref{KdefB}.  Then, for any non-negative real-valued functions $f_i, i=1,2,3,4,$ on $\mathbb F_q^2,$  we have
$$\unlhd(f_1,f_2,f_3,f_4) = ||B(f_1, f_2)\cdot f_3 \cdot Af_4  ||_1,$$
where $A$ denotes the averaging operator over the circle in $\mathbb F_q^2.$
\end{proposition}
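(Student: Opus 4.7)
The identity is obtained by a direct change of variables that decouples the triangle part of the graph $K_3+\text{tail}$ from its tail. Starting from the definition
\[
\unlhd(f_1,f_2,f_3,f_4)=\frac{1}{q^2|S_t|^2|S_t^{0}|}\sum_{x^1,x^2,x^3,x^4\in\mathbb F_q^2} S_t(x^1-x^2)S_t(x^2-x^3)S_t(x^3-x^4)S_t(x^3-x^1)\prod_{i=1}^{4}f_i(x^i),
\]
I set $x=x^3$, $y=x^3-x^1$, $z=x^3-x^2$, and $w=x^3-x^4$ (anchoring at the triangle vertex $x^3$ that carries the tail). Under this substitution the four sphere factors become $S_t(x^3-x^1)=S_t(y)$, $S_t(x^2-x^3)=S_t(z)$, $S_t(x^3-x^4)=S_t(w)$, and $S_t(x^1-x^2)=S_t(z-y)$, while the function product reads $f_3(x)f_1(x-y)f_2(x-z)f_4(x-w)$.

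Crucially, after substitution the pair $(y,z)$ and the variable $w$ interact only through the common base point $x$, so the sum factors as
\[
\sum_{x\in\mathbb F_q^2} f_3(x)\Bigl(\sum_{\substack{y,z\in S_t\\ \|z-y\|=t}} f_1(x-y)f_2(x-z)\Bigr)\Bigl(\sum_{w\in S_t} f_4(x-w)\Bigr).
\]
By the definition \eqref{KdefB} of the bilinear averaging operator the first inner parenthesis equals $|S_t|\,B(f_1,f_2)(x)$, and by \eqref{AAdef} the second equals $|S_t|\,Af_4(x)$. The two explicit $|S_t|$ factors cancel the $|S_t|^2$ in the denominator, leaving
\[
\unlhd(f_1,f_2,f_3,f_4)=\frac{1}{q^2}\sum_{x\in\mathbb F_q^2} B(f_1,f_2)(x)\,f_3(x)\,Af_4(x) = \|B(f_1,f_2)\cdot f_3\cdot Af_4\|_1
\]
under the normalized counting measure on $\mathbb F_q^2$.

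There is no real obstacle beyond bookkeeping the constants; the one subtle point is that the factor $|S_t^{d-2}|=|S_t^{0}|$ appearing in the normalization in dimension two is handled exactly as in the proof of Theorem \ref{mainRSE}, where the same type of reduction converts $\Delta(f_1,f_2,f_3)$ into the pairing $\langle B(f_1,f_2),f_3\rangle$. This is what makes the identity hold with no stray constant.
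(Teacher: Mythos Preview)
Your proof is correct and follows essentially the same route as the paper: the same anchoring change of variables at $x^3$ (the paper writes $y^1=x^3-x^1$, $y^2=x^3-x^2$, $y^3=x^3$, $y^4=x^3-x^4$, which is your $(y,z,x,w)$), followed by the same identification of the two inner sums with $|S_t|\,B(f_1,f_2)(x)$ and $|S_t|\,Af_4(x)$. Your remark about the $|S_t^{0}|$ factor is apt; the paper silently drops it in two dimensions here just as it does in the proof of Theorem~\ref{mainRSE}.
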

\begin{proof}  In two dimensions,   $\unlhd(f_1,f_2,f_3,f_4)$  can be rewritten as the following form:
$$\frac{1}{q^2 |S_t|^2 } \sum_{x^1, x^2, x^3, x^4\in \mathbb F_q^2} S_t(x^1-x^2) S_t(x^3-x^2) S_t(x^3-x^4) S_t(x^3-x^1) \prod_{i=1}^4 f_i(x^i). $$
By the change of variables by putting $y^1=x^3-x^1,  ~y^2=x^3-x^2, ~ y^3=x^3,~  y^4=x^3-x^4,$ the value $\unlhd(f_1, f_2,f _3, f_4)$ becomes
$$ \frac{1}{q^2 |S_t|^2} \sum_{y^1,y^2,y^3, y^4\in \mathbb F_q^2} S_t(y^2-y^1) S_t(y^2) S_t(y^4) S_t(y^1) f_1(y^3-y^1) f_2(y^3-y^2) f_3(y^3) f_4(y^3-y^4).$$
This can be expressed as follows:
$$ \frac{1}{q^2} \sum_{y^3\in \mathbb F_q^2} f_3(y^3)  \left(\frac{1}{|S_t|} \sum_{y^4\in S_t} f_4(y^3-y^4)\right)  \left(\frac{1}{|S_t|} \sum_{y^1, y^2\in S_t: ||y^2-y^1||=t}  f_1(y^3-y^1) f_2(y^3-y^2)\right).$$
Recalling the definitions of the averaging operator in \eqref{DefA}  and  the bilinear averaging operator in \eqref{KdefB},   it follows that
$$\unlhd(f_1,f_2,f_3,f_4) = \frac{1}{q^2} \sum_{y^3\in \mathbb F_q^2} f_3(y^3)  Af_4(y^3)  B(f_1, f_2)(y^3).$$

By the definition of the normalized norm $||~~||_1$,  the statement follows.                                                  
\end{proof}

 For $1\le p_1, p_2, p_3, p_4\le \infty$,  recall that the notation $\unlhd(p_1, p_2, p_3, p_4) \lessapprox 1$ is used if   the following estimate holds for all subsets  $E, F, G, H$ of $\mathbb F_q^2$:
$$  \unlhd(E,F,G,H) \lesssim ||E||_{p_1} ||F||_{p_2} ||G||_{p_3} ||H||_{p_4},$$ 
and this estimate is referred to as the restricted strong-type  $\unlhd(p_1, p_2, p_3, p_4)$ estimate.

The following theorem is our main result in two dimensions, which gives  a new restricted strong-type estimate for the boundedness on the operator $\unlhd$.
\begin{theorem}\label{RSTK}
Let $\unlhd$ be defined on functions on $\mathbb F_q^2.$  Let $1\le p_3, p_4\le \infty.$ Then the following statements are valid for all subsets $E, F$ of $\mathbb F_q^2$ and all non-negative functions $f_3, f_4$ on $\mathbb F_q^2.$
\begin{enumerate}
\item [(i)] If $2\le p_3\le \infty,  ~ 3/2 \le p_4 \le \infty,$ and  $\frac{1}{p_3}+\frac{1}{2p_4}\le \frac{1}{2},$ then  we have 
$$\unlhd(E, F, f_3, f_4)\lesssim ||E||_2||F||_2 ||f_3||_{p_3} ||f_4||_{p_4}.$$
\item [(ii)]  If $2\le p_3\le \infty, ~  4/3 \le p_4 \le 3/2,$ and  $\frac{1}{p_3}+\frac{2}{p_4}\le \frac{3}{2},$ then  we have 
$$\unlhd(E, F, f_3, f_4)\lesssim ||E||_2||F||_2 ||f_3||_{p_3} ||f_4||_{p_4}.$$
\end{enumerate}

\end{theorem}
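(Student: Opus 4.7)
The plan is to combine three ingredients already established: the identity from Proposition \ref{ABFormula}, the bilinear bound from Lemma \ref{mainlemK}, and the spherical averaging estimates from Lemma \ref{BALem}, all glued together by H\"older's inequality. First, by Proposition \ref{ABFormula},
$$\unlhd(E,F,f_3,f_4) \;=\; \|B(E,F)\cdot f_3 \cdot Af_4\|_1 .$$
Since every $\|\cdot\|_p$ is taken with respect to the normalized counting measure (a probability measure), three-factor H\"older gives, for any triple $(a,p_3,b)$ with $\tfrac{1}{a}+\tfrac{1}{p_3}+\tfrac{1}{b}=1$,
$$\unlhd(E,F,f_3,f_4) \;\le\; \|B(E,F)\|_a\,\|f_3\|_{p_3}\,\|Af_4\|_b .$$
I would set $a=2$ so that Lemma \ref{mainlemK} applies and gives $\|B(E,F)\|_2 \lesssim \|E\|_2\|F\|_2$. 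The choice $a=2$ forces $\tfrac{1}{b}=\tfrac{1}{2}-\tfrac{1}{p_3}$, which is nonnegative precisely when $p_3\ge 2$, matching the hypothesis.

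It remains to absorb $\|Af_4\|_b$ via an averaging estimate $A(p_4\to b)\lesssim 1$ on $\mathbb F_q^2$, for which I invoke Lemma \ref{BALem} with $d=2$. For part (i) I use the first branch, $\tfrac{1}{b}=\tfrac{1}{2p_4}$, valid for $0\le \tfrac{1}{p_4}\le \tfrac{2}{3}$, i.e.\ $p_4\ge 3/2$; equating with $\tfrac{1}{b}=\tfrac{1}{2}-\tfrac{1}{p_3}$ yields the boundary case $\tfrac{1}{p_3}+\tfrac{1}{2p_4}=\tfrac{1}{2}$. For part (ii) I use the second branch, $\tfrac{1}{b}=\tfrac{2}{p_4}-1$, valid for $\tfrac{2}{3}\le \tfrac{1}{p_4}\le 1$, i.e.\ $1\le p_4\le 3/2$; equating then gives the boundary $\tfrac{1}{p_3}+\tfrac{2}{p_4}=\tfrac{3}{2}$, and the extra lower bound $p_4\ge 4/3$ comes from requiring $\tfrac{1}{p_3}\ge 0$, equivalently $\tfrac{2}{p_4}\le \tfrac{3}{2}$.

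Finally, the stated hypotheses allow $\le$ in place of $=$ in each linear constraint. This is handled by the nesting of $L^p$-norms under a probability measure: $\|f\|_p$ is non-decreasing in $p$, so shrinking $\tfrac{1}{p_3}$ or $\tfrac{1}{p_4}$ past the boundary only enlarges the right-hand side, and the boundary estimate propagates throughout the stated region. The main obstacle is purely bookkeeping, namely aligning the H\"older exponent $b$ with the correct branch of the averaging range in Lemma \ref{BALem} and verifying the endpoint constraints ($p_3\ge 2$, $p_4\ge 3/2$ in (i); $p_3\ge 2$, $4/3\le p_4\le 3/2$ in (ii)); no new analytic estimate beyond the three ingredients is needed.
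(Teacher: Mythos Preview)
Your proof is correct and follows essentially the same approach as the paper: apply Proposition \ref{ABFormula}, then H\"older with the $L^2$-factor on $B(E,F)$ so that Lemma \ref{mainlemK} applies, and finish with the spherical averaging bound $A\bigl(p_4\to \tfrac{2p_3}{p_3-2}\bigr)\lesssim 1$ via Lemma \ref{BALem} (the paper writes the third H\"older exponent explicitly as $\tfrac{2p_3}{p_3-2}$, which is exactly your $b$). The only cosmetic difference is that the paper handles the non-boundary region by appealing to the full convex hull in Theorem \ref{sharpA}, whereas you reduce to the boundary and invoke nesting of norms; these are equivalent.
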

\begin{proof} Let $E, F$ be subsets of $\mathbb F_q^2$ and $f, g$ be non-negative real-valued functions on $\mathbb F_q^2.$
By Proposition \ref{ABFormula} and   H\"older's inequality,  it follows that for $2\le p_3\le \infty,$ 
$$ \unlhd(E, F, f_3, f_4) \le || B(E, F)||_2  ||f_3||_{p_3} ||Af_4||_{\frac{2p_3}{p_3-2}}.$$
Here, we  also notice that $2\le \frac{2p_3}{p_3-2}\le \infty.$ 
Since  $||B(E, F)||_2 \lesssim  ||E||_2  ||F||_2$ by Lemma \ref{mainlemK},  we see that
$$ \unlhd(E, F, f_3, f_4) \le ||E||_2  ||F||_2  ||f_3||_{p_3} ||Af_4||_{\frac{2p_3}{p_3-2}}.$$
Hence, to complete the proof,  it suffices to show that for all exponents $p_3, p_4$ satisfying the assumptions of the theorem, we have
\begin{equation}\label{aimGo}  A\left(p_4 \to  \frac{2p_3}{p_3-2}\right) \lesssim 1.\end{equation}
To prove this,  we first recall  from Theorem \ref{sharpA} with $d=2$ that  $ A(p\to r) \lesssim 1$ for any numbers $1\le p, r\le \infty$ such that  $(1/p, 1/r)$ lies on the convex hull of points $(0,0), (0, 1),  (1,1), $ and $(\frac{2}{3},  \frac{1}{3} ).$
Also invoke Lemma \ref{BALem} to find the equations indicating the endpoint estimates for $A(p\to r)\lesssim 1.$
Using those averaging estimates with $p=p_4, r= \frac{2p_3}{p_3-2}$,  the inequality  \eqref{aimGo} can be obtained by a direct computation, where we also use the fact that $2\le r=\frac{2p_3}{p_3-2}\le \infty.$
\end{proof}

The following corollary is a direct consequence of Theorem \ref{RSTK}.
\begin{corollary}\label{Hb3} Let $\unlhd$ be defined on functions on $\mathbb F_q^2.$ Then we have
$ \unlhd(2,2, 2, \infty)\lessapprox 1.$
\end{corollary}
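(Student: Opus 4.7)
The plan is to derive Corollary \ref{Hb3} as a direct specialization of Theorem \ref{RSTK}(i), since the corollary is precisely the restricted strong-type case of that theorem at the endpoint $(p_3,p_4)=(2,\infty)$.

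First I would verify that the hypotheses of Theorem \ref{RSTK}(i) are satisfied at $p_3=2$ and $p_4=\infty$. Indeed, we have $2\le p_3=2\le \infty$ and $3/2\le p_4=\infty$, and the linear constraint
$$\frac{1}{p_3}+\frac{1}{2p_4} \;=\; \frac{1}{2}+0 \;=\; \frac{1}{2}$$
holds with equality, so $(p_3,p_4)=(2,\infty)$ sits at a corner of the admissible region in Theorem \ref{RSTK}(i). Applying that theorem yields, for all subsets $E,F\subset \mathbb F_q^2$ and all non-negative functions $f_3,f_4$ on $\mathbb F_q^2$,
$$\unlhd(E,F,f_3,f_4)\;\lesssim\; \|E\|_2\,\|F\|_2\,\|f_3\|_{2}\,\|f_4\|_{\infty}.$$

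Next I would specialize to indicator test functions: for any subsets $G,H\subset \mathbb F_q^2$, taking $f_3 = 1_G$ and $f_4 = 1_H$, one has $\|f_3\|_2 = \|G\|_2$ and $\|f_4\|_\infty = \|H\|_\infty \le 1$, so
$$\unlhd(E,F,G,H)\;\lesssim\; \|E\|_2\,\|F\|_2\,\|G\|_2\,\|H\|_\infty,$$
which is exactly the restricted strong-type estimate $\unlhd(2,2,2,\infty)\lessapprox 1$ by the definition of the $\lessapprox$ notation recorded just before the statement.

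There is essentially no obstacle here — the entire content was already proved in Theorem \ref{RSTK}(i) via the Proposition \ref{ABFormula} identity $\unlhd(f_1,f_2,f_3,f_4)=\langle B(f_1,f_2)\cdot f_3\cdot Af_4\rangle_1$, H\"older's inequality separating $f_3$ at exponent $p_3$ from $Af_4$ at the conjugate exponent $2p_3/(p_3-2)$, the bilinear $L^2\times L^2\to L^2$ bound for $B$ from Lemma \ref{mainlemK}, and the spherical averaging endpoint $A(\infty\to\infty)\lesssim 1$ from Theorem \ref{sharpA}. The only thing worth double-checking is that the exponent bookkeeping remains valid at $p_3=2$, where $2p_3/(p_3-2)=\infty$; this is exactly the $L^\infty\to L^\infty$ averaging bound, which is trivially available.
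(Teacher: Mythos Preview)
Your proof is correct and matches the paper's own proof exactly: the paper simply states that the corollary follows by a direct application of Theorem~\ref{RSTK}(i), which is precisely what you do by checking that $(p_3,p_4)=(2,\infty)$ satisfies the hypotheses there.
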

\begin{proof}
The statement follows by a direct application of  Theorem \ref{RSTK} (i).
\end{proof}

The theorem below shows that   the exponents for $\diamondsuit_t(p_1, p_2, p_3, p_4)\lesssim 1$  are more restricted than those for $\unlhd(p_1, p_2, p_3, p_4)\lesssim 1$ up to the endpoints. This also provides a positive answer to Question \ref{PConj} since the graph $K_3$ + tail is a subgraph of the graph $C_4$ + diagonal. 
\begin{theorem} \label{TK} Let $\diamondsuit_t$ and $\unlhd$ be the operators acting on the functions on $\mathbb F_q^2.$
Suppose that  $\diamondsuit_t(p_1, p_2, p_3, p_4)\lesssim 1$ for $1\le p_1, p_2, p_3, p_4\le \infty.$ 
Then we have $\unlhd(p_1, p_2, p_3, p_4)\lesssim 1$ except for the point $(2,2,2, \infty).$  In addition, we have $\unlhd(2,2, 2, \infty)\lessapprox 1.$ 
\end{theorem}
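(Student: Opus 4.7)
My plan is to follow the same strategy used in the proof of Theorem~\ref{CExa}(i): reduce to the extreme points of the convex hull described in Lemma~\ref{N2244}, verify the desired bound for $\unlhd$ at each of those extreme points using the results already proven earlier in this section, and then conclude by multilinear interpolation.

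First, assuming $\diamondsuit_t(p_1,p_2,p_3,p_4)\lesssim 1$, the second part of Lemma~\ref{N2244} (produced by Polymake) tells me that $(1/p_1,1/p_2,1/p_3,1/p_4)$ lies in the convex hull of the twelve extreme points listed there. Translated into $(p_1,p_2,p_3,p_4)$-coordinates, these twelve tuples are $(\infty,\infty,\infty,\infty)$, $(1,\infty,\infty,\infty)$, $(\infty,1,\infty,\infty)$, $(\infty,\infty,1,\infty)$, $(\infty,\infty,\infty,1)$, $(3/2,3/2,\infty,\infty)$, $(3/2,\infty,3/2,\infty)$, $(3/2,\infty,\infty,3/2)$, $(\infty,3/2,3/2,\infty)$, $(\infty,\infty,3/2,3/2)$, $(2,\infty,2,2)$, and $(2,2,2,\infty)$.

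Next, I would match each of these twelve vertices to a previously established bound. Theorem~\ref{Hb1} supplies the strong-type estimate $\unlhd\lesssim 1$ at exactly seven of them, namely $(\infty,\infty,\infty,\infty)$, $(1,\infty,\infty,\infty)$, $(\infty,1,\infty,\infty)$, $(\infty,\infty,1,\infty)$, $(3/2,3/2,\infty,\infty)$, $(3/2,\infty,3/2,\infty)$, and $(\infty,3/2,3/2,\infty)$. Theorem~\ref{Hb2} supplies it at four more, namely $(\infty,\infty,\infty,1)$, $(2,\infty,2,2)$, $(3/2,\infty,\infty,3/2)$, and $(\infty,\infty,3/2,3/2)$. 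The one remaining vertex is $(2,2,2,\infty)$, at which Corollary~\ref{Hb3} provides only the restricted strong-type bound $\unlhd(2,2,2,\infty)\lessapprox 1$; this is precisely why the strong-type conclusion of the theorem must exclude exactly this single tuple and record it separately in the ``in addition'' clause.

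Finally, I would invoke multilinear interpolation exactly as in the proof of Theorem~\ref{KiteK}: the strong-type estimates at the eleven strong-type vertices interpolate to strong-type throughout the convex hull they span, and a Marcinkiewicz-type interpolation between the restricted strong-type vertex $(2,2,2,\infty)$ and its strong-type neighbors furnishes the strong-type bound at every point of the full convex hull other than $(2,2,2,\infty)$ itself. The main obstacle is purely bookkeeping: one must verify that the twelve extreme points furnished by Lemma~\ref{N2244} are indeed exactly partitioned among Theorem~\ref{Hb1} (seven vertices), Theorem~\ref{Hb2} (four vertices), and Corollary~\ref{Hb3} (the last vertex). Once this matching is checked, no new analytic input is required, and both conclusions of the theorem follow.
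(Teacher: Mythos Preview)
Your proposal is correct and follows essentially the same approach as the paper's proof: both reduce via Lemma~\ref{N2244} to the twelve extreme points of the convex hull, verify the strong-type bound at eleven of them using Theorems~\ref{Hb1} and~\ref{Hb2}, handle the exceptional vertex $(2,2,2,\infty)$ with Corollary~\ref{Hb3}, and finish by interpolation. Your explicit seven/four/one partition of the vertices among these three results matches the paper exactly.
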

\begin{proof}
Assume that $\diamondsuit_t(p_1, p_2, p_3, p_4)\lesssim 1$ for $1\le p_1, p_2, p_3, p_4\le \infty.$ 
Then, by Lemma \ref{N2244}, the point  $(1/p_1, 1/p_2, 1/p_3, 1/p_4)$ is contained in the convex hull of  the following points:
$(0,0,1,0),$ $(0,1,0,0),$ $(0,0,0,1),$ $(1/2,0,1/2,1/2),$ $(2/3,2/3,0,0),$ $(1,0,0,0),$
$(2/3,0,2/3,0),$ $(1/2,1/2,1/2,0),$ $(2/3,0,0,2/3),$ $(0,2/3,2/3,0),$ $(0,0,0,0),$ $(0,0,2/3,2/3).$

By Theorems \ref{Hb1}, \ref{Hb2},  the strong type estimate $\unlhd(p_1, p_2, p_3, p_4)\lesssim 1$ holds for all the above points $(1/p_1, 1/p_2, 1/p_3, 1/p_4)$ except for $(1/2, 1/2,1/2, 0).$  Moreover,  we know from Corollary \ref{Hb3} that $\unlhd(2,2,2, \infty)\lessapprox 1.$ Hence, the statement follows by interpolating  those points.
\end{proof}

\begin{remark}\label{ReRk}
The reverse statement of Theorem \ref{TK} is not true. To see this, observe from  Theorem \ref{RSTK} (ii) that  $\unlhd(2,2, 6, 3/2)\lessapprox 1.$ In addition, by Lemma \ref{N2244}, notice that $\diamondsuit_t(2,2, 6, 3/2)$ cannot be bounded.
\end{remark}
\begin{corollary}\label{CorTK} 
Conjecture \ref{PConjK} holds up to endpoints for the graph $C_4$ + diagonal and its subgraph  $K_3$ + tail in $\mathbb F_q^2.$
\end{corollary}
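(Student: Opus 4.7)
The plan is to reduce Corollary \ref{CorTK} directly to Theorem \ref{TK}, which was already proved. The corollary is really a packaging statement: it asserts that the general subgraph-conjecture (Conjecture \ref{PConjK}) applies to the specific pair $(G, G') = (C_4+\text{diagonal},\, K_3+\text{tail})$ in $\mathbb{F}_q^2$, and the implication between the corresponding operator bounds is exactly what Theorem \ref{TK} provides. So the proof consists of two short verifications followed by an invocation.

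First I would verify that $K_3+\text{tail}$ is a subgraph of $C_4+\text{diagonal}$. This is immediate from inspection of the edge sets: labelling the vertices of $C_4+\text{diagonal}$ as $x^1, x^2, x^3, x^4$ with edges $\{x^1x^2, x^2x^3, x^3x^4, x^4x^1, x^1x^3\}$, deleting the edge $x^3x^4$ leaves the triangle on $\{x^1, x^2, x^3\}$ together with the pendant edge $x^4x^1$, which is exactly $K_3+\text{tail}$.

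Next I would verify the degree hypothesis \eqref{mainConj} of Conjecture \ref{PConjK}. In $C_4+\text{diagonal}$, the two endpoints of the diagonal have degree $3$ and the other two vertices have degree $2$, so $\delta(C_4+\text{diagonal}) = 2$. In $K_3+\text{tail}$, the tail vertex has degree $1$, so $\delta(K_3+\text{tail}) = 1$. With $d = 2$, we have
\[
\min\{\delta(C_4+\text{diagonal}),\; d\} \;=\; \min\{2, 2\} \;=\; 2 \;>\; 1 \;=\; \delta(K_3+\text{tail}),
\]
so the hypothesis of Conjecture \ref{PConjK} is satisfied.

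Finally, I would invoke Theorem \ref{TK}: whenever $\diamondsuit_t(p_1, p_2, p_3, p_4) \lesssim 1$ with $1 \le p_i \le \infty$, one has $\unlhd(p_1, p_2, p_3, p_4) \lesssim 1$ except possibly at the single point $(2,2,2,\infty)$, where Theorem \ref{TK} still yields the restricted strong-type estimate $\unlhd(2,2,2,\infty) \lessapprox 1$. Since the operators $\diamondsuit_t$ and $\unlhd$ are respectively associated with $C_4+\text{diagonal}$ and $K_3+\text{tail}$, this is precisely the conclusion of Conjecture \ref{PConjK} for this pair of graphs, up to the single endpoint at which only the restricted strong-type estimate is known. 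No further work is needed; the entire content of Corollary \ref{CorTK} is captured by this citation, and there is no substantive obstacle in the argument.
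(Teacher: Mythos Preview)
Your proposal is correct and follows essentially the same approach as the paper: verify that $K_3+\text{tail}$ is a subgraph of $C_4+\text{diagonal}$, check the minimum-degree hypothesis \eqref{mainConj}, and then invoke Theorem \ref{TK}. The only difference is that you are more explicit about the subgraph verification (the paper simply asserts it); note, incidentally, that to match the paper's specific ordered labeling of $\unlhd$ one deletes the edge $x^4x^1$ rather than $x^3x^4$, but this is immaterial since your final step cites Theorem \ref{TK} directly in terms of the operators $\diamondsuit_t$ and $\unlhd$.
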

\begin{proof}
The operators $\diamondsuit_t$ and $\unlhd$ are associated with the $C_4$ + diagonal and its subgraph  $K_3$ + tail in $\mathbb F_q^2,$ respectively. Hence, invoking Theorem \ref{TK}, the proof is  reduced to showing that  the $C_4$ + diagonal and its subgraph  $K_3$ + tail satisfy the main hypothesis \eqref{mainConj} of Conjecture \ref{PConjK}. However, it is clear that
$$ \min\{\delta(C_4 + diagonal),  2\}=2 > 1=\delta(K_3 + tail).$$
Thus, the proof is complete.
\end{proof}

The following theorem shows that there exists  an inclusive relation between  boundedness exponents for the operators corresponding to the graphs $C_4$ and $K_3$ + tail, although they are not subgraphs of each other.
\begin{theorem} \label{DU}Let $\Diamond$ and $\unlhd$ be defined on functions on $\mathbb F_q^2$ and let $1\le p_1, p_2, p_3, p_4\le \infty.$ Then if $\Diamond(p_1, p_2, p_3, p_4) \lesssim 1$,  we have $\unlhd(p_1, p_2, p_3, p_4)\lesssim 1.$
\end{theorem}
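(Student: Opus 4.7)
My plan is to chain two previously established comparison results and pass through the intermediate operator $\diamondsuit_t$ associated with the graph $C_4+\text{diagonal}$. Although $K_3+\text{tail}$ is not a subgraph of $C_4$, both operators have already been compared to $\diamondsuit_t$ in the paper: the $C_4$-operator $\Diamond$ is bounded only if $\diamondsuit_t$ is (Theorem \ref{CExa}(i)), and $\diamondsuit_t$ dominates $\unlhd$ up to a single exceptional exponent tuple (Theorem \ref{TK}). So the strategy is simply to compose these two implications, treating $\diamondsuit_t$ as a bridge between the two target operators.

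Concretely, assume $\Diamond(p_1,p_2,p_3,p_4)\lesssim 1$. A direct application of Theorem \ref{CExa}(i) gives $\diamondsuit_t(p_1,p_2,p_3,p_4)\lesssim 1$. Theorem \ref{TK} then yields $\unlhd(p_1,p_2,p_3,p_4)\lesssim 1$, \emph{provided} $(p_1,p_2,p_3,p_4)\ne(2,2,2,\infty)$. The only thing left to check, therefore, is that the exceptional tuple $(2,2,2,\infty)$ cannot arise under the standing hypothesis $\Diamond(p_1,p_2,p_3,p_4)\lesssim 1$. I would verify this using the sharp characterization in Theorem \ref{Thm7.6K}: one of its two necessary inequalities is $\frac{2}{p_1}+\frac{1}{p_2}+\frac{2}{p_3}+\frac{1}{p_4}\le 2$, and substituting $(2,2,2,\infty)$ gives $1+\tfrac12+1+0=\tfrac52>2$. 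Hence $\Diamond(2,2,2,\infty)$ is unbounded and the exceptional case is automatically excluded from the hypothesis, which closes the argument.

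There is no serious obstacle; the theorem is essentially a corollary of the comparison chain $\Diamond\Rightarrow\diamondsuit_t\Rightarrow\unlhd$ combined with a one-line numerical check to rule out the single exponent tuple that Theorem \ref{TK} cannot itself handle. The only subtlety worth spelling out is that Theorem \ref{TK} literally excludes $(2,2,2,\infty)$ in its conclusion, so the proof must explicitly invoke the necessary conditions for $\Diamond$-boundedness, rather than quoting $\diamondsuit_t\Rightarrow\unlhd$ as an unconditional implication.
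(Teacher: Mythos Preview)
Your proof is correct and follows essentially the same route as the paper: chain Theorem~\ref{CExa}(i) and Theorem~\ref{TK} through the intermediate operator $\diamondsuit_t$, and rule out the single exceptional tuple $(2,2,2,\infty)$ using Theorem~\ref{Thm7.6K}. The only cosmetic difference is that the paper disposes of the exceptional case first and then applies the chain, whereas you apply the chain first and handle the exception at the end.
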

\begin{proof} First, by Theorem \ref{Thm7.6K}, note that $\Diamond(2,2,2, \infty)$ cannot be bounded.
Now suppose that $\Diamond(p_1, p_2, p_3, p_4)\lesssim 1.$ Then $(p_1, p_2, p_3, p_4)\ne (2,2,2,\infty).$  
Using Theorem \ref{CExa}, we get $\diamondsuit_t(p_1, p_2, p_3, p_4)\lesssim 1.$  Then the statement follows immediately from Theorem \ref{TK}.
\end{proof}

By combining Remark \ref{ReRk} and Theorem \ref{CExa}, it is clear that the reverse of Theorem \ref{DU} does not hold. 
Notice that  Theorem \ref{DU} provides an example to satisfy Conjecture \ref{PConjK} without the hypothesis that $G'$ is a subgraph of the graph $G.$

\section{Boundedness problems for the $Y$-shaped graph}
In this section, we study the boundedness of the operator for the $Y$-shaped graph in Figure \ref{fig:YS}. For $t\in \mathbb F_q^*,$ the $Y$-shaped operator $Y$ is defined by
\begin{equation}\label{defY} Y(f_1, f_2, f_3, f_4)= \frac{1}{q^d |S_t|^3}\sum_{x^1, x^2, x^3, x^4\in \mathbb F_q^d} S_t(x^3-x^1) S_t(x^3-x^2) S_t(x^3-x^4)  \prod_{i=1}^4 f_i(x^i),\end{equation}
where functions $f_i, i=1,2,3, 4,$ are defined  on $\mathbb F_q^d.$
Note that this operator $Y$ is related to the $Y$-shaped graph, and so the normalizing factor $\mathcal{N}(G)$ in \eqref{generalform} can be taken as  $q^d |S_t|^3.$ \\

The operator norm of the $Y$-shaped operator is defined in a standard way.
\begin{definition} For $1\le p_1, p_2, p_3, p_4\le \infty,$ 
the operator norm $Y(p_1, p_2, p_3, p_4)$ is defined as  the smallest constant such that   the following estimate holds  for all non-negative real-valued functions $f_i, i=1,2,3, 4,$ on $\mathbb F_q^d:$
\begin{equation}\label{Ytest} Y(f_1,f_2, f_3, f_4) \le Y(p_1, p_2, p_3, p_4) ||f_1||_{p_1} ||f_2||_{p_2} ||f_3||_{p_3} ||f_4||_{p_4}.\end{equation}
\end{definition} 

We aim to find  all numbers $1\le p_1, p_2, p_3, p_4 \le \infty$ such that $ \unlhd(p_1, p_2, p_3, p_4) \lesssim 1.$

\begin{lemma} [Necessary conditions for the boundedness of $Y(p_1, p_2, p_3, p_4)$]\label{NeY} Let $1\le p_i \le \infty, 1\le i\le 4.$ Suppose that $Y(p_1, p_2, p_3, p_4) \lesssim 1.$ Then all the following inequalities are satisfied:\\
$ \frac{1}{p_1} +\frac{1}{p_2}+\frac{d}{p_3}+\frac{1}{p_4} \le d,$~
$ \frac{d}{p_1} +\frac{d}{p_2}+\frac{1}{p_3}+\frac{d}{p_4} \le 3d-2,$ ~
 $\frac{d}{p_1} +\frac{d}{p_2}+\frac{1}{p_3}\le 2d-1,$
 ~ $\frac{d}{p_1} +\frac{1}{p_3}+\frac{d}{p_4}\le 2d-1,$
 ~ $\frac{d}{p_2} +\frac{1}{p_3}+\frac{d}{p_4}\le 2d-1,$
~$\frac{d}{p_1} +\frac{1}{p_3}\le d,$ 
~$\frac{d}{p_2} +\frac{1}{p_3}\le d, $
~$\frac{1}{p_3} +\frac{d}{p_4}\le d.$

In particular, if $d=2$, then it can be shown by Polymake \cite{AGHJLP, GJ} that $(1/p_1, 1/p_2, 1/p_3, 1/p_4)$ is contained in the convex hull of the  points: $(0,0,1,0),$ $(1,0,0,1/2),$  $(1, 0, 1/2, 0),$ $(1/2, 1, 0, 1/2),$ $(1, 1/2, 0, 1/2),$ $(1/2, 1/2, 0, 1),$ $(1, 0, 1/3, 1/3),$ $(1/2, 5/6, 1/3, 0),$ 
 $(1,1/2, 0, 0),$ $(1/2, 0, 1/3, 5/6),$\\ $(1/2,1,0,0),$
  $(1, 1/3, 1/3,0),$ $(1/2, 0,0,1),$ $(1,0,0,0),$  $(0,0,0,0),$ $(0, 0, 0, 1),$ $(0, 1, 0, 0),$ $(0, 0, 2/3, 2/3),$ $(0, 5/6, 1/3, 1/2),$ $(0, 1, 0, 1/2),$ $(0,1/2, 1/3, 5/6),$ $(0, 1/2, 0, 1),$ $(0,2/3,2/3,0).$
\end{lemma}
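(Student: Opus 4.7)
The plan is to proceed exactly as in the proofs of Lemmas \ref{N2244}, \ref{NeB}, \ref{NCap}, and \ref{Neun}: derive each necessary inequality by plugging a carefully chosen tuple of indicator-type functions (each $f_i$ equal to $\delta_0$, $1_{S_t}$, or $1_{\mathbb F_q^d}$) into the defining inequality \eqref{Ytest}, compute both sides to leading order in $q$, and compare exponents. The center vertex of the $Y$-shape is $x^3$ (of degree $3$), while $x^1, x^2, x^4$ are leaves; this asymmetry dictates which slot should carry $\delta_0$ vs.~$1_{S_t}$ in each test.

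First I would handle inequality 1. Take $f_3 = \delta_0$ and $f_1 = f_2 = f_4 = 1_{S_t}$. Then $x^3 = 0$ forces $x^1, x^2, x^4 \in S_t$, so $Y(f_1,f_2,f_3,f_4) = \frac{|S_t|^3}{q^d|S_t|^3} = q^{-d}$, while the right-hand side is $\sim q^{-1/p_1 - 1/p_2 - d/p_3 - 1/p_4}$. Next, for inequality 2 I would set $f_1 = f_2 = f_4 = \delta_0$ and $f_3 = 1_{S_t}$; this pins $x^1 = x^2 = x^4 = 0$ and forces $x^3 \in S_t$, so $Y \sim q^{-d}|S_t|^{-2} \sim q^{-3d+2}$, against the right-hand side $\sim q^{-d/p_1 - d/p_2 - 1/p_3 - d/p_4}$. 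For inequality 3, I would take $f_1 = f_2 = \delta_0$, $f_3 = 1_{S_t}$, and $f_4 = 1_{\mathbb F_q^d}$: the sum over $x^4$ contributes a factor $|S_t|$, giving $Y \sim q^{-2d+1}$. Inequalities 4 and 5 follow by symmetry of the $Y$-operator in the three leaves $x^1, x^2, x^4$, swapping the roles of the arguments above.

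For inequality 6 I would plug in $f_1 = \delta_0$, $f_3 = 1_{S_t}$, and $f_2 = f_4 = 1_{\mathbb F_q^d}$. Then $x^1 = 0$ forces $x^3 \in S_t$, while the sums over $x^2, x^4$ each contribute a factor $|S_t|$, so $Y = q^{-d}$, yielding the claim. Inequalities 7 and 8 are obtained by the same test with the roles of the leaves permuted ($f_2 = \delta_0$ or $f_4 = \delta_0$ respectively). Each comparison reduces to matching a single power of $q$, using only $|S_t| \sim q^{d-1}$; no finer information about the sphere is needed here, so no appeal to the Appendix lemmas is required.

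For the final assertion about $d = 2$, once the eight linear inequalities above are combined with the box constraints $0 \le 1/p_i \le 1$ for $i = 1,\dots,4$, the resulting region is a bounded rational polytope in $[0,1]^4$ whose vertex enumeration is a purely computational task; as in the earlier lemmas this is carried out by Polymake \cite{AGHJLP, GJ}, and one simply lists the output. The main obstacle, such as it is, is bookkeeping: making sure the chosen test tuple produces exactly the right exponent on each side so that the stated inequality is tight; the symmetries of $Y$ in its three leaves reduce the actual casework to the four genuinely distinct tests (inequalities 1, 2, 3, 6) described above.
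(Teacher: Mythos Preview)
Your proposal is correct and matches the paper's own proof essentially line for line: the paper lists exactly the eight test tuples you describe (inequalities 1, 2, 3, 6 explicitly, with 4, 5, 7, 8 obtained by permuting the leaf arguments), and likewise defers the $d=2$ vertex enumeration to Polymake. Your computations of $Y(f_1,f_2,f_3,f_4)$ and of the right-hand norms are accurate, and your observation that only $|S_t|\sim q^{d-1}$ is needed (no Appendix lemmas) is also in agreement with the paper.
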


\begin{proof} 
By a direct computation,  the conclusions of the lemma easily follow by testing the inequality \eqref{Ytest} with the following specific functions, respectively:
\begin{tabbing}
\=1) \hspace{4cm} \=2) \hspace{4cm} \kill
1) $f_1=f_2=f_4=1_{S_t}, ~ f_3=\delta_0.$\hspace{16mm}
2) $f_1=f_2=f_4=\delta_0, ~f_3=1_{S_t}.$\\
3) $f_1=f_2=\delta_0, f_3=1_{S_t}, f_4=1_{\mathbb F_q^d}.$\hspace{10mm}
4) $f_1=f_2=\delta_0, f_2=1_{\mathbb F_q^d}, f_3=1_{S_t}.$\\
5) $f_1=1_{\mathbb F_q^d}, f_2=f_4=\delta_0, f_3=1_{S_t}.$\hspace{10mm}
6) $f_1=\delta_0, f_2=f_4=1_{\mathbb F_q^d}, f_3=1_{S_t}.$ \\
7)  $f_1=f_4=1_{\mathbb F_q^d}, f_2=\delta_0, f_3=1_{S_t}.$\hspace{10mm}
8)  $f_1=f_2=1_{\mathbb F_q^d}, f_3=1_{S_t}, f_4=\delta_0.$
\end{tabbing}
\end{proof}

\subsection{Sufficient conditions for the boundedness of $Y$ on $\mathbb F_q^d$}
It is not hard to observe that the boundedness problem for the $Y$-shaped operator can be reduced to the spherical averaging estimate. Indeed, the value $Y(f_1,f_2, f_3, f_4)$ in \eqref{defY} can be written by
$$Y(f_1,f_2, f_3, f_4)=\frac{1}{q^d} \sum_{x^3\in \mathbb F_q^d} f_3(x^3) \prod_{i=1,2, 4}\left(\frac{1}{|S_t|} \sum_{x^i\in \mathbb F_q^d} S_t(x^3-x^i) f_i(x^i)\right).$$
Invoking the definition of the averaging operator $A=A_{S_t}$ in \eqref{AAdef},  we get
$$ Y(f_1, f_2, f_3, f_4)= \frac{1}{q^d} \sum_{x^3\in \mathbb F_q^d} f_3(x^3) Af_1 (x^3) Af_2(x^3) Af_4(x^4) =|| Af_1 \cdot Af_2 \cdot f_3\cdot Af_4||_1.$$
By H\H{o}lder's inequality and the nesting property of the norm $||\cdot ||_p,$  we get 
\begin{equation}\label{IKBP}
Y(f_1, f_2, f_3, f_4) \le ||Af_1||_{r_1} ||Af_2||_{r_2} ||f||_{p_3} ||Af_4||_{r_4} \quad\mbox{if}\quad \frac{1}{r_1}+\frac{1}{r_2} + \frac{1}{p_3}+ \frac{1}{r_4} \le 1.
\end{equation}
\begin{proposition} \label{proY}
Let $1\le p_1, p_2, p_3, p_4, r_1, r_2, r_4 \le \infty$ be extended real numbers which satisfy the following assumptions:
$\frac{1}{r_1}+\frac{1}{r_2} + \frac{1}{p_3}+ \frac{1}{r_4} \le 1$ and $A(p_i\to r_i)\lesssim 1 $ for all $i=1,2, 4.$ Then we have 
$$Y(p_1, p_2, p_3, p_4)\lesssim 1.$$  
\end{proposition}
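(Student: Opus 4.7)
The proposal is to prove Proposition \ref{proY} as an essentially immediate consequence of the reformulation already carried out in the paragraph just before the statement. The key identity
\[
Y(f_1,f_2,f_3,f_4) = \| Af_1 \cdot Af_2 \cdot f_3 \cdot Af_4 \|_1,
\]
together with the Hölder-type bound \eqref{IKBP}, reduces the boundedness problem for the $Y$-shape operator to three independent instances of the spherical averaging estimate.

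The plan is as follows. First, I would start from the identity above and apply Hölder's inequality on $(\mathbb F_q^d, dx)$ with exponents $r_1, r_2, p_3, r_4$; here I would use both Hölder's inequality in the usual form when $1/r_1+1/r_2+1/p_3+1/r_4 = 1$, and the nesting property $\|g\|_s \le \|g\|_{s'}$ for $s\le s'$ with respect to the normalizing counting measure in the general case $1/r_1+1/r_2+1/p_3+1/r_4 \le 1$. This is exactly inequality \eqref{IKBP} stated just before the proposition, so the estimate
\[
Y(f_1,f_2,f_3,f_4) \le \|Af_1\|_{r_1}\,\|Af_2\|_{r_2}\,\|f_3\|_{p_3}\,\|Af_4\|_{r_4}
\]
is immediate for all non-negative functions $f_i$ on $\mathbb F_q^d$.

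Second, I would plug in the three hypothesized averaging estimates $A(p_i\to r_i)\lesssim 1$ for $i=1,2,4$, each of which gives
\[
\|Af_i\|_{r_i} \lesssim \|f_i\|_{p_i}.
\]
Multiplying these three bounds together with the trivial factor $\|f_3\|_{p_3}$ yields
\[
Y(f_1,f_2,f_3,f_4) \lesssim \|f_1\|_{p_1}\,\|f_2\|_{p_2}\,\|f_3\|_{p_3}\,\|f_4\|_{p_4},
\]
which is the desired conclusion $Y(p_1,p_2,p_3,p_4)\lesssim 1$ by definition of the operator norm.

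There is no real obstacle here: the work has already been done in writing the operator as a product of three averages weighted against $f_3$, and the statement of the proposition is tailored so that Hölder plus the three assumed averaging bounds close it. If anything, the only subtlety worth flagging is the use of the nesting property to absorb the slack when the sum of the reciprocals is strictly less than $1$; this is why the norms are taken with respect to the normalized counting measure, so that $\|1\|_p = 1$ for every $p$ and nesting holds without a factor of $q^d$.
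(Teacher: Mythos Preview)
Your proposal is correct and matches the paper's own proof essentially line for line: the paper simply cites inequality \eqref{IKBP} and the assumed averaging bounds $A(p_i\to r_i)\lesssim 1$ to conclude. Your write-up is in fact slightly more explicit, since you spell out the role of the nesting property when $\frac{1}{r_1}+\frac{1}{r_2}+\frac{1}{p_3}+\frac{1}{r_4}<1$, which the paper leaves implicit in the derivation of \eqref{IKBP}.
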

\begin{proof}
By combining the inequality \eqref{IKBP} with our assumptions on the averaging estimates,  it follows that for all functions $f_i, i=1,2,3, 4,$ on $\mathbb F_q^d,$ 
$$ Y(f_1,f_2,f_3, f_4) \lesssim ||f_1||_{p_1} ||f_2||_{p_2} ||f_3||_{p_3} ||f_4||_{p_4}.$$
This completes the proof.
\end{proof}

The following theorem provides lots of sufficient conditions for the boundedness of the $Y$-shaped operator. 

\begin{theorem} \label{thm10.1} Let  $1\le p_1,p_2, p_3, p_4\le \infty,$ and $Y$ be the $Y$-shaped operator on $\mathbb F_q^d.$ Then $Y(p_1, p_2, p_3, p_4)\lesssim 1$ provided that  one of the following conditions is satisfied:
\begin{enumerate}
\item [(i)] $0\le \frac{1}{p_1}, \frac{1}{p_2}, \frac{1}{p_4}\le \frac{d}{d+1}$ and $  \frac{1}{p_1}+\frac{1}{p_2}+\frac{d}{p_3}+\frac{1}{p_4} \le d.$

\item [(ii)] $0\le \frac{1}{p_1}, \frac{1}{p_2} \le \frac{d}{d+1} \le \frac{1}{p_4}$ and $  \frac{1}{dp_1}+\frac{1}{dp_2}+\frac{1}{p_3}+\frac{d}{p_4} \le d.$

\item [(iii)] $0\le \frac{1}{p_1}, \frac{1}{p_4}\le \frac{d}{d+1} \le \frac{1}{p_2}\le 1$ and $  \frac{1}{dp_1}+\frac{d}{p_2}+\frac{1}{p_3}+\frac{1}{dp_4} \le d.$
 
\item [(iv)] $0\le \frac{1}{p_2},  \frac{1}{p_4} \le \frac{d}{d+1} \le \frac{1}{p_1} \le 1$ and $  \frac{d}{p_1}+\frac{1}{dp_2}+\frac{1}{p_3}+\frac{1}{dp_4} \le d.$

\item [(v)] $0\le \frac{1}{p_1}\le \frac{d}{d+1} \le \frac{1}{p_2}, \frac{1}{p_4}\le 1$ and $\frac{1}{dp_1}+\frac{d}{p_2}+\frac{1}{p_3}+\frac{d}{p_4} \le 2d-1.$

\item [(vi)] $0\le \frac{1}{p_2}\le \frac{d}{d+1} \le \frac{1}{p_1},\frac{1}{p_4}\le 1$  and $ \frac{d}{p_1}+\frac{1}{dp_2}+\frac{1}{p_3}+\frac{d}{p_4} \le 2d-1.$

\item [(vii)] $0\le \frac{1}{p_4}\le \frac{d}{d+1}\le \frac{1}{p_1}, \frac{1}{p_2} \le 1$  and 
 $  \frac{d}{p_1}+\frac{d}{p_2}+\frac{1}{p_3}+\frac{1}{dp_4} \le 2d-1.$

\item [(viii)]  $\frac{d}{d+1}\le \frac{1}{p_1}, \frac{1}{p_2}, \frac{1}{p_4}\le 1$  and 
 $  \frac{d}{p_1}+\frac{d}{p_2}+\frac{1}{p_3}+\frac{d}{p_4} \le 3d-2.$
\end{enumerate}
\end{theorem}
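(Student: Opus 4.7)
My plan is to derive all eight cases as direct applications of Proposition \ref{proY}, which states that $Y(p_1,p_2,p_3,p_4)\lesssim 1$ whenever one can choose exponents $r_1,r_2,r_4$ with $A(p_i\to r_i)\lesssim 1$ for $i=1,2,4$ and $\frac{1}{r_1}+\frac{1}{r_2}+\frac{1}{p_3}+\frac{1}{r_4}\le 1$. Thus the whole theorem reduces to, in each case, making the correct choice of $r_1,r_2,r_4$ based on the boundary point on the convex hull of averaging estimates and then verifying that the interpolation constraint coincides with the hypothesis stated.

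The driving principle is Lemma \ref{BALem}: if $0\le 1/p\le d/(d+1)$ we may take $1/r=1/(dp)$, and if $d/(d+1)\le 1/p\le 1$ we may take $1/r=d/p-d+1$. So for each of the three ``variable'' slots $p_1,p_2,p_4$ I will pick the corresponding $r_i$ according to which of the two regimes $1/p_i$ lies in. The eight cases of the theorem correspond precisely to the $2^3=8$ combinations of which regime each of $1/p_1,1/p_2,1/p_4$ falls into. For instance, in case (i) all three exponents lie in the first regime, so I choose $1/r_i=1/(dp_i)$ for $i=1,2,4$, and the constraint $\frac{1}{r_1}+\frac{1}{r_2}+\frac{1}{p_3}+\frac{1}{r_4}\le 1$ becomes $\frac{1}{dp_1}+\frac{1}{dp_2}+\frac{1}{p_3}+\frac{1}{dp_4}\le 1$, which after multiplying by $d$ is exactly the stated inequality $\frac{1}{p_1}+\frac{1}{p_2}+\frac{d}{p_3}+\frac{1}{p_4}\le d$.

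For each of the remaining cases I will proceed analogously: in case (ii) take $1/r_1=1/(dp_1)$, $1/r_2=1/(dp_2)$, $1/r_4=d/p_4-d+1$, so the interpolation constraint reads $\frac{1}{dp_1}+\frac{1}{dp_2}+\frac{1}{p_3}+\frac{d}{p_4}\le d$, matching the hypothesis. Cases (iii) and (iv) handle $p_2$ or $p_1$ in the second regime by the symmetric choice. Cases (v), (vi), (vii) put exactly two of $1/p_1,1/p_2,1/p_4$ in the second regime, producing two factors of $d/p_i-d+1$ and one factor of $1/(dp_i)$; summing and clearing denominators gives the hypothesized upper bound $2d-1$. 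Finally, case (viii) places all three in the second regime, yielding three copies of $d/p_i-d+1$ and the bound $3d-2$ appears by combining the three subtracted $(d-1)$ terms with the $1$ on the right-hand side of the interpolation constraint.

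There is no real obstacle here; the bookkeeping is the only thing that can go wrong, and the arithmetic of ``$1-3(d-1)=4-3d$ vs. the stated $3d-2$'' type is what needs checking carefully in case (viii). In each case one should also verify that $r_1,r_2,r_4\in[1,\infty]$, which follows automatically from $1\le p_i\le\infty$ and the regime assumptions so the formulas $1/r_i=1/(dp_i)$ or $d/p_i-d+1$ give values in $[0,1]$. After this verification, Proposition \ref{proY} immediately yields $Y(p_1,p_2,p_3,p_4)\lesssim 1$ and concludes the proof.
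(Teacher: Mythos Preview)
Your proposal is correct and follows exactly the approach the paper intends: the paper's own proof of Theorem \ref{thm10.1} simply states that one combines Proposition \ref{proY} with Lemma \ref{BALem} in the same manner as in Theorem \ref{SharpNS}, leaving the details to the reader. Your case-by-case selection of $1/r_i=1/(dp_i)$ or $1/r_i=d/p_i-d+1$ according to the regime of $1/p_i$, together with the arithmetic verifications you outline, is precisely the omitted computation.
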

\begin{proof}
The proof uses Proposition \ref{proY}  and the sharp averaging estimates in Lemma \ref{BALem}. The proof of this theorem is similar to that of Theorem \ref{SharpNS}. Therefore, we leave the detail of the proof to readers.
\end{proof}
Conjecture \ref{PConjK} is also supported by the following theorem.
\begin{theorem}\label{ThmDtY} Let ${\diamondsuit}_t$ and $Y$ be the operators acting on the functions on $\mathbb F_q^2.$
If  ${\diamondsuit}_t(p_1, p_2, p_3, p_4)\lesssim 1$ with $1\le p_1, p_2, p_3, p_4\le \infty,$ 
then  $Y(p_1, p_2, p_3, p_4)\lesssim 1.$
\end{theorem}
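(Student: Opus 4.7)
The plan is to mirror the strategy used for Theorem \ref{ThmDtCap}. By Lemma \ref{N2244} applied in $d=2$, the set of $(1/p_1,1/p_2,1/p_3,1/p_4) \in [0,1]^4$ with $\diamondsuit_t(p_1,p_2,p_3,p_4) \lesssim 1$ is contained in the convex hull of the twelve explicit vertices listed there. Since $\Lambda_G$ is linear in each $f_i$, the multilinear Riesz--Thorin interpolation theorem reduces the task to verifying $Y(p_1,p_2,p_3,p_4) \lesssim 1$ at each of those twelve vertices. For each vertex I would exhibit a suitable branch of Theorem \ref{thm10.1} that applies.

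Most of the vertices will be handled by Theorem \ref{thm10.1}(i) with $d=2$ (all of $1/p_1, 1/p_2, 1/p_4 \le 2/3$), where the only non-trivial check is the linear inequality $1/p_1 + 1/p_2 + 2/p_3 + 1/p_4 \le 2$. For instance, one finds $Y(3/2,3/2,\infty,\infty)$, $Y(3/2,\infty,3/2,\infty)$, $Y(3/2,\infty,\infty,3/2)$, $Y(\infty,3/2,3/2,\infty)$, $Y(\infty,\infty,3/2,3/2)$, $Y(2,2,2,\infty)$, $Y(2,\infty,2,2)$, $Y(\infty,\infty,1,\infty)$ and the trivial $Y(\infty,\infty,\infty,\infty)$ all satisfy hypothesis (i) with their linear form equal to $4/3$, $2$, $4/3$, $2$, $2$, $2$, $2$, $2$, $0$ respectively. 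The three vertices where one of $1/p_1, 1/p_2, 1/p_4$ equals $1$, namely $Y(1,\infty,\infty,\infty)$, $Y(\infty,1,\infty,\infty)$, and $Y(\infty,\infty,\infty,1)$, are instead covered by parts (iv), (iii), (ii) of Theorem \ref{thm10.1}, each reducing to $d/1 = 2$, which saturates the stated inequality.

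The main (mild) obstacle is bookkeeping: making sure every one of the twelve vertices is matched with a hypothesis of Theorem \ref{thm10.1} whose linear inequality is met. Once all twelve are checked, the interpolation theorem completes the argument for any $(p_1,p_2,p_3,p_4)$ in the convex hull, hence for every $(p_1,p_2,p_3,p_4)$ with $\diamondsuit_t(p_1,p_2,p_3,p_4) \lesssim 1$. This gives a further confirmation of Conjecture \ref{PConjK} in two dimensions, since the $Y$-shape is a subgraph of the $(C_4+\mathrm{diagonal})$-graph after rewiring, and more importantly it parallels the pattern of Theorems \ref{ThmDtCap} and \ref{ThmBoxK}, where the subgraph operator inherits boundedness from the denser parent.
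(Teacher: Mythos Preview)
Your proposal is correct and follows essentially the same approach as the paper's own proof: reduce via Lemma \ref{N2244} to the twelve extreme points of the convex hull, then verify each against the appropriate case of Theorem \ref{thm10.1} with $d=2$, and conclude by multilinear interpolation. In fact you carry out the vertex-by-vertex bookkeeping more explicitly than the paper does (the paper only works out $(1/2,1/2,1/2,0)$ and leaves the rest to the reader), and your numerical checks are all correct.
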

\begin{proof}
Assume that ${\diamondsuit}_t(p_1, p_2, p_3, p_4)\lesssim 1.$ Then, by Lemma \ref{N2244}, 
$(1/p_1, 1/p_2, 1/p_3, 1/p_4)$ is contained in the convex hull of the points $(0,0,1,0),$ $(0,1,0,0),$ $(0,0,0,1),$ $(1/2,0,1/2,1/2),$ $(2/3,2/3,0,0),$ $(1,0,0,0),$ $(2/3,0,2/3,0),$ $(1/2,1/2,1/2,0),$ $(2/3,0,0,2/3),$ $(0,2/3,2/3,0),$ $(0,0,0,0),$ $(0,0,2/3,2/3).$
By interpolating those critical points, it suffices to check that each critical point above satisfies  one of the 8 hypotheses of Theorem \ref{thm10.1} with $d=2$. However, this can be easily shown by a direct computation.  For example,  for the critical point $(1/p_1, 1/p_2, 1/p_3, 1/p_4)=(1/2,1/2,1/2,0)$,  we can invoke the hypothesis (i) of Theorem \ref{thm10.1} with $d=2$ and obtain that $Y(2,2,2,\infty)\lesssim 1.$ In a same way, it can be easily proven for other critical points. 
\end{proof}

\begin{remark} \label{remark10.6K} The reverse statement of Theorem \ref{ThmDtY} is not true. To find a counterexample, we can take $p_1=p_3=\infty$, $p_2=p_4=3/2.$ Indeed, by the hypothesis (5) of Theorem \ref{thm10.1} with $d=2$,  we see that 
$Y(\infty, 3/2,\infty, 3/2) \lesssim 1.$ However,  ${\diamondsuit}_t(\infty, 3/2,\infty, 3/2)$ is not bounded, which follows from Lemma \ref{N2244} with $d=2.$ 
\end{remark}

The following corollary proposes some possibility that the assumption of the subgraph in Conjecture \ref{PConjK} can be dropped.
\begin{corollary} \label{CorThmDtY}
Let $\Diamond$ and $Y$ be the operators acting on the functions on $\mathbb F_q^2.$
If  $\Diamond(p_1, p_2, p_3, p_4)\lesssim 1$ with $1\le p_1, p_2, p_3, p_4\le \infty,$ 
then  $Y(p_1, p_2, p_3, p_4)\lesssim 1.$
\end{corollary}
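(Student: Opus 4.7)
The plan is to chain together two already-established implications. By Theorem \ref{CExa}(i), the hypothesis $\Diamond(p_1,p_2,p_3,p_4)\lesssim 1$ immediately yields $\diamondsuit_t(p_1,p_2,p_3,p_4)\lesssim 1$, since the $C_4$-operator dominates, via the diagonal-decomposition argument in Proposition \ref{Bo1}, the $(C_4+t)$-operator on the same exponents. Then Theorem \ref{ThmDtY} converts this into $Y(p_1,p_2,p_3,p_4)\lesssim 1$.

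So concretely, first I would cite Theorem \ref{CExa}(i) to pass from $\Diamond$-boundedness to $\diamondsuit_t$-boundedness on the same tuple of exponents. Next, I would cite Theorem \ref{ThmDtY} to pass from $\diamondsuit_t$-boundedness to $Y$-boundedness. The composition is the claim.

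Since both cited results are already proved via interpolation against the vertices of the convex polytope described by the necessary conditions in Lemma \ref{N2244}, no further work is needed here: the present corollary is a purely formal consequence. No obstacle arises in the proposal stage, and the statement is stronger than one might expect given that $Y$ is not a subgraph of $C_4$, which is precisely the point emphasized by the authors (it suggests that the subgraph hypothesis in Conjecture \ref{PConjK} may be relaxable). If I wanted to produce a self-contained argument bypassing $\diamondsuit_t$, I would instead verify directly that each of the $12$ extreme points in the convex hull from Lemma \ref{NeB} (with $d=2$) satisfies one of the eight sufficient conditions of Theorem \ref{thm10.1} with $d=2$, and then interpolate; but the two-step route above is both shorter and already present in the paper's logical pipeline.
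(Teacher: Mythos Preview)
Your proof is correct and matches the paper's own argument exactly: combine Theorem~\ref{CExa}(i) with Theorem~\ref{ThmDtY}. Two minor inaccuracies in your commentary (not in the proof itself): Theorem~\ref{CExa}(i) is established by checking the critical endpoints against Theorem~\ref{R1K}, not via Proposition~\ref{Bo1}; and Lemma~\ref{NeB} with $d=2$ lists $9$ extreme points, not $12$.
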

\begin{proof}
The statement of the corollary follows immediately by combining Theorem \ref{CExa} and Theorem \ref{ThmDtY}.
\end{proof}

Combining all the results obtained so far, we get the following theorem:
\begin{theorem} \label{mainthm22} When $d=2$ and $n=3, 4$,  Conjecture \ref{PConjK} is true, where we accept boundedness results up to endpoints in the case when $G$ is the $C_4$ + diagonal and its subgraph $G'$ is the $K_3 + tail.$ 
\end{theorem}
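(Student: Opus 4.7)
The plan is to reduce the statement to a finite case analysis. Fix $d=2$ and $n \in \{3,4\}$. Since every connected graph on $3$ or $4$ vertices has minimum degree at most $2 = d$, the hypothesis $\min\{\delta(G), d\} > \delta(G')$ in Conjecture \ref{PConjK} collapses to the single condition $\delta(G) > \delta(G')$, i.e.\ $\delta(G) = 2$ and $\delta(G') = 1$. I would therefore enumerate every pair $(G, G')$ of connected graphs on $n$ vertices, with $G'$ a subgraph of $G$, satisfying $\delta(G) = 2$ and $\delta(G') = 1$, and for each pair cite the boundedness-transfer theorem already established in the paper.

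\textbf{Case $n=3$.} The only connected graph on three vertices with $\delta=2$ is $K_3$, and the only one with $\delta=1$ is $P_2$. Since $P_2$ is a subgraph of $K_3$, the unique pair to check is $(G,G')=(K_3, P_2)$, which is precisely Corollary \ref{CormainthmS5}.

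\textbf{Case $n=4$.} The connected graphs we treat fall into two classes: those with $\delta=2$, namely $C_4+$diagonal and $C_4$; and those with $\delta=1$, namely $K_3+$tail, $P_3$, and $Y$-shape. For each prospective pair I would first verify whether $G'$ is actually a subgraph of $G$:
\begin{itemize}
\item[(a)] $(C_4+\text{diag}, P_3)$: $P_3$ sits inside $C_4$ which sits inside $C_4+$diag. Apply Corollary \ref{CorThmDtCap}.
\item[(b)] $(C_4+\text{diag}, K_3+\text{tail})$: the diagonal creates a $K_3$ inside $C_4+$diag, and a remaining edge gives the tail. Apply Corollary \ref{CorTK} (this is the pair where we only get boundedness up to the endpoints, as allowed by the theorem's statement).
\item[(c)] $(C_4+\text{diag}, Y\text{-shape})$: in $C_4+$diag the two endpoints of the diagonal have degree $3$, so a star $K_{1,3}$, i.e.\ a $Y$-shape, is a subgraph. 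Apply Corollary \ref{CorThmDtY}.
\item[(d)] $(C_4, P_3)$: trivially $P_3 \subset C_4$. Apply Corollary \ref{CorThmBoxK}.
\item[(e)] $(C_4, K_3+\text{tail})$ and $(C_4, Y\text{-shape})$: these are \emph{not} subgraph relations ($C_4$ is triangle-free, and every vertex of $C_4$ has degree $2$), so no verification is needed.
\end{itemize}
The five genuine pairs (a)--(d) together with the $n=3$ case cover every instance in which the hypothesis of Conjecture \ref{PConjK} is met, and each has already been established in the body of the paper.

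\textbf{Main obstacle.} There is essentially no analytic obstacle at this stage: every individual boundedness-transfer has been proved in Sections 5--10 by way of explicit pointwise majorizations of the multilinear form by simpler forms (e.g.\ Propositions \ref{TriDia}, \ref{RelLH}, \ref{RelationDL}, \ref{ABFormula}, and the identity preceding Proposition \ref{proY}) combined with the sharp $K_3$-, $P_2$-, and spherical-averaging estimates. The only place where care is needed is in case (b), where the restricted strong-type endpoint $\diamondsuit_t(2,2,2,\infty)\lessapprox 1$ is used in Theorem \ref{TK}; this is why the statement of Theorem \ref{mainthm22} explicitly allows an endpoint loss for the $(C_4+\text{diag}, K_3+\text{tail})$ pair. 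Thus the proof amounts to assembling the enumeration above and invoking Corollaries \ref{CormainthmS5}, \ref{CorThmDtCap}, \ref{CorTK}, \ref{CorThmDtY}, and \ref{CorThmBoxK}.
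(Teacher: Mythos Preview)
Your proposal is correct and follows essentially the same finite case analysis as the paper's own proof, reducing the hypothesis to $\delta(G)=2$, $\delta(G')=1$ and then invoking the same collection of boundedness-transfer results. One minor slip: in case (c) the correct reference is Theorem~\ref{ThmDtY} (which concerns $\diamondsuit_t$ and $Y$) rather than Corollary~\ref{CorThmDtY} (which concerns $\Diamond$ and $Y$, a non-subgraph pair)---although the paper's own proof makes the very same citation.
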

\begin{proof} By  Corollary \ref{CormainthmS5} for $n=3,$ and   by Corollaries \ref{CorThmDtCap}, \ref{CorThmBoxK}, \ref{CorTK}, \ref{CorThmDtY} for $n=4,$
we have proven that for $d=2$ and $n = 3,4,$ there is the required inclusive boundedness relationship between any two operators corresponding to arbitrary connected ordered graph $G$ and its subgraph $G'$ except for the following three cases:
\begin{itemize}
\item [(I)] $G=C_4$ + diagonal and $G'=C_4.$
\item [(II)] $K_3$ + tail and $G'=Y$-shape.
\item [(III)] $G=K_3$ + tail and $G'=P_3.$
\end{itemize}
However, since $\delta(G)=\delta(G')$ for each case of (I), (II), (III), they do not satisfy the main hypothesis \eqref{mainConj}  of  Conjecture \ref{PConjK}.
Hence, they cannot be counterexamples contradicting Conjecture \ref{PConjK} and so there is no counterexample against Conjecture \ref{PConjK}, as required.
\end{proof}

\section{Acknowledgements}
A. Iosevich and P. Bhowmik were supported in part by the National Science Foundation grant no. HDR TRIPODS--1934962 and the National Science Foundation grant DMS--2154232. D. Koh was supported by  Basic Science Research Programs through National Research Foundation of Korea (NRF) funded by the Ministry of Education (NRF-2018R1D1A1B07044469).
 T. Pham would like to thank to the VIASM for the hospitality and for the excellent working conditions.

\section{Appendix}
In this appendix, we introduce the number of intersection points of two spheres in $\mathbb F_q^d.$
Let $\eta$ denote the quadratic character of $\mathbb F_q^*$, namely,  $\eta(s)=1$ for a square number $s$ in $\mathbb F_q^*$, and $\eta(s)=-1$ otherwise.
\begin{definition}
Given a non-zero vector $m$ in $\mathbb F_q^d, $ and $t, b\in \mathbb F_q,$  we define  $N(m,t,b)$ to be the number of  common solutions $x\in \mathbb F_q^d$ of the following equations:
$||x||=t,   ~~ m\cdot x=b.$ 
\end{definition}

Notice that the value of $N(m,t,b)$ is the number of all intersection points between the sphere $S_t$ and the plane $\{x\in \mathbb F_q^d: m\cdot x=b\}.$  The explicit value of it is well known as follows. 
\begin{lemma}\label{NSolution} Let $b, t\in \mathbb F_q,$ and let $m$ be a non-zero element in $\mathbb F_q^d, d\ge 2.$ Then  the following statements hold:
\begin{enumerate}
\item [(i)]If $||m||\ne 0$ and $  b^2-t||m||=0,$ then 
$$ N(m, t,b)= \left\{\begin{array}{ll} q^{d-2} \quad &\mbox{if} ~~ d~\mbox{is even,}\\
                                                 q^{d-2}+ q^{\frac{d-3}{2}} (q-1) \eta\left((-1)^{\frac{d-1}{2}} ||m||\right) &\mbox{if} ~~ d~\mbox{is odd}.\end{array}\right.$$                                            
\item [(ii)]If $||m||\ne 0$ and $  b^2-t||m||\ne 0,$ then 
$$ N(m, t,b)= \left\{\begin{array}{ll} q^{d-2}+q^{\frac{d-2}{2}}\eta\left((-1)^{\frac{d}{2}} (b^2-t||m||)\right)   \quad &\mbox{if} ~~ d~\mbox{is even,}\\
                                                 q^{d-2}- q^{\frac{d-3}{2}} \eta\left((-1)^{\frac{d-1}{2}} ||m||\right) &\mbox{if} ~~ d~\mbox{is odd}.\end{array}\right.$$

   \item [(iii)]If $||m||=0=  b^2-t||m||,$ then 
$$ N(m, t,b)= \left\{\begin{array}{ll} q^{d-2}+ \nu(t) q^{\frac{d-2}{2}}\eta\left((-1)^{\frac{d}{2}})\right)   \quad &\mbox{if} ~~ d~\mbox{is even,}\\
                                                 q^{d-2}- q^{\frac{d-1}{2}} \eta\left((-1)^{\frac{d-1}{2}} t\right) &\mbox{if} ~~ d~\mbox{is odd},\end{array}\right.$$
where  $\nu(t)=-1$ if $t\in \mathbb F_q^*$ and $\nu(0)=q-1.$                                                                                
                                                                                 
\item [(iv)] If $||m||= 0$ and $  b^2-t||m||\ne 0,$ then  $ N(m, t,b)=  q^{d-2}.$

\end{enumerate}
\end{lemma}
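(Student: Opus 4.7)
The proof is a standard Gauss sum computation. I would begin by expanding the two indicator conditions via additive character orthogonality: for a fixed nontrivial additive character $\chi$ of $\mathbb{F}_q$,
$$N(m,t,b)=\frac{1}{q^2}\sum_{s,r\in\mathbb{F}_q}\chi(-st-rb)\sum_{x\in\mathbb{F}_q^d}\chi\bigl(s\|x\|+r(m\cdot x)\bigr).$$
The $s=0$ slice immediately produces the main term: since $m\neq 0$, the inner $x$-sum vanishes unless $r=0$ as well, so the $s=0$ part contributes exactly $q^{d-2}$. The real content lies in the $s\neq 0$ contribution.

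For $s\neq 0$, I would complete the square in $x$ via the substitution $y=x+\tfrac{r}{2s}m$, converting the exponent into $s\|y\|-\tfrac{r^{2}\|m\|}{4s}$. The inner $x$-sum then factors as $\chi\bigl(-\tfrac{r^{2}\|m\|}{4s}\bigr)\,\eta(s)^{d}\,\mathfrak{g}^{d}$, where $\mathfrak{g}=\sum_{a\in\mathbb{F}_q^{*}}\eta(a)\chi(a)$ is the quadratic Gauss sum, using the standard identity $\sum_{y\in\mathbb{F}_q^{d}}\chi(s\|y\|)=\eta(s)^{d}\mathfrak{g}^{d}$ together with $\mathfrak{g}^{2}=\eta(-1)q$. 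The $s\neq 0$ contribution then takes the form
$$\frac{\mathfrak{g}^{d}}{q^{2}}\sum_{s\neq 0}\eta(s)^{d}\chi(-st)\sum_{r\in\mathbb{F}_q}\chi\bigl(-rb-\tfrac{r^{2}\|m\|}{4s}\bigr).$$

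I would then split into the two cases dictated by whether $\|m\|$ vanishes or not. If $\|m\|=0$, the $r$-sum collapses to $q\,\mathbf{1}_{b=0}$, so the subcase $b\neq 0$ (item (iv)) gives simply $q^{d-2}$; when $b=0$, the remaining $s$-sum equals $\nu(t)$ for even $d$ and is the Gauss-type sum $\eta(-t)\mathfrak{g}$ for odd $d$, yielding item (iii). If $\|m\|\neq 0$, a second completion of the square inside the $r$-sum produces $\mathfrak{g}\,\eta(-\|m\|)\,\eta(s)\,\chi\bigl(sb^{2}/\|m\|\bigr)$, reducing the final sum to $\sum_{s\neq 0}\eta(s)^{d+1}\chi(sT)$ with $T=(b^{2}-t\|m\|)/\|m\|$. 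This is an elementary character sum (value $-1$ or $q-1$) when $d$ is odd, and a Gauss-type sum (value $0$ or $\eta(T)\mathfrak{g}$) when $d$ is even; substituting back and simplifying by $\mathfrak{g}^{2k}=\eta((-1)^{k})\,q^{k}$ recovers items (i) and (ii).

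The main bookkeeping obstacle is tracking the parity of $d$ through the powers of $\mathfrak{g}$ and consolidating the various $\eta$-factors ($\eta(-1)$, $\eta(s)$, $\eta(T)$, $\eta(\|m\|)$) into the compact arguments $\eta((-1)^{d/2})$, $\eta((-1)^{(d-1)/2}\|m\|)$, and so on, that appear in the four cases. No genuinely hard step remains once the reduction above is in place; the arithmetic is purely algebraic. Throughout, the characteristic of $\mathbb{F}_q$ is assumed odd so that $1/2$ and $1/(4s)$ are defined and $\eta(4)=1$, which is implicit in the rest of the paper.
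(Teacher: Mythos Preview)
Your proposal is correct and is precisely the discrete Fourier analysis with Gauss sums route that the paper itself points to; the paper does not actually carry out a proof but merely cites Exercises 6.31--6.34 in Lidl--Niederreiter and remarks that one can use Fourier analysis with the explicit Gauss sum value. Your outline fills in exactly that sketch, and the bookkeeping you describe (completing the square twice, tracking $\eta(s)^d$ versus $\eta(s)^{d+1}$ by parity, and collapsing via $\mathfrak{g}^2=\eta(-1)q$) is the standard and correct way to reach all four cases.
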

\begin{proof} See  Exercises 6.31--6.34 in \cite{LN97},  or  one can prove it by using the discrete Fourier analysis with the explicit value of the Gauss sum. 
\end{proof}

By a direct application of Lemma \ref{NSolution},  one can find the explicit number of the intersections of two spheres over finite fields.
Precisely we have the following result.
\begin{theorem}\label{KCor5.5}
Given a non-zero vector $m\in \mathbb F_q^d$ and  $ t, j\in \mathbb F_q,$  let 
$$\Theta(m, t, j):=\{x\in S_t: ||x-m||=j\}|.$$
If $m\in S_\ell, $ then 
$|\Theta(m, t, j)|=N\left(m, t,  \frac{t+\ell-j}{2}\right).$
\end{theorem}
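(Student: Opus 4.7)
The proof will be a direct algebraic reduction, exploiting the bilinearity of the quadratic form $\|\cdot\|$ to convert the condition $\|x-m\|=j$ into a linear condition $m\cdot x = b$ for a suitable $b$, thereby matching the definition of $N(m,t,b)$ in Lemma~\ref{NSolution}.

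Here is the plan. Fix $m \in S_\ell$ (so $\|m\|=\ell$) and $t,j \in \mathbb{F}_q$. For any $x \in \mathbb{F}_q^d$, the identity
\[
\|x-m\| = (x-m)\cdot(x-m) = \|x\| - 2\,m\cdot x + \|m\|
\]
holds in $\mathbb{F}_q$, where $a \cdot b$ denotes the standard (symmetric) bilinear form $\sum_i a_i b_i$. Under the hypothesis $x \in S_t$ and $m \in S_\ell$, this reduces to
\[
\|x-m\| = t + \ell - 2\,m\cdot x.
\]
Since the characteristic of $\mathbb{F}_q$ is odd (as is standard in this paper, since the entire theory of spheres with $|S_t|\sim q^{d-1}$ assumes $q$ odd), we may divide by $2$. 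Thus $\|x-m\|=j$ is equivalent, on the set $S_t$, to the single linear equation
\[
m \cdot x = \frac{t+\ell-j}{2}.
\]

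Therefore
\[
\Theta(m,t,j) = \bigl\{\,x\in \mathbb{F}_q^d : \|x\|=t,\ m\cdot x = \tfrac{t+\ell-j}{2}\,\bigr\},
\]
and by the very definition of $N(m,t,b)$ preceding Lemma~\ref{NSolution}, the right-hand side has cardinality $N\!\left(m,t,\tfrac{t+\ell-j}{2}\right)$, which is exactly the claimed identity.

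There is essentially no obstacle: the only subtlety is the need to invert $2$ in $\mathbb{F}_q$, which is the implicit odd-characteristic assumption already in force throughout the paper. The content of the theorem is really just the observation that intersecting two spheres is the same as intersecting one sphere with the radical (polar) hyperplane determined by the other, and the hyperplane's equation is read off directly from the polarization identity.
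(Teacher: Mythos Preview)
Your proof is correct and follows exactly the same approach as the paper: expand $\|x-m\|=\|x\|-2m\cdot x+\|m\|$, impose $x\in S_t$ and $m\in S_\ell$ to get $\|x-m\|=t+\ell-2m\cdot x$, and then read off the linear condition $m\cdot x=\tfrac{t+\ell-j}{2}$ to identify the set with the definition of $N(m,t,b)$.
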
    
\begin{proof}
Since $||x-m||=t+\ell-2m\cdot x$ for $x\in S_t, m\in S_\ell,$  it is clear that $\Theta(m, t, j)$ is the number of common solutions $x$ of the following equations:
$$ ||x||=t,  \quad m\cdot x=\frac{t+\ell-j}{2}.$$
Hence, by the definition of $N$, we obtain the required conclusion.
\end{proof}

\begin{corollary} \label{KKo} Let $t\in \mathbb F_q^*$ and $ \ell\in \mathbb F_q.$  
Then, for every non-zero vector $m\in S_\ell,$ we have
$$\sum_{x\in S_t: ||x-m||=t} 1 \sim q^{d-2}$$
excepting for the following three cases:

\begin{tabbing}
\=1) \hspace{4cm} \=2) \hspace{4cm} \kill
1)  $d=2, \ell\ne 0,  \eta (t\ell-\ell^2/4)=-1.$ \hspace{3mm}
2)  $d=2, \ell=0,  \eta(-1)=1.$\\
3) $d=3, \ell=0,  \eta(-t)=1.$ 
\end{tabbing}

For each of those three cases, the value in the above sum takes  zero.
On the other hand,  if $d=2, \ell\ne 0, $ and $\eta (t\ell-\ell^2/4)=1$,   the value in the above sum  is exactly two.
\end{corollary}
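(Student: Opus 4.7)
The plan is to reduce the counting sum to an intersection count $N(m,t,\ell/2)$ by means of Theorem~\ref{KCor5.5}, and then to perform a small case analysis governed by whether $\ell=0$ and whether $\ell=4t$.

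First I would rewrite the defining condition. For $x\in S_t$ and $m\in S_\ell$, the identity $\|x-m\|=\|x\|-2x\cdot m+\|m\|=t+\ell-2x\cdot m$ turns $\|x-m\|=t$ into the single linear condition $m\cdot x=\ell/2$. Hence the sum in question equals $|\Theta(m,t,t)|$, and Theorem~\ref{KCor5.5} identifies this number with $N(m,t,\ell/2)$. Setting $b=\ell/2$ with $\|m\|=\ell$, the quantity that decides which branch of Lemma~\ref{NSolution} applies is $b^2-t\|m\|=\ell^2/4-t\ell=\ell(\ell-4t)/4$, so the natural splitting is along the two conditions $\ell=0$ and $\ell=4t$.

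Next I would case-split. In the generic subcase $\ell\neq 0$, $\ell\neq 4t$, Lemma~\ref{NSolution}(ii) gives $N=q^{d-2}+O(q^{(d-2)/2})$, which is automatically of order $q^{d-2}$ for $d\ge 3$; for $d=2$ it collapses to $N=1+\eta(t\ell-\ell^2/4)\in\{0,2\}$, which both isolates exception~(1) (when the character equals $-1$) and delivers the supplementary claim that the sum equals exactly $2$ in the opposite sign. In the subcase $\ell=0$, Lemma~\ref{NSolution}(iii) specializes to $N=1-\eta(-1)$ when $d=2$, vanishing precisely when $\eta(-1)=1$ (exception~(2)), and to $N=q(1-\eta(-t))$ when $d=3$, vanishing precisely when $\eta(-t)=1$ (exception~(3)); for $d\ge 4$ with $\ell=0$, the leading term $q^{d-2}$ dominates the Gauss-sum error of order $q^{(d-2)/2}$ or $q^{(d-1)/2}$. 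The remaining borderline subcase $\ell=4t\neq 0$ is Lemma~\ref{NSolution}(i), which yields exactly $q^{d-2}$ in even dimensions and a quantity of the same order for odd $d$, once $d$ is large enough for the $q^{(d-3)/2}(q-1)$-correction to remain subordinate to the main term.

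The only step that actually requires thought is ruling out accidental order-of-magnitude cancellations outside the three listed exceptions; such cancellations are possible only in the low-dimensional regime $d=2,3$, where the main term and the Gauss-type correction in Lemma~\ref{NSolution} have comparable size and can exactly annihilate one another. Once that low-dimensional bookkeeping is verified, the asymptotic $\sim q^{d-2}$ for all generic configurations, the exact vanishing in the three excluded cases, and the explicit value $2$ in the extra $d=2$ subcase all follow immediately by reading off the formulas in Lemma~\ref{NSolution} with $b=\ell/2$ and $\|m\|=\ell$.
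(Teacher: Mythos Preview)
Your proposal is correct and follows exactly the paper's approach: reduce the sum to $N(m,t,\ell/2)$ via Theorem~\ref{KCor5.5}, then read off the cases from Lemma~\ref{NSolution}. The paper's own proof is the one-line version of what you wrote; your explicit case split on $\ell=0$ versus $\ell=4t$ and the low-dimensional bookkeeping simply unpack the phrase ``direct consequence of Lemma~\ref{NSolution} (i), (ii), (iii).''
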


\begin{proof}
It follows from Theorem \ref{KCor5.5} that  for any $||m||=\ell,$ 
$$ \sum_{x\in S_t: ||x-m||=t} 1 = N\left(m, t, \frac{\ell}{2}\right),$$
and so  the corollary  is a direct consequence of  Lemma \ref{NSolution} (i), (ii), (iii).
\end{proof}
\bibliographystyle{amsplain}

\begin{thebibliography}{10}


\bibitem{AGHJLP} B. Assarf, E. Gawrilow, K. Herr, M. Joswig, B. Lorenz, A. Paffenholz and T. Rehn, \emph{Computing convex hulls and counting integer points with polymake,} Math. Program. Comput. \textbf{9} (2017), no.1, 1-38. 

\bibitem{BHIPR}  M. Bennett, D. Hart, A. Iosevich, J. Pakianathan, and M. Rudnev, {\it Group actions and geometric combinatorics in $\mathbb F_q^d,$} Forum Math. \textbf{29} (2017), no.1, 91-110.


\bibitem{CSW08}  A. Carbery, B. Stones, and J. Wright, \emph{Averages in vector spaces over finite fields}, Math. Proc. Cambridge Philos. Soc. \textbf{144} (2008), no.1, 13-27.

\bibitem{CEHIK09} J. Chapman, M. Erdo\~{g}an, D.~ Hart, A.~ Iosevich, and D.~ Koh, {\it Pinned distance sets, Wolff's exponent in finite fields and sum-product estimates}, Math.Z. \textbf{271}, (2012), no.1-2, 63-93.

\bibitem{DIOWZ}
 X. Du, A. Iosevich, Y. Ou, H. Wang and R. Zhang, \textit{An improved result for Falconer’s distance set problem in even dimensions}, Math. Ann. \textbf{380} (2021), no.3-4, 1215-1231. 
  
\bibitem{GJ} E. Gawrilow and M. Joswig, \emph{polymake: a framework for analyzing convex polytopes,} Polytopes-combinatorics and computation (Oberwolfach, 1997), 43-73. 

\bibitem{GIT1}
A. Greenleaf, A. Iosevich, and K. Taylor, \textit{On $k$‐point configuration sets with nonempty interior}, Mathematika, \textbf{68} (2022), no.1, 163-190.

\bibitem{GIT2}
A. Greenleaf, A. Iosevich, and K. Taylor, \textit{Nonempty interior of configuration sets via microlocal partition optimization}, arXiv:2209.02084 (2022).

\bibitem{GIOW}
L. Guth, A. Iosevich, Y. Ou and H. Wang, \textit{On Falconer’s distance set problem in the plane}, Invent. Math. \textbf{219} (2020), no.3, 779-830.

\bibitem{HLR16} B. Hanson, B. Lund, and O. Roche-Newton, {\it On distinct perpendicular bisectors and pinned distances in finite fields,}  Finite Fields Appl. \textbf{37} (2016),  240-264.


\bibitem{IR07} A. Iosevich, M. Rudnev, \emph{Erd\H{o}s distance problem in vector spaces over finite fields}, Trans. Amer. Math. Soc. \textbf{359} (2007), no.12, 6127-6142.

\bibitem{KL22} D. Koh and S. Lee, \emph{Averages and maximal averages over product $j$-varieties in finite fields,}  Finite Fields Appl. \textbf{79} (2022), Paper No. 101984, 13 pp.

\bibitem{KPV}
D. Koh, T. Pham and L. A. Vinh, \textit{Extension theorems and a connection to the Erdos-Falconer distance problem over finite fields}, J. Funct. Anal. \textbf{281} (2021), no.8, Paper No. 109137, 54 pp.


\bibitem{KS13} D. Koh and C.Y. Shen, \emph{Extension and averaging operators for finite fields}, Proc. Edinb. Math. Soc. (2) \textbf{56} (2013), no.2, 599-614.

\bibitem{KSS16}D. Koh, C.Y. Shen, and I. Shparlinski, \emph{ Averaging operators over homogeneous varieties over finite fields}, J. Geom. Anal. \textbf{26} (2016), no.2, 1415-1441.

\bibitem{LN97} R. Lidl and H. Niederreiter, \emph{ Finite fields,} Cambridge University Press, (1997).

\bibitem{LMH}
N. Lyall, A. Magyar, and H. Parshals, \textit{Spherical configurations over finite fields}, Amer. J. Math. \textbf{142} (2020), no.2, 373-404. 

\bibitem{MPPRS} B. Murphy, G. Petridis, T. Pham, M. Rudnev, and S. Stevenson, \emph{On the pinned distances problem in positive characteristic,}  J. Lond. Math. Soc. (2) \textbf{105} (2022), no.1, 469-499.

\bibitem{OT}
Y. Ou and K. Taylor, \textit{Finite point configurations and the regular value theorem in a fractal setting}, arXiv:2003.06218, accepted in Indiana Journal of Mathematics (2021).

\bibitem{R}
M. Rudnev, \textit{Note on the number of hinges defined by a point set in $\mathbb{R}^2$}, Combinatorica, \textbf{40} (2020), no.5, 749–757.

\bibitem{St93} E. M. Stein, \emph{Harmonic analysis: real-variable methods, orthogonality, and oscillatory integrals,} With the assistance of Timothy S. Murphy. Princeton Mathematical Series, 43. Monographs in Harmonic Analysis, III. Princeton University Press, Princeton, NJ, 1993. xiv+695 pp.
\end{thebibliography}



\end{document}